\documentclass[12pt]{amsart}
\usepackage{comment}
\usepackage[
backend=biber,
style=alphabetic,
giveninits=true,
maxcitenames=6,
mincitenames=6,
maxbibnames=10,
sorting=nyt
]{biblatex}
\usepackage{amssymb}
\usepackage{amsfonts}
\usepackage{amscd,color}
\usepackage[final]{showkeys}
\usepackage{tikz}
\usepackage{tikz-cd}
\usetikzlibrary{cd}
\usepackage{hyperref}
\usepackage[capitalize,nameinlink]{cleveref}
\usepackage[utf8]{inputenc}
\usepackage{braket}
\usepackage{ytableau}
\usepackage[bb=stix]{mathalpha}

\usepackage[all]{xy}
\usepackage{xypic,epsfig}
\usepackage{newunicodechar}
\newunicodechar{̂}{\^{} }
\definecolor{cardinal}{rgb}{0.77, 0.12, 0.23}
\definecolor{maroon}{rgb}{128, 0, 0}
\definecolor{glasgowblue}{rgb}{0.16, 0.32, 0.75}
\definecolor{applegreen}{rgb}{0.55, 0.71, 0.0}
\newcounter{Todo}

\newcommand{\Z}{\mathbb{Z}}

\newcommand{\C}{\mathbb{C}}

\newcommand{\cS}{\mathcal{S}}
\newcommand{\cQ}{\mathcal{Q}}
\newcommand{\SL}{\,\mathrm{SL}}
\newcommand{\GL}{\,\mathrm{GL}}

\newcommand{\Fl}{\,\mathrm{Fl}}
\newcommand{\Gr}{\,\mathrm{Gr}}

\newcommand{\fS}{\mathfrak{S}}
\newcommand{\End}{\operatorname{End}}

\newcommand{\mb}[1]{\mathbb{#1}}
\newcommand{\mc}[1]{\mathcal{#1}}
\newcommand{\mf}[1]{\mathfrak{#1}}

\newcommand{\bs}[1]{\boldsymbol{#1}}
\newcommand{\op}[1]{\operatorname{#1}}

\newcommand{\veps}{\varepsilon}
\newcommand{\Hpt}{H^*_T(\mathrm{pt})}
\newcommand{\ring}{R}

\newcommand{\ad}{\operatorname{ad}}
\newcommand{\bt}{\mathbb{t}}

\newtheorem{defn}{Definition}[section]
\newtheorem{claim}{Claim}[section]
\newtheorem{remark}{Remark}[section]
\newtheorem{thm}{Theorem}[section]
\newtheorem{prop}{Proposition}[section]
\newtheorem{cor}[prop]{Corollary}
\newtheorem{lemma}[prop]{Lemma}
\newtheorem{example}[prop]{Example}
\newtheorem{conjecture}[prop]{Conjecture}
\crefname{conjecture}{Conjecture}{Conjectures}
\Crefname{conjecture}{Conjecture}{Conjectures}
\numberwithin{equation}{section}
\numberwithin{figure}{section}

\newcommand{\YB}{\operatorname{YB}}
\newcommand{\T}{\mathsf{T}}

\newcommand{\gl}{\mathfrak{gl}}

\title[Equivariant Quantum Cohomology via the Clifford algebra]{Equivariant Quantum Cohomology of Grassmannians via the Clifford algebra}
\author{Christian Korff}
\address{
Christian Korff, School of Mathematics and Statistics,
University of Glasgow,
Glasgow G12 8QQ, United Kingdom
}
\email{Christian.Korff@glasgow.ac.uk}
\urladdr{https://sites.google.com/view/christiankorff}

\author{Mikhail Vasilev}
\address{
Mikhail Vasilev, School of Mathematics and Statistics,
University of Glasgow,
Glasgow G12 8QQ, United Kingdom
}
\email{Mikhail.Vasilev@glasgow.ac.uk}

\subjclass[2020]{Primary: 14N35, 53D45; Secondary: 14N15, 05E05, 05E14}

\date{May 2026}

\begin{document}
\begin{abstract}
    We construct an explicit equivariant quantum Satake map for Grassmannians, which enables us to express their torus-equivariant quantum cohomology in terms of that of projective space. We then consider the exterior algebra of the latter, which admits a canonical identification with a Clifford algebra. We describe the resulting action in several complementary ways: first, from a geometric perspective via push–pull maps, and second, in terms of the shuffle product, which also arises in the simplest cohomological Hall algebra associated with the $A_1$-quiver. Exploiting the Clifford algebra structure, we derive new recurrence relations among equivariant Gromov–Witten invariants, yielding a new method for their computation in terms of Wick's Theorem. As an application, we provide combinatorial proofs of Graham positivity for both equivariant quantum Pieri rules, and in one case extend these results to quantum triple Schubert calculus.
\end{abstract}

\maketitle
\tableofcontents

\section{Introduction}

Quantum cohomology emerged from the foundational works of Gepner \cite{gepner1991fusion}, Intriligator \cite{intriligator1991fusion}, Vafa \cite{vafa1991topological} and Witten \cite{witten1993verlinde} on fusion rings and can be understood as a $q$-deformation of ordinary (intersection) cohomology. Grassmannians were among the first varieties whose quantum cohomology ring $qH^*(\op{Gr}(k,n))$ was computed explicitly \cite{agnihotri1995quantum, bertram1997quantum}. Since then, an extensive body of work has developed around the combinatorics of Schubert calculus and its quantum analogue, with significant impact on adjacent fields such as geometric representation theory. Equivariant quantum Schubert calculus was subsequently introduced in the works by Givental and Kim; see e.g. \cite{givental1995quantum,givental1996equivariant,kim1996equivariant}. Here one has in addition a group action on the underlying topological space leading to an even richer algebraic and combinatorial structure.

More recently, new impetus to the area has come from the works \cite{nekrasov2009quantum,braverman2011quantum,maulik2019quantum} which allows one to interpret the direct sum of quantum cohomologies of Nakajima varieties as quantum group modules using stable envelopes; see  \cite{maulik2019quantum}. The basic example are the rings for cotangent bundles $\T^*\Gr(k;n)$ of Grassmannians $\Gr(k;n)=\Gr(k;\C^n)$, where $\bigoplus_{k=0}^nqH^*(\T^*\Gr(k;n))$ is endowed with the structure of a module for the simplest Yangian $Y_\hbar(\mf{sl}_2)$. Here the deformation parameter $\hbar$ controls the transition from the cotangent bundle down to the base manifold. While the limit from $qH^*(\T^*\Gr(k;n))$ to $qH^*(\Gr(k;n))$ can in principle be taken, one ends up with a very degenerate limit of the quantum group structure: the latter can be described in terms of Yang-Baxter algebras which has been related to the simplest cohomological Hall algebra (CoHA) of the $A_1$-quiver in \cite{GKS20} (see also \cite{gorbounov2017quantum,gorbounov2025quantum} for a discussion of the equivariant quantum K-theory of Grassmannians using Yang-Baxter algebras). In general, it can be challenging to explicitly describe the quantum group structure in this limit down to the base manifold in concrete geometric terms and compare it explicitly to the known combinatorics of quantum Schubert calculus in the literature.

\begin{figure}\label{fig:CliffYoung}
\centering
\includegraphics[width=.95\textwidth]{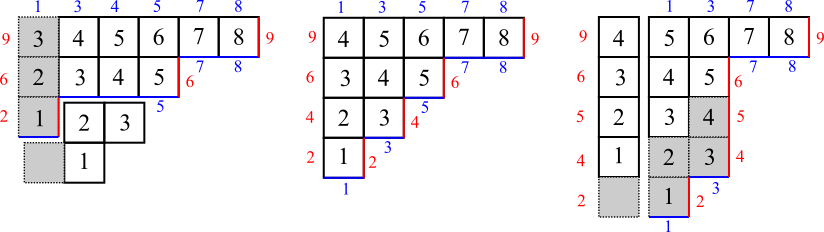} 
\caption{A graphical depiction of the action \eqref{recferm} of the Clifford algebra in terms of Young diagrams labelling Schubert classes. In the middle a Young diagram labelling a Schubert class $\sigma_w$ in $qH^*(\Gr(4;9))$ corresponding to the Grassmannian permutation $w=[2469\,13578]$. Acting with the maps \eqref{recferm} on the corresponding Schubert class $\sigma_w$ gives the classes $\sigma_{w'}\in qH^*(\Gr(3;9))$ (left) and $\sigma_{w''}\in qH^*(\Gr(5;9))$ (right) up to a sign. For example, $\sigma_{w'}=(-1)^{\#(w,w')}\psi_4(\sigma_w)$ is obtained by first adding a column of maximal height to the Young diagram of $w$, then removing a rim-hook of length 4 and, finally, multiplying with $(-1)^{\#(w,w')}$ where $\#(w,w')$ is the height of the rim hook (number of rows) minus one. On the right the computation of $\sigma_{w''}=(-1)^{\#(w'',w)}\psi^*_5(\sigma_w)$ which follows the reverse procedure: add a ribbon of length 5, then remove a column of maximal height and multiply with $(-1)^{\#(w'',w)}$ where $\#(w'',w)$ is the height of the added rim hook minus one. If the outcome of either procedure is not a Young diagram then the image is zero. 
}
\end{figure}

\subsection{A geometric construction of the Clifford algebra}
Therefore, we shall take a different approach in this article and directly identify the sum $\bigoplus_{k=0}^nqH_T^*(\Gr(k;n))$ of {\em torus equivariant quantum cohomology rings of Grassmannians} for fixed $n\ge 2$, as a cyclic module $\mc{F}$ of a (finite) {\em Clifford algebra}. Namely, we consider the associative unital algebra  $\mc{C}_n=\langle\psi_i,\psi_i^*~:~i=1,\ldots,n\rangle/\sim$ over $\ring[q^{\pm 1}]\simeq qH^*_T[\mathrm{pt}]=\Z[q^{\pm 1}]\otimes_\Z\Z[y_1,\ldots,y_n]$ subject to the relations
\[
\psi_i\psi_j+\psi_j\psi_i=\psi_i^*\psi_j^*+\psi_j^*\psi_i^*=0,\qquad
\psi_i\psi^*_j+\psi_j^*\psi_i=\delta_{ij}\;.
\]
These identities are known as the {\em canonical anti-commutation relations} (CAR) and describe in quantum physics {\em free fermions}. For this reason we will sometimes refer to the $\psi_i^*$ as {\em fermionic creation} and to the $\psi_i$ as {\em fermionic annihilation operators}. The irreducible module $\mc{F}\simeq \bigoplus_{k=0}^nqH_T^*(\Gr(k;n))$ is called the (finite-dimensional) {\em fermionic Fock space}.

Let us describe the Clifford algebra action on $\mc{F}$ geometrically building on the earlier works \cite{GKS20,korffstroppel2010}: consider the following natural projections from the 2-step flag variety $\Fl(k,k+1;n)$ to Grassmannians,
\begin{equation}\label{pushpull}
    \begin{tikzcd}
        & \Fl(k, k +1; n) \arrow[ld, "p_1"] \arrow[rd, "p_2"] & \\
        \Gr(k; n) & & \Gr(k + 1; n)\;.
    \end{tikzcd}
\end{equation}
Then for each $0\le k<n$ and $i=1,\ldots, n$ we define maps 
$$\psi^*_i:qH^*_T(\Gr(k;n))\to qH^*_T(\Gr(k+1;n)),\quad\psi_i:qH^*_T(\Gr(k+1;n))\to qH^*_T(\Gr(k;n))$$ as follows: for $i=1$ and $i=n$ we respectively set
\begin{equation}\label{initial}
 \psi^*_1 =(p_2)_{*} p_1^*, \qquad 
 \psi_n =(-1)^{k} (p_1)_{*} p_2^*\;.
\end{equation}
The action of the remaining Clifford generators is defined recursively via
\begin{gather}\label{recferm}
     \psi^*_{i + 1} = c_1(\cS_{k + 1}) \circ \psi^*_i - \psi^*_i \circ c_1(\cS_k) - y_{i} \circ \psi^{*}_i,\\ \psi_{i - 1} = \psi_i \circ c_1(\cS_{k+1}) - c_1(\cS_{k}) \circ \psi_i - y_i \circ \psi_i,\notag
\end{gather}
where $c_1(\mc{S}_k)$ denotes the (quantum) multiplication with the first Chern class of the tautological bundle over the Grassmannian $\Gr(k;n)$ and $y_i$ is the multiplication with an equivariant parameter related to the torus action.\footnote{The maps in \eqref{initial} coincide with the ones considered in \cite{GKS20} when describing the geometric action of Yang-Baxter algebras and we shall comment below on the precise connection. The maps defined in \eqref{recferm} on the other hand were not considered in \cite{GKS20}. Their algebraic analogues in the non-equivariant setting were discussed in \cite{korffstroppel2010}.} 

Since the definition of the action of the Clifford algebra via \eqref{recferm} is somewhat indirect, let us briefly describe it explicitly in terms of Schubert classes using the combinatorics of Young diagrams. Recall that the Schubert classes in $qH^*_T(\Gr(k;n))$ are labelled in terms of Grassmannian permutations $w\in S^{k,n}\subset S_n$ which are in bijection with partitions $\lambda(w)$ whose Young diagram fits into the $k\times (n-k)$ rectangle and for which $|\lambda(w)|=\ell(w)$, the length of the permutation. The action of the maps \eqref{recferm} on Schubert classes then corresponds to adding (removing) a column of maximal height and removing (adding) a rim-hook (up to a sign factor); see Figure \ref{fig:CliffYoung} for an example. 

\subsection{Shuffle product and the CoHA of $A_1$}
In Section \ref{sec:shuffle} we shall explicitly relate the action of the  fermionic creation operators $\psi_i^*:qH^*_T(\Gr(k;n))\to qH^*_T(\Gr(k+1;n))$ to the shuffle product of \cite{feigin1995vector} which describes the graded multiplication in the simplest CoHA of the $A_1$-quiver; see \cite{kontsevich2011cohomological} for details. We note that previously the connection of equivariant cohomology of Grassmannians to the CoHA was discussed in \cite[Section 8]{GKS20} in terms of Yang-Baxter algebras. Our description in terms of fermions and the shuffle product gives a new combinatorial description of this connection which extends to the quantum case. We also give a combinatorial description of the annihilation operators $\psi_i:qH^*_T(\Gr(k;n))\to qH^*_T(\Gr(k-1;n))$ using the Newton interpolation formula; see \eqref{annihshuffle} and \eqref{annihshuffle2}. In particular, we show that both, creation and annihilation operators, are related via the commutative diagram 
\begin{equation}\label{LRcd}
    \begin{tikzcd}
         qH^*_T(\Gr(k-1;n)\arrow[d,"\Theta_{k-1}"]  &\arrow[l,"(-1)^{n-k+1-i}\psi_i"] qH^*_T(\Gr(k;n)) \arrow[d,"\Theta_k"] \arrow[r, "(-1)^{n-k-i}\psi^*_i"] &  qH^*_T(\Gr(k +1; n)) \arrow[d, "\Theta_{k+1}"]  \\
        qH^*_T(\Gr(n-k+1;n)&\arrow[l,"\psi^*_{n+1-i}"]  qH^*_T(\Gr(n-k;n)) \arrow[r, "\psi_{n+1-i}"] &  qH^*_T(\Gr(n-k-1; n))
    \end{tikzcd}
    \end{equation}
by employing a ring isomorphism $\Theta_k:qH^*_T(\Gr(k;n))\to qH^*_T(\Gr(n-k;n))$ discussed in \cite[Cor. 6.8]{gorbounovkorff2014} (and for quantum K-theory in \cite[Cor. 4.14]{gorbounov2017quantum}) for which we give a new combinatorial proof; see Theorem \ref{thm:LRduality} in the main text.

\subsection{New recurrence formulae for Gromov-Witten invariants}
The maps \eqref{recferm} allow for a new combinatorics in quantum Schubert calculus which relates the quantum multiplication in $qH^*_T(\Gr(k;n))$ to either $qH^*_T(\Gr(k-1;n))$ or $qH^*_T(\Gr(k+1;n))$ and one arrives at recurrence relations for equivariant Gromov-Witten invariants of Grassmannians of different dimensions.\footnote{Previously established recurrence formulae and algorithms such as \cite[Prop.5.1 and Thm 2]{MihalceaAIM} are based on relations between invariants in the {\em same} ring.} 

Applying this recurrence repeatedly one then deduces a novel expression for the equivariant Gromov-Witten invariants purely in terms of the Clifford algebra maps \eqref{recferm}. Before we can state this result we need to briefly recall two basic facts related to the Clifford algebra (further details are given in the main body of the article):
\begin{itemize}
\item[(i)] Given a group morphism $\alpha:G\to\GL_n(\ring)$ there is a natural $G$-action on the Clifford algebra via automorphisms, $g.\psi_i=\sum_{j}\psi_j (g^{-1})_{ji}$ and $g.\psi^*_i=\sum_{j}g_{ij}\psi^*_j$. In our case we will exploit this for $G=\Z^n\rtimes S_n$ the extended affine Weyl group and the above group morphism $\alpha$ is prescribed by a $G$-action on the quantum cohomology of projective space $qH^*_T(\mb{P}^{n-1})$; see Lemma \ref{lem:Waffaction}.
\item[(ii)] The Clifford algebra comes equipped with a natural functional $\langle\;\bullet\;\rangle:\mc{C}_n\to\ring[q^{\pm 1}]$ called the {\em vacuum expectation value} which can be computed via a determinant formula called `Wick's Theorem',
$\langle \psi_{i_k}\ldots\psi_{i_1}\psi^*_{j_1}\ldots\psi^*_{j_k}\rangle=\det(\langle\psi_{i_a}\psi^*_{j_b}\rangle)_{1\le a,b\le k}$ with $\langle\psi_{i_a}\psi^*_{j_b}\rangle=\delta_{i_aj_b}$. 
\end{itemize}
Recall that a skew diagram $\rho=\lambda/\mu$ of two partitions $\lambda,\mu$ is called a {\em horizontal strip} if the difference of their Young diagram contains at most one box in each column. Moreover, the content $c(\square)$ of a box $\square\in\rho$ is the difference of its column and row number in the Young diagram of $\lambda$. 
\begin{thm}
    Let $u=[u_1\ldots u_n],v=[v_1\ldots v_n],w=[w_1\ldots w_n]\in S^{k,n}$ be Grassmannian permutations and set $\lambda_i=u_{k+1-i}+i-k-1$ for $i=1,\ldots,k$. Then the equivariant Gromov-Witten invariants, the structure constants of $qH^*_T(\Gr(k;n))$, are given by 
\begin{equation}\label{intro:EGW=VEV}
C_{uv}^{w}(y,q)=\sum_{T(\lambda)}
\langle\psi_{w_k}\cdots\psi_{w_1}(t^{\rho_1}.\psi^{*}_{v_1}) \cdots(t^{\rho_k}.\psi^*_{v_k})\rangle
\end{equation}
where the sum runs over all semi-standard tableaux $T$ of shape $\lambda$, a partitioning of the Young diagram of $\lambda$ into horizontal strips $\rho_i$, each of which defines a translation in the normal subgroup $\Z^n\simeq\langle t_1,\ldots t_n\rangle\le G$ via  $t^{\rho_i}=\prod_{\square\in\rho_i}t_{k+c(\square)}$.   
\end{thm}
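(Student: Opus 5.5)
Our approach is to realise $qH^*_T(\Gr(k;n))$ as the degree-$k$ part of the fermionic Fock space $\mc{F}\simeq\bigwedge qH^*_T(\mb{P}^{n-1})$ via the equivariant quantum Satake map. Under this map the Schubert class $\sigma_v$ corresponds to $\psi^*_{v_1}\cdots\psi^*_{v_k}|0\rangle$, up to the ordering and sign conventions fixed in the main text. The dual pairing is given by the vacuum expectation value: $\langle\psi_{w_k}\cdots\psi_{w_1}\,\cdot\,\rangle$ extracts the coefficient of $\sigma_w$. By Wick's Theorem, this gives $\langle\psi_{w_k}\cdots\psi_{w_1}\psi^*_{v_1}\cdots\psi^*_{v_k}\rangle=\delta_{vw}$ on the basis. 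The theorem therefore reduces to expressing the quantum multiplication operator $\sigma_u\ast$ on $\mc{F}_k$ as an element of the Clifford algebra. Concretely, we will show
\[
\sigma_u\ast\bigl(\psi^*_{v_1}\cdots\psi^*_{v_k}|0\rangle\bigr)=\sum_{T(\lambda)}(t^{\rho_1}.\psi^*_{v_1})\cdots(t^{\rho_k}.\psi^*_{v_k})|0\rangle ,
\]
and then pair this with $\langle 0|\psi_{w_k}\cdots\psi_{w_1}$.

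To prove the displayed identity, we first treat the generators. Multiplication by the equivariant elementary class $e_r$ (the $r$-th Chern class of the tautological bundle, up to sign) should act on $\mc{F}_k$ as the derivation-type operator $\sum_{S}\prod_{a\in S}(\text{action of the hyperplane class on the }a\text{-th factor})$. More precisely, it acts as the $r$-th elementary symmetric function of the operators $x^{(a)}$, where $x$ is multiplication by the hyperplane class in $qH^*_T(\mb{P}^{n-1})$ acting on the $a$-th tensor factor. Under Lemma \ref{lem:Waffaction}, multiplying $\psi^*_j$ by $x-y_j$ (equivalently $x$) is implemented by the translation $t_j$ in the extended affine Weyl group, shifting $j\mapsto j+1$. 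The affine wrap $n\mapsto 1$ carries the factor $q$, which accounts for quantum corrections. This is exactly what the recursion \eqref{recferm} encodes: $\psi^*_{i+1}=[c_1(\cS),\psi^*_i]-y_i\psi^*_i$.

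We then obtain $\sigma_u$ as a polynomial in the $e_r$ via the equivariant (factorial) Jacobi--Trudi/Giambelli formula. Equivalently, we write the factorial Schur function $s_\lambda(x\mid y)$ as a sum over semistandard tableaux, with each box $\square$ contributing $(x_{T(\square)}-y_{T(\square)+c(\square)+\text{shift}})$. Grouping boxes with entry $i$ into the horizontal strips $\rho_i$ gives a factor $\prod_{\square\in\rho_i}(x^{(i)}-y_{k+c(\square)}\text{-type shift})$ acting on the $i$-th fermion. By the previous step, each such factor is realised as the translation $t_{k+c(\square)}$ acting on $\psi^*_{v_i}$, which yields $t^{\rho_i}.\psi^*_{v_i}$. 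The Satake isomorphism identifies $\sigma_u$ with $s_\lambda(x\mid y)$ in the antisymmetrised presentation. This is where the reindexing $\lambda_i=u_{k+1-i}+i-k-1$ enters.

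The main obstacle is to match the equivariant shifts precisely: the content index $k+c(\square)$ must coincide with the $y$-shift produced by the recursion, including the $q$-twisted wrap-around at the boundary. We also have to check that the tableau sum, which is not manifestly antisymmetric factorwise, still gives the correct operator on the wedge. For the latter, we will use that the factorial Schur polynomial is symmetric in $x$, so its action commutes with antisymmetrisation. The shift bookkeeping will be verified first for single rows $\lambda=(r)$, the Pieri case, by induction on $r$ using \eqref{recferm}. The general case then follows from the Jacobi--Trudi expansion and Wick's Theorem.
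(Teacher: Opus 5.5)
Your proposal is correct and is essentially the paper's argument. Proposition \ref{prop:quantumSatake} gives $\sigma_u\star=s_{\lambda}(X^q_1,\ldots,X^q_k|y)$ on $\bigwedge^kV$; you then substitute the commuting quantised Chern roots into the tableau formula \eqref{tablformula} (equivalently, iterate the branching rule as in \eqref{fschurcom}), let each monomial act factorwise on the lift $v_{v_1}\otimes\cdots\otimes v_{v_k}$ and project back, which is how the paper proves Theorem \ref{thm:fermion}, and pairing with $\langle 0|\psi_{w_k}\cdots\psi_{w_1}$ then gives \eqref{EGW=VEV}.

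The detour through the generators $e_r$, the recursion \eqref{recferm}, single rows and Jacobi--Trudi is unnecessary, and \eqref{recferm} could not in any case replace Proposition \ref{prop:quantumSatake}, which rests on the quantum Chevalley rule and Mihalcea's uniqueness theorem. The shift bookkeeping you flag is immediate: a box $\square$ with entry $T(\square)$ contributes $X^q_{T(\square)}-y_{T(\square)+c(\square)}$, i.e.\ the translation $t_{T(\square)+c(\square)}$, so the ``$k$'' in $t_{k+c(\square)}$ is the running rank. By symmetry in the $X^q_a$ it does not matter whether entry $i$ is attached to the $i$-th or the $(k+1-i)$-th creation operator. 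Finally, the wrap-around $n\mapsto 1$ produces $z=(-1)^{k-1}q$ rather than $q$.
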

To the best of the authors' knowledge analogous expressions for the quantum invariants of higher flag varieties or their cotangent bundles are currently not known.

We note that the idea of obtaining such formulae via the Clifford algebra goes back to the work \cite[Section 11]{korffstroppel2010}, where recurrence formulae for the non-equivariant cohomology ring $qH^*(\Gr(k;n))$ were obtained; see also \cite{Korff09}. The new result here is their geometric description in terms of push-pull maps and the extension to equivariant quantum Schubert calculus which allows one to relate these formulae to the action of the extended affine Weyl group $W=\Z^n\rtimes S_n$ on $qH^*_T(\mb{P}^{n-1})$ from Lemma \ref{lem:Waffaction}. Key in making this connection is an explicit quantum version of the equivariant Satake correspondence for Grassmannians.

\subsection{An equivariant quantum Satake map for Grassmannians}
The main idea we are going to employ in our derivation of \eqref{intro:EGW=VEV} and the underlying Clifford algebra structure is to describe the ring $qH_T^*(\op{Gr}(k,n))$ in terms of the (much simpler) quantum cohomology ring $qH_T^*(\mb{P}^{n-1})$ of projective space $\mb{P}^{n-1}=\op{Gr}(1,n)$; see e.g. \cite{hori2000mirror,bertram2005two,kim2008quantum,golyshev2011quantum} in the non-equivariant setting. 

Let us briefly recall the construction. It follows from the geometric Satake construction  \cite{ginzburg1995perverse,mirkovic2007geometric} that when identifying the cohomology of $\Gr(k;n)$ as a minuscule Schubert cell in the affine Grassmannian of $\GL_n(\C)$ the latter corresponds to the $k$-fold exterior power of the cohomology of $\mb{P}^{n-1}$ under the same identification. In particular, the Satake correspondence identifies $H^*(\Gr(k;n))$ with the $\gl_n(\C)$-module $\bigwedge^kV$ and the multiplication by the first Chern class corresponds to the action by the principal nilpotent element of $\gl_n(\C)$. This picture has been extended to quantum cohomology in \cite{ciocan2008abelian,golyshev2011quantum} by replacing the principal nilpotent element with the cyclic element in $\gl_n(\mb{C})$, which then describes the quantum Pieri rule in $qH^*(\op{Gr}(k,n))$ provided one sets $q=1$, i.e. one considers the so-called Verlinde algebra.

The construction of the Satake map extends to equivariant (non-quantum) Schubert calculus by considering the exterior $\ring$-algebra $\bigwedge V$ with $V\simeq H^*_T(\mb{P}^{n-1})$ as $\gl_n(\ring)$-modules; see e.g. \cite{martin2000symplectic,Laksov09,anderson2023equivariant,labelle2024equivariant} as well as references therein. In this article we state a quantum version of the equivariant Satake map for Grassmannians; see also \cite{cotti2025satake,cheng2025quantum} for related constructions albeit with a rather different focus on quantum difference equations. 

Our construction will use instead `quantised Chern roots' $X^q_1,\ldots,X^q_k\in\End_{\ring} V^{\otimes k}[q^{\pm 1}]$, a set of mutually commuting $n\times n$ matrices with values in $\ring[q^{\pm 1}]$ each satisfying the matrix identity
\begin{equation}\label{intro:BAE}
    \prod_{j=1}^n(X_i^q-y_j\cdot1)=(-1)^{k-1}q\cdot 1\;,\qquad i=1,\ldots,k\;.
\end{equation}
Apart from a sign factor in the deformation parameter $q$, the latter identity is the defining relation of the quantum cohomology of projective space and we refer to the matrices $X^q_i$ as `quantised Chern roots'. This terminology is further justified by the following result. If we consider the double Schubert polynomial $\fS_w(X_1^q,\ldots,X^q_k|y)\in\End_{\ring} V^{\otimes k}[q^{\pm 1}]$ with $w\in S^{k,n}$ a Grassmannian permutation then the latter commutes with the canonical projection $V^{\otimes k}[q^{\pm 1}]\twoheadrightarrow\bigwedge^kV[q^{\pm 1}]$ and we prove the following:
\begin{prop}\label{intro:quantumSatake}
    The $\ring[q^{\pm1}]$-module $\bigwedge^kV[q^{\pm1}]$ together with the bilinear operation
    \[
    \ket{w}\star\ket{u}:=\fS_w(X_1^q,\ldots,X^q_k|y)\ket{u},\quad 
    \ket{u}=v_{u_1}\wedge\ldots\wedge v_{u_k},
    \]
    where $u,w\in S^{k,n}$ is a pair of Grassmannian permutations and $\{v_1,\ldots,v_n\}\subset V$ the standard basis of the free module $V\simeq \ring^n$, is a well-defined commutative associative unital graded ring, which is isomorphic to $qH^*_T(\Gr(k;n))$. Moreover, the isomorphism is canonical, i.e. the basis vectors $\ket{w}=v_{w_1}\wedge\ldots\wedge v_{w_k}$ of $\bigwedge^kV[q^{\pm 1}]$ map to Schubert classes.
\end{prop}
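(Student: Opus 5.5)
The strategy is to show that $\bigwedge^kV[q^{\pm1}]$ with the operators $\fS_w(X^q_1,\ldots,X^q_k|y)$ is a faithful presentation of the known Bethe-ansatz/idempotent description of $qH^*_T(\Gr(k;n))$ (after localisation). We then check that $\ket{e}=v_1\wedge\cdots\wedge v_k$ is a cyclic vector whose orbit map sends $\fS_w$ to $\ket{w}$. I would first construct the $X^q_i$ concretely: $X^q$ acts on $V$ as the matrix of quantum multiplication by the hyperplane class in the Schubert basis of $qH^*_T(\mb{P}^{n-1})$, i.e. $X v_j=y_j v_j+v_{j+1}$ for $j<n$ and $Xv_n=y_nv_n+(-1)^{k-1}q\,v_1$. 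We set $X^q_i=1^{\otimes(i-1)}\otimes X\otimes 1^{\otimes(k-i)}$. These commute and satisfy \eqref{intro:BAE} by the Cayley--Hamilton identity for this cyclic companion-type matrix. Since any symmetric polynomial in the $X^q_i$ commutes with the $S_k$-action, and double Schubert polynomials of Grassmannian permutations are symmetric in $x_1,\ldots,x_k$ (they are factorial Schur functions $s_{\lambda(w)}(x|y)$), each $\fS_w(X^q|y)$ preserves the antisymmetrised subspace and descends to $\bigwedge^kV[q^{\pm1}]$. The sign $(-1)^{k-1}$ is exactly the one that makes the wedge of $k$ cyclic shifts consistent with the sign convention of the quantum relation.

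Next I would show that the symmetric algebra $\mc{A}=\ring[q^{\pm1}][X^q_1,\ldots,X^q_k]^{S_k}$ acting on $\bigwedge^k V$ realises the presentation of $qH^*_T(\Gr(k;n))$ of Mihalcea/Givental--Kim. That presentation is symmetric functions in $x_1,\ldots,x_k$ modulo the ideal generated by $h_{n-k+r}(x|y)$-type relations, with $q$ entering only at $r=1$. Equivalently, over the fraction field it is the ring of functions on solutions of the Bethe equations $\prod_j(x_i-y_j)=(-1)^{k-1}q$ with distinct $x_i$. Over $\mathrm{Frac}(\ring)[q^{\pm1}]$ the matrix $X$ has $n$ distinct eigenvalues, namely the roots $\zeta_1,\ldots,\zeta_n$ of \eqref{intro:BAE}. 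Thus $\bigwedge^kV$ decomposes into one-dimensional joint eigenspaces indexed by $k$-subsets $I$, on which symmetric $f$ acts by $f(\zeta_I)$. This gives an injective algebra map from $\mc{A}$, after quotienting by the annihilator, into $\prod_I \mathrm{Frac}$. This is precisely the diagonalisation of $qH^*_T(\Gr(k;n))$ by Bethe vectors. Hence $\mc{A}/\mathrm{Ann}\cong qH^*_T(\Gr(k;n))$ as rings, with $s_\lambda(x|y)\mapsto\sigma_\lambda$, because the equivariant quantum Giambelli formula (Mihalcea) states that the Schubert class is the factorial Schur function in the quantised Chern roots.

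It then remains to prove the module statement that $\fS_w(X^q|y)\ket{e}=\ket{w}$, where $\ket{e}$ is the identity-class vector. This single identity gives everything at once. The map $\mc{A}\to\bigwedge^kV$, $f\mapsto f\ket{e}$, is then surjective onto a free module with basis $\ket{w}$. Its kernel is therefore the annihilator, so $\bigwedge^kV[q^{\pm1}]$ is identified with $qH^*_T$. Commutativity, associativity and unit $\ket{e}$ are inherited, and we get $\ket{w}\star\ket{u}=\fS_w\fS_u\ket{e}=\sum C^{x}_{wu}\ket{x}$. The gradings match because $\deg y_j=\deg X=1$ and $\deg q=n$.

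To prove $\fS_w\ket{e}=\ket{w}$, I would expand $s_\lambda(x|y)$ via its Jacobi--Trudi or bialternant form $\det\big((x_i|y)^{\lambda_j+k-j}\big)/\Delta$, using factorial powers $(x|y)^m=\prod_{l\le m}(x-y_l)$. On a single factor one has $(X|y)^{m}v_1=v_{m+1}$ for $m<n$, with no $q$ appearing since we never wrap around. The antisymmetrisation should then produce $v_{\lambda_k+1}\wedge\cdots\wedge v_{\lambda_1+k}=\ket{w}$. This is the main obstacle, because the division by the Vandermonde is not an operator. I would instead use the tableau (or flagged Jacobi--Trudi) expansion and a column-by-column induction with triangularity. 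Each $v_{j}$ in $\ket{e}$ starts at a different index, so $(X|y)$-powers must be shifted: I would use $(X-y_j)v_j=v_{j+1}$ and show by induction on $|\lambda|$ that the Pieri operators $e_r(X|y)$ act on $\ket{w}$ by the equivariant Pieri rule, with wrap-around terms giving exactly the $q$-terms. Verifying that the boundary wrap-around contributions carry the sign $(-1)^{k-1}$ matching the quantum Pieri rule is where I expect the real care to be needed.
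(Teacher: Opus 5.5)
Your route differs from the paper's. You identify the operator algebra with Mihalcea's presentation via the Bethe-root diagonalisation and quantum Giambelli. The paper never uses the presentation here: it checks only the quantum Chevalley rule for $e_1(X^q|y)$ on the basis $\ket{\lambda}$ and then invokes Mihalcea's theorem that the Chevalley rule determines the graded ring. Your alternative would be legitimate, but as written it has a genuine gap exactly where you locate the main obstacle. Everything you conclude rests on the identity $\fS_w(X^q_1,\ldots,X^q_k|y)\ket{e}=\ket{w}$, and this is not proved. That includes cyclicity of $\ket{e}$, the fact that $f\mapsto f(X^q)\ket{e}$ sends $\sigma_w$ to $\ket{w}$, and even commutativity of $\star$: the product is defined as $\fS_w(X^q|y)\ket{u}$, which equals $\fS_w\fS_u\ket{e}$ only once $\fS_u(X^q|y)\ket{e}=\ket{u}$ is known. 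You drop the bialternant because division by the Vandermonde is not an operator. The replacement, establishing the full equivariant quantum Pieri rule for $e_r(X^q|y)$ with the signs of wrap-around terms, is only announced. It is essentially as hard as the proposition, and more than you need, since $s_\lambda(X^q|y)\ket{e}$ involves no wrap-around at all.

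The missing observation is that no division is necessary, because the Vandermonde is already inside $\ket{e}$. Factorial powers are monic, so $\prod_{i<j}(x_i-x_j)=\det\big((x_j|y)^{k-i}\big)_{1\le i,j\le k}$. Together with $(X^q|y)^m v_1=v_{m+1}$ for $m<n$, this gives $\ket{e}=v_1\wedge\cdots\wedge v_k=(-1)^{k(k-1)/2}\Delta(X^q)\,v_1\otimes\cdots\otimes v_1$, where $\Delta(X^q)=\prod_{i<j}(X^q_i-X^q_j)$. The identity $s_\lambda(x|y)\Delta(x)=\det\big((x_j|y)^{\lambda_i+k-i}\big)$ holds in the polynomial ring, hence also for the commuting $X^q_i$. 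Therefore $s_\lambda(X^q|y)\ket{e}=(-1)^{k(k-1)/2}\det\big((X^q_j|y)^{\lambda_i+k-i}\big)v_1^{\otimes k}=v_{\lambda_k+1}\wedge\cdots\wedge v_{\lambda_1+k}$, with no $q$ appearing because $\lambda_i+k-i\le n-1$. This is the paper's first step. Once you have it, your diagonalisation only needs to show that Mihalcea's relations act by zero on $\bigwedge^kV[q^{\pm1}]$, i.e.\ that they vanish at every Bethe point $\zeta_I$ with the sign $(-1)^{k-1}q$. You currently assert this by appeal to the Bethe-vector diagonalisation of $qH^*_T(\Gr(k;n))$; it should be checked or cited precisely. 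Injectivity is then automatic because the $\ket{\lambda}$ form a basis, so you do not need semisimplicity over the algebraic closure. The completed argument would replace the paper's use of Mihalcea's Chevalley characterisation with his presentation and Giambelli formula, plus this vanishing statement.
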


The key ingredient which we will use in proving the above is Mihalcea's result that the ring structure is fixed, once we can prove the quantum Chevalley rule holds \cite[Cor.7.1]{MihalceaAIM}. As an immediate consequence we obtain an alternative expression for the equivariant Gromov-Witten invariants.
\begin{cor}
    Let $u,v,w\in S^{k,n}$ be Grassmannian permutations then
    \[
    C^w_{uv}(y,q)=\bra{w}\fS_u(X_1^q,\ldots,X_n^q|y)\ket{v}\;,
    \]
    where the right hand side is computed via a natural pairing $\bigwedge^kV^*[q^{\pm 1}]\otimes\bigwedge^kV[q^{\pm 1}]\to\ring[q^{\pm 1}]$ using Poincar\'e duality and $\bra{w}=v^{w_1}\wedge\ldots\wedge v^{w_k}$ corresponds to the dual Schubert basis.
\end{cor}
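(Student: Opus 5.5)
Since the preceding proposition identifies $\bigwedge^kV[q^{\pm1}]$, with product $\ket{w}\star\ket{u}=\fS_w(X^q_1,\ldots,X^q_k|y)\ket{u}$, with $qH^*_T(\Gr(k;n))$ so that $\ket{w}$ goes to the Schubert class $\sigma_w$, the plan is to read the structure constants directly off the action of the operator $\fS_u(X^q|y)$. By definition of the equivariant Gromov--Witten invariants as structure constants,
\[
\sigma_u\star\sigma_v=\sum_{w\in S^{k,n}} C^w_{uv}(y,q)\,\sigma_w .
\]
Transported through the isomorphism, this reads $\fS_u(X^q|y)\ket{v}=\sum_w C^w_{uv}\ket{w}$. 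The first step is therefore to define the pairing $\bigwedge^kV^*[q^{\pm1}]\otimes\bigwedge^kV[q^{\pm1}]\to\ring[q^{\pm1}]$ by $\langle v^{w_1}\wedge\cdots\wedge v^{w_k},\,v_{u_1}\wedge\cdots\wedge v_{u_k}\rangle=\det(\delta_{w_a u_b})$. For increasing sequences $w_1<\dots<w_k$ and $u_1<\dots<u_k$ this equals $\delta_{wu}$, so $\{\bra{w}\}$ is the dual basis of $\{\ket{w}\}$. Pairing both sides with $\bra{w}$ then isolates $C^w_{uv}$.

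The second step is to justify the phrase ``using Poincar\'e duality'', namely that $\bra{w}$ corresponds to the dual Schubert basis. Under the isomorphism, the dual Schubert class is the class $\sigma^\vee_w$ with $\int_{\Gr(k;n)}\sigma_w\cdot\sigma^\vee_{w'}=\delta_{ww'}$ (equivariant integration, classical part). Hence the coefficient of $\sigma_w$ in any class $\alpha$ is the Poincar\'e pairing of $\alpha$ with $\sigma^\vee_w$. This matches $\bra{w}\alpha$, because both functionals extract the $\ket{w}$-coefficient in the basis which, by the proposition, corresponds to the Schubert basis. Extending $\ring[q^{\pm1}]$-linearly gives the formula.

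Two small points need checking along the way. The statement writes $\fS_u(X^q_1,\ldots,X^q_n|y)$, but since $u\in S^{k,n}$ is Grassmannian with descent at $k$, its double Schubert polynomial depends only on the first $k$ variables, so this is the same operator $\fS_u(X^q_1,\ldots,X^q_k|y)$ as in the proposition. Also, the preceding proposition already guarantees that $\fS_u(X^q|y)$ preserves $\bigwedge^k V$, so the matrix coefficient is taken inside that module.

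The only potential obstacle is a bookkeeping one: ensuring the chosen normalization of the pairing, including the sign conventions of the wedge ordering, agrees with the canonical identification of basis vectors with Schubert classes. Taking $w_i$ and $u_i$ increasing fixes this, so no sign appears and the corollary follows immediately.
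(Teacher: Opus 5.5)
Your proposal is correct and follows the paper's intended argument: the paper gives no separate proof and calls the corollary an immediate consequence of the quantum Satake proposition, obtained by reading the structure constants off as matrix coefficients of $\fS_u(X^q|y)$ against the dual basis $\bra{w}$. Your reading of $X^q_n$ as $X^q_k$ is also right, since the double Schubert polynomial of a Grassmannian permutation involves only $x_1,\ldots,x_k$.
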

The last identity leads to another combinatorial algorithm involving semi-standard tableaux: recall that for Grassmannian permutations the double Schubert polynomial specialises to a factorial Schur polynomial and the latter have a presentation as a weighted sum over semi-standard tableaux; see Appendix \ref{app:A}. Each term in this sum acts by simple matrix multiplication on $V^{\otimes k}[q^{\pm 1}]$ and the final outcome of the computation can then simply be projected onto $\bigwedge^kV[q^{\pm 1}]$. This is a generalisation of the construction from \cite[Section 4.2 and 5.7]{korff2020cylindric} for non-equivariant quantum cohomology.

From the realisation of equivariant quantum cohomology via the quantum Satake map one is then naturally led to consider the Clifford algebra $\mc{C}_n(\ring)$: the latter is simply the exterior algebra $\bigwedge V[q^{\pm 1}]$ and the geometric maps defined in \eqref{recferm} correspond to multiplication and contraction operators. In other words, the quantum Satake map extends to an isomorphism of Clifford algebra modules.

\subsection{Applications: Graham positivity and quantum triple Schubert calculus}
As an application of our formalism we give new combinatorial proofs of Graham positivity for the quantum Pieri rules in $qH^*_T(\Gr(k;n))$; see Section \ref{sec:Pieri}. There is a known result originally due to Mihalcea \cite{MihalceaAIM} that the structure constants $C^{\nu,d}_{\lambda\mu}(y)$ of the equivariant quantum cohomology ring $qH^*_T(\Gr(k;n))$ satisfy {\em Graham positivity}, i.e. the $C^{\nu,d}_{\lambda\mu}(y)$ are non-negative integer polynomials in the torus weights $y_{i+1}-y_i$. A combinatorial proof of this statement or a manifestly positive combinatorial algorithm to compute the equivariant Gromov-Witten invariants is currently not known. More recently, Bertiger et al. introduced in \cite{BEMT22} a statistics on Postnikov's \cite{Post05} cylindric tableaux which allows for a manifestly positive computation of quantum Pieri rules and we directly compare our formula against theirs in Section \ref{sec:cylloops}. 

We conclude by extending our construction to triple Schubert calculus and suggest a quantum version of it; see Section \ref{sec:tripleSchubert}. Recall that in \cite{MS99} Molev and Sagan considered the problem of multiplying two factorial Schur polynomials with different sets of equivariant parameters
\begin{equation}\label{MolevSagan}
    s_{\lambda}(x|z) s_{\mu}(x|y) = \sum\limits_{\nu} c_{\lambda, \mu}^{\nu}(y,z) s_{\nu}(x|y)
\end{equation}
and derived a combinatorial formula for the coefficients $c_{\lambda,\mu}^{\nu}(y,z)$, which is manifestly positive in the following sense: $c_{\lambda,\mu}^{\nu}(y,z) \in \Z_{\ge 0}[y_i - z_j]$. Knutson and Tao then gave a geometric meaning for computing the expansion \eqref{MolevSagan} in terms of so-called {\em triple Schubert calculus}; see Section 6 of \cite{KnutTao2003} for details. In their setting of triple Schubert calculus the parameters $y$ and $z$ in the product \eqref{MolevSagan} are the generators of the ring $H^*(\Fl_n \times \Fl_n) \simeq H^*(\Fl_n) \otimes H^*(\Fl_n)$, namely $y_i$ is the first Chern class of the $i$-th tautological line bundle over the first copy of $\Fl_n$ and $z_j$ is the first Chern class of the $j$-th tautological line bundle over the second copy of $\Fl_n$. Recently, the positivity of triple Schubert calculus for arbitrary partial flags was proved by geometric methods in \cite{gao2025graham}.

Prompted by these results in the literature, we consider an extension of our quantum Satake map from Proposition \ref{intro:quantumSatake} to the base ring $\Z[y_1,\ldots,y_n,z_1,\ldots,z_n]$ and define a triple quantum multiplication by replacing the $x$-variables in \eqref{MolevSagan} with our quantised Chern roots satisfying \eqref{intro:BAE}. We conjecture that the resulting ring has structure constants that also satisfy a refined version of Graham positivity and prove a manifestly positive quantum Pieri rule in support of our conjecture.

\subsection{The connection with lattice models and Yang-Baxter algebras}
For completeness, let us briefly outline the connection between our construction and the previous work on equivariant quantum cohomology of Grassmannians using Yang-Baxter algebras in \cite{gorbounovkorff2014} and \cite{GKS20}; a detailed discussion can be found in Appendix \ref{app:C}.

Yang-Baxter algebras are unital associative bi-algebras whose defining relations are encoded in a matrix equation known as the $RTT$-equation,
\[
R_{12}(x_1 - x_2)T_1(x_1) T_2(x_2) = T_2(x_2) T_1(x_1) R_{12}(x_1 - x_2)\;.
\]
Historically, this is how quantum groups, such as Yangians, were first introduced by the Faddeev School and this method is now known as the Faddeev-Reshetikhin-Takhtajan (FRT)-construction of quantum groups \cite{faddeev1988quantization,faddeev1990lectures}. Here $R$ denotes a given solution of the quantum Yang-Baxter equation (the data defining the algebra) and $T$ is called the {\em monodromy matrix} whose elements give the generators of the algebra. 

In the case of the quantum cohomology of Grassmannians (as opposed to that of their cotangent bundles) the Yang-Baxter algebra $\YB$ is not the Yangian $Y_\hbar(\mf{sl}_2)$, but instead the corresponding $R$-matrix describes a so-called exactly solvable five-vertex model in statistical mechanics \cite{korff2014quantum,gorbounovkorff2014}, it has been geometrically constructed in \cite{GKS20} including the monodromy matrix
\begin{equation} \label{monodromy}
    T(x) = \begin{pmatrix}
        T_{11}(x) & T_{12}(x) \\
        T_{21}(x) & T_{22}(x)
    \end{pmatrix}\;,
\end{equation}
where the entries are currents whose coefficients are convolution operators: namely, the `quantum trace' of the monodromy matrix, $t(x)= T_{11}(x) + q T_{22}(x)$, where $q$ is the deformation parameter of the quantum cohomology ring, acts on $\mc{F}\simeq \bigoplus_{k=0}^nqH_T^*(\Gr(k;n))$ by quantum multiplication with the Chern polynomial of the tautological bundle on each individual summand, while the action of the off-diagonal elements $T_{12}(x),T_{21}(x)$ involves push and pull maps of the type \eqref{initial} and, thus, the latter map between different quantum cohomology rings. 

The relation between the Yang-Baxter algebra construction from \cite{GKS20} and our construction using the quantum Satake map from Prop \ref{intro:quantumSatake} is best summarised in terms of the following quantum version of a Cauchy-type identity for factorial Schur functions, which is a new result; compare with their non-equivariant quantum versions in \cite[Lemma 5.1 and Cor.5.40]{korff2020cylindric}:
\begin{prop}
    Let $t(x)\in\End_{\ring}(qH^*_T(\Gr(k;n))$ be the quantum multiplication by the Chern polynomial of the tautological bundle $\mc{S}_k$. Then
    \[
    t(x_1)\cdots t(x_{n-k})=\sum_{w\in S^{k,n}}\fS_w(X_1^q,\ldots,X^q_k|y)\fS_{w^*}(x_1,\ldots,x_{n-k}|-y),
    \]
    where $w^* = w \cdot (12\ldots n)^{k}\in S^{n-k,n}$ is a Grassmannian permutation and the $X^q_i$ are mutually commuting matrices satisfying \eqref{intro:BAE}.
\end{prop}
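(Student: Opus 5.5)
The plan is to reduce the identity to the generating-function form of the quantum Satake map from Proposition \ref{intro:quantumSatake}. By that proposition, quantum multiplication in $qH^*_T(\Gr(k;n))$ is transported to the action of $\fS_w(X^q_1,\ldots,X^q_k|y)$ on $\bigwedge^kV[q^{\pm1}]$. First I identify $t(x)$, quantum multiplication by the Chern polynomial of $\mc{S}_k$, as a product: on the Satake side $c(\mc{S}_k)$ is the elementary symmetric generating function in the quantised Chern roots. With a suitable sign convention for $x$ this gives
\[
t(x)=\prod_{i=1}^k (x+X^q_i)\qquad\text{on }\bigwedge^kV[q^{\pm1}].
\]
To justify this, I use that $\fS_{s_k}(X^q|y)=\sum_i (X^q_i-y_i)$ realises the divisor class, since Mihalcea's quantum Chevalley rule was verified in the proof of the proposition. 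I then check that the higher elementary symmetric functions $e_r(X^q)$ map to $(-1)^r c_r(\mc{S}_k)$. These are the factorial Schur polynomials of column shape $(1^r)$ plus lower-order corrections in $y$. The quantum corrections vanish because $c_r(\mc{S}_k)$ for $r\le k$ has degree below $n$, so these classes are undeformed.

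Next I note that, since the $X^q_i$ commute, the product $t(x_1)\cdots t(x_{n-k})=\prod_{i=1}^k\prod_{j=1}^{n-k}(x_j+X^q_i)$ is a symmetric expression in two commuting alphabets. The classical dual Cauchy identity for factorial Schur functions (Molev; see Appendix \ref{app:A}) reads
\[
\prod_{i=1}^k\prod_{j=1}^{n-k}(x_j+z_i)=\sum_{\lambda\subseteq k\times(n-k)} s_\lambda(z|y)\,s_{\lambda'^\vee}(x|-y),
\]
with the complementary conjugate partition. It holds as a polynomial identity in the indeterminates $z_i,x_j,y$, so I may substitute the commuting matrices $z_i=X^q_i$. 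Translating partitions to Grassmannian permutations, with $\fS_w=s_{\lambda(w)}$ and $\lambda(w^*)=\lambda(w)'^\vee$ for $w^*=w\cdot(12\ldots n)^k$, gives the stated formula. It remains to check that the right-hand side preserves $\bigwedge^k$; this holds because each $\fS_w(X^q|y)$ commutes with antisymmetrisation.

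The main obstacle is the first step: the precise sign and shift conventions, and ensuring that no $q$-terms appear in $t(x)$. In particular I must confirm that the factorial dual Cauchy identity has exactly the $y$ versus $-y$ shifts claimed and truncates to the $k\times(n-k)$ box. If the substitution yields shapes outside the box, I would use the relation \eqref{intro:BAE} to show their contributions vanish on $\bigwedge^kV$. Those terms carry factorial Schur functions with $\lambda_1>n-k$, and these reduce via \eqref{intro:BAE} to $q$-multiples of terms that cancel. I would try first to sidestep this by proving the identity as a polynomial identity in $x$ of bounded degree $k$ in each $x_j$, which forces the truncation automatically.
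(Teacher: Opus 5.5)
Your proposal is correct and follows the paper's route. The paper proves this statement as the Corollary following Proposition \ref{placticandChern} in Appendix B: it identifies quantum multiplication by $c_r(\mc{S}_k)$ with $e_r(X^q_1,\ldots,X^q_k)$ and then substitutes the commuting matrices $X^q_i$ into the dual Cauchy identity \eqref{Cauchydual}, using $\lambda(w^*)=(\lambda(w)^\vee)'$.

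Three small corrections:
\begin{itemize}
\item In the paper's conventions $e_r(X^q)$ realises $c_r(\mc{S}_k)\star$ with no sign $(-1)^r$; this is exactly what makes your $t(x)=\prod_i(x+X^q_i)$ consistent.
\item What you need is only that the class $c_r(\mc{S}_k)$ has a $q$-free Schubert expansion. The operator $t(x)$ itself does have $q$-terms.
\item No truncation issue arises, because \eqref{Cauchydual} is already a finite polynomial identity over the $k\times(n-k)$ box. So \eqref{intro:BAE} is never needed.
\end{itemize}
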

Besides the above relation we give explicit formulae for all the Yang-Baxter algebra generators $T_{ij}(x)$ in terms of the maps \eqref{initial} and \eqref{recferm} in Appendix \ref{app:C} of this work. In particular, we show that the image $\mc{Y}$ of $\YB$ in $\End \bigoplus_{k=0}^nqH_T^*(\Gr(k;n))$, as well as its `dual' $\mc{Y}'$ under the ring isomorphism $qH^*_T(\Gr(k;n)\simeq qH^*_T(\Gr(n-k;n)$ from Theorem \ref{thm:LRduality} are both generated by the Clifford algebra and coincide; see Theorem \ref{fermtheor2}.

This shows that the Clifford algebra is a more fundamental algebraic structure underpinning the equivariant quantum cohomology of Grassmannians albeit the formulation in terms of Yang-Baxter algebras leads to a nice graphical calculus of lattice models which greatly simplifies some combinatorial descriptions. 

Finally, we point out that the matrix identity \eqref{intro:BAE} when rewritten in terms of eigenvalues of the matrices $X_i^q$ are nothing else but the Bethe ansatz equations of the underlying lattice model. The latter are needed in the Bethe ansatz or quantum inverse scattering method construction of the idempotents for each of the quantum cohomology rings; see \cite{korff2014quantum,gorbounovkorff2014,GKS20} for details.

\bigskip
\noindent{\bf Acknowledgments.} C.K. gratefully acknowledges the sharing of knowledge and expertise with Vassily Gorbounov, Leonardo Mihalcea and Catharina Stroppel. M.V.  is grateful to the University of Glasgow for financial support in form of a Postgraduate Scholarship of the College of Science and Engineering.

\section{Preliminaries}
Most of the results in this section can be found in the literature but we collect them here to introduce our notation and keep this article self-contained. A new result is presented in Section \ref{sec:pushpull} which states the geometric action of the Clifford algebra for equivariant (non-quantum) cohomology which we then extend to the quantum case later in the article.

\subsection{Equivariant quantum cohomology of Grassmannians}
Denote by $\Gr(k;n)$ the Grassmannian of $k$ dimensional hyperplanes in $\C^n$. There is a natural left $\GL_n(\C)$ action on $\Gr(k;n)$. Fix a maximal torus $T\cong(\C^\times)^n$ in $\GL_n(\C)$, the subset of $n\times n$ invertible diagonal matrices with entries $y_1,\ldots,y_n$ which are called the {\em equivariant parameters}. We denote by
\[
\ring=\Z[y_1,\ldots,y_n]\cong\Hpt
\]
the equivariant cohomology of a point. Rather than recalling the geometric definition of the equivariant quantum cohomology ring $qH^*_T(\Gr(k;n))$ we shall work instead with its presentation as quotient ring using factorial Schur polynomials; see Appendix \ref{app:A}. However, we give a dictionary between the algebraic and geometric definitions.

Consider the following generating function for factorial versions $e_r(x|y)$ of the elementary symmetric polynomials,
\[
\prod_{i=1}^k(z-x_i)=\sum_{r=0}^k(-1)^re_r(x|y)\;(z|y)^{k-r},\qquad(z|y)^r:=\prod_{i=1}^r(z-y_i),
\]
where $(z|y)^r$ are called the {\em falling factorial powers}. Geometrically, the above generating function corresponds to the Chern polynomial $z^kc_{-z^{-1}}(\cS_k)$ of the {\em tautological bundle} $\cS_k$ fixed by the short exact sequence $0\to \cS_k\to\C^n\to \cQ_{n-k}\to 0$, where $\cQ_{n-k}$ denotes the {\em quotient bundle}. The variables $x_1,\cdots,x_k$ are the first $k$ {\em Chern roots}. The following result is due to Mihalcea \cite[Thm 1.1]{Mihalcea}.
\begin{thm}[Mihalcea]
    The equivariant quantum cohomology ring $qH^*_T(\Gr(k;n))$ has the presentation,
    \begin{equation}\label{QHdef}
    qH^*_T(\Gr(k;n)\cong\ring[q][e_1,\ldots,e_k]/\langle h_{k+1},\ldots,h_{n-1},h_n+(-1)^kq\rangle
    \end{equation}
    where $q$ is the quantum deformation parameter and $h_r=\det(\tau^{j-1}e_{1-i+j})_{1\le i,j\le r}$ are the Chern classes of the quotient bundle with $\tau$ denoting the shift operator; see Appendix \ref{app:A}.
\end{thm}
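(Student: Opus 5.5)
This presentation is quoted from Mihalcea \cite[Thm 1.1]{Mihalcea}, so I would not reprove it from scratch; I would reduce it to two known inputs. The first is the classical equivariant presentation of $H^*_T(\Gr(k;n))$. The second is the quantum deformation result: a graded ring over $\ring[q]$ that deforms $H^*_T$ flatly and satisfies the equivariant quantum Chevalley rule is canonically isomorphic to $qH^*_T$ \cite[Cor.7.1]{MihalceaAIM}.

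\textbf{Step 1: the classical case $q=0$.} Work in $\ring[e_1,\ldots,e_k]$, with $e_r=e_r(x|y)$ in the Chern roots. Let $h_r$ be the factorial complete symmetric functions, defined by the Jacobi--Trudi type determinant $h_r=\det(\tau^{j-1}e_{1-i+j})$. These satisfy the factorial version of the identity $E(z)H(-z)=1$. By the Whitney formula applied to $0\to\cS_k\to\C^n\to\cQ_{n-k}\to0$, with $\C^n$ carrying torus weights $y_1,\ldots,y_n$, the $h_r$ are the (shifted) equivariant Chern classes of $\cQ_{n-k}$. Since $\cQ_{n-k}$ has rank $n-k$, the classes $h_{k+1},\ldots,h_n$ vanish in the appropriate shifted normalisation. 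This produces a surjection
\[
\ring[e]/\langle h_{k+1},\ldots,h_n\rangle\to H^*_T(\Gr(k;n)).
\]
It is an isomorphism because both sides are free $\ring$-modules of rank $\binom nk$. On the left, the factorial Schur polynomials $s_\lambda(x|y)$ with $\lambda\subseteq k\times(n-k)$ form a basis. On the right, these polynomials map to the equivariant Schubert classes, by Knutson--Tao or by localisation.

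\textbf{Step 2: the deformed ring.} Define $A=\ring[q][e]/\langle h_{k+1},\ldots,h_{n-1},h_n+(-1)^kq\rangle$, graded with $\deg q=n$. This ideal is a graded deformation of the classical one, so a Gröbner/straightening argument shows that $\{s_\lambda(x|y)\}_{\lambda\subseteq k\times(n-k)}$ is still an $\ring[q]$-basis of $A$. Thus $A$ is a flat deformation of $H^*_T(\Gr(k;n))$.

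\textbf{Step 3: the Chevalley rule.} Compute the product $s_{(1)}\cdot s_\lambda$ in $A$ and check that it matches the equivariant quantum Chevalley rule:
\[
\sigma_1\star\sigma_\lambda=\sum_{\mu=\lambda+\square}\sigma_\mu+c_\lambda(y)\,\sigma_\lambda+q\,\sigma_{\lambda^-}.
\]
Here $c_\lambda(y)$ is the difference of the torus weights of $\lambda$ and of the empty partition, and $\lambda^-$ is obtained by removing the full rim hook of length $n$, which requires $\lambda_1=n-k$ and $\ell(\lambda)=k$. The terms not involving $q$ are the classical equivariant Pieri rule for factorial Schur functions. The $q$-term arises when adding a box forces a first row of length $n-k+1$. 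Straightening that polynomial with the relation $h_n=-(-1)^kq$, via the Jacobi--Trudi expansion along the first row, gives exactly $q\,s_{\lambda^-}$. Once Step 3 holds, Mihalcea's characterisation \cite[Cor.7.1]{MihalceaAIM} yields $A\cong qH^*_T(\Gr(k;n))$, with $s_\lambda$ mapping to the Schubert classes.

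The main obstacle is this straightening computation in Step 3. The difficulty is keeping track of the sign $(-1)^k$, which must come out correctly from the Jacobi--Trudi determinant expansion, and of the factorial shifts $\tau$. One must show that the only surviving correction term is $q\,s_{\lambda^-}$ with coefficient $+1$.
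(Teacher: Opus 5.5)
The paper gives no proof of this statement; it quotes the presentation from \cite[Thm 1.1]{Mihalcea}. Your strategy is, in outline, Mihalcea's own proof: a flat graded deformation of the classical presentation, the equivariant quantum Chevalley rule for factorial Schur functions, then the characterisation \cite[Cor.7.1]{MihalceaAIM}. It is also the mechanism the paper uses for Proposition~\ref{prop:quantumSatake}. The strategy works, but the index set in Step 1 is wrong and must be corrected before Steps 1--2 hold.

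The displayed relations are misprinted, and you inherited the misprint. They must be $h_{n-k+1},\ldots,h_{n-1},h_n+(-1)^kq$. Your inference ``$\cQ_{n-k}$ has rank $n-k$, so $h_{k+1},\ldots,h_n$ vanish'' is false as stated: rank $n-k$ kills exactly the $h_r$ with $r>n-k$. With the literal indices, Steps 1--2 fail whenever $n\neq 2k$:
\begin{itemize}
\item for $k=1$ the ideal would contain $h_2=(x-y_1)(x-y_2)=\sigma_2$, which is nonzero in $H^*_T(\mb{P}^{n-1})$ for $n\ge 3$;
\item for $k=n-1$ there is a single relation in $n-1$ generators, so the quotient is not of finite rank.
\end{itemize}
After the correction your rank count is right. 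In Step 2, a straightening argument only gives spanning (graded Nakayama, since $\deg q>0$). For linear independence you also need $q$ to be a non-zero-divisor on $A$. This holds because the corrected relations reduce mod $q$ to the classical ones, which form a homogeneous regular sequence (the classical quotient is finite over $\ring$), so $q$ followed by the deformed relations is regular and may be permuted.

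The step you flag as the main obstacle closes in a few lines. The paper's $h_r=\det(\tau^{j-1}e_{1-i+j})$ is the unshifted $h_r(x|y)=s_{(r)}(x|y)$, by the dual Jacobi--Trudi formula. For $\mu=(n-k+1,\lambda_2,\ldots,\lambda_k)$, however, the first row of $\bigl(h_{\mu_i-i+j}(x|\tau^{1-j}y)\bigr)$ consists of the shifted entries $h_{n-k+j}(x|\tau^{1-j}y)$, $j=1,\ldots,k$. The alternant form $h_r(x|\tau^a y)=\sum_i\prod_{m=a+1}^{a+r+k-1}(x_i-y_m)/\prod_{l\neq i}(x_i-x_l)$ gives
\[
h_r(x|\tau^{a}y)-h_r(x|\tau^{a+1}y)=(y_{a+r+k}-y_{a+1})\,h_{r-1}(x|\tau^{a+1}y).
\]
By induction, $h_r(x|\tau^{-s}y)$ equals $h_r(x|y)$ plus a combination of $h_{r-1}(x|y),\ldots,h_{r-s}(x|y)$ with coefficients polynomial in the $y_m$. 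Auxiliary $y_m$ with $m\notin[1,n]$ only multiply ideal elements and can be specialised away. For $r=n-k+j$ and $s=j-1$, all these lower indices lie in $[n-k+1,n-1]$. So modulo the ideal the first row becomes $(0,\ldots,0,-(-1)^kq)$. Expanding along it gives $(-1)^{1+k}\cdot\bigl(-(-1)^kq\bigr)\cdot M=q\,M$, where $M$ is the Jacobi--Trudi determinant of $(\lambda_2-1,\ldots,\lambda_k-1)$. If $\lambda_k\ge 1$ this is $s_{\lambda^-}(x|y)$; here $\lambda^-$ removes a rim hook of length $n-1$ from $\lambda$, equivalently of length $n$ from $\mu$. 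If $\lambda_k=0$ the minor has a zero last row. All other terms of the classical factorial Chevalley identity stay inside the $k\times(n-k)$ box. Hence $s_{(1)}s_\lambda$ in $A$ is exactly the equivariant quantum Chevalley rule with coefficient $+1$ on $q$, and \cite[Cor.7.1]{MihalceaAIM} finishes the argument.
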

Furthermore, we recall that $qH^*_T(\Gr(k;n))$ has a distinguished additive $\ring[q]$-basis, the {\em Schubert classes} $\sigma_\lambda$, where $\lambda$ runs over all partitions $\lambda=(\lambda_1,\ldots,\lambda_k)$ whose Young diagram fits into a $k\times (n-k)$ rectangle, i.e. $\lambda_1\le n-k$. We shall denote this set by $\mc{P}_{k,n}$. Under the above isomorphism the Schubert classes are sent to the factorial Schur functions $s_\lambda=\det(\tau^{j-1}e_{\lambda'_i-i+j})_{1\le i,j\le k}$ with $\lambda'$ denoting the conjugate partition. The latter are specialisations of Lascoux and Sch\"utzenberger's double Schubert polynomials to Grassmannian permutations; see Appendix \ref{app:A} for details.

The main aim of this article is to give new combinatorial formulae for the computation of the structure constants $C_{\lambda\mu}^{\nu}(y,q)$ of the ring \eqref{QHdef} in the Schubert basis defined via the expansion (we shall denote quantum multiplication by $\star$ throughout for clarity)
\begin{equation}\label{EGWdef}
\sigma_\lambda\star\sigma_\mu=\sum_{\nu}C^{\nu}_{\lambda\mu}(y,q)\sigma_\nu=
\sum_{\nu}q^dC^{\nu,d}_{\lambda\mu}(y)\sigma_\nu,
\end{equation}
where the coefficients are called the {\em equivariant Gromov-Witten invariants}. 
\subsection{The Satake correspondence for projective space}
Let $V=\ring^n$ be the free $\ring$-module with $n\ge 2$ arbitrary but fixed and denote by $\{v_1,\ldots,v_n\}\subset V$ the standard basis. We use the notation $V^*$ for its dual with basis $\{v^1,\ldots,v^n\}$.

As $\ring$-modules we have the isomorphism $\Sigma_1:V\to H^*_T(\mb{P}^{n-1})$ where 
\[
H^*_T(\mb{P}^{n-1})\cong\ring[x]/\langle (x-y_1)\cdots(x-y_n)\rangle
\]
and we identify the standard basis in $V$ with the Schubert basis in $H^*_T(\mb{P}^{n-1})$ via
\begin{equation}\label{Satake1}
\Sigma_1:\quad v_{i+1}\mapsto\sigma_{i}=(x-y_1)\cdots(x-y_i),\qquad \forall i=0,1,\ldots,n-1\;.
\end{equation}
We now turn the latter module isomorphism into an isomorphism of $\ring$-algebras: denote by $\{E_{ij}\in \gl_n(\ring)~:~1\le i,j\le n\}$ the standard basis, the unit matrices, and define the following sum of a nilpotent and a (trivially) semi-simple element,
\begin{equation}\label{X0}
X=X_{\rm n}+X_{\rm s}=\sum_{i=1}^{n-1}E_{i+1,i}+\sum_{i=1}^ny_iE_{ii}\;.
\end{equation}
Define an $\ring$-algebra structure on $V$ by setting $v_i\odot v_j=(X-y_1\,1)\cdots(X-y_i\,1).v_j$.
\begin{prop}
    The $\ring$-algebra $(V,\odot)$ is associative and commutative. Moreover, the map \eqref{Satake1} is an isomorphism of $\ring$-algebras $V\cong H^*_T(\mb{P}^{n-1})$ and 
    \[
    \Sigma_1\circ(1+X)=c(\mc{S})\circ\Sigma_1\;,
    \]
    where $c(\mc{S})$ is the total Chern class of the tautological bundle $\cS=\cS_1$.
\end{prop}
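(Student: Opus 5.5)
The plan is to show that the map $\Sigma_1$ intertwines the action of $X$ on $V$ with multiplication by $x$ on $H^*_T(\mb{P}^{n-1})$, and then to transport the ring structure. Concretely, I would first verify the identity
\[
\Sigma_1(X.v_j)=x\cdot\Sigma_1(v_j),\qquad j=1,\ldots,n .
\]
For $j<n$ we have $X.v_j=v_{j+1}+y_jv_j$. On the other side,
\[
x\,\sigma_{j-1}=(x-y_j)\sigma_{j-1}+y_j\sigma_{j-1}=\sigma_j+y_j\sigma_{j-1},
\]
and these match. For $j=n$ we have $X.v_n=y_nv_n$, while $x\sigma_{n-1}=\sigma_n+y_n\sigma_{n-1}$ with $\sigma_n=\prod_{i=1}^n(x-y_i)=0$ in $H^*_T(\mb{P}^{n-1})$. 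This is precisely where the defining relation enters. Hence $\Sigma_1\circ X=x\circ\Sigma_1$, and therefore $\Sigma_1\circ p(X)=p(x)\circ\Sigma_1$ for every polynomial $p\in\ring[t]$.

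Next I would deduce the ring statements. By definition, $v_i\odot v_j=\sigma_{i-1}(X).v_j$, where $\sigma_{i-1}(t)=\prod_{l<i}(t-y_l)$. Applying $\Sigma_1$ and the intertwining relation gives
\[
\Sigma_1(v_i\odot v_j)=\sigma_{i-1}(x)\,\Sigma_1(v_j)=\Sigma_1(v_i)\,\Sigma_1(v_j).
\]
I am reading the definition so that $v_i$ corresponds to the product of the first $i-1$ factors, matching $\Sigma_1(v_i)=\sigma_{i-1}$; the paper's "$(X-y_1)\cdots(X-y_i)$" is presumably an index shift. This should be checked against the normalisation, with $v_1$ as the unit.

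Since $\Sigma_1$ is an $\ring$-module isomorphism (a triangular change of basis from the monomial basis $1,x,\ldots,x^{n-1}$), it follows that $\odot$ is the pullback of the multiplication in $H^*_T(\mb{P}^{n-1})$. Associativity, commutativity and the unit $v_1$ are then inherited from that commutative ring, and $\Sigma_1$ is a ring isomorphism. Alternatively one can argue intrinsically: $V$ is a cyclic $\ring[X]$-module generated by $v_1$, with $\sigma_{i-1}(X)v_1=v_i$, so $V\cong\ring[X].v_1$. This reduces to the same computation.

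Finally I would treat the Chern class identity. On $\mb{P}^{n-1}$ the tautological line bundle $\cS$ has equivariant first Chern class equal to $x$ under the presentation. With the sign convention of the generating function $\prod(z-x_i)$, the Chern roots are $x$, so $c(\cS)=1+x$, and the claim $\Sigma_1\circ(1+X)=c(\cS)\circ\Sigma_1$ follows from the intertwining relation. The main obstacle I anticipate is only conventions: the sign of $c_1(\cS)$ (whether it is $x$ or $-x$) and the off-by-one index in the definition of $\odot$. The plan is to fix these by the presentation $\prod_i(z-x_i)$ used for $\Gr(k;n)$ in the case $k=1$; the remaining steps are routine.
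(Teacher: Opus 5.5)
Your proof is correct, and so is your reading of the definition of $\odot$. It has to use the $i-1$ factors $(X-y_1)\cdots(X-y_{i-1})$. This is consistent with $\Sigma_1(v_i)=\sigma_{i-1}$ and with the paper's own quantum analogue $v_i\star v_j=(X^q-y_1)\cdots(X^q-y_{i-1}).v_j$. With $i$ factors commutativity already fails, since then $v_1\odot v_2-v_2\odot v_1=(y_2-y_1)v_2$.

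Your route differs from the paper's in where the work is done. The paper rests the proposition on Lemma~\ref{lemX0}. That lemma computes the whole matrix $(X-y_1)\cdots(X-y_m)$ by induction, using the branching rule for factorial complete symmetric polynomials. Its upshot is the identity $(X-y_1)\cdots(X-y_n)=0$ on the $V$ side, so that $x\mapsto X$ is well defined on $\ring[x]/\langle\prod_i(x-y_i)\rangle$. It also gives $(X-y_1)\cdots(X-y_m)v_1=v_{m+1}$.

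You instead check the intertwining $\Sigma_1\circ X=x\circ\Sigma_1$ on basis vectors, using the relation $\sigma_n=0$ in the target, and then transport the ring structure. The matrix identity then follows by conjugating with $\Sigma_1$ rather than being an input.

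Your argument is shorter and needs no factorial symmetric function identities. It also carries over verbatim to the quantum case, since $X^qv_n=zv_1+y_nv_n$ matches $x\,\sigma_{n-1}=q+y_n\sigma_{n-1}$ in $qH^*_T(\mathbb{P}^{n-1})$.

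The paper's lemma yields more information. Its explicit entries $h_{m+j-i}(y_j,\ldots,y_i|y)$ are exactly the equivariant structure constants of $v_{m+1}\odot v_j$ in the Schubert basis. The same computation is also the template for Lemma~\ref{lem:BAE}.
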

The proof uses the following lemma.
\begin{lemma}\label{lemX0}
    For $1\le m<n$ we have that
    \[
    (X-y_1\cdot 1)\cdots(X-y_m\cdot1)=\sum_{i-j=k}E_{ij}+\sum_{0\le i-j< k}p_{ij}^{(m)}(y)E_{ij}
    \]
    where the $p_{ij}^{(m)}(y)$ are either zero if $j>i$ or homogeneous polynomials in the equivariant parameters of degree $m-i+j$ with
    \[
        p_{ij}^{(m)}(y) = h_{m + j - i}(y_j, \ldots, y_i|y).
    \]
    In particular, we have for $k=n$ the matrix identity:
    \begin{gather}\label{BAE0}
        (X-y_1\cdot 1)\ldots(X-y_n\cdot 1)=0\;.
    \end{gather}
\end{lemma}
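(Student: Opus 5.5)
The statement has a typo: $k$ should read $m$. The claim is that the product $P_m:=(X-y_1)\cdots(X-y_m)$ has $1$'s on the $m$-th subdiagonal ($i-j=m$), entries $p^{(m)}_{ij}=h_{m+j-i}(y_j,\ldots,y_i|y)$ for $0\le i-j<m$, and zeros elsewhere. I will prove this by induction on $m$, using $P_m=P_{m-1}(X-y_m)$. Here $h_r(y_j,\ldots,y_i|y)$ is the factorial complete symmetric polynomial in the $i-j+1$ variables $y_j,\ldots,y_i$.

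\textbf{Base case and bandedness.} For $m=1$ we have $X-y_1=\sum_i E_{i+1,i}+\sum_i (y_i-y_1)E_{ii}$. So the subdiagonal consists of $1$'s and the diagonal entry is $y_i-y_1$, which should equal $h_1(y_i|y)$ (a single variable, falling factorial shift $y_1$). Since $X$ is lower bidiagonal, $P_m$ is lower triangular with nonzero entries only for $0\le i-j\le m$. Its $m$-th subdiagonal is the product of the $m$ subdiagonal $1$'s, which gives the leading term.

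\textbf{Inductive step.} Right multiplication by $X-y_m$ gives
\[
(P_m)_{ij}=(P_{m-1})_{i,j+1}+(y_j-y_m)(P_{m-1})_{ij}.
\]
In terms of the claimed formula, this becomes the recursion
\[
h_{r}(y_j,\ldots,y_i|y)=h_{r}(y_{j+1},\ldots,y_i|y)+(y_j-y_{m})\,h_{r-1}(y_j,\ldots,y_i|y),
\]
with $r=m+j-i$, and with the boundary convention that the term $h_{0}$ on the edge $i-j=m-1$ equals $1$.

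This is the step I expect to be the main obstacle: matching this recursion with the definition of factorial complete symmetric functions in Appendix~\ref{app:A}. I would first check it from the generating function
\[
\sum_r h_r(x_1,\ldots,x_p|y)\prod(\cdots)=\prod_a (1-x_a t)^{-1}\text{-type}
\]
identity. Alternatively, I would use the tableau formula, in which a row tableau with entries in $\{j,\ldots,i\}$ has box weights $x_{T(s)}-y_{T(s)+s-1}$. The recursion then follows by splitting tableaux according to whether the first box has entry $j$. Index shifts in the $y$'s need care. If the appendix convention differs, I would instead verify the entries directly by the combinatorial expansion $(P_m)_{ij}=\sum \prod_{\text{diag steps}}(y_{\cdot}-y_{s})$ over lattice paths from column $j$ to row $i$, which is manifestly a factorial $h$. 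Homogeneity of degree $m-i+j$ is immediate, since each diagonal step contributes one linear factor and there are $m-(i-j)$ of them.

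\textbf{The case $m=n$.} For $m=n$, every entry has $i-j\le n-1<n$, so there is no subdiagonal $1$ term. Each entry is then $h_{n+j-i}(y_j,\ldots,y_i|y)$, a factorial complete function in $p=i-j+1$ variables of degree $>n-p$. Such functions vanish, which is the standard vanishing for falling factorials once the degree exceeds $n$ minus the number of variables; in the lattice path model, every path is forced to use a factor $y_s-y_s=0$. More directly, $P_n$ is the characteristic-polynomial expression for the lower-bidiagonal $X$ with eigenvalues $y_1,\ldots,y_n$, so Cayley–Hamilton gives \eqref{BAE0} at once. I would use this as the clean argument, with the entry formula providing a consistency check.
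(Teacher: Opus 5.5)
Your reading of the typo ($k$ should be $m$) is right, the induction via $P_m=P_{m-1}(X-y_m)$ is legitimate, and the recursion you derive,
\[
h_r(y_j,\ldots,y_i|y)=h_r(y_{j+1},\ldots,y_i|y)+(y_j-y_m)\,h_{r-1}(y_j,\ldots,y_i|y),\qquad r=m+j-i,
\]
is true. The gap is that this identity is the entire content of the lemma, and you leave it unproved. The tableau check you propose also fails as stated.

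In \eqref{tablformula} for $h_r(x_1,\ldots,x_p|y)$ with $x_a=y_{j+a-1}$, a box in column $s$ with entry $a$ has weight $x_a-y_{a+s-1}$. So the shift depends on the \emph{position} $a$ of the variable in the list. The tableaux avoiding the entry $1$ therefore do not sum to $h_r(x_2,\ldots,x_p|y)$: after relabelling $a=b+1$ every shift is off by one. Already $h_1(x_2|y)=x_2-y_1$, whereas the entry-$2$ tableau of $h_1(x_1,x_2|y)$ gives $x_2-y_2$. Tableaux containing $1$ have it in the first box with weight $x_1-y_1$, not $x_1-y_m$, and deleting that box shifts all other columns.

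Right multiplication peels off the \emph{first} variable $y_j$, so your recursion is not the appendix's rule. The missing input is that $h_r(x|y)$ is symmetric in $x$. Then \eqref{reversecomplete}, namely $h_r(x_1,\ldots,x_p|y)=h_r(x_1,\ldots,x_{p-1}|y)+(x_p-y_{p+r-1})h_{r-1}(x_1,\ldots,x_p|y)$, becomes your identity after swapping $x_1\leftrightarrow x_p$, since $y_{p+r-1}=y_m$ for $p=i-j+1$. (That rule is proved by splitting on whether the largest entry $p$ occurs, necessarily in the last box.) The paper sidesteps symmetry by letting the new factor act on the left, $P_{m+1}v_j=(X-y_{m+1})P_mv_j$, which is allowed since all factors commute. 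This gives $(P_{m+1})_{ij}=(P_m)_{i-1,j}+(y_i-y_{m+1})(P_m)_{ij}$, which is literally \eqref{reversecomplete} with the last variable $y_i$ removed.

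Of your fallbacks, the generating-function one is too vague to count. The lattice-path expansion does close the gap, and is in fact a complete non-inductive proof, provided you let $X-y_1$ act first. If the $t$-th stationary step occurs at position $l_t$ under the factor $X-y_{s_t}$, then $s_t=l_t-j+t$. So a path has weight $\prod_t(y_{l_t}-y_{l_t-j+t})$, which is the tableau weight $\prod_t(x_{a_t}-y_{a_t+t-1})$ with $a_t=l_t-j+1$. Such paths are in bijection with one-row tableaux with entries in $\{1,\ldots,i-j+1\}$.

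For $m=n$, your Cayley--Hamilton argument is correct: $X$ is lower triangular with diagonal $y_1,\ldots,y_n$, and Cayley--Hamilton holds over any commutative ring. It is cleaner than the paper, which states \eqref{BAE0} without a separate argument. Your path argument also works. Let $\ell_s$ be the position before the $s$-th factor; then $\ell_s-s$ moves from $j-1\ge 0$ to $i-n-1<0$ in steps of $0$ or $-1$. This forces a stationary step at position $s$ under $X-y_s$, i.e.\ a factor $y_s-y_s=0$.
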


\begin{proof}
We prove the first statement by induction in the factorial power $m$. The base of induction is trivial. Thus, we are left to complete the induction step. Using the induction assumption we have
\begin{align*}
    \;& (X-y_1) \cdots (X-y_m)(X-y_{m+1})v_j = (X-y_{m + 1})\sum\limits_{i = j}^{j + m}h_{m + j - i}(y_j, \ldots, y_i|y) v_i \\
    \;&= \sum\limits_{i = j}^{j + m}h_{m + j - i}(y_j, \ldots, y_i|y) v_{i + 1} + \sum\limits_{i = j}^{j + m} (y_i - y_{m + 1}) h_{m + j - i}(y_j, \ldots, y_i|y) v_i \\
    \;&= \sum\limits_{i = j + 1}^{j + m + 1}h_{m + j + 1 - i}(y_j, \ldots, y_{i - 1}|y) v_i + \sum\limits_{i = j}^{j + m} (y_i - y_{m + 1}) h_{m + j - i}(y_j, \ldots, y_i|y) v_i \\
    \;&= \sum\limits_{i = j}^{j+m + 1} (h_{m + j + 1 - i}(y_j, \ldots, y_{i - 1}|y) +(y_i - y_{m + 1}) h_{m + j - i}(y_j, \ldots, y_i|y))v_i \\
    \;&= \sum\limits_{i = j}^{j + m + 1}h_{m+1+j-i}(y_j, \ldots, y_i|y)v_i,
\end{align*}
where in the last equality we have applied the branching rule for complete factorial symmetric polynomials; see Appendix \ref{app:A}.
\end{proof}
\subsection{The left Weyl group action}
Recall that there is a natural $\GL_n(\C)$-action on $\Gr(k;n)$ which induces a left Weyl group action on $H^*_T(\Gr(k;n))$,
\begin{equation}\label{LeftW}
(s_i,\sigma_w)\mapsto\left\{\begin{array}{ll}
\sigma_w+(y_i-y_{i+1})\sigma_{s_iw},& \ell(s_iw)=\ell(w)-1\;\&\;s_i w\in S^{k,n}\\
\sigma_w,&\text{ else}
\end{array}
\right.\;,
\end{equation}
where $i=1,2,\ldots,n-1$ and $S^{k,n}$ is the set of minimal length coset representatives of $S_n/S_k\times S_{n-k}$. Note that there is also a natural $S_n$-action on $\ring$ (permuting the equivariant parameters) and since $H^*_T(\Gr(k,n))$ is an $\ring$-module it is tacitly understood that the action \eqref{LeftW} is compatible with the ring action, i.e. $s_i.(r\sigma)=(s_i.r)(s_i.\sigma)$ for all $r\in\ring$ and $\sigma\in H^*_T(\Gr(k;n))$. In other words, $H^*_T(\Gr(k;n))$ is a module for the skew group ring $\ring\#S_n$ and this will be understood throughout the article including for our discussion of the quantum ring.

In light of the isomorphism \eqref{Satake1}, we wish to describe the above left Weyl group action for $k=1$ in terms of the Lie group $\GL_n(\ring)$ in order to then extend it to the tensor products of $\ring$-modules, $V^{\otimes k}$ and $\bigwedge^kV$, below.
\begin{lemma}\label{lem:Waction}
The map $(s_i,v_j)\mapsto g_i.v_j$ with 
\begin{equation}\label{g_i}
g_i:=\exp(\ad_{X_{\rm s}}(E_{i,i+1}))\in\GL_n(\ring),\qquad i=1,\ldots,n-1
\end{equation}
and $j=1,\ldots,n$ defines a group action of $S_n$ on $V$ and the map \eqref{Satake1} is an $\ring\#S_n$-module isomorphism with respect to the action defined via \eqref{g_i} on $H^*_T(\mb{P}^{n-1})$.
\end{lemma}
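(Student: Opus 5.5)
The plan is to compute the matrices $g_i$ explicitly and then match them, basis vector by basis vector, with the left Weyl group action \eqref{LeftW} for $k=1$. Throughout, $s_i$ acts semilinearly: $s_i.(r v)=(s_i.r)\,g_i.v$ for $r\in\ring$.

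First I compute $g_i$. Since $X_{\rm s}$ is diagonal,
\[
\ad_{X_{\rm s}}(E_{i,i+1})=[X_{\rm s},E_{i,i+1}]=(y_i-y_{i+1})E_{i,i+1}.
\]
Because $E_{i,i+1}^2=0$, the exponential series stops after the linear term, so
\[
g_i=1+(y_i-y_{i+1})E_{i,i+1}\in\GL_n(\ring).
\]
Hence $g_i.v_j=v_j$ for $j\neq i+1$, and $g_i.v_{i+1}=v_{i+1}+(y_i-y_{i+1})v_i$.

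Next I compare this with \eqref{LeftW} for $k=1$. Elements $w\in S^{1,n}$ are determined by $w_1=j$, and the corresponding class is $\sigma_{j-1}$, the image of $v_j$ under \eqref{Satake1}. The condition $\ell(s_iw)=\ell(w)-1$ with $s_iw\in S^{1,n}$ holds exactly when $w_1=i+1$, and then $(s_iw)_1=i$. So \eqref{LeftW} sends $\sigma_i\mapsto\sigma_i+(y_i-y_{i+1})\sigma_{i-1}$ and fixes all other Schubert classes. Under $\Sigma_1$ this is precisely the formula for $g_i$ above. Therefore $\Sigma_1(g_i.v)=s_i.\Sigma_1(v)$ on basis vectors, and by semilinearity on both sides on all of $V$.

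It remains to see that the rule defines a group action of $S_n$ and not merely a family of operators. I expect this to be the only point needing care, because the relations must be checked for the semilinear maps rather than for the matrices $g_i$ alone. The cleanest route is to transport the question through the bijection $\Sigma_1$.
\begin{itemize}
\item On $H^*_T(\mb{P}^{n-1})\cong\ring[x]/\langle\prod_j(x-y_j)\rangle$, the action \eqref{LeftW} is simply the permutation of the variables $y$ with $x$ fixed. The defining ideal is $S_n$-invariant, so this is manifestly a group action.
\item As a consistency check, $s_i$ applied to $(x-y_1)\cdots(x-y_i)$ gives $(x-y_1)\cdots(x-y_{i-1})(x-y_{i+1})$, which equals $\sigma_i+(y_i-y_{i+1})\sigma_{i-1}$, and all other $\sigma_j$ are fixed.
\item Since $\Sigma_1$ is a bijective intertwiner, the operators $v\mapsto g_i.(s_i.v)$ on $V$ satisfy the Coxeter relations as well.
\end{itemize}

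If a direct verification is preferred, the check reduces to small blocks. For $s_i^2=1$ one computes
\[
v_{i+1}\mapsto v_{i+1}+(y_i-y_{i+1})v_i\mapsto v_{i+1}+(y_i-y_{i+1})v_i+(y_{i+1}-y_i)v_i=v_{i+1}.
\]
Commutation of $s_i$ and $s_j$ for $|i-j|>1$ holds because they act on disjoint coordinates and disjoint parameters. The braid relation only involves $v_i,v_{i+1},v_{i+2}$ and is a finite computation.
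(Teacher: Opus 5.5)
Your proof is correct and follows the paper's route: you compute $g_i=1+(y_i-y_{i+1})E_{i,i+1}$ from $\ad_{X_{\rm s}}(E_{i,i+1})=(y_i-y_{i+1})E_{i,i+1}$ and $E_{i,i+1}^2=0$, then match this with \eqref{LeftW} at $k=1$ under \eqref{Satake1}. Your specialisation $\sigma_i\mapsto\sigma_i+(y_i-y_{i+1})\sigma_{i-1}$ is the correct one (the paper's display \eqref{LeftW1} writes $\sigma_{j+1}$ where $\sigma_{j-1}$ is meant), and your check that the semilinear operators satisfy the $S_n$ relations supplies a step the paper only calls obvious; you do this by identifying the $k=1$ action with permutation of the $y$'s on $\ring[x]/\langle\prod_j(x-y_j)\rangle$.
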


\begin{proof}
If we specialise the left Weyl group action \eqref{LeftW} to $H^*_T(\mb{P}^{n-1})$ by setting $k=1$ we arrive at
\begin{equation}\label{LeftW1}
(s_i,\sigma_j)\mapsto\left\{\begin{array}{ll}
\sigma_j+(y_j-y_{j+1})\sigma_{j+1},& j=i\\
\sigma_j,&\text{ else}
\end{array}
\right.\;.
\end{equation}
On the other hand, we have that $\ad_{X_{\rm s}}(E_{i,i+1})=(y_i-y_{i+1})E_{i,i+1}$ and hence that
\[
\exp(\ad_{X_{\rm s}}(E_{i,i+1}))=1+(y_i-y_{i+1})E_{i,i+1}\;.
\]
From this explicit form of the action on $V$ and \eqref{Satake1} the claim is now obvious.
\end{proof}

Having identified the Lie algebra element $X\in\gl_n(\ring)$ in \eqref{X0} with the first Chern class of the tautological bundle via \eqref{Satake1} it must be invariant under the left Weyl group action.
\begin{lemma}\label{lem:Winvariance}
    Let $g_i\in\GL_n(\ring)$ for $i=1,\ldots,n-1$ be defined as in \eqref{g_i}. Then we have the commutation relations
    \begin{equation}\label{Winvariance}
    g_i^{-1}Xg_i=X^{s_i},\qquad i=1,\ldots,n-1,
    \end{equation}
    where $X^{s_i}\in\gl_n(\ring)$ denotes the element \eqref{X0} with the equivariant parameters $y_i$ and $y_{i+1}$ exchanged.
\end{lemma}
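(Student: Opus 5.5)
This is a direct matrix computation, using the explicit form of $g_i$ from the proof of Lemma \ref{lem:Waction}. There we found $g_i=1+(y_i-y_{i+1})E_{i,i+1}$. Since $E_{i,i+1}^2=0$, its inverse is
\[
g_i^{-1}=1-(y_i-y_{i+1})E_{i,i+1}.
\]
Set $a=y_i-y_{i+1}$ and $E=E_{i,i+1}$, and write $X=X_{\rm n}+X_{\rm s}$ as in \eqref{X0}. Then
\[
g_i^{-1}Xg_i=X+a[X,E]-a^2EXE,
\]
and I will treat the three terms in turn.

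First, the quadratic term. We have $EXE=E_{i,i+1}XE_{i,i+1}=X_{i+1,i}E_{i,i+1}$, and $X_{i+1,i}=1$ from the nilpotent part. So $-a^2EXE=-a^2E_{i,i+1}$.

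Next, the commutator, split into its semisimple and nilpotent parts. By the computation already used in Lemma \ref{lem:Waction}, $[X_{\rm s},E]=aE$. For the nilpotent part,
\[
[X_{\rm n},E_{i,i+1}]=\sum_j\bigl(E_{j+1,j}E_{i,i+1}-E_{i,i+1}E_{j+1,j}\bigr)=E_{i+1,i+1}-E_{ii}
+E_{i-1,i+1}\delta_{i\ge2}\cdot 0+\dots
\]
I will check the indices carefully. $E_{j+1,j}E_{i,i+1}$ is nonzero only for $j=i$, giving $E_{i+1,i+1}$. $E_{i,i+1}E_{j+1,j}$ is nonzero only for $j=i$, giving $E_{ii}$. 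Hence $[X_{\rm n},E]=E_{i+1,i+1}-E_{ii}$.

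Collecting everything,
\[
g_i^{-1}Xg_i=X+a(E_{i+1,i+1}-E_{ii})+a^2E-a^2E=X-(y_i-y_{i+1})E_{ii}+(y_i-y_{i+1})E_{i+1,i+1}.
\]
The $E_{i,i+1}$ terms cancel. On the diagonal, $y_iE_{ii}+y_{i+1}E_{i+1,i+1}$ is replaced by $y_{i+1}E_{ii}+y_iE_{i+1,i+1}$, and the nilpotent part is unchanged. This is exactly $X^{s_i}$, which proves \eqref{Winvariance}.

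The only delicate point is the bookkeeping of the quadratic term and of the sign in the inverse; there are no conceptual obstacles. As a consistency check I would verify the case $n=2$ by explicit $2\times2$ multiplication.
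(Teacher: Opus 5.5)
Your proposal is correct and takes the same approach as the paper, whose proof is simply ``a simple case of matrix multiplication''. With $a=y_i-y_{i+1}$ and $E=E_{i,i+1}$ you correctly obtain $g_i^{-1}Xg_i=X+a(E_{i+1,i+1}-E_{ii})$, because the $a^2E$ coming from $[X_{\rm s},E]$ cancels the $-a^2EXE$ term; the only fix needed is cosmetic: delete the stray scratch terms ``$+E_{i-1,i+1}\delta_{i\ge2}\cdot 0+\dots$'' from your display for $[X_{\rm n},E]$, since your index check gives $E_{i+1,i+1}-E_{ii}$.
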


\begin{proof}
    A simple case of matrix multiplication.
\end{proof}
\subsection{The Satake correspondence for Grassmannians}
We remind the reader that under the Satake correspondence one identifies $H^*(\op{Gr}(k,n))$ with the $k$-fold exterior power of the cohomology of $\mb{P}^{n-1}$. In particular, the correspondence is an isomorphism of $\gl_n(\C)$-modules and the multiplication by the first Chern class corresponds to the action by the principal nilpotent element of $\gl_n(\C)$. Here we recall the extension of the Satake correspondence to the torus-equivariant case; see for example \cite{anderson2023equivariant,labelle2024equivariant} and references therein.

We start by identifying the direct sum of all cohomology rings with the exterior $\ring$-algebra of $V\simeq H^*_T(\mb{P}^{n-1})$ as $\ring$-modules: namely, we consider the $\ring$-module isomorphism
\begin{equation}\label{Satake_dom}
\Sigma:\bigwedge V=\bigoplus_{k=0}^n
\bigwedge\nolimits^kV\to \bigoplus_{k=0}^nH^*_T(\Gr(k;n))
\end{equation}
where for any subset $I=\{i_1,\ldots,i_k\}\subset\{1,\ldots,n\}$ that is fixed by a Grassmannian permutation $w=[i_1\ldots  i_n]\in S^{k,n}$ we set
    \begin{equation}\label{Satake}
    \Sigma:\quad \ket{w}=v_{i_1}\wedge\ldots\wedge v_{i_k}\mapsto \sigma_w
    \end{equation}
with $k=0,1,\ldots,n$. Here we have used that for each fixed $0\le k\le n$ one can alternatively label Schubert classes by Grassmannian permutations as the latter are in bijection with partitions whose Young diagrams fit into the $k\times(n-k)$ rectangle. The bijection is given by $w\mapsto\lambda(w)=(\lambda_1,\ldots,\lambda_k)$, where $\lambda(w)$ is the partition with parts $i_j=\lambda_{k+1-j} + j$. We will use both types of labels interchangeably throughout.

\begin{example}\rm
Consider $k=6$, $n=10$ and $\lambda=(4,4,2,2,1,0)$. The corresponding Grassmannian permutation $w(\lambda)$ is recovered by drawing a lattice path prescribing the outline of the Young diagram of $\lambda$, where we record the positions $\{i_1,\ldots,i_k\}$ of each vertical and the positions $\{i_{k+1},\ldots,i_n\}$ of each horizontal move in terms of black and a white-go stone, respectively. The resulting Grassmannian permutation in one-line notation is then given by the positions of the go-stones as $w(\lambda)=[i_1\ldots i_ki_{k+1} \ldots i_n]$; see Figure \ref{fig:Young}. Alternatively, one can obtain $w(\lambda)$ as a reduced expression in terms of simple reflections by reading out the content of the boxes in the Young diagram bottom to top and right to left. In our example,
\[
w(\lambda)=s_8s_9\,s_7s_8\,s_4s_5s_6s_7\,s_2s_3s_4s_5s_6\;.
\]
We will occasionally also need the image of Schubert classes under level-rank duality which in terms of Young diagrams is the conjugate partition $\lambda'$ obtained by swapping black and white go-stones and reading their positions from right to left; see Figure \ref{fig:Young}.
\end{example}

Recall that there is natural action of the universal enveloping algebra $U$ of $\gl_n(\ring)$ on $V^{\otimes k}$ via the coproduct,
\[
\Delta^{k-1}(X)=X_1+\cdots +X_k,\quad X\in\gl_n(\ring)
\]
where $X_i=1\otimes\cdots1\otimes X\otimes1\cdots \otimes1$ acts non-trivially in the $i$th factor. This action descends to $\bigwedge^kV$ because of Schur-Weyl duality. The following result for Grassmannians can be found in the literature; see e.g. \cite{anderson2023equivariant,labelle2024equivariant} and references therein.
\begin{thm}
    The restriction $\Sigma_k=\Sigma|_{\bigwedge^kV}$ of the map \eqref{Satake} to $\bigwedge^kV$ is an isomorphism of $\ring$-algebras $\bigwedge^kV\cong H^*_T(\Gr(k;n))$ via
    \[
    \Sigma_k\circ \Delta^{k-1}(X)=c_1(\cS_k)\circ\Sigma_k\;,
    \]
    where $X\in\gl_n(\ring)$ is defined in \eqref{X0} and $c_1(\cS_k)$ is the first Chern class of the tautological bundle $\cS_k$. As the latter generates the ring $H^*_T(\Gr(k;n))$, this fixes the isomorphism completely.
\end{thm}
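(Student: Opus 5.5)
We need to prove that $\Sigma_k$ is an $\ring$-module isomorphism intertwining $\Delta^{k-1}(X)$ with $c_1(\cS_k)$, and that this determines the ring structure.

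\textbf{Step 1: $\Sigma_k$ is a module isomorphism.} This is immediate. The vectors $\ket{w}$ with $w\in S^{k,n}$ form an $\ring$-basis of $\bigwedge^kV$, and the Schubert classes $\sigma_w$ form an $\ring$-basis of $H^*_T(\Gr(k;n))$.

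\textbf{Step 2: compute $\Delta^{k-1}(X)$ on the basis.} Split $X=X_{\rm s}+X_{\rm n}$ as in \eqref{X0}. The semisimple part acts diagonally:
\[
\Delta^{k-1}(X_{\rm s})\ket{w}=(y_{w_1}+\cdots+y_{w_k})\ket{w}.
\]
For the nilpotent part, $\Delta^{k-1}(X_{\rm n})\ket{w}$ is the sum, over all $j$ with $w_j+1\notin\{w_1,\ldots,w_k\}$, of the wedge obtained by replacing $v_{w_j}$ with $v_{w_j+1}$. No sign arises, since $w_j<w_j+1<w_{j+1}$ keeps the indices ordered. In Young diagram language this is exactly the sum of $\ket{w'}$ over all $\lambda(w')=\lambda(w)+\square$, i.e. adding one box.

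\textbf{Step 3: match with the equivariant Chevalley rule.} The equivariant Chevalley formula (stated e.g. in \cite{MihalceaAIM}) reads
\[
c_1(\cS_k)\cdot\sigma_\lambda=\sum_{\mu=\lambda+\square}(\pm)\sigma_\mu+\bigl(c_1(\cS_k)|_{w}\bigr)\sigma_\lambda .
\]
Here the diagonal coefficient is the restriction of $c_1(\cS_k)$ to the torus fixed point $w$, which is $\sum_j y_{w_j}$ up to the paper's sign conventions. I would fix these conventions by checking the case $k=1$, where the preceding Proposition gives $\Sigma_1\circ X=c_1(\cS)\circ\Sigma_1$. I expect this step to be the main obstacle. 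The sign of the off-diagonal coefficients and the normalisation $(z|y)$ must be pinned down so that the coefficients agree exactly with Step 2.

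A convention-independent alternative is localisation. Map $\bigwedge^kV$ to the fixed-point basis: the vector $\ket{w}$ corresponds to the factorial Schur polynomial $s_{\lambda(w)}(x|y)$. Then check that multiplication by $e_1(x)-\sum_{i\le k}y_i$ (or $e_1(x)$, depending on convention) satisfies the factorial Pieri rule for $e_1$. That rule follows from the tableau formula in Appendix \ref{app:A} or from the branching rule already used in Lemma \ref{lemX0}.

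\textbf{Step 4: the ring structure is determined.} Transport the product along $\Sigma_k$, so that $\bigwedge^kV$ becomes a commutative ring isomorphic to $H^*_T(\Gr(k;n))$. Uniqueness follows because $c_1(\cS_k)$ generates $H^*_T(\Gr(k;n))$ after localising at the fraction field of $\ring$. Indeed, its eigenvalues $\sum_j y_{w_j}$ on the fixed points are pairwise distinct, so multiplication by $c_1(\cS_k)$ determines all multiplication operators, which must commute with it. The resulting structure constants then lie in $\ring$ by Mihalcea's characterisation \cite[Cor.7.1]{MihalceaAIM}: any $\ring$-algebra on the Schubert basis satisfying the equivariant Chevalley rule, together with the grading, is the equivariant cohomology ring.
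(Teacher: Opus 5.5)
Your proposal is correct. The paper does not prove this statement itself; it quotes it from the literature. The natural benchmark is the paper's proof of the quantum analogue, Proposition~\ref{prop:quantumSatake}. Your Steps 1--3 are exactly the $q=0$ case of Claim~\ref{claim:quantumChevalley}: let $X_{\rm s}+X_{\rm n}$ act as a derivation on wedges and recognise the equivariant Chevalley rule.

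The real difference is Step 4. The paper closes with Mihalcea's uniqueness theorem \cite[Cor.~7.1]{MihalceaAIM}. You instead argue spectrally:
\begin{itemize}
\item $\Delta^{k-1}(X)$ is triangular on the basis $\ket{w}$, with pairwise distinct diagonal entries $\sum_j y_{w_j}$.
\item So over $\mathrm{Frac}(\ring)$ its commutant is the algebra of polynomials in $\Delta^{k-1}(X)$.
\item Hence, in any commutative multiplication with unit $\ket{e}$ in which $\Delta^{k-1}(X)$ is a multiplication operator, $m_b$ is the unique element $T$ of that commutant with $T\ket{e}=b$.
\item Torsion-freeness of $\bigwedge^kV$ carries this uniqueness back to $\ring$.
\end{itemize}
State the unit condition explicitly; uniqueness is only among structures with unit $\ket{e}$.

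What your route buys: it is elementary and self-contained. It also correctly repairs the paper's assertion that $c_1(\cS_k)$ generates $H^*_T(\Gr(k;n))$, which is only true after inverting $\ring$. Already for $\Gr(2;4)$ one has $\sigma_2\notin\ring[\sigma_1]$, because $\sigma_1^2$ contains $\sigma_2$ and $\sigma_{11}$ with equal coefficients.

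What Mihalcea's route buys: it needs no spectral input and applies verbatim to the quantum ring. There the eigenvalues of $e_1(X^q)$ are sums of Bethe roots in an algebraic extension, and simple spectrum would need a separate argument.

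Your closing appeal to Mihalcea in Step 4 is redundant. Structure constants transported along $\Sigma_k$ lie in $\ring$ automatically.

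Steps 2--3 need two small repairs.
\begin{itemize}
\item \emph{No signs to fix.} With the paper's dictionary $c_1(\cS_k)=e_1(x)$ and $\sigma_\lambda=s_\lambda(x|y)$, nothing needs pinning down, so drop the hedged first route. The factorial Pieri rule follows most directly from the alternant \eqref{factSchur}, rather than from the tableau formula or the identity in Lemma~\ref{lemX0}. Multiplying $\det\bigl((x_j|y)^{\lambda_i+k-i}\bigr)$ by $e_1(x)$ distributes over the rows. Using $x\,(x|y)^m=(x|y)^{m+1}+y_{m+1}(x|y)^m$ and $\lambda_i+k-i+1=w_{k+1-i}$, this gives $e_1(x)s_\lambda(x|y)=\sum_{\mu=\lambda+\square}s_\mu(x|y)+\bigl(\sum_j y_{w_j}\bigr)s_\lambda(x|y)$ in $\ring[x_1,\ldots,x_k]^{S_k}$.
\item \emph{The out-of-box term.} You must still note that the term with $\mu_1=n-k+1$ vanishes in $H^*_T(\Gr(k;n))$. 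This follows from the vanishing property of factorial Schur polynomials plus injectivity of localisation. On the wedge side it corresponds to $Xv_n=y_nv_n$, i.e.\ the condition $w_j<n$, which is missing from your Step 2.
\end{itemize}
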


Let $\mb{X}_k=(\mb{P}^{n-1})^{\times k}$. In what follows we will make sometimes use of the fact that we can canonically identify the tensor product $V^{\otimes k}$ with the equivariant cohomology $H^*_T(\mb{X}_k)\cong H_T^*(\mb{P}^{n-1})^{\otimes k}$ (as $\ring$-algebras) and that multiplication by symmetric polynomials in the Chern roots `commutes' with the natural projection $V^{\otimes k}\twoheadrightarrow\bigwedge^kV$ in the following sense: let $x_1,\ldots,x_k$ denote the $k$ Chern roots of the $k$ copies of $\mb{P}^{n-1}$, then multiplication with any elementary symmetric polynomial $e_r(x_1,\ldots,x_k)$ in $H^*_T(\mb{X}_k)$ corresponds to multiplication by the $r$th Chern class $c_r(\cS_k)$ of the tautological bundle in $H^*_T(\Gr(k;n))$.
\begin{cor}
    Under the isomorphism \eqref{Satake} we have the identities
    \begin{equation}
    \Sigma_k\circ e_r(X_1,\ldots,X_k)=c_r(\cS_k)\circ\Sigma_k,
    \qquad r=0,1,\ldots,k
    \end{equation}
    and 
    \begin{equation}
    \Sigma_k\circ \fS_w(X_1,\ldots,X_k;y)=\sigma_w\circ\Sigma_k,\qquad
    \forall w\in S^{k,n},
    \end{equation}
    where $\fS_w(x;y)$ is the double Schubert polynomial and the classes on the right hand side act by multiplication in $H^*_T(\Gr(k;n))$.
\end{cor}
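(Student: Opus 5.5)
The first identity follows from the preceding Theorem together with the observation that $e_r(X_1,\ldots,X_k)$ is symmetric in the tensor factors. By definition $e_r(X_1,\ldots,X_k)$ is a polynomial in the pairwise commuting operators $X_i=1\otimes\cdots\otimes X\otimes\cdots\otimes 1$ on $V^{\otimes k}$. Because it is symmetric under permuting tensor factors, it commutes with the antisymmetriser and hence descends to $\bigwedge^kV$. On $\bigwedge^kV$, the Newton identities express $e_r(X_1,\ldots,X_k)$ as a polynomial in the power sums $p_m(X_1,\ldots,X_k)=\Delta^{k-1}(X^m)$. This rewrites the operator in terms of the $\gl_n$-action, but it does not by itself identify it with $c_r(\cS_k)$. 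The direct route is the one the paper sets up: identify $V^{\otimes k}\cong H^*_T(\mb{X}_k)$, under which $X_i$ acts as multiplication by the Chern root $x_i$ of the $i$-th factor (the case $k=1$ is the Proposition for $\mb{P}^{n-1}$, via $\Sigma_1\circ X=c_1(\mc{S})\circ\Sigma_1$). I would then show that the antisymmetrisation map $H^*_T(\mb{X}_k)\twoheadrightarrow\bigwedge^kV\cong H^*_T(\Gr(k;n))$ is a module map over the symmetric functions in the $x_i$, with $e_r(x)$ acting as $c_r(\cS_k)$.

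The concrete check uses the Theorem. Since $\Sigma_k\circ\Delta^{k-1}(X)=c_1(\cS_k)\circ\Sigma_k$ and $c_1(\cS_k)$ generates $H^*_T(\Gr(k;n))$ as an $\ring$-algebra, every ring element acts on $\bigwedge^kV$ through some polynomial in $\Delta^{k-1}(X)$. I would verify $\Sigma_k(e_r(X_1,\ldots,X_k)\ket{\mathrm{id}})=c_r(\cS_k)$ directly, where $\ket{\mathrm{id}}=v_1\wedge\cdots\wedge v_k$ is the unit. Expanding $e_r(X)=\sum_{|S|=r}\prod_{i\in S}X_i$ with $X=X_{\rm n}+X_{\rm s}$, the nilpotent part shifts $v_j\mapsto v_{j+1}$, which is killed under wedging unless $j=k$; the diagonal part contributes $y_j$. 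This yields the factorial elementary polynomial expansion, which is the known image of $c_r(\cS_k)=e_r(x|y)$-type classes, and matches the factorial Schur presentation recalled in Appendix \ref{app:A}. Once this holds on the unit, commutativity finishes the argument. Specifically, $e_r(X)$ commutes with $\Delta^{k-1}(X)$, so it is an $\ring[c_1]$-module endomorphism of the cyclic module $\bigwedge^kV=\ring[\Delta^{k-1}(X)]\ket{\mathrm{id}}$. Hence it equals multiplication by its value on $\ket{\mathrm{id}}$, namely $c_r(\cS_k)$.

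For the second identity, the Grassmannian double Schubert polynomial $\fS_w(x;y)$ for $w\in S^{k,n}$ is symmetric in $x_1,\ldots,x_k$ and equals the factorial Schur polynomial $s_{\lambda(w)}(x|y)=\det(\tau^{j-1}e_{\lambda'_i-i+j})$, a polynomial in the $e_r(x|y)$ and hence in the $e_r(x)$ with coefficients in $\ring$. Since $\Sigma_k$ is a ring isomorphism and the first identity turns each $e_r(X)$ into multiplication by $c_r(\cS_k)$, we get $\Sigma_k\circ\fS_w(X;y)=s_{\lambda(w)}(c(\cS_k)|y)\circ\Sigma_k$. The Mihalcea presentation (at $q=0$) sends this factorial Schur function to $\sigma_w$.

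The main obstacle is the unit computation in the first part, namely tracking the wedge cancellations to obtain exactly the factorial elementary polynomial. I would handle it by induction on $k$, peeling off the last tensor factor and using Lemma \ref{lemX0}.
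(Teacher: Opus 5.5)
The step that carries your argument is false over $\ring$. For $2\le k\le n-2$ the class $c_1(\cS_k)$ does \emph{not} generate $H^*_T(\Gr(k;n))$ as an $\ring$-algebra, and $\bigwedge^kV\neq\ring[\Delta^{k-1}(X)]\ket{\mathrm{id}}$. The remark in the Theorem to that effect only holds after localising, e.g.\ over $\operatorname{Frac}(\ring)$.

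Already for $k=2$, $n=4$ this fails. A homogeneous degree-two element of $\ring[\Delta^{1}(X)]\ket{\mathrm{id}}$ has the form $a\ket{\mathrm{id}}+b\,\Delta^{1}(X)\ket{\mathrm{id}}+c\,\Delta^{1}(X)^2\ket{\mathrm{id}}$ with $a\in\ring_2$, $b\in\ring_1$, $c\in\Z$. Here $\Delta^{1}(X)^2\ket{\mathrm{id}}=v_2\wedge v_3+v_1\wedge v_4+(\text{terms in }v_1\wedge v_2,\ v_1\wedge v_3)$. So $\ket{(1,1)}=v_2\wedge v_3$ and $\ket{(2)}=v_1\wedge v_4$ always occur with the same integer coefficient, and $\ket{(1,1)}$ is never reached. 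The cyclicity you use to conclude that $e_r(X)$ is multiplication by its value on $\ket{\mathrm{id}}$ is therefore unavailable.

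The repair is to base change to $K=\operatorname{Frac}(\ring)$. There $X$ is triangular with distinct diagonal entries, hence diagonalisable. So $\Delta^{k-1}(X)$ acts on $\bigwedge^kV_K$ with the pairwise distinct eigenvalues $\sum_{i\in I}y_i$, $|I|=k$. Its centraliser in $\End_K\bigwedge^kV_K$ is then $K[\Delta^{k-1}(X)]$, so $e_r(X)=a(\Delta^{k-1}(X))$ for some $a\in K[t]$. The Theorem gives $\Sigma_k\circ e_r(X)=a(c_1(\cS_k))\circ\Sigma_k$. Applying both sides to $\ket{\mathrm{id}}$, which $\Sigma_k$ sends to $1$, gives $a(c_1(\cS_k))=\Sigma_k(e_r(X)\ket{\mathrm{id}})$. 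Both sides of the claimed identity are $\ring$-linear maps between free $\ring$-modules, so agreement over $K$ gives agreement over $\ring$.

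Your unit computation also needs correcting. A shift $v_j\mapsto v_{j+1}$ with $j<k$ is not killed when slot $j+1$ is shifted as well. The surviving terms are exactly those whose shifted slots form a terminal block $\{k-j+1,\ldots,k\}$, which gives
\[
e_r(X_1,\ldots,X_k)\ket{\mathrm{id}}=\sum_{j=0}^{r}e_{r-j}(y_1,\ldots,y_{k-j})\,\ket{(1^j)},\qquad \ket{(1^j)}=v_1\wedge\cdots\wedge v_{k-j}\wedge v_{k-j+2}\wedge\cdots\wedge v_{k+1}.
\]
This is a direct count; no induction or Lemma \ref{lemX0} is needed.

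Moreover, $c_r(\cS_k)$ is the ordinary $e_r(x_1,\ldots,x_k)$, not $e_r(x|y)$. Comparing coefficients of $z^{k-r}$ in the paper's generating function $\prod_i(z-x_i)=\sum_j(-1)^je_j(x|y)(z|y)^{k-j}$ gives $e_r(x)=\sum_j e_{r-j}(y_1,\ldots,y_{k-j})\,e_j(x|y)$. This matches the display term by term, since $e_j(x|y)=s_{(1^j)}(x|y)$ represents $\sigma_{(1^j)}$.

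With these two corrections your route is complete and genuinely different from the paper's. You verify the identity on the unit and propagate it via the centraliser of $\Delta^{k-1}(X)$ over $K$. The paper instead rests on the identification $V^{\otimes k}\cong H^*_T(\mb{X}_k)$ with $e_r(x)\mapsto c_r(\cS_k)$, asserted just before the Corollary. Your deduction of the second identity from the first, through the factorial Schur presentation, is fine as it stands.
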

In light of the first identity it is natural to define the analogue of the Chern polynomial.
\begin{lemma}\label{lem:transfermatrix}
Fix $1\le k\le n-1$ and let $u$ be an indeterminate. Then
\[
t(u)=(u\cdot 1+X_1)\cdots (u\cdot 1+X_k)\in\gl_n(\ring)\otimes_\ring\ring[u]
\]
commutes with the left Weyl group action on $\bigwedge^kV$ and, thus, setting $t_i=t(-y_i)\in\End_{\ring} \bigwedge^kV$ we have
\begin{equation}
s_it_i=t_{i+1}s_i\qquad\text{and}\qquad s_it_j=t_js_i\;\text{ if }|i-j|>1\;.
\end{equation}
Here by abuse of notation we have denoted the image of the $s_i$ in $\End \bigwedge^kV$ by the same symbol. We note that the $t_i$ are singular and that $t_1t_2\ldots t_n=0$.
\end{lemma}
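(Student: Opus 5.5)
The plan is to derive everything from Lemma \ref{lem:Winvariance}, $g_i^{-1}Xg_i=X^{s_i}$, applied factorwise in $V^{\otimes k}$. The left Weyl group acts on $V^{\otimes k}$ diagonally by $g_i^{\otimes k}$ combined with the permutation $s_i$ of the equivariant parameters in $\ring$. Conjugating each tensor factor gives
\[
(g_i^{\otimes k})^{-1}\,t(u)\,g_i^{\otimes k}=t(u)^{s_i},
\]
where $t(u)^{s_i}$ denotes $t(u)$ with $y_i,y_{i+1}$ exchanged. This is the same statement as saying that $t(u)$ commutes with the semilinear operator $s_i$, since $s_i$ acts on the coefficients by swapping $y_i$ and $y_{i+1}$. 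Because $t(u)$ is symmetric in $X_1,\ldots,X_k$, it commutes with the $S_k$-action on $V^{\otimes k}$ and hence descends to $\bigwedge^kV$. The $g_i^{\otimes k}$ also commute with this $S_k$-action, so the intertwining relation descends as well. I would state this for $u$ treated as an indeterminate that is invariant under $s_i$.

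Next I would specialise $u=-y_j$. Since $s_i$ acts semilinearly, $s_i\,t(-y_j)=t(-s_i(y_j))\,s_i$, where on the right $t$ is evaluated with $X^{s_i}$ already absorbed by the conjugation. For $j=i$ this gives $s_i t_i=t_{i+1}s_i$, because $s_i(y_i)=y_{i+1}$. For $|i-j|>1$, and indeed for any $j\notin\{i,i+1\}$, $y_j$ is fixed, so $s_it_j=t_js_i$. The one point that needs care is the bookkeeping: the statement's convention that $t_j$ acts on the $\ring$-module with its own coefficients must match the skew group ring convention $s_i.(r\sigma)=(s_i.r)(s_i.\sigma)$. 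I would check this once on $V$ (the case $k=1$) using the explicit form $g_i=1+(y_i-y_{i+1})E_{i,i+1}$.

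Finally, singularity and $t_1\cdots t_n=0$. Since the $X_a$ commute,
\[
t_1\cdots t_n=\prod_{a=1}^k\prod_{j=1}^n(X_a-y_j)\cdot(-1)^{nk}.
\]
Each inner product vanishes by the matrix identity \eqref{BAE0} of Lemma \ref{lemX0}, so the whole product is zero. Singularity of each $t_j$ follows by showing $\det t_j=0$. Here $X$ is lower triangular with diagonal $y_1,\ldots,y_n$, so $X_a-y_j$ has a zero eigenvalue on $V$. On $\bigwedge^kV$ the operator $t(-y_j)$ is triangular in the basis $\ket{w}$ (ordered by dominance) with diagonal entries $\prod_{m\in I}(y_m-y_j)$. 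The entry for any $I$ containing $j$ is $0$, and such an $I$ exists for $1\le k$.

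The main obstacle is only the descent and semilinearity bookkeeping in the first step. Everything else is a direct consequence of Lemmas \ref{lemX0} and \ref{lem:Winvariance}.
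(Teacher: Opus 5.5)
Your proposal is correct and follows the same route as the paper, whose proof just cites Lemma \ref{lemX0} (for $t_1\cdots t_n=0$ via \eqref{BAE0}), Lemma \ref{lem:Winvariance} (applied factorwise and read as commutation of $t(u)$ with the semilinear operator $s_i$), and Schur--Weyl duality for the descent to $\bigwedge^kV$. Your triangularity argument only spells out the singularity claim that the paper leaves implicit, and there is one harmless slip: $t(-y_j)=\prod_{a}(X_a-y_j)$ exactly, so the factor $(-1)^{nk}$ in your expression for $t_1\cdots t_n$ should be dropped (it is irrelevant since the product vanishes).
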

\begin{proof}
    The claims follow directly from our previous Lemmata \ref{lemX0} and  \ref{lem:Winvariance} as well as Schur-Weyl duality.
\end{proof}
\subsection{The exterior and the Clifford algebra} Consider the exterior $\ring$-algebra $\bigwedge V$ from \eqref{Satake}. The latter can be identified with the Clifford algebra $\mc{C}_n(\ring)$ over $V$ with trivial quadratic form. Namely, 
denote by $\psi_i^* = v_i\wedge$ the multiplication operator $\bigwedge^kV\to\bigwedge^{k+1}V$ with $v_i\in V$ and by $\psi_i = \imath_{v^i}$ the contraction operator $\bigwedge^kV\to\bigwedge^{k-1}V$ with $v^i\in V^*$,
\begin{equation}\label{contraction}
\imath_{v^i}(v_{j_1}\wedge\cdots\wedge v_{j_k})=\sum_{r=1}^k(-1)^{r-1}\delta_{ij_r}
v_{j_1}\wedge\cdots\wedge v_{j_{r-1}}\wedge v_{j_{r+1}}\cdots\wedge v_{j_k}\;.
\end{equation}
It is a standard fact that these operators satisfy the canonical anti-commutation relations (CAR) of free fermions,
\begin{equation}\label{CAR}
\psi_i\psi_j+\psi_j\psi_i=\psi_i^*\psi_j^*+\psi_j^*\psi_i^*=0,\qquad
\psi_i\psi^*_j+\psi_j^*\psi_i=\delta_{ij}\;.
\end{equation}
As already pointed out in the introduction, the natural action of $\GL_n(\ring)$ on $V$ induces automorphisms of the Clifford algebra via
\begin{equation}\label{GLaction}
g.\psi^*_i:=\sum_j g_{ij}\psi_j^*\qquad\text{and}\qquad g.\psi_i:=\sum_j\psi_j(g^{-1})_{ji},\qquad\forall g\in\GL_n(\ring)\;.
\end{equation}
One verifies via a straightforward computation that $g.\psi_i,g.\psi_i^*$ still satisfy the CAR. In particular, any action of a group $G$ on $V$ induces via its representation map $G\to \GL_n(\ring)$ a group action via automorphisms of the Clifford algebra, $g.\psi^*_i=\psi^*_{g.v_i}$ and $g.\psi_i=\imath_{(g^{-1})^T.v^i}$. We now apply this to the case of the left Weyl group action with $G=S_n$ using Lemma \ref{lem:Waction}. Note that the left Weyl group action naturally extends to an action on $\bigwedge V$ under Schur-Weyl duality. The next lemma describes how this action on $\bigwedge V$ interacts with that of the Clifford algebra.
\begin{lemma}\label{lem:WClifford}
    We have the commutation relations
    \[
    s_i\circ\psi^*_j=(g_i.\psi_j^*)\circ s_i
    \quad\text{and}\quad
    s_i\circ\psi_j=(g_i.\psi_j)\circ s_i
    \]
    where the matrices $g_i\in\GL_n(\ring)$ with $i=1,\ldots,n-1$ are defined in \eqref{g_i}. Explicitly,
    \[
    g_i.\psi^*_j=\psi^*_j+\delta_{i+1,j}(y_i-y_{i+1})\psi^*_{i}
    \quad\text{and}\quad
    g_i.\psi_j=\psi_j-\delta_{i,j}(y_i-y_{i+1})\psi_{i+1}\;.
    \]
\end{lemma}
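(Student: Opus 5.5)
The plan is to reduce everything to the definitions. On $\bigwedge V$ the Weyl group acts as the $k$-fold tensor power of Lemma \ref{lem:Waction}: for a wedge $v_{j_1}\wedge\cdots\wedge v_{j_k}$ we have $s_i(v_{j_1}\wedge\cdots\wedge v_{j_k})=g_iv_{j_1}\wedge\cdots\wedge g_iv_{j_k}$. This action is semilinear, so it also applies $s_i$ to coefficients in $\ring$. Because $g_i$ has polynomial entries, semilinearity will need care.

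For the creation operators I would compute both sides on a basis vector $\omega=v_{j_1}\wedge\cdots\wedge v_{j_k}$. We have
\[
s_i(\psi^*_j\omega)=s_i(v_j\wedge\omega)=g_iv_j\wedge s_i\omega .
\]
By \eqref{g_i} and the explicit form $g_i=1+(y_i-y_{i+1})E_{i,i+1}$, we get $g_iv_j=v_j+\delta_{j,i+1}(y_i-y_{i+1})v_i$. Hence $s_i\circ\psi^*_j=\psi^*_{g_iv_j}\circ s_i$. Expanding $\psi^*_{g_iv_j}$ gives $\sum_l(g_i)_{lj}\psi^*_l$, which matches the claimed formula $\psi^*_j+\delta_{i+1,j}(y_i-y_{i+1})\psi^*_i$. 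One point needs checking: \eqref{GLaction} writes $g_{ij}$ with the indices in the order that corresponds to the transpose convention. I would reconcile the index convention so that $g.\psi^*_j=\psi^*_{g.v_j}$, as the paper states just after \eqref{GLaction}.

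For the semilinearity I would check that the coefficient $(y_i-y_{i+1})$ appearing after $s_i$ is correct. Passing $s_i$ through the scalar flips $y_i\leftrightarrow y_{i+1}$. Since we only evaluate $s_i$ on basis vectors and then multiply, no scalar passes through, so this case is fine. For a general vector $r\omega$ both sides acquire the factor $s_i(r)$ identically, so the identity holds $\ring$-semilinearly.

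For the annihilation operators I would use the CAR \eqref{CAR} and uniqueness, rather than a direct computation. Conjugating the relation $\psi_j\psi^*_l+\psi^*_l\psi_j=\delta_{jl}$ by $s_i$ gives operators $\phi_j:=s_i\psi_js_i^{-1}$. These satisfy $\phi_j(g_i.\psi^*_l)+(g_i.\psi^*_l)\phi_j=\delta_{jl}$ and annihilate the vacuum $1\in\bigwedge^0V$, which is fixed by $s_i$. An operator of degree $-1$ that kills the vacuum is determined by its anticommutators with all $\psi^*_l$. By \eqref{GLaction}, $g_i.\psi_j$ is exactly such an operator, since the pair $(g.\psi,g.\psi^*)$ satisfies the CAR. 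Therefore $\phi_j=g_i.\psi_j$.

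Explicitly, $g_i^{-1}=1-(y_i-y_{i+1})E_{i,i+1}$, so
\[
g_i.\psi_j=\sum_l\psi_l(g_i^{-1})_{lj}.
\]
One convention gives $\psi_j-\delta_{j,i+1}(y_i-y_{i+1})\psi_i$ and the transposed convention gives $\psi_j-\delta_{ij}(y_i-y_{i+1})\psi_{i+1}$. To fix which is correct I would verify directly on $\omega$ using \eqref{contraction}: with $g_i$ upper-triangular in the $(i,i+1)$ entry, the stated formula $\psi_j-\delta_{ij}(y_i-y_{i+1})\psi_{i+1}$ is the dual (inverse-transpose) action.

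The main obstacle is this bookkeeping of index and transpose conventions, together with the semilinear twist. One must check that $s_i^{-1}=s_i$ is compatible with $g_i^{-1}\ne g_i$ under the twist. Concretely, applying $s_i$ twice gives $g_i\cdot{}^{s_i}g_i=(1+(y_i-y_{i+1})E)(1+(y_{i+1}-y_i)E)=1$ with $E=E_{i,i+1}$, so $s_i^2=1$. This identity is exactly what makes $\phi_j$ well defined, and I would verify it first.
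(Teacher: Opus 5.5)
Your argument is correct. For the creation operators it is the paper's ``straightforward computation'': $s_i(v_j\wedge\omega)=g_iv_j\wedge s_i\omega$ gives $s_i\psi^*_js_i^{-1}=\psi^*_{g_iv_j}$. You are also right that \eqref{GLaction} as printed has the indices of $g$ transposed relative to the explicit formulas and to the paper's own remark $g.\psi^*_i=\psi^*_{g.v_i}$. A literal substitution into \eqref{GLaction} would give $\psi^*_j+\delta_{ij}(y_i-y_{i+1})\psi^*_{i+1}$ instead of the stated formula.

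For the annihilation operators your route genuinely differs. The paper again just substitutes $g_i$ into the covariance formula. You instead conjugate the CAR by $s_i$ and use that an $\ring$-linear operator of degree $-1$ killing the vacuum is determined by its anticommutators with the creators. This buys you two things.

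\begin{itemize}
\item The semilinear twist drops out automatically. $s_i\psi_js_i^{-1}$ is $\ring$-linear, and only the untwisted $g_i$ enters, via the creator relation.
\item The index convention is forced, so your closing ``I would verify directly'' step is unnecessary. From $\{\phi_j,\psi^*_{g_iv_l}\}=\delta_{jl}$ for all $l$ you get $\{\phi_j,\psi^*_m\}=(g_i^{-1})_{jm}$. Hence $\phi_j=\sum_p(g_i^{-1})_{jp}\psi_p=\psi_j-\delta_{ij}(y_i-y_{i+1})\psi_{i+1}$.
\end{itemize}

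The other candidate fails this test. For $j=i+1$ it reads $\psi_{i+1}-(y_i-y_{i+1})\psi_i$, whose anticommutator with $\psi^*_{g_iv_i}=\psi^*_i$ is $-(y_i-y_{i+1})\neq 0$.

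The paper's direct computation is shorter. However, it is exactly where a transposition error can slip in, as the printed \eqref{GLaction} shows. Your uniqueness argument rules that out.
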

\begin{proof}
    A straightforward computation using \eqref{GLaction} and the definition \eqref{g_i}.
\end{proof}

\subsection{Cyclic modules of the Clifford algebra: the Fock space and its dual}
We recall the construction of some fundamental Clifford algebra modules as these might not be familiar to a wider mathematical audience. For this we now consider the (finite) Clifford algebra $\mc{C}_n(\ring)$ as abstract $\ring$-algebra generated by $\{\psi_i,\psi_i^*~:~i=1,\ldots,n\}$ subject to the CAR \eqref{CAR}.

Define the following free $\ring$-modules,
\begin{gather}
W_n^-=\bigoplus_{k=1}^n\ring \psi_k,\qquad W_n^+=\bigoplus^n_{k=1}\ring\psi^*_k,\qquad
 W_n=W_n^-\oplus W_n^+
\end{gather}
and set
\begin{equation}
    \mc{F}_n=\mc{C}_n/\mc{C}_n W^-\qquad\text{and}\qquad
    \mc{F}^*_n=\mc{C}_nW^+\backslash\mc{C}_n\;.
\end{equation}
The latter spaces are natural left and right $\mc{C}_n$-modules generated by the cyclic vectors
\begin{equation}
\ket{0}=1\!\!\mod \mc{C}_n W^-
\qquad\text{and}\qquad
\bra{0}=1\!\!\mod W^+\mc{C}_n\;,
\end{equation}
respectively. The cyclic vectors satisfy the relations
\[
\psi_i\ket{0}=0,\qquad\bra{0}\psi_i^*=0,\qquad i=1,\ldots,n\;.
\]
We call $\mc{F}_n$ the {\em fermionic Fock space} and $\mc{F}^*_n$ its dual, since there exists a symmetric bilinear form $\mc{F}^*_n\otimes_{\mc{C}_n}\mc{F}_n\to\ring$ called the {\em vacuum expectation value} (VEV) denoted by
\[
\tilde F\otimes F\mapsto \bra{0}\tilde FF\ket{0}=\langle \tilde F F\rangle
\]
The latter functional is defined via the following relations:
\begin{gather}
\langle 1\rangle=1,\quad\langle\psi_i\psi_j\rangle=\langle\psi_i^*\psi^*_j\rangle=\langle\psi^*_i\psi_j\rangle=0,\quad\langle\psi_i\psi^*_j\rangle=\delta_{ij}
\end{gather}
and for $F_i\in\{\psi_i,\psi^*_i\}$ we set
\[
\langle F_1\ldots F_r\rangle=\left\{\begin{array}{ll}
    \sum\limits_{w}(-1)^w\langle F_{w_1}F_{w_2}\rangle\cdots\langle F_{w_{r-1}}F_{w_r}\rangle, &\;r\;\text{ even}  \\
    0, &\;r\;\text{ odd} 
\end{array}
\right.\;,
\]
where the sum runs over all permutations $w=[w_1\ldots w_r]\in S_r$ such that $w_i<w_{i+1}$ for $i=1,\ldots,r-1$ and $w_1<w_3<\ldots <w_{r-2}<w_r$. This relation is known as {\em Wick's Theorem} in the physics literature, where it is used to compute density-density correlation functions for free fermion gases.

\begin{prop}
    Sending $\mc{F}_n\ni\ket{0}\mapsto 1\in\ring$ provides an isomorphism of (left) $\mc{C}_n(\ring)$-modules.
\end{prop}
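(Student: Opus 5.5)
The statement should be read as follows: the Fock space $\mc{F}_n$ is identified with the exterior algebra $\bigwedge V$ by sending $\ket{0}$ to $1\in\ring=\bigwedge^0V$. Here $\bigwedge V$ carries the left $\mc{C}_n(\ring)$-action in which $\psi_i^*$ acts by $v_i\wedge$ and $\psi_i$ acts by the contraction $\imath_{v^i}$ from \eqref{contraction}.

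The plan is to construct the map explicitly and then show it is bijective.

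\textbf{Construction of the map.}
1. Since the wedge and contraction operators satisfy the CAR \eqref{CAR}, $\bigwedge V$ is a left $\mc{C}_n(\ring)$-module.
2. The orbit map $\phi:\mc{C}_n\to\bigwedge V$, $a\mapsto a\cdot 1$, is therefore a homomorphism of left $\mc{C}_n$-modules.
3. Every contraction annihilates $1\in\bigwedge^0V$, so $\phi(a\psi_i)=a\cdot\imath_{v^i}(1)=0$. Hence $\phi$ vanishes on the left ideal $\mc{C}_nW^-$.
4. Consequently $\phi$ descends to a module map $\bar\phi:\mc{F}_n\to\bigwedge V$ with $\bar\phi(\ket{0})=1$.

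\textbf{Surjectivity.} For $i_1<\cdots<i_k$ we have
\[
\bar\phi\bigl(\psi^*_{i_1}\cdots\psi^*_{i_k}\ket{0}\bigr)=v_{i_1}\wedge\cdots\wedge v_{i_k}.
\]
These vectors, over all subsets $I$, form an $\ring$-basis of $\bigwedge V$, so $\bar\phi$ is surjective.

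\textbf{Spanning set for $\mc{F}_n$.} This is the main step. I will show that the $2^n$ vectors $\psi^*_I\ket{0}:=\psi^*_{i_1}\cdots\psi^*_{i_k}\ket{0}$, with $I$ increasing, span $\mc{F}_n$ over $\ring$. Since $\mc{F}_n$ is cyclic, it suffices to rewrite $m\ket{0}$ for an arbitrary monomial $m$ in the generators.

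1. Proceed by induction on the length of $m$, and for fixed length on the number of inversions, i.e.\ pairs where an annihilator stands to the left of a creator.
2. If the rightmost letter of $m$ is some $\psi_i$, then $m\ket{0}=0$.
3. Otherwise, if some $\psi_i$ stands immediately to the left of some $\psi^*_j$, replace $\psi_i\psi^*_j$ by $\delta_{ij}-\psi^*_j\psi_i$. This produces one term that is strictly shorter and one term with fewer inversions, so induction applies to both.
4. Once $m\ket{0}$ is reduced to terms whose monomials consist of creators only, use $\psi^*_i\psi^*_j=-\psi^*_j\psi^*_i$ to sort the indices. Taking $i=j$ in this relation gives $2(\psi_i^*)^2=0$.

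One point needs care at step 4: over $\ring=\Z[y]$ the relation $2(\psi_i^*)^2=0$ does not by itself give $(\psi^*_i)^2=0$. I would either read the CAR as including $\psi_i^{*2}=\psi_i^2=0$, as is standard, or avoid the issue by writing $(\psi_i^*)^2=\psi^*_i(\psi_i\psi^*_i+\psi^*_i\psi_i)\psi^*_i$ and noting that $\psi^*_i\psi_i\psi^*_i\psi^*_i$ still contains $\psi_i^{*2}$. The cleanest route is to adopt the convention $\psi_i^{*2}=0$ as part of the CAR.

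\textbf{Injectivity.} The spanning family $\{\psi^*_I\ket{0}\}$ is mapped bijectively by $\bar\phi$ onto a basis of the free module $\bigwedge V$. Suppose $\bar\phi\bigl(\sum_I a_I\psi^*_I\ket{0}\bigr)=0$. Then $\sum_I a_I\,v_I=0$ in $\bigwedge V$, so every $a_I=0$. Hence $\bar\phi$ is injective, and the family $\{\psi^*_I\ket{0}\}$ is in fact an $\ring$-basis of $\mc{F}_n$. Together with surjectivity this shows $\bar\phi$ is an isomorphism of left $\mc{C}_n(\ring)$-modules sending $\ket{0}\mapsto 1$.

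I expect the only step requiring real care to be the normal-ordering reduction, in particular choosing an induction order that terminates and handling the squared generators. The remaining steps follow formally from freeness of $\bigwedge V$.
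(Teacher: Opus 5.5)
Your proposal is correct, provided you adopt $\psi_i^2=(\psi_i^*)^2=0$. It is more complete than the paper's argument.

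The paper only says that, since $\ket{0}$ is cyclic, it suffices to check that the assignment $\psi^*_{i_1}\cdots\psi^*_{i_k}\ket{0}\mapsto v_{i_1}\wedge\cdots\wedge v_{i_k}$ intertwines the Clifford action, which it calls obvious. This tacitly assumes that the ordered monomials $\psi^*_I\ket{0}$ form an $\ring$-basis of $\mc{F}_n$, so that the assignment is a well-defined bijection.

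You avoid that assumption. The orbit map of $1\in\bigwedge V$ kills $\mc{C}_nW^-$, so the module map exists by the universal property of the quotient. Spanning then follows from your normal-ordering induction, and linear independence follows by comparison with the free module $\bigwedge V$. Your route therefore proves the basis statement the paper takes for granted. The paper's version is shorter but is only a sketch.

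Your worry about squares is not pedantry. With the CAR read literally over $\ring=\Z[y_1,\ldots,y_n]$, the statement is false. Take the polynomial ring $\mb{F}_2[x_1,\ldots,x_n]$, with $y_j$ acting by $0$, $\psi_i^*$ acting by multiplication with $x_i$, and $\psi_i$ acting by $\partial/\partial x_i$.
\begin{itemize}
\item In characteristic $2$, anticommutators are commutators, so all relations in \eqref{CAR} hold.
\item The vector $1$ is killed by every $\psi_i$, so this ring is a quotient of $\mc{F}_n$ with $\ket{0}\mapsto 1$.
\item In it, $(\psi_i^*)^2\ket{0}\mapsto x_i^2\neq 0$.
\end{itemize}
In particular $\mc{F}_n$ is not even finitely generated over $\ring$.

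So imposing $\psi_i^2=(\psi_i^*)^2=0$ (the standard Clifford convention $w^2=Q(w)$ for the hyperbolic form on $W_n$) is necessary, not merely the cleanest option. Your alternative manipulation $(\psi_i^*)^2=\psi^*_i(\psi_i\psi^*_i+\psi^*_i\psi_i)\psi^*_i$ cannot remove the issue, and you should drop it.

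With that convention stated up front, every step of your argument goes through as written.
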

\begin{proof}
Since $\ket{0}$ is cyclic, it suffices to show that the map $\psi^*_{i_1}\ldots\psi^*_{i_k}\ket{0}\mapsto v_{i_1}\wedge\ldots\wedge v_{i_k}$ for any Grassmannian permutation $w=[i_1\ldots i_n]\in S^{k,n}$ with $k=0,1,\ldots,n$ intertwines the action of the Clifford algebra. But this is obvious from the definitions.
\end{proof}

\begin{remark}\rm
    By similar means one shows that $(\bigwedge V)^*\simeq\bigwedge V^*$ has the structure of a {\em right} $\mc{C}_n(\ring)$-module isomorphic to the dual Fock module $\mc{F}^*$. The Clifford generators naturally induce maps $\psi_i:\bigwedge^kV^*\to\bigwedge^{k+1}V^*$ and $\psi^*_i:\bigwedge^kV^*\to\bigwedge^{k-1}V^*$. Moreover, for any $w=[i_1\ldots i_n]\in S^{k,n}$ one can identify $\bra{0}\psi_{i_k}\cdots\psi_{i_1}\in\mc{F^*}$ with the dual Schubert class $\sigma^w$ satisfying $\langle\sigma^u,\sigma_w\rangle=\delta_{uw}$ with respect to the Poincar\'e bilinear form.
\end{remark}
\subsection{Push forward and pull back maps}\label{sec:pushpull}
The next result describes the action of the Clifford algebra on the equivariant cohomology of Grassmannians under the Satake map; to the best knowledge of the authors this geometric description is new. Recall the diagram \eqref{pushpull} from the introduction, 
where $p_1$ and $p_2$ are the natural maps that project a flag $F=(F_1\subset F_2\subset\C^n)\in\Fl(k,k+1;n)$ onto its first and second element, respectively. Denote by $\cS_k$ and $\cS_{k + 1}$ the tautological bundles over the Grassmannians $\Gr(k;n)$ and $\Gr(k + 1;n)$ respectively and by $\hat\cS_k=p_1^* (\cS_k)$ and $\hat\cS_{k+1}=p_2^*(\cS_{k + 1})$ their pull backs to the two-step flag variety $\Fl(k, k+1; n)$. 
\begin{prop}\label{fermionform}
     We have the following identification of maps under the $\ring$-module isomorphism $\Sigma=\bigoplus_{0\le k\le n}\Sigma_k$ from \eqref{Satake_dom}: 
    \begin{gather}\label{fermgeom}
       \Sigma_{k+1}\circ\psi^*_{l+1}= (p_2)_{*}\circ\prod_{i=1}^l \,(c_1( \hat\cS_{k + 1}/\hat\cS_k )-y_i)\circ p_1^*\circ\Sigma_k\\
       \Sigma_{k-1}\circ \psi_{n - l}=(-1)^{k - 1}(p_1)_* \circ\prod_{i=1}^l (c_1(\hat\cS_{k }/\hat\cS_{k - 1} )-y_{n+1-i})\circ p_2^*\circ\Sigma_k ,
    \end{gather}
    where $0\le k,l\le n$ and we recall (for any partial flag variety) the identity $c_1(\hat\cS_{k+1}/\hat\cS_k)=x_{k+1}$, the $(k+1)$th Chern root.
\end{prop}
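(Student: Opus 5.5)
We work in equivariant localization: all spaces involved have isolated $T$-fixed points, so we check the identities after restriction to fixed points. On $\Gr(k;n)$ the fixed points are coordinate subspaces $\C^I$, $|I|=k$, and $\sigma_w|_I$ is the factorial Schur polynomial $s_{\lambda(w)}(y_I|y)$ from Appendix \ref{app:A}. On $\Fl(k,k+1;n)$ the fixed points are pairs $(I,j)$ with $j\notin I$, where $F_1=\C^I$ and $F_2=\C^{I\cup\{j\}}$, and there $x_{k+1}=c_1(\hat\cS_{k+1}/\hat\cS_k)$ restricts to $y_j$. The projection $p_2$ is a $\mb{P}^k$-bundle, so Atiyah–Bott gives the pushforward formula
\[
\bigl((p_2)_*\alpha\bigr)\big|_{J}=\sum_{j\in J}\frac{\alpha|_{(J\setminus j,\,j)}}{\prod_{i\in J\setminus j}(y_i-y_j)},
\]
with $p_1$ handled in the same way using the $\mb{P}^{n-k-1}$ fibres and the roots $y_m-y_j$ for $m\notin I$. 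The signs of these weights depend on the conventions, and we will fix them by comparing with the $l=0$ case below.

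The right-hand side of the first identity, evaluated at $J$, is then the divided-difference (Lagrange interpolation) expression
\[
\sum_{j\in J}\frac{(y_j|y)^l\, s_\lambda(y_{J\setminus j}|y)}{\prod_{i\in J\setminus j}(y_i-y_j)}.
\]
This is exactly the bialternant/determinant expansion of a factorial Schur polynomial in $k+1$ variables along the row of the new variable. For $\lambda=\lambda(w)$, $w\in S^{k,n}$, write $s_\lambda(x|y)$ as $\det\bigl((x_a|y)^{\lambda_b+k-b}\bigr)$ divided by a Vandermonde determinant. Adding the variable $x_{k+1}=y_j$ with monomial $(y_j|y)^l$ and summing over $j$ gives $\det\bigl((x_a|y)^{m_b}\bigr)/\Delta(x)$, where the exponent set is $\{l\}\cup\{\lambda_b+k-b\}$. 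Recall that $\ket{w}=v_{i_1}\wedge\cdots\wedge v_{i_k}$ with $i_j-1=\lambda_{k+1-j}+j-1$. Under this matching, the exponent set is $\{i_j-1\}\cup\{l\}$, which is the label of $v_{l+1}\wedge\ket{w}$. Reordering the exponents gives precisely the sign of $\psi^*_{l+1}$: it is zero if $l+1\in I$ (repeated columns), and otherwise the sign of the permutation sorting $l+1$ into place. That ordering sign must agree with the wedge convention $\psi^*_i=v_i\wedge$, which puts $v_i$ first. So the first identity reduces to the classical identity $s_\mu(x_1,\dots,x_{k+1}|y)=\sum_j(\ldots)$, i.e. the Laplace expansion of the alternant along one row. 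The case $l=0$ recovers $(p_2)_*p_1^*$ as a check, and we compare it with the first sentence of the proposition.

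The second identity will be proved in one of two ways, and we commit to the first. The first option is to run the same localization computation for $p_1$. Here the fibre direction adds an index $m\notin I$, and the weight $\prod_i(y_m-y_{n+1-i})$ kills every $m\ge n+1-l$. The sum over $m$ then becomes an interpolation in the complementary variables, so we use the dual (complementary) presentation $s_\lambda(y_I|y)$ in terms of $s_{\lambda'}$ of the complement with reversed parameters. The resulting determinant again has an added or removed row, matching the contraction $\imath_{v^{n-l}}$. The second option is to deduce it from the first identity by the duality $\Gr(k;n)\cong\Gr(n-k;n)$, $F\mapsto F^\perp$. This duality exchanges $p_1$ and $p_2$, reverses the parameters $y_i\mapsto -y_{n+1-i}$, and sends wedge to contraction.

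The main obstacle will be the bookkeeping of signs: the $(-1)^{k-1}$, the orientation of the tangent weights, and the wedge reordering sign. These have to match exactly, including checking that the second identity's product index $n+1-i$ pairs correctly with $\psi_{n-l}$. We will handle this by first verifying the cases $k=1$ and $n=2,3$ explicitly, and only then fixing the conventions in general.
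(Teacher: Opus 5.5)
Your argument is correct, but it takes a different route from the paper's proof.

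\textbf{The paper's route.} It proves the first identity by induction on $l$. The case $l=0$ is imported from the localization computation in \cite[Prop.~4.6]{GKS20}. For the step $l\to l+1$, it writes $x_{k+1}=c_1(\hat\cS_{k+1})-c_1(\hat\cS_k)$ and uses the projection formula, so that the geometric operators satisfy the recurrence \eqref{recferm}. It then checks on $V^{\otimes(k+1)}$, via the Satake identification $c_1(\cS_k)\leftrightarrow X_1+\cdots+X_k$, that $v_{l+1}\wedge(\cdot)$ obeys the same recurrence, because $Xv_{l+1}=v_{l+2}+y_{l+1}v_{l+1}$. The second identity is handled the same way with contractions.

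\textbf{Your route.} You localize for every $l$ at once and recognise the fixed-point sum as the Laplace expansion of the alternant along the new row. This is exactly the computation the paper performs later for the shuffle product \eqref{simpleshuffle}. Your proof therefore shows directly that $(p_2)_*\circ(x_{k+1}|y)^l\circ p_1^*$ is the shuffle product with $(x|y)^l$.

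\textbf{Signs.} They close up in both identities.
\begin{itemize}
\item First identity: with your denominator $\prod_{i\in J\setminus j}(y_i-y_j)$, the expansion gives $(-1)^{\#\{b:\,\lambda_b+k-b<l\}}s_\mu(y_J|y)$. This is precisely the sign of sorting $v_{l+1}\wedge\ket{w}$.
\item Second identity: rewrite $s_\lambda(y_K|y)=s_{\lambda'}(-y_{\bar K}\,|\,-y_n,\ldots,-y_1)$, with $\bar K$ the complement of $K$. The weight $\prod_{i=1}^l(y_m-y_{n+1-i})$ then becomes $(-1)^l$ times a falling factorial in the reversed parameters. Combined with the sign relating contraction to wedging on the complementary side (cf.\ \eqref{LRpsi}), this gives exactly $(-1)^{k-1}$.
\end{itemize}

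\textbf{One caution.} The sign of the tangent weights is not a free convention to be fitted to $l=0$. It is forced by requiring $\sigma_w|_I=s_{\lambda(w)}(y_I|y)$ to be restrictions of actual Schubert classes, and your denominator is the correct one in that normalisation. Since the $l=0$ case is itself part of what the proposition asserts, you should derive this sign rather than calibrate it against $l=0$.

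\textbf{Trade-offs.} Your route is self-contained: it proves the base case that the paper cites. It also gives explicit fixed-point formulas and puts the shuffle/CoHA description on a geometric footing. The cost is all the Euler-class and determinant sign bookkeeping, plus the complementary identity for factorial Schur restrictions needed for the second formula. The paper's induction confines localization to the base case and isolates the recurrence structure. That structure is what the paper reuses later for the quantum operators $\widetilde\psi^*_i,\widetilde\psi_i$, where no classical fixed-point formula is available.
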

N.B. the formulae \eqref{fermgeom} are nothing else but the solution of the geometric recurrence relations \eqref{recferm} with initial conditions \ref{initial} from the introduction. From this recursive definition it is immediate that $\psi_{i}^* = \psi_{n +1 - i} = 0$ for $i > n$. By abuse of notation we shall henceforth use the same symbols to denote the algebraically and geometrically defined operators $\psi_i,\psi_i^*$.

\begin{proof}
    Let us prove the first formula in \eqref{fermgeom}, the second one is proved in the same manner. We proceed by induction, the base of induction follows from \cite[Prop.4.6]{GKS20}, where equivariant localisation was used to compute the matrix elements of the push-pull operators $(p_1)_* p_2^*$ and $(p_2)_* p_1^*$.  
    Namely, we have according to \eqref{initial} that
    \[
    \psi^*_1(\sigma_\lambda)=\left\{\begin{array}{ll}
    \sigma_{(\lambda_1-1,\ldots,\lambda_k-1)},&\ell(\lambda)=k\\
    0,&\text{ else}
    \end{array}
    \right.\;,\quad
    \psi_n(\sigma_\lambda)=\left\{\begin{array}{ll}
    (-1)^{k-1}\sigma_{\mu},&\lambda_1=n-k\\
    0,&\text{ else}
    \end{array}
    \right.\;,
    \]
    where $\lambda/\mu$ is a maximal rim hook of length $n$, which matches with the action on $\bigwedge^kV$ under the map \eqref{Satake}. 
    
    Let us assume that \eqref{fermgeom} holds true for $l \leq n-2$, then using formula \eqref{recferm} for any $v \in \bigwedge^{k}V$ we have
    \begin{multline*}
    (\Sigma^{-1}_{k + 1} \circ c_1(\cS_{k + 1}) \circ \Sigma_{k + 1}  \circ \psi^*_{l + 1} - \psi^*_{l + 1} \circ \Sigma_k^{-1} \circ c_1(\cS_k) \circ \Sigma_k - y_{l + 1} \circ \psi^{*}_{l + 1})v
    \\
    = (X_1 + \ldots X_{k + 1})(v_{l + 1} \wedge v) - v_{l + 1} \wedge (X_1 + \ldots X_k)v - y_{l + 1} v_{l + 1} \wedge v.
    \end{multline*}
    Let us lift the last equality to the tensor product $V^{\otimes (k + 1)}$ resulting in
    \begin{multline*}
    (X_1 + \ldots X_{k + 1})(v_{l + 1} \otimes \widetilde{v})
    - v_{l + 1} \otimes (X_1 + \ldots X_k) \widetilde{v} - y_{l + 1} v_{l + 1} \otimes \widetilde{v} 
    \\
    = (X v_{l + 1}) \otimes \widetilde{v} - y_{l + 1} v_{l + 1} \otimes \widetilde{v} = (v_{l + 2} + y_{l + 1} v_{l + 1}) \otimes \widetilde{v} - y_{l + 1} v_{l + 1} \otimes \widetilde{v} = v_{l + 2} \otimes \widetilde{v},
    \end{multline*}
    where $\widetilde{v} \in V^{\otimes k}$ is a lift of $v \in \bigwedge^k V$. Projecting back onto $\bigwedge^kV$ we obtain
    \[
    (\Sigma^{-1}_{k + 1} \circ c_1(\cS_{k + 1}) \circ \Sigma_{k + 1}  \circ \psi^*_{l + 1} - \psi^*_{l + 1} \circ \Sigma_k^{-1} \circ c_1(\cS_k) \circ \Sigma_k - y_{l + 1} \circ \psi^{*}_{l + 1})v = v_{l + 2} \wedge v,
    \]
    which finishes the proof of the statement.
\end{proof}

\section{Equivariant quantum Schubert calculus via the Clifford algebra}
We now extend our construction to the quantum case. There are two ways of motivating the definition of the quantum ring $qH^*(\Gr(k;n))$ from an algebraic/combinatorial perspective -- both are of course interlinked:
\begin{enumerate}
    \item Consider the loop algebra $\gl_n[z,z^{-1}]=\gl_n(\ring)\otimes_\ring\ring[z,z^{-1}]$ and its natural action on $V[z,z^{-1}]=V\otimes_\ring\ring[z,z^{-1}]$. Define a `quantised' Chern root by considering the element
    \begin{equation}\label{X}
    X^q=X^q_{\rm n}+X_{\rm s}=z\,E_{1,n}+\sum_{i=1}^{n-1}E_{i+1,i}+\sum_{i=1}^ny_iE_{ii}\;,
    \end{equation}
    where the loop parameter $z$ is up to a sign the quantum parameter $q$. As we will show below the matrix \eqref{X} now satisfies the defining relations of $qH^*_T(\mb{P}^{n-1})$. Thus, when extending the Satake map \eqref{Satake}, the elementary symmetric polynomials $e_r(X^q_1,\ldots,X^q_k)$ now describe the {\em quantum} multiplication by the tautological Chern classes $c_r(\cS_k)$ in $qH^*(\Gr(k;n))$ and the double Schubert polynomial $\fS_w(X_1^q,\ldots,X_k^q;y)$ the {\em quantum} multiplication by a Schubert class.
    \item Extend the left Weyl group action from Lemma \ref{lem:Waction} to an action of the extended affine Weyl group $W=\Z^n\rtimes S_n$, where the translations $t_i=t_{\epsilon_i}$ generating the normal subgroup $\Z^n$ act on $\bigwedge^kV\otimes_\ring\ring[z,z^{-1}]$ by
    \[
    t_i=t(-y_i)=(X^q_1-y_i\cdot 1)\cdots (X_k^q-y_i\cdot 1),\quad i=1,\ldots,n\;;
    \]
    compare with Lemma \ref{lem:transfermatrix}. In particular, the affine Weyl reflection $s_0$ acts according to
    \begin{equation}\label{s0}
    s_0\mapsto g_0=\exp\left(z^{-1}\op{ad}_{X_{\rm s}}E_{n,1}\right)
    =1+z^{-1}(y_n-y_1)E_{n,1}
    \end{equation}
    and $g^{-1}_0.X^q.g_0=(X^q)^{s_0}$ where $(X^q)^{s_0}$ denotes the matrix $X^q$ with the equivariant parameters $y_1$ and $y_n$ swapped; compare with Lemma \ref{lem:Winvariance}. In particular, we will show via Formula \eqref{fermionformula} that the action of the normal subgroup $\Z^n\le W$ for projective space $\mb{P}^{n-1}=\Gr(1;n)$ `generates' the quantum multiplication in $qH^*_T(\Gr(k;n)$ together with the action of the Clifford algebra.
\end{enumerate}

Because both of these algebraic descriptions of the quantum multiplication can be expressed in terms of the Clifford algebra, this leads to a recurrence formula of the quantum product in $qH^*(\Gr(k;n))$ in terms of push-pull maps.

\subsection{Quantised Chern roots}
Set $V[z^{\pm 1}]=V\otimes_\ring\ring[z^{\pm 1}]$. The matrix $X^q\in\gl_n[z^{\pm 1}]$ defined in \eqref{X} reads explicitly,
\begin{equation}\label{Xmatrix}
    X^q=\begin{pmatrix}
    y_1 & 0 &\cdots &0 & z\\
    1& y_{2} &0 && 0\\
    0 &\ddots& \ddots &\ddots &\vdots\\
    \vdots &&1& y_{n-1}&0\\
    0& \cdots&0& 1 & y_n
    \end{pmatrix}\;.
\end{equation}
We shall identify the latter with the quantised Chern root of projective space because of the following Lemma.
\begin{lemma} \label{lem:BAE}
For $1\le m \leq n$ we have that
    \begin{multline*}
        (X^q-y_1\cdot 1)\cdots(X^q-y_m\cdot1)=\\
        z \delta_{m,n}\cdot 1 + \sum_{i-j=m}E_{ij}+z\sum_{i-j=m-n}E_{ij}+
    \sum_{0\le i-j<m}p^{(m)}_{ij}(y)E_{ij}+z\sum_{i-j<m-n}p^{(m)}_{ij}(y)E_{ij}
    \end{multline*}
    where the $p^{(m)}_{ij}(y)$ are either zero or homogeneous polynomials of degree $m-i+j$ if $i>j$ and of degree $m-n-i+j$ if $i<j$, concretely
    \[
        p_{ij}^{(m)} = \left\{
        \begin{array}{ll}
            h_{m + j - i}(y_j,\ldots,y_i|y), \quad i \geq j \\
            h_{m - n - i + j}(y_j, \ldots, y_n, y_1, \ldots, y_i|y), \quad i < j. 
        \end{array}
        \right.
    \]
    For $m=n$ we have
    \begin{equation}\label{GrassBAE}
    \prod_{j=1}^n(X^q-y_j\cdot1)=z\cdot 1\;.
\end{equation}
In particular, $\det(X^q-y_j\cdot1)=z$ and, thus, each $(X^q-y_j\cdot 1)\in\GL_n(\ring[z^{\pm1}])$ for all $j=1,\ldots,n$.
\end{lemma}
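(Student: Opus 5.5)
Note that $X^q = X + zE_{1,n}$, with $X$ as in \eqref{X0}. I will compute the action of the product on basis vectors $v_j$, column by column, as in the proof of Lemma \ref{lemX0}, and induct on $m$.

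The key observation is how $X^q - y_{m+1}$ acts on a standard basis vector:
\[
(X^q - y_{m+1})v_i = v_{i+1} + (y_i - y_{m+1})v_i \quad (i<n), \qquad (X^q - y_{m+1})v_n = z v_1 + (y_n - y_{m+1})v_n .
\]
So the quantum correction enters only when the ``wrap-around'' index $n$ is reached. I will therefore track, in the image of $v_j$, the coefficients on the indices $i = j, j+1, \ldots$ read cyclically modulo $n$. A coefficient picks up a factor of $z$ once the index has passed from $n$ to $1$.

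The induction step should go as follows. Assume the column $(X^q-y_1)\cdots(X^q-y_m)v_j$ equals
\[
\sum_{i} h_{m+j-i}(y_j,\ldots,y_i|y)\, v_i ,
\]
where $i$ runs cyclically from $j$ to $j+m$, the sequence $y_j,\ldots,y_i$ is read cyclically, and the coefficient carries a factor $z$ after wrapping. Apply $(X^q-y_{m+1})$ and reindex. Each coefficient then becomes
\[
h_{m+j+1-i}(y_j,\ldots,y_{i-1}|y) + (y_i - y_{m+1})\, h_{m+j-i}(y_j,\ldots,y_i|y),
\]
which equals $h_{m+1+j-i}(y_j,\ldots,y_i|y)$ by the branching rule for factorial complete symmetric polynomials (Appendix \ref{app:A}). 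That rule applies unchanged to cyclically ordered variable lists, because it only involves appending one variable and the falling-factorial shift, which depends on the degree and not on the names of the variables.

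Translating the cyclic bookkeeping into matrix entries gives the stated formula:
\begin{itemize}
\item the top entries $i-j=m$ are $h_0 = 1$;
\item the wrapped entries with $i - j = m - n$ give $z\cdot 1$;
\item the remaining entries carry $h_{m+j-i}$ for $i\ge j$, and $z\,h_{m-n-i+j}(y_j,\ldots,y_n,y_1,\ldots,y_i|y)$ for $i<j$.
\end{itemize}

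The main obstacle is the case $m=n$. Here the cyclic range from $j$ to $j+n$ returns to the starting index $j$ after one full turn, so the diagonal entry receives two contributions: $h_n(y_j|y)$ from the unwrapped start, and $z\,h_0 = z$ from the wrapped end. This is the source of the term $z\delta_{m,n}\cdot 1$. I must then show that every other entry vanishes.

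The unwrapped entries are $h_{n+j-i}(y_j,\ldots,y_i|y)$ with $i\ge j$. These are factorial complete symmetric polynomials in $i-j+1$ variables of degree $n+j-i$. They vanish by the standard vanishing property: with $r = i-j+1$ variables, $h_d(y_j,\ldots,y_i|y)=0$ when the shifted factorial product contains the full set $\{y_1,\ldots,y_n\}$, which is the case $d+r-1\ge n$. I will verify this instance explicitly from the generating function in Appendix \ref{app:A}. Alternatively, one can argue by direct analogy with \eqref{BAE0}: the $z^0$ part of the product is exactly $\prod_j(X-y_j)=0$ from Lemma \ref{lemX0}. The $z^1$ part contains only entries of degree $0$, namely the wrapped diagonal. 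There are no $z^2$ terms, since each column wraps at most once in $n$ steps.

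Finally, taking determinants in \eqref{GrassBAE} gives
\[
\prod_j \det(X^q-y_j) = z^n .
\]
Expanding $\det(X^q-y_j)$ along its cyclic structure gives $\prod_i(y_i-y_j) + (-1)^{n-1}(-1)^{n-1}z = z$, since the factor with $i=j$ vanishes. Hence each $\det(X^q-y_j)=z$, which is a unit in $\ring[z^{\pm1}]$, so each $X^q-y_j$ is invertible.
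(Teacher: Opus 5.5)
Your induction on $m$ is correct for $m<n$, and it is the argument the paper intends when it says the proof follows the lines of Lemma~\ref{lemX0}. You track each column $v_j$ cyclically, pick up $z$ at $v_n\mapsto zv_1$, and close the step with $h_d(x_1,\ldots,x_r|y)=h_d(x_1,\ldots,x_{r-1}|y)+(x_r-y_{r+d-1})h_{d-1}(x_1,\ldots,x_r|y)$, where $r+d-1=m+1$ for unwrapped and wrapped entries alike. (At $m=n$ the terms $z\delta_{m,n}\cdot1$ and $z\sum_{i-j=m-n}E_{ij}$ of the statement are the same wrapped diagonal and must be counted once, as you do.)

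\textbf{Gap at $m=n$.} You only prove vanishing of the unwrapped entries $h_{n+j-i}(y_j,\ldots,y_i|y)$. The wrapped off-diagonal entries $z\,h_{j-i}(y_j,\ldots,y_n,y_1,\ldots,y_i|y)$, $i<j$, must vanish too, and your fallback does not show it. The grading ($\deg v_i=i-1$, $\deg y_l=1$, $\deg z=n$) only says the $(i,j)$ entry of the $z^1$-part has $y$-degree $j-i$. That degree is positive for $i<j$, so ``the $z^1$-part has only degree-$0$ entries'' is exactly what must be proved.

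The fix is to apply your vanishing property to the cyclic list as well. It has $r=n-j+1+i$ variables and degree $d=j-i$, so $d+r-1=n$. State the property precisely: from the alternant, $h_d(x_1,\ldots,x_r|y)=\sum_k (x_k|y)^{d+r-1}/\prod_{l\ne k}(x_k-x_l)$. This vanishes when the $x_k$ are \emph{pairwise distinct} elements of $\{y_1,\ldots,y_n\}$ and $d+r-1\ge n$. Distinctness matters: $h_{n-1}(y_1,y_1|y)=\prod_{l\ge2}(y_1-y_l)\neq0$. A simpler route is Cayley--Hamilton: only the identity and the $n$-cycle contribute to $\det(t\cdot1-X^q)=\prod_i(t-y_i)-z$, so \eqref{GrassBAE} follows at once.

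\textbf{The determinant step is wrong.} In $\det(X^q-y_j\cdot1)$ only two permutations contribute:
the identity gives $\prod_i(y_i-y_j)=0$, and the $n$-cycle through the entries $(1,n),(2,1),\ldots,(n,n-1)$ gives $(-1)^{n-1}z$. There is no second sign to cancel, so $\det(X^q-y_j\cdot1)=(-1)^{n-1}z$. For $n=2$, $j=1$: $\det\begin{pmatrix}0&z\\1&y_2-y_1\end{pmatrix}=-z$.

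So the lemma's claim $\det(X^q-y_j\cdot1)=z$ is itself off by a sign for even $n$, and your factor $(-1)^{n-1}(-1)^{n-1}$ was forced to match it. Taking determinants in \eqref{GrassBAE} cannot detect this, since $(-1)^{n(n-1)}=1$.

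The conclusion that matters survives. $\pm z$ is a unit in the Laurent ring. More directly, since the factors commute, \eqref{GrassBAE} gives $(X^q-y_j\cdot1)^{-1}=z^{-1}\prod_{l\neq j}(X^q-y_l\cdot1)$ without any determinant.
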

\begin{proof}
    The proof follows the same lines as for Lemma \ref{lemX0}, and thus we omit it.  
\end{proof}

\subsection{The equivariant quantum Satake map for Grassmannians}
In close analogy with the non-quantum case we now define the quantum Satake map starting with projective space. Recall that
\[
qH^*_T(\mb{P}^{n-1})\cong\ring[q^{\pm 1}][x]/\langle(x-y_1)\cdots(x-y_n)-q\rangle
\]
where we have assumed the quantum parameter to be invertible in order to define the action of the extended affine Weyl group below.

Define an isomorphism $\Sigma^q_1:V[z^{\pm 1}]/\langle z-q\rangle\to qH^*_T(\mb{P}^{n-1})$ of $\ring[q^{\pm1}]$-modules by sending the standard basis $\{v_i~:~1\le i\le n\}$ in $V[z^{\pm 1}]/\langle z-q\rangle$ to the Schubert basis in $qH^*_T(\mb{P}^{n-1})$ as before,
\begin{equation}\label{qSatake1}
\Sigma^q_1:\quad v_{i + 1}\mapsto \sigma_{i}=(x-y_1)\cdots(x-y_i),\qquad \forall i=0,1,\ldots,n-1\;.
\end{equation}
However, we now define a different multiplication map on $V[z^{\pm 1}]/\langle z-q\rangle$ setting instead $v_i\star v_j=(X^q-y_1\,1)\cdots(X^q-y_{i - 1}\,1).v_j$, where $X^q$ is the matrix defined above.
\begin{prop}
    The $\ring[q^{\pm 1}]$-algebra $(V[z^{\pm 1}]/\langle z-q\rangle,\star)$ is associative and commutative. Moreover, the map \eqref{qSatake1} is an isomorphism of $\ring[q^{\pm1}]$-algebras and 
    \[
    \Sigma^q_1\circ(1+X^q)=c(\mc{S})\circ\Sigma^q_1\;,
    \]
    where $c(\mc{S})$ now denotes the {\em quantum} multiplication operator by the total Chern class of the tautological bundle $\cS=\cS_1$.
\end{prop}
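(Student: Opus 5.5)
The plan is to show that $v_1$ is a cyclic vector for the commutative algebra $\ring[q^{\pm1}][X^q]$ acting on $V[z^{\pm1}]/\langle z-q\rangle$, and to transport the ring structure of $qH^*_T(\mb{P}^{n-1})$ along the resulting identification.

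\textbf{Step 1: the polynomial map.} Consider
\[
\phi:\ring[q^{\pm1}][x]\to V[z^{\pm1}]/\langle z-q\rangle,\qquad f(x)\mapsto f(X^q)\,v_1 .
\]
Apply Lemma \ref{lem:BAE} to the column $j=1$ with $m<n$. Only the terms with $i-1=m$ and $0\le i-1<m$ contribute; the $z$-terms require $i<j=1$ and so vanish. Hence
\[
(X^q-y_1)\cdots(X^q-y_m)\,v_1=v_{m+1}.
\]
The same identity can be checked directly, since $(X^q-y_{m})v_{m}=v_{m+1}$ for $m<n$. Thus $\phi(\sigma_i)=v_{i+1}$, so $\phi$ is surjective and reproduces the assignment \eqref{qSatake1}. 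By \eqref{GrassBAE}, $\phi\big((x-y_1)\cdots(x-y_n)-q\big)=(z-q)v_1=0$. So $\phi$ descends to a map $qH^*_T(\mb{P}^{n-1})\to V[z^{\pm1}]/\langle z-q\rangle$. Both sides are free of rank $n$, and this map sends the Schubert basis to the standard basis, so it is an isomorphism whose inverse is exactly $\Sigma^q_1$.

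\textbf{Step 2: the product.} By construction, $v_i\star v_j=\sigma_{i-1}(X^q)\,v_j$. Writing $v_j=\sigma_{j-1}(X^q)v_1$ gives
\[
v_i\star v_j=(\sigma_{i-1}\sigma_{j-1})(X^q)\,v_1=\phi(\sigma_{i-1}\sigma_{j-1}).
\]
Therefore $\Sigma^q_1(v_i\star v_j)=\sigma_{i-1}\star\sigma_{j-1}$, computed in the quotient ring. Associativity and commutativity follow at once, because polynomials in the single matrix $X^q$ commute and $\Sigma^q_1$ is a bijection that intertwines $\star$ with the quantum product. The unit is $v_1$, since $v_1\star v_j=v_j$ (an empty product of factors).

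\textbf{Step 3: the Chern class identity.} In the presentation of $qH^*_T(\mb{P}^{n-1})$, the total Chern class of $\mc{S}$ is $1+x$, following the convention of the non-quantum proposition. Then
\[
\Sigma^q_1\big((1+X^q)\,f(X^q)v_1\big)=(1+x)\star f .
\]
This is precisely the identity $\Sigma^q_1\circ(1+X^q)=c(\mc{S})\circ\Sigma^q_1$.

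\textbf{Main obstacle.} The only delicate point is matching the geometric input: that Mihalcea's presentation of $qH^*_T(\mb{P}^{n-1})$ has relation $\prod_j(x-y_j)=q$, and that the classes $\sigma_i=(x-y_1)\cdots(x-y_i)$ are genuinely the quantum Schubert classes with $x=c_1(\mc{S})$ (up to the sign convention for $c_1$ of the tautological bundle). For $k=1$ there is no quantum correction to these products, since degree considerations give $i<n$. So this reduces to the case $k=1$ of \eqref{QHdef}, where the sign $(-1)^k$ and the sign convention for $h_n$ must be checked to give $+q$.
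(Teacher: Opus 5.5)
Your proof is correct and is essentially the paper's argument: the paper only says the statement is an immediate consequence of the matrix identity \eqref{GrassBAE}, and your Steps 1--3 unpack exactly that sentence (cyclicity of $v_1$, descent of $f\mapsto f(X^q)v_1$ through the relation $\prod_j(x-y_j)=q$, and matching the Schubert basis with the standard basis). One small loose point: reading $(X^q-y_1)\cdots(X^q-y_m)v_1=v_{m+1}$ off Lemma \ref{lem:BAE} also needs the lower terms $p^{(m)}_{i1}(y)=h_{m+1-i}(y_1,\ldots,y_i|y)$, $i\le m$, to vanish, but your direct check $(X^q-y_m)v_m=v_{m+1}$ already covers this.
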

\begin{proof}
    The proof is an immediate consequence of the matrix identity \eqref{GrassBAE}.
\end{proof}
Since $X^q\in\gl_n(\ring[z^{\pm1}])$ there is a natural action of the symmetric polynomials in the quantised Chern roots
\[
X_i^q=1\otimes\cdots\otimes1\otimes\underset{i}{X^q}\otimes1\otimes\cdots\otimes1,\quad i=1,\ldots,k
\]
on the $\ring[z^{\pm1}]$-module $V^{\otimes k}\otimes_\ring\ring[z^{\pm1}]\cong (V)^{\otimes k}[z^{\pm 1}]$ using the standard coproduct formula,
\[
\Delta^{k-1}(X^q)=X_1^q+\cdots +X^q_k,
\]
where $\Delta$ is an $R[z^{\pm1}]$-algebra morphism.
\begin{prop}\label{prop:quantumSatake}
    Define an isomorphism $\Sigma^q_k:\bigwedge^kV[z^{\pm1}]/\langle z+(-1)^kq\rangle\to qH^*_T(\Gr(k;n))$ of $\ring[q^{\pm1}]$-modules by naturally extending the map \eqref{Satake} over the base $\ring[q^{\pm 1}]$.  
    Then this map becomes an isomorphism of $\ring[q^{\pm1}]$-algebras if we in addition set
    \begin{equation}
    \Sigma^q_k\circ \fS_w(X^q_1,\ldots,X^q_k|y)=\sigma_w\circ\Sigma^q_k,\qquad
    \forall w\in S^{k,n},
    \end{equation}
    where $\fS_w(x;y)$ is the double Schubert polynomial and the Schubert classes $\sigma_w$ on the right hand side act by {\em quantum} multiplication in $qH^*_T(\Gr(k;n))$.
\end{prop}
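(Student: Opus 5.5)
We follow the strategy announced in the introduction and reduce everything to Mihalcea's characterisation \cite[Cor.7.1]{MihalceaAIM}. That result says $qH^*_T(\Gr(k;n))$ is the unique graded commutative associative $\ring[q]$-algebra with a basis $\{\sigma_w\}$ satisfying the equivariant quantum Chevalley rule for $\sigma_{s_k}\star\sigma_w$, subject to the usual compatibility (degree) conditions. The plan has four steps: establish commutativity and well-definedness of the operators, define the product, verify the quantum Chevalley rule, and then deduce the isomorphism.

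\textbf{Operators on $\bigwedge^kV$.} Since the $X^q_i$ act in distinct tensor factors, they commute pairwise. Each symmetric polynomial in them commutes with the action of $S_k$ permuting the tensor factors, and with the sign twist. Hence it preserves the antisymmetric part and descends to $\bigwedge^kV[z^{\pm1}]$. For $w\in S^{k,n}$, $\fS_w(x|y)$ is the factorial Schur polynomial $s_{\lambda(w)}(x|y)$ (Appendix \ref{app:A}), which is symmetric in $x_1,\dots,x_k$. So the operators $\fS_w(X^q|y)$ are well defined and mutually commuting on $\bigwedge^kV[z^{\pm1}]$.

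\textbf{The product.} We set $\ket{w}\star\ket{u}:=\fS_w(X^q|y)\ket{u}$. To get a commutative associative unital algebra with $\ket{e}$ as unit, the essential input is the identity
\[
\fS_w(X^q|y)\ket{\mathrm{id}}=\ket{w},
\]
where $\ket{\mathrm{id}}=v_1\wedge\cdots\wedge v_k$. Granting this, the linear map $\sum a_w\ket{w}\mapsto\sum a_w\fS_w(X^q|y)$ is an injective ring map from the module into a commutative algebra of operators. Indeed, this map followed by evaluation at $\ket{\mathrm{id}}$ is the identity, which gives injectivity, and its image is closed under composition. Concretely, the product of two such operators applied to $\ket{\mathrm{id}}$ is again in the module, and the operator attached to that vector agrees with the composite because both commute with the algebra. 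To make the last step rigorous, I would show that $\ket{\mathrm{id}}$ is a cyclic vector for the commutative algebra $\mathcal{A}$ generated by $e_r(X^q_1,\dots,X^q_k)$. Cyclicity follows from the displayed identity. It then suffices that $\mathcal{A}$ acts faithfully, which holds because $\mathcal{A}$ is commutative and has a cyclic vector.

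To prove the displayed identity, I would use the tableau expansion of $s_\lambda(x|y)$ acting on $v_1\otimes\cdots\otimes v_k$, antisymmetrised. The factors $(X^q-y_j)$ are lower-triangular shifts, except for the corner entry $z$. Starting from $v_1\wedge\cdots\wedge v_k$ and using Lemma \ref{lem:BAE}, the $z$-terms can only arise from wrapping past $v_n$. That cannot happen at this stage, because $\lambda_1\le n-k$. The identity therefore reduces to the non-quantum statement, which holds by the classical Satake corollary.

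\textbf{Quantum Chevalley rule.} The Chevalley operator is $\fS_{s_k}(X^q|y)=\sum_i X^q_i-\sum_{j\le k}y_j$. Writing $X^q=X+zE_{1,n}$, we obtain
\[
\Delta^{k-1}(X^q)=\Delta^{k-1}(X)+z\,\psi^*_1\psi_n .
\]
The classical part reproduces the equivariant Chevalley rule by the non-quantum Satake theorem. For the correction term, $\psi^*_1\psi_n\ket{w}$ is nonzero only when $w_k=n$ and $w_1\neq1$. In that case it equals $\pm\, v_1\wedge v_{w_1}\wedge\cdots\wedge v_{w_{k-1}}$, and reordering this wedge product contributes the sign $(-1)^{k-1}$. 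Under the substitution $z=(-1)^{k}q$, this combines to give $+q\,\sigma_{\mu}$, where $\mu$ is obtained from $\lambda(w)$ by removing a rim hook of length $n-1$. This is exactly the quantum term of the Chevalley formula \cite{MihalceaAIM}.

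\textbf{Conclusion.} Mihalcea's theorem then identifies the ring with $qH^*_T(\Gr(k;n))$ with $\ket{w}\mapsto\sigma_w$. Since this identification sends $\fS_w(X^q|y)$ to quantum multiplication by $\sigma_w$, this proves the proposition.

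\textbf{Main obstacle.} I expect the sign bookkeeping in the $z\psi_1^*\psi_n$ term to be delicate, together with checking that this is the only source of $z$. A second delicate point is the cyclic-vector identity in the presence of the corner entry. That entry could contribute for tableaux whose entries are large, and this needs the bound $\lambda_1\le n-k$ to be used carefully.
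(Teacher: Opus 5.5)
Your proposal is correct and follows the paper's proof essentially step for step. The paper likewise establishes the cyclic-vector identity $\fS_w(X^q|y)\ket{e}=\ket{w}$, derives commutativity and associativity from the mutually commuting operators $s_\lambda(X^q|y)$, checks the quantum Chevalley rule, and then invokes Mihalcea's Cor.~7.1. Your two variations are both sound: you deduce the cyclic identity from the classical Satake map, using the bound $\lambda_1\le n-k$ to rule out wrap-around, whereas the paper uses the alternant/Vandermonde formula; and you verify Chevalley by splitting $\Delta^{k-1}(X^q)=\Delta^{k-1}(X)+z\,\psi^*_1\psi_n$, whereas the paper computes $X^q_r\ket{\lambda}$ directly. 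Your separating-vector argument also makes precise the paper's one-line remark that associativity follows from matrix multiplication.

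One sign needs fixing. The quotient by $\langle z+(-1)^kq\rangle$ means $z=(-1)^{k-1}q$, not $z=(-1)^kq$. With the correct relation,
\[
z\,\psi^*_1\psi_n(v_{w_1}\wedge\cdots\wedge v_{w_k})=(-1)^{k-1}z\,v_1\wedge v_{w_1}\wedge\cdots\wedge v_{w_{k-1}}
=q\,v_1\wedge v_{w_1}\wedge\cdots\wedge v_{w_{k-1}},
\]
whereas the substitution you wrote would give $-q$. You should also state explicitly that $X^q$ is homogeneous of degree one when $\deg z=n$, because Mihalcea's criterion requires a graded algebra; the paper checks this in its second claim.
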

We split the proof of this proposition into several steps. First we prove the following, see \cite[Thrm 1.2]{Laksov09}:
\begin{claim}
    Let $\fS_{w}(X^q_1, \ldots, X^q_k|y)$ be the double Schubert polynomial in the variables $X^q_i$, then for any Grassmannian permutation $w\in S^{k,n}$ we have
    \begin{equation}\label{action}
        \fS_{w}(X^q_1, \dots, X^q_n | y) \ket{e} = \ket{w}\;,
    \end{equation}
    where $e\in S^{k,n}$ is the identity permutation.
\end{claim}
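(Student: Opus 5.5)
Acting on $\ket{e}=v_1\wedge v_2\wedge\cdots\wedge v_k$, I expect the quantum corrections to vanish, so that everything reduces to a computation in the non-quantum (even non-equivariant-looking) setting. The reason is that $z$ only enters $X^q$ through the entry $E_{1,n}$, so it only contributes when a factor $X^q$ is applied to a vector containing a $v_n$ component. Starting from $v_1\wedge\cdots\wedge v_k$ and applying a polynomial of total degree $\ell(w)\le k(n-k)$ in the $X^q_i$, a positive power of $z$ could only appear after one tensor factor has been pushed all the way up to $v_n$. The plan is to make this precise through a triangularity argument in the Bruhat order on $S^{k,n}$, or equivalently a degree count.

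The cleanest route I would try uses the factorial Schur / tableau presentation of $\fS_w$ for Grassmannian $w$ (Appendix \ref{app:A}) together with a Jacobi--Trudi or Lindström--Gessel--Viennot style computation. First I would write $\fS_w(X^q|y)=s_\lambda(X^q_1,\dots,X^q_k|y)$ with $\lambda=\lambda(w)$. Using the ratio of alternants, $s_\lambda(x|y)=\det\big((x_i|y)^{\lambda_{k+1-j}+j-1}\big)/\det\big((x_i|y)^{j-1}\big)$, and since $X^q_i$ acts only on the $i$th tensor slot, I would compute on the lift $\sum_{\pi\in S_k}\operatorname{sgn}(\pi)\,v_{\pi(1)}\otimes\cdots\otimes v_{\pi(k)}$.

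The key one-slot input is Lemma \ref{lem:BAE}. With $a=\lambda_{k+1-j}+j-1\le n-1$ and $a+m$ denoting the target index, it gives
\[
(X^q|y)^{a}\,v_m=v_{m+a}+\sum_{i<m+a}(\text{lower terms}),
\]
and $z$ appears only in entries with $i-j<a-n$ (below index) or on the wrapped band $i-j=a-n$. For $m\le k$ and $a\le n-1$ no wrapping occurs unless $m+a>n$. Because $\lambda_1\le n-k$, we have $m+a\le k+(n-k)+k-1$ in the worst case. This bound is not automatically below $n$, so the claim really needs an argument rather than a naive degree count.

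I would avoid the division by the Vandermonde and argue directly instead. Use the standard fact that $s_\lambda(x|y)$ lies in the ideal-free span obtained by iterating divided differences: $\fS_w=\partial_{w^{-1}w_0}\fS_{w_0}$. Alternatively, and this is what I would try first, I would induct on $\ell(w)$ via the equivariant Chevalley rule. Multiplying by $\fS_{s_k}=(X^q_1-y_1)+\cdots$, shifted appropriately, expresses $e_1(X^q)\,\fS_w$ as $\sum\fS_{w'}$ plus $(y_{w(k)}-\ldots)\fS_w$, which is the classical Monk-type identity valid in the polynomial ring.

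The induction would run as follows. The hypothesis is $\fS_u\ket{e}=\ket{u}$ for all $u$ with $\ell(u)<\ell(w)$. Pick $u$ with $w$ covering it and apply the polynomial identity expressing $\fS_w$ through $\fS_u$, $e_1$ and lower terms, evaluated at the matrices $X^q_i$. This is legitimate because the $X^q_i$ commute, so every polynomial identity survives the substitution. It then remains to check that $\Delta(X^q)-\sum_i y_{\cdot}$ acts on $\ket{u}$ by the non-quantum rule: adding one box, plus a diagonal weight term. On $v_{u_1}\wedge\cdots\wedge v_{u_k}$, the operator $X^q$ sends $v_j\mapsto v_{j+1}+y_jv_j$ for $j<n$, and sends $v_n\mapsto zv_1+y_nv_n$. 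The $z$-term produces $v_1\wedge\cdots$, which vanishes whenever $u_1=1$. For every $u$ reached from $e$ by adding boxes but not filling the first column completely, we have $u_1=1$, so the $z$-term dies.

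I expect the main obstacle to be exactly the case where $\lambda(u)$ has $k$ rows, since there $v_1$ is absent and the $z$-term survives. To handle it I would choose the induction path so that the last box added lies outside the first column (possible unless $\lambda=(1^k)$). For the remaining column shapes I would verify the claim directly via $e_r(X^q)\ket{e}$. When $r<k$ the same vanishing argument applies. When $r=k$, $e_k(X^q-y)$ shifts all indices and produces $\ket{(1^k)}$ exactly, with no $v_n$ involved as long as $k<n$.
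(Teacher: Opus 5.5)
\paragraph{Gap in the Chevalley induction.} Your main route, induction on $\ell(w)$ via the equivariant Monk/Chevalley rule, does not close. In the polynomial ring that rule reads $e_1(x|y)\,s_\mu(x|y)=\sum_{\nu=\mu+\square}s_\nu(x|y)+c_\mu\,s_\mu(x|y)$. Here $\nu$ runs over \emph{all} ways of adding a box with at most $k$ rows, with no bound on $\nu_1$.

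Whenever $\mu$ has more than one addable corner, there are several top-degree terms of the same length as $w$. So you cannot solve for a single $\fS_w$ from lower-length data. For $k=2$, the identity $e_1s_{(1)}=s_{(2)}+s_{(1,1)}+(y_3-y_2)s_{(1)}$ only determines the sum $s_{(2)}(X^q|y)\ket{e}+s_{(1,1)}(X^q|y)\ket{e}$.

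Moreover, the terms with $\nu_1=n-k+1$ lie outside the rectangle and do not act trivially on $\ket{e}$. Already for $k=1$, $n=2$ one has $s_{(2)}(X^q|y)v_1=(X^q-y_2\cdot 1)v_2=z\,v_1$. Such contributions are exactly what balances the surviving $z$-term of $e_1(X^q|y)\ket{u}$ when $\lambda(u)$ has $k$ rows and $\lambda_1(u)=n-k$.

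Choosing the last box outside the first column does not avoid this case. For $w$ the full $k\times(n-k)$ rectangle with $k,n-k\ge 2$, the only $u\lessdot w$ is of this type. So your claim that the $z$-term dies along the induction is false. Your induction hypothesis never controls the out-of-rectangle polynomials you would need.

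\paragraph{The missing idea.} It lies in the alternant route you set aside: instead of dividing by the Vandermonde, absorb it into the vector. Since $(X^q-y_j\cdot1)v_j=v_{j+1}$ for $j<n$, one has exactly $(X^q|y)^m v_1=v_{m+1}$ for $0\le m\le n-1$. Combined with $\prod_{i<j}(x_i-x_j)=\det\big((x_j|y)^{k-i}\big)$, this gives $\ket{e}=(-1)^{k(k-1)/2}\Delta(X^q)\,v_1\otimes\cdots\otimes v_1$. The polynomial identity $s_\lambda\Delta=\det\big((x_j|y)^{\lambda_i+k-i}\big)$ survives substitution of the commuting $X^q_j$, so
\[
s_\lambda(X^q|y)\ket{e}=(-1)^{k(k-1)/2}\sum_{\sigma\in S_k}\operatorname{sgn}(\sigma)\,\bigotimes_{j=1}^k v_{\lambda_{\sigma(j)}+k-\sigma(j)+1}=v_{\lambda_k+1}\wedge\cdots\wedge v_{\lambda_1+k}.
\]
Every exponent satisfies $\lambda_i+k-i\le n-1$, so no factor ever reaches $v_n$, and neither $z$ nor any $y$-dependent term appears. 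This is the paper's proof: a direct computation with no induction.

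Your attempt applied the alternant to the antisymmetrised lift of $v_1\wedge\cdots\wedge v_k$. That computes $s_\lambda\Delta$ on $\ket{e}$ rather than $s_\lambda$, which is why you ran into target indices beyond $n$.
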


\begin{proof}
We start by observing that $\ket{e}$ can be written in the following convenient form provided we consider $\bigwedge^kV[z^{\pm 1}]$ as a submodule of $V^{\otimes k}[z^{\pm1}]$,
\[
\ket{e} = v_1 \wedge \ldots\wedge v_k = (-1)^{\frac{k (k - 1)}{2}}\Delta(X^q) v_1 \otimes \ldots \otimes v_1,
\]
where $\Delta(X^q) = \prod\limits_{i<j}^k(X^q_i - X^q_j)$ is the Vandermonde determinant. Recall that for Grassmannian permutations $w$ we have that $\fS_w(x|y)=s_{\lambda(w)}(x|y)$, the factorial Schur polynomial; see Appendix \ref{app:A}. Using the alternant formula for the factorial Schur polynomial we find
\begin{align*}
s_{\lambda}(X^q_1, \ldots , X^q_k |y)\ket{e} =\;& (-1)^{\frac{k(k-1)}{2}}\underset{k \times k}{\det}((X^q_j |y)^{\lambda_i + k -i})v_1 \otimes \ldots\otimes v_1 \\
=\;& (-1)^{\frac{k(k-1)}{2}} \sum\limits_{\sigma \in S_k}(-1)^{\sigma} \prod\limits_{l = 1}^k (X^q_j|y)^{\lambda_{\sigma(j)} + k - \sigma(j)} v_1\otimes \ldots\otimes v_1\\ 
=\;& v_{\lambda_k + 1} \wedge \ldots\wedge v_{\lambda_1 + k} = \ket{\lambda}\;.
\end{align*}
Here we have used in the last step that all terms involving the equivariant parameters cancel.
\end{proof}
Introduce the following $\Z$-grading on $\bigwedge^kV[z^{\pm1}]$: we set $\deg y_i=1$, $\deg\ket\lambda=|\lambda|$ and $\deg z=n$. Note that this the natural grading inherited from $qH^*_T(\Gr(k;n))$ under the linear isomorphism \eqref{Satake}. %
Next we show that the ring structure on $\bigwedge^kV[z^{\pm1}]$ is well-defined:
\begin{claim}
    The pair $(\bigwedge^kV[z^{\pm1}],\star)$ with product
    \begin{equation}\label{star_product}
    \ket{\lambda}\star \ket{\nu}=s_{\lambda}(X^q_1, \ldots, X^q_k|y)\ket{\nu},  
    \end{equation}
    is a well-defined graded commutative associative ring with multiplicative identity $\ket{\emptyset}=\ket{e}=v_1\wedge v_2\wedge\ldots\wedge v_k$.
\end{claim}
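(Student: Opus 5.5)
The plan is to deduce every ring axiom from two facts: the operators $s_\lambda(X^q_1,\ldots,X^q_k|y)$ commute with one another, and they act on the cyclic vector $\ket{e}$ as stated in the previous Claim.

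\emph{Well-definedness.} The $X^q_i$ act in distinct tensor factors, so they commute pairwise. Hence any polynomial in them is a well-defined operator on $V^{\otimes k}[z^{\pm1}]$. Since $s_\lambda(x|y)$ is symmetric in $x$, the operator $s_\lambda(X^q_1,\ldots,X^q_k|y)$ commutes with the $S_k$-action permuting tensor factors. It therefore preserves the image of the antisymmetriser, i.e.\ the subspace $\bigwedge^kV[z^{\pm1}]$ (and descends to the quotient), so \eqref{star_product} defines a map $\bigwedge^kV[z^{\pm1}]\to\bigwedge^kV[z^{\pm1}]$. Extending $\ring[z^{\pm1}]$-bilinearly on the basis $\ket\lambda$, $\lambda\in\mc{P}_{k,n}$, gives a bilinear product.

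\emph{Associativity, commutativity and unit.} For commutativity, use the Claim \eqref{action} to write $\ket\nu=s_\nu(X^q|y)\ket e$. Then
\[
\ket\lambda\star\ket\nu=s_\lambda(X^q|y)s_\nu(X^q|y)\ket e=s_\nu(X^q|y)s_\lambda(X^q|y)\ket e=\ket\nu\star\ket\lambda ,
\]
since all these operators lie in the commutative algebra generated by the $X^q_i$.

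Associativity follows the same way, but first I would establish that for every $\lambda,\mu$
\[
s_\lambda(X^q|y)\,s_\mu(X^q|y)\big|_{\bigwedge^k}=\sum_\nu c_\nu\, s_\nu(X^q|y)\big|_{\bigwedge^k},\qquad \text{where }\ket\lambda\star\ket\mu=\sum_\nu c_\nu\ket\nu .
\]
To prove this, let $\mc{A}$ be the commutative algebra of symmetric polynomials in the $X^q_i$ restricted to $\bigwedge^kV[z^{\pm1}]$. The map $\mc{A}\to\bigwedge^kV[z^{\pm1}]$, $A\mapsto A\ket e$, is surjective by \eqref{action}. It is also injective: if $A\ket e=0$, then for any $B\in\mc{A}$ we get $AB\ket e=BA\ket e=0$, so $A$ kills every vector and hence $A=0$. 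With this isomorphism, $\star$ is transported multiplication in $\mc{A}$, which is associative and commutative with unit $1\mapsto\ket e$. The unit is $\ket\emptyset=\ket e$ because $s_\emptyset=1$.

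\emph{Grading.} The matrix $X^q$ is homogeneous of degree $1$ once we set $\deg z=n$ and $\deg v_i=i-1$: the entry $E_{i+1,i}$ raises degree by one, $y_iE_{ii}$ carries $\deg y_i=1$, and $zE_{1n}$ maps $v_n$ (degree $n-1$) to $zv_1$ (degree $n$). Consequently $s_\lambda(X^q|y)$ raises degree by $|\lambda|$, and $\deg\ket\nu=\sum(\nu_i+k-i)-\binom{k}{2}=|\nu|$, so $\star$ is graded.

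I expect the main obstacle to be the cyclicity/injectivity step, specifically checking that $\mc{A}$ acting on $\bigwedge^k$ is exactly spanned by the $s_\lambda$ with $\lambda\in\mc P_{k,n}$. Via the argument above this reduces to \eqref{action} together with commutativity of the $X^q_i$.
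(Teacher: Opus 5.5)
Your proposal is correct and is essentially the paper's argument: commutativity comes from the commuting operators $s_\lambda(X^q|y)$ together with the cyclicity $s_\lambda(X^q|y)\ket{e}=\ket{\lambda}$ from the preceding Claim, the unit from $s_\emptyset=1$, and the grading from homogeneity of $X^q$ (your choice $\deg v_i=i-1$, $\deg z=n$ is a cleaner form of the paper's case check). The one place you go further is associativity: the paper only cites associativity of matrix multiplication, which tacitly needs your identity $s_\lambda(X^q|y)s_\mu(X^q|y)|_{\bigwedge^k}=\sum_\nu c_\nu s_\nu(X^q|y)|_{\bigwedge^k}$, and your injectivity argument for $A\mapsto A\ket{e}$ supplies exactly that.
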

\begin{proof}
 We start by noting that the factorial Schur polynomials $s_\lambda(X^q|y)$ and $s_\mu(X^q|y)$ commute in $\End_{\ring} V[z^{\pm1}]^{\otimes k}$, simply because the quantised Chern roots $X_i^q$ mutually commute. But as the factorial Schur polynomials are symmetric in the $X_i^q$ their restriction to the antisymmetric tensor product must also form an abelian family of operators in $\End_{\ring} \bigwedge^kV[z^{\pm 1}]$.

To show that this implies commutativity of the ring structure $\star$, we now use formula \eqref{action}, which immediately gives
    \begin{align*}
    \ket{\lambda} \star \ket{\mu} = s_{\lambda}(X^q|y) \ket{\mu} =\;& s_{\lambda}(X^q|y) s_{\mu}(X^q|y)\ket{\emptyset}\\=\;&
    s_{\mu}(X^q|y) s_{\lambda}(X^q|y)\ket{\emptyset} = s_{\mu}(X^q|y) \ket{\lambda} =\ket{\mu} \star \ket{\lambda}.
    \end{align*}
    Associativity follows from associativity of matrix multiplication and $\ket{e}$ is a multiplicative identity because $s_{\emptyset}(X^q|y) = 1$.
    
    Finally, to show that this is a graded ring, we observe that each monomial $\prod_{\Box\in\lambda}(X^q_{T(\Box)}-y_{T(\Box)+c(\Box)})$ acting on $\ket{\mu}$ increases the degree by $|\lambda|$ since multiplication by $X^q$ either (1) shifts $v_{i_{T(\Box)}}$ to $v_{i_{T(\Box)}+1}$ (which amounts to adding a box), or (2) multiplies with $y_{i_{T(\Box)}}$, or (3) removes a maximal rim hook (by shifting $i_k=n$ to $1$) of length $n-1$ and multiplies with $z=(-1)^{k-1}q$.
\end{proof}
As a last step in the proof of Prop. \ref{prop:quantumSatake} it remains to show that both graded ring structures coincide. Using a result of Mihalcea \cite[Cor. 7.1]{MihalceaAIM} it suffices to show that the quantum Chevalley formula from $qH^*_T(\Gr(k;n))$ holds; this identifies the ring up to isomorphism (together with the natural grading inherited from $qH^*_T(\Gr(k;n))$).
\begin{claim}\label{claim:quantumChevalley}
The quantum Chevalley formula holds in $(\bigwedge^kV[z^{\pm 1}]/\langle z+(-1)^kq\rangle,\star)$.
\end{claim}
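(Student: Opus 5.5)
The plan is to compute directly the action of the degree-one Schubert class $\sigma_{\square}=s_{(1)}(X^q_1,\ldots,X^q_k|y)=\sum_{i=1}^k(X^q_i-y_1)$ on a basis vector $\ket{\lambda}=v_{i_1}\wedge\cdots\wedge v_{i_k}$, with $i_j=\lambda_{k+1-j}+j$, and then compare the result with Mihalcea's equivariant quantum Chevalley rule. That rule reads
\[
\sigma_\square\star\sigma_\lambda=\Big(\sum_{j=1}^k y_{i_j}-\sum_{j=1}^k y_j\Big)\sigma_\lambda+\sum_{\mu=\lambda+\square}\sigma_\mu+q\,\sigma_{\lambda^-},
\]
where $\lambda^-$ is obtained from $\lambda$ by removing a rim hook of length $n-1$, provided $\lambda_1=n-k$ and $\ell(\lambda)=k$; otherwise the $q$-term is absent. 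The equivariant coefficient is $\sigma_\square|_\lambda$, the localisation of $\sigma_\square$ at the fixed point $\lambda$.

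Since $\Delta^{k-1}(X^q)=X_1^q+\cdots+X_k^q$ acts on $\bigwedge^kV[z^{\pm1}]$ as a derivation, I will evaluate
\[
\sum_i X_i^q\,\ket{\lambda}=\sum_{j=1}^k v_{i_1}\wedge\cdots\wedge X^qv_{i_j}\wedge\cdots\wedge v_{i_k},
\]
using \eqref{Xmatrix}. The matrix gives $X^qv_i=y_iv_i+v_{i+1}$ for $i<n$, and $X^qv_n=y_nv_n+z\,v_1$. This produces three kinds of terms.

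\begin{itemize}
\item The diagonal terms give $\big(\sum_j y_{i_j}\big)\ket{\lambda}$. Subtracting $k y_1$ leaves $\sum_j(y_{i_j}-y_1)$. This does \emph{not} match $\sum_j(y_{i_j}-y_j)$ directly, so the first point to check is the normalisation.
\item The terms $v_{i_j}\mapsto v_{i_j+1}$ survive exactly when $i_j+1\notin I$. These are precisely the additions of a single box, giving $\sum_{\mu=\lambda+\square}\ket{\mu}$ with coefficient $+1$, since the wedge order is preserved.
\item When $i_k=n$ and $1\notin I$, the term $z\,v_{i_1}\wedge\cdots\wedge v_{i_{k-1}}\wedge v_1$ appears. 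Moving $v_1$ to the front costs the sign $(-1)^{k-1}$.
\end{itemize}

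The normalisation issue in the first item is resolved as follows. $s_{(1)}(x|y)$ for $k$ variables is $\sum_i x_i-(y_1+\cdots+y_k)$, since the factorial Schur function uses shifted parameters $y_{j+c(\square)}$ in each row. With this correction the constant is $\sum_j(y_{i_j}-y_j)$, as required.

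For the $q$-term, the word $1\notin I$, $n\in I$ translates to $\ell(\lambda)=k$ and $\lambda_1=n-k$. The new index set $\{1,i_1,\ldots,i_{k-1}\}$ corresponds to removing the rim hook of length $n-1$, as in the proof of the grading claim. The coefficient is $(-1)^{k-1}z$, which equals $q$ under the identification $z=-(-1)^kq$, i.e.\ modulo $\langle z+(-1)^kq\rangle$. I will verify this sign bookkeeping carefully, as it is where the quotient in Prop.~\ref{prop:quantumSatake} is chosen.

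Finally, since the ring is graded (previous Claim) and generated appropriately, and $\star$-multiplication by $\ket{\square}$ reproduces the quantum Chevalley rule on the Schubert basis, Mihalcea's characterisation \cite[Cor.~7.1]{MihalceaAIM} applies. I expect the main obstacle to be not the computation but confirming that the hypotheses of Mihalcea's theorem hold. His uniqueness requires an $\ring[q]$-algebra with a graded basis satisfying the Chevalley rule, together with compatibility with the grading and the $\ring$-structure. We have these from the Claim above, together with the fact that $\ket{\lambda}=s_\lambda(X^q|y)\ket{e}$, which makes $\ket{e}$ the unit and the basis $\{\ket\lambda\}$ independent over $\ring[q^{\pm1}]$. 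I will address this last point briefly by restricting to the $\ring[q]$-lattice spanned by the $\ket{\lambda}$, which is preserved by the computation above.
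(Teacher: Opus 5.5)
Your proposal is correct and follows essentially the same route as the paper. Like the paper, you expand $s_{(1)}(X^q|y)=\sum_{i=1}^k(X^q_i-y_i)$ as a derivation on $v_{i_1}\wedge\cdots\wedge v_{i_k}$, sort the result into diagonal terms, single-box additions and the wrap-around term $(-1)^{k-1}z\,v_1\wedge v_{i_1}\wedge\cdots\wedge v_{i_{k-1}}=q\ket{\lambda^-}$, and match this with Mihalcea's equivariant quantum Chevalley rule.

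Two small points. First, state $s_{(1)}(x|y)=\sum_i(x_i-y_i)$ from the outset instead of writing $\sum_i(X^q_i-y_1)$ and correcting it later. Second, your remark that the $\ring[q]$-span of the $\ket{\lambda}$ is closed under $\star$ needs more than the Chevalley computation: it follows because $X^q$ has entries in $\ring[z]$, so every $s_\lambda(X^q|y)$ preserves that span.
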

\begin{proof}
    Noting that
    \[
    s_\Box(X^q|y)\ket{\lambda}=e_1(X^q|y)\ket{\lambda}=(X_1^q+\cdots+X_k^q-y_1-\cdots-y_k)\ket{\lambda}
    \]
    it suffices to consider the action of $X_r^q$ on a state $\ket{\lambda}$. Using the bijection between Grassmannian permutations $w=[i_1\ldots i_n]\in S^{k,n}$ and partitions $\lambda$ with $i_j=\lambda_{k+1-j}+j$ one easily verifies for $1\le r<k$ that either $$X_r^q\ket{\lambda}=\ket{\lambda^{(r)}}+y_{i_r}\ket{\lambda}$$
    where $\lambda^{(r)}$ is the partition obtained by adding to the Young diagram of $\lambda$ a single box with content $i_r-k$, or if that is not possible ($i_r+1=i_{r+1}$), then $X_r^q\ket{\lambda}=y_{i_r}\ket{\lambda}$. The case $r=k$ is special: in this case $X_r^q\ket{\lambda}=q\ket{\lambda^{-}}+y_{i_k}\ket{\lambda}$ where $\lambda^{-}$ is the partition obtained by removing a maximal rim hook from the Young diagram of $\lambda$. This is only possible if $i_1>1$ and $i_k=n$, otherwise the quantum term is absent. Thus, we arrive at
    \[
    \ket{1}\star\ket{\lambda}=\sum_{\mu-\lambda=(1)}\ket{\mu}+(y_{i_1}+\cdots+y_{i_k}-y_1-\cdots-y_k)\ket{\lambda}
    +q\ket{\lambda^-}\;.
    \] 
    This is the quantum Chevalley formula for $qH^*_T(\Gr(k;n))$; see \cite[Thm 1]{MihalceaAIM}.
\end{proof}

\subsection{The extended affine Weyl group action}
Recall the left Weyl group action from Lemma \ref{lem:Waction}. We now extend this action to an action of the extended affine Weyl group $W=\Z^n\rtimes S_n$ on $V[z^{\pm 1}]$.
\begin{lemma}\label{lem:Waffaction}
    Define a left $W$-action on $V[z^{\pm 1}]$ via
    \begin{equation}
    (s_i,v_k)\mapsto g_i.v_k\quad\text{ and }\quad (t_j,v_k)\mapsto (X^q-y_j\cdot1).v_k\;,
    \end{equation}
    where $s_i$ with $i=1,\ldots,n-1$ are the simple reflections in $S_n$ and $t_j$ with $j=1,\ldots,n$ the translations by the standard basis of $\Z^n$. Here it is understood that we interpret $V[z^{\pm 1}]$ as module with respect to the skew group ring $\ring\#W$ where the translations are acting trivially on $\ring$.  We note that
    \begin{equation}\label{rho&z}
    t_1t_2\cdots t_n=z\cdot1\quad\text{and}\quad\varrho=s_1\cdots s_{n-1}t_n=zE_{1,n}+\sum_{i=1}^{n-1}E_{i+1,i}\;.
    \end{equation}
    Moreover, the affine Weyl reflection $s_0=\varrho s_{n-1}\varrho^{-1}$ acts via the matrix \eqref{s0}.
\end{lemma}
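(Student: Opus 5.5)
We have to check that the assignments define a representation of $W=\Z^n\rtimes S_n$ on $V[z^{\pm1}]$, as an $\ring\#W$-module, and then verify the two identities in \eqref{rho&z} together with the claim about $s_0$. The relations to check split into three groups. First, the $S_n$-relations among the $g_i$; these are already settled by Lemma \ref{lem:Waction}. Second, the translations must commute: $t_it_j=t_jt_i$. This is immediate because each $t_j$ acts by the polynomial $X^q-y_j\cdot 1$ in the single matrix $X^q$. Invertibility is supplied by Lemma \ref{lem:BAE}: there $\det(X^q-y_j)=z$ is a unit, so $(X^q-y_j\cdot 1)\in\GL_n(\ring[z^{\pm1}])$ and $\Z^n$ really does act.

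The main step is the semidirect product relation $s_it_js_i^{-1}=t_{s_i(j)}$, read in the skew group ring. Because $g_i$ acts $\ring$-semilinearly through the permutation of parameters, the relation to verify is
\[
g_i\,(X^q-y_j\cdot 1)\,g_i^{-1}=s_i\bigl(X^q-y_j\cdot1\bigr)^{s_i},
\]
where the superscript denotes swapping $y_i$ and $y_{i+1}$. By the affine analogue of Lemma \ref{lem:Winvariance} this reduces to $g_i^{-1}X^qg_i=(X^q)^{s_i}$ for $1\le i\le n-1$. For these $i$ the extra entry $zE_{1,n}$ commutes with $g_i=1+(y_i-y_{i+1})E_{i,i+1}$, since $E_{i,i+1}E_{1,n}=0$ and $E_{1,n}E_{i,i+1}=0$ whenever $1\le i\le n-1$ with $i\neq n$ and $i+1\neq 1$. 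The quantum term is therefore inert, and Lemma \ref{lem:Winvariance} applies verbatim. Applying the substitution $y_j\mapsto y_{s_i(j)}$ then gives $t_{s_i(j)}$.

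With the action in place, the identities follow quickly. The relation $t_1\cdots t_n=z\cdot 1$ is exactly \eqref{GrassBAE}. For $\varrho$, we compute $s_1\cdots s_{n-1}t_n$ in the skew group ring. Writing it as the operator $g_1\cdots g_{n-1}$ composed with $(X^q-y_n)$, and keeping track of the parameter twist by the cycle $s_1\cdots s_{n-1}$, the diagonal $y$-dependence should cancel and leave the cyclic matrix $zE_{1,n}+\sum E_{i+1,i}$. I expect this bookkeeping of semilinearity, namely in which order the parameters are permuted when the product of $g_i$ meets $X^q-y_n$, to be the main obstacle. To make it manageable I would first check the claim on basis vectors $v_k$ for small $n$, say $n=2,3$. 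Alternatively, one can verify directly that $\varrho':=zE_{1,n}+\sum E_{i+1,i}$ satisfies $\varrho' X^q\varrho'^{-1}=(X^q)^{\text{cyclic shift}}$, which fixes $\varrho$ up to a commuting factor. Finally, $s_0=\varrho s_{n-1}\varrho^{-1}$ is computed by conjugating $g_{n-1}=1+(y_{n-1}-y_n)E_{n-1,n}$ by the cyclic matrix. We have $\varrho E_{n-1,n}\varrho^{-1}=z^{-1}E_{n,1}$, while the parameters shift cyclically so that $y_{n-1}-y_n\mapsto y_n-y_1$. This yields $1+z^{-1}(y_n-y_1)E_{n,1}$, which is \eqref{s0}.
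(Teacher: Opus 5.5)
\textbf{The identity for $\varrho$ is not proved.}

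Your treatment of the relations defining $W$ is sound.
\begin{itemize}
\item The $S_n$-relations come from Lemma~\ref{lem:Waction}, and the translations commute because they are polynomials in $X^q$.
\item With $s_i$ acting as $g_i$ composed with the swap of $y_i,y_{i+1}$, the relation $s_it_js_i^{-1}=t_{s_i(j)}$ reads $g_i\,(X^q-y_j\cdot 1)^{s_i}\,g_i^{-1}=X^q-y_{s_i(j)}\cdot 1$. Your displayed version of this is garbled, but the reduction you then state, $g_i^{-1}X^qg_i=(X^q)^{s_i}$, is the right one. It holds because $E_{1,n}$ commutes with $E_{i,i+1}$ for $1\le i\le n-1$.
\item Invertibility of $t_j$ is most cleanly read off from \eqref{GrassBAE} itself: $t_j^{-1}=z^{-1}\prod_{k\neq j}(X^q-y_k\cdot 1)$. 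Incidentally, $\det(X^q-y_j\cdot 1)=(-1)^{n+1}z$ rather than $z$, though it is a unit either way.
\end{itemize}

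The gap is the identity for $\varrho$. This is the only real computational content of the lemma, and the paper gives no argument for it.
\begin{itemize}
\item You only predict that the $y$-dependence ``should cancel'' and propose testing $n=2,3$.
\item Your conjugation argument, as you note yourself, fixes $\varrho$ only up to a factor commuting with $X^q$. That centraliser is the whole rank-$n$ algebra of polynomials in $X^q$, so it does not determine the matrix.
\item Your $s_0$ step, which is otherwise correct, rests on this unproved identity.
\item The twist is not cosmetic. For $n=2$ the untwisted product $g_1(X^q-y_2\cdot 1)$ sends $v_1\mapsto v_2+2(y_1-y_2)v_1$.
\end{itemize}

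\textbf{The missing computation.} Let $c=s_1\cdots s_{n-1}$; it acts on $\ring$ by $y_j\mapsto y_{j+1}$ for $j<n$ and $y_n\mapsto y_1$. Moving all parameter permutations to the right, $\varrho$ acts as $M\circ c$, with
\[
M=G\,\bigl(c(X^q)-y_1\cdot 1\bigr),\qquad G=\prod_{m=1}^{n-1}(s_1\cdots s_{m-1})(g_m)=\prod_{m=1}^{n-1}\bigl(1+(y_1-y_{m+1})E_{m,m+1}\bigr),
\]
where the product is taken in increasing order of $m$ from left to right. The second expression holds because $s_1\cdots s_{m-1}$ sends $m\mapsto 1$ and fixes $m+1$.

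Thus $G$ is upper unitriangular with $G_{ij}=\prod_{m=i}^{j-1}(y_1-y_{m+1})$ for $i\le j$. Moreover $(c(X^q)-y_1)v_j=v_{j+1}-(y_1-y_{j+1})v_j$ for $j<n$, and $(c(X^q)-y_1)v_n=zv_1$. Hence $Mv_n=zGv_1=zv_1$, and for $j<n$ the terms telescope:
\[
Mv_j=Gv_{j+1}-(y_1-y_{j+1})\,Gv_j=v_{j+1}+\sum_{i=1}^{j}\Bigl(\prod_{m=i}^{j}(y_1-y_{m+1})\Bigr)v_i-\sum_{i=1}^{j}\Bigl(\prod_{m=i}^{j}(y_1-y_{m+1})\Bigr)v_i=v_{j+1}.
\]
So $M=zE_{1,n}+\sum_{i=1}^{n-1}E_{i+1,i}$, and $\varrho$ is this matrix composed with the cyclic shift $c$ of the parameters. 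That is exactly what your $s_0$ step needs: $c\,s_{n-1}c^{-1}$ swaps $y_1$ and $y_n$, and $c(y_{n-1}-y_n)=y_n-y_1$.

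\textbf{Completing your conjugation route instead.}
\begin{itemize}
\item The relation $\varrho t_j\varrho^{-1}=t_{c(j)}$ gives $M\,c(X^q)\,M^{-1}=X^q$.
\item The cyclic matrix $C=zE_{1,n}+\sum_{i=1}^{n-1}E_{i+1,i}$ satisfies the same relation, so $C^{-1}M$ commutes with $c(X^q)$.
\item Since $z$ is a unit, $v_n$ is a cyclic vector for $c(X^q)$ over $\ring[z^{\pm1}]$.
\item Because $Mv_n=zv_1=Cv_n$, the commuting factor fixes $v_n$ and hence fixes everything, so $C^{-1}M=1$.
\end{itemize}
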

Using the standard coproduct for the group algebra of $W$ the action extends canonically to $V[z^{\pm 1}]^{\otimes k}$ and $\bigwedge^kV[z^{\pm 1}]$ by acting diagonally. If we consider in particular the action of the cyclic element $\varrho$ in \eqref{rho&z} on $\bigwedge^kV[z^{\pm 1}]$ we find that
\[
\varrho.(v_{i_1}\wedge\cdots\wedge v_{i_{k-1}}\wedge v_n )=
(-1)^{k-1}z\;v_1\wedge v_{i_1+1}\wedge\cdots\wedge v_{i_{k-1}+1}
\]
and, thus, for positivity it is natural to define the quantum parameter for $qH^*_T(\Gr(k;n))$ as $q=(-1)^{k-1}z$.

The following lemma summarises the commutation relations between the extended affine Weyl group and the Clifford algebra. 
\begin{lemma}\label{lem:WaffClifford}
In addition to the relations from Lemma \ref{lem:WClifford}, we have
\begin{gather}
     s_0\circ\psi^*_i=(g_0.\psi_i^*)\circ s_0,\quad
     g_0.\psi^*_i=\psi^*_i+\delta_{i1}z^{-1}(y_n-y_{1})\psi^*_{n}\\
     s_0\circ\psi_i=(g_0.\psi_i)\circ s_0,\quad
     g_0.\psi_i=\psi_i-\delta_{in}z^{-1}(y_n-y_1)\psi_{1}\;.
\end{gather}
as well as
\begin{equation}\label{rhopsi}
\begin{array}{c}
\varrho\circ\psi^*_i= z^{\delta_{in}} \psi^*_{i+1} \circ \varrho= ((-1)^{\op{deg}-1}q)^{\delta_{in}}\psi^*_{i+1}\circ \varrho, \\
\varrho\circ\psi_i= z^{-\delta_{i1}} \psi_{i-1} \circ \varrho =((-1)^{\op{deg}-1}q)^{-\delta_{i1}}\psi_{i-1}\circ \varrho,
\end{array}
\end{equation}
where $i=1,\ldots,n$ and indices are understood modulo $n$ and deg denotes the `degree operator' which assigns to a tensor its degree. Finally, for the translations $t_i$ we find the relations
\begin{gather} \label{commuterelat}
    t_j\circ\psi^*_i=(t_j.\psi_i^*)\circ t_j,\qquad t_j.\psi_i^*=\psi^*_{i+1}+(y_i-y_j)\psi_i^*=\psi^*_{t_j.v_i}\\
    t^{-1}_j\circ\psi_i=(t^{-1}_j.\psi_i)\circ t^{-1}_j,\qquad t^{-1}_j.\psi_i=\psi_{i-1}+(y_i-y_j)\psi_i=\psi_{(t_j)^T.v_i}\;.
\end{gather}
\end{lemma}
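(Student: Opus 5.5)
The plan is to reduce every relation to a computation in $V[z^{\pm1}]$ and then pass to $\bigwedge V[z^{\pm1}]$ through the general principle recorded in \eqref{GLaction}. If a group element acts on $\bigwedge V$ diagonally through a matrix $g$ on $V$, then $g(v_i\wedge\omega)=(g.v_i)\wedge g(\omega)$. This gives $g\circ\psi^*_i=\psi^*_{g.v_i}\circ g=(g.\psi^*_i)\circ g$. For the contraction operators we use that $g\circ\imath_{\xi}\circ g^{-1}=\imath_{(g^{-1})^T\xi}$ on the exterior algebra, which is the standard naturality of contraction under $\GL(V)$. This yields $g\circ\psi_i=(g.\psi_i)\circ g$ with $g.\psi_i=\sum_j\psi_j(g^{-1})_{ji}$. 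Consequently every claimed relation reduces to computing the matrix entries of $g$ and of $g^{-1}$ for the relevant generator.

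For $s_0$ we use \eqref{s0}, namely $g_0=1+z^{-1}(y_n-y_1)E_{n,1}$ with $g_0^{-1}=1-z^{-1}(y_n-y_1)E_{n,1}$. Then $g_0.v_i=v_i+\delta_{i1}z^{-1}(y_n-y_1)v_n$, which gives the formula for $g_0.\psi^*_i$. On the other side, $\sum_j\psi_j(g_0^{-1})_{ji}$ picks up $-z^{-1}(y_n-y_1)\psi_n$ when $i=1$. Here I must check the index convention against Lemma \ref{lem:WClifford}, where $g_i.\psi_j$ for $g_i=1+cE_{i,i+1}$ produced $-c\,\delta_{ij}\psi_{i+1}$. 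That lemma computes with $(g^{-1})^T$ acting on the dual basis, so the correct entry is the transpose one, $(g^{-1})_{ij}$. With $E_{n,1}$ this produces $\delta_{in}\psi_1$, matching the claim. I will fix this convention once and apply it uniformly.

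For $\varrho=zE_{1,n}+\sum_{i=1}^{n-1}E_{i+1,i}$ we have $\varrho v_i=v_{i+1}$ for $i<n$ and $\varrho v_n=zv_1$. This gives $\varrho\circ\psi^*_i=z^{\delta_{in}}\psi^*_{i+1}\circ\varrho$ with indices taken mod $n$. Since $\varrho^{-1}$ is a permutation matrix with $z^{-1}$ in the $(n,1)$ entry, the same transpose rule gives $\varrho\circ\psi_i=z^{-\delta_{i1}}\psi_{i-1}\circ\varrho$. To rewrite $z$ in terms of $q$ we use the identification $q=(-1)^{k-1}z$ on $\bigwedge^kV$. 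We read the ``degree'' as the fermion number $k$ of the target (respectively source) component, taking care about which exterior power the operator lands in. I expect this sign and degree bookkeeping, rather than any algebra, to be the main obstacle, and I will verify it on the explicit formula for $\varrho.(v_{i_1}\wedge\cdots\wedge v_n)$ displayed above.

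For the translations, $t_j$ acts on $V$ by $X^q-y_j$, which lies in $\GL_n(\ring[z^{\pm1}])$ by Lemma \ref{lem:BAE}. Then $t_j.v_i=v_{i+1}+(y_i-y_j)v_i$, with $v_{n+1}:=zv_1$ understood, and this gives the $\psi^*$ relation. For the $\psi$ relation we apply the contraction rule with $g=t_j^{-1}$, which produces $\imath_{(X^q-y_j)^Tv^i}$. The transpose $(X^q-y_j)^T$ sends $v^i\mapsto v^{i-1}+(y_i-y_j)v^i$ (with $v^0:=zv^n$), giving $\psi_{i-1}+(y_i-y_j)\psi_i$.
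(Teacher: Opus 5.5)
Your method is the intended computation: conjugate $\psi^*_i=v_i\wedge$ and $\psi_i=\imath_{v^i}$ by the diagonal action, using $g\circ\psi^*_i\circ g^{-1}=\psi^*_{gv_i}$ and $g\circ\imath_\xi\circ g^{-1}=\imath_{\xi\circ g^{-1}}$, and read off matrix entries. Your treatment of $s_0$, of $\varrho\circ\psi^*_i$ and of both translation relations is correct. The step that fails is $\varrho\circ\psi_i$.

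After checking Lemma \ref{lem:WClifford} you rightly settle on the ``row'' rule $g.\psi_i=\sum_j (g^{-1})_{ij}\psi_j$. Your first paragraph still records the ``column'' rule $\sum_j\psi_j(g^{-1})_{ji}$ of \eqref{GLaction}, which does not follow from $\imath_{(g^{-1})^T\xi}$. Apply the row rule to $\varrho^{-1}=\sum_{i=1}^{n-1}E_{i,i+1}+z^{-1}E_{n,1}$, whose $i$-th row has its only nonzero entry in column $i+1$ (resp.\ $z^{-1}$ in column $1$ when $i=n$). This gives $\varrho.\psi_i=\psi_{i+1}$ for $i<n$ and $\varrho.\psi_n=z^{-1}\psi_1$, i.e.\ $\varrho\circ\psi_i=z^{-\delta_{in}}\psi_{i+1}\circ\varrho$. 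The printed $z^{-\delta_{i1}}\psi_{i-1}$ is exactly what the rejected column rule produces. So your claim that ``the same transpose rule'' yields it is false: you have silently switched conventions. No single convention reproduces all the printed relations, since the $g_0$ and $t_j^{-1}$ formulas (and Lemma \ref{lem:WClifford}) force the row rule.

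In fact the second line of \eqref{rhopsi} is false as stated for the diagonal action. Since $\varrho$ moves each $v_i$ to $v_{i+1}$ (up to $z$), $\varrho\psi_i\varrho^{-1}=\imath_{v^i\circ\varrho^{-1}}$ contracts against $v^{i+1}$, not $v^{i-1}$. Concretely, for $n\ge 3$ one has $\varrho\circ\psi_1(v_1)=\varrho(1)=1$, whereas $z^{-1}\psi_n\circ\varrho(v_1)=z^{-1}\psi_n(v_2)=0$.

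The correct relation is $\varrho\circ\psi_i=z^{-\delta_{in}}\psi_{i+1}\circ\varrho$, equivalently $\varrho^{-1}\circ\psi_i=z^{\delta_{i1}}\psi_{i-1}\circ\varrho^{-1}$. This version is consistent with the labelling $\psi_{n+1}=z^{-1}\psi_1$ of \eqref{quasiperiod}. It is also consistent with the $t_j^{-1}$ relation you did prove: at $y=0$ each $t_j$ acts by the matrix $\varrho$, and your formula then reads $\varrho^{-1}\circ\psi_i=\psi_{i-1}\circ\varrho^{-1}$ with $\psi_0=z\psi_n$.

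The sign and degree bookkeeping you flagged as the main obstacle is only the rewriting $z=(-1)^{k-1}q$ once the $z$-identities hold. The real issue is the direction of the index shift, which your proposed check against the explicit formula for $\varrho.(v_{i_1}\wedge\cdots\wedge v_n)$ would have exposed. You should carry out the $\varrho$ computation with the one convention you fixed and record the corrected relation, rather than claim the printed one.
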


\subsection{The fermionic product formula}
We now state a recurrence formula for the quantum product in $qH^*_T(\Gr(k;n))$ using the Clifford algebra. In the non-equivariant case ($y_i=0$) this formula was obtained in \cite[Eq (1.5) and Thm 1.3]{Korff09} using the results from \cite[Section 11]{korffstroppel2010}. The new result in the present article is its non-trivial extension to the equivariant case and the geometric interpretation of the Clifford algebra in terms of push-pull maps. To state the formula it is convenient to label the Clifford generators in light of \eqref{rhopsi} by arbitrary integers setting for any $1 \leq i \leq n$ 
\begin{equation}\label{quasiperiod}
    \psi_{i + n} = z^{-1}\psi_i=(-1)^{\op{deg}-1}q^{-1} \psi_i\quad\text{ and }
    \quad \psi^*_{i + n} =z\psi^*_i= (-1)^{\op{deg}-1}q\, \psi^*_i\;.
\end{equation}
These `quasi-periodic' boundary conditions where imposed {\em ad-hoc} in \cite{korffstroppel2010,Korff09} but now can be interpreted naturally in light of the extended affine Weyl group action from Lemma \ref{lem:WaffClifford} and \eqref{rhopsi}. Henceforth, we shall use the quantum parameter $q=(-1)^{\op{deg}-1}z$ in formulae and it is always understood that we specialise the loop parameter $z$ in the quantised Chern roots accordingly.

\begin{remark}\rm
Recall that in the recursion relation \eqref{recferm} we used the classical (as opposed to quantum) product in $H^*_T(\Gr(k;n))$. Let us  now use the quantum product in $qH^*(\Gr(k;n))$ instead to define, 
\begin{gather*}
\widetilde{\psi}^*_{i + 1} = c_1(\cS_{k + 1}) \star_{k} \widetilde{\psi}^*_i - \widetilde{\psi}^*_i c_1(\cS_k) \star_{k-1} - y_i \circ \widetilde{\psi}_i\\
\widetilde{\psi}_{i - 1} = \widetilde{\psi}_i c_1(\cS_k) \star_{k} - c_1(\cS_{k - 1}) \star_{k-1} \widetilde{\psi}_i - y_i \circ \widetilde{\psi}_i 
\end{gather*}
with the same initial conditions \eqref{initial}. N.B. the deformation parameters $q_k,q_{k-1}$ in the rings $qH^*_T(\Gr(k;n))$ and $qH^*_T(\Gr(k-1;n))$ are related via a minus sign, $q_k=-q_{k-1}=(-1)^{k-1}z$, and we have therefore labelled the products to distinguish in which ring the multiplication is taken. We have the following result:
    for $ 1 \leq  i \leq n$ the action of the operators  $\tilde\psi_i^*,\tilde\psi_i:\bigwedge^k V[z^{\pm1}]\to \bigwedge^{k \pm 1} V[z^{\pm1}]$ coincides with the action of the classical ones $\psi^*_i, \psi_i$,
    \[
    \psi^*_i = \widetilde{\psi}_i^*, \quad \psi_{n + 1 -i} = \widetilde{\psi}_{n + 1 -i}\;.
    \]
    But for $i>n$ we now have the following ``quasi-periodicity'' relations
    \[
    \widetilde{\psi}^*_{i + n} = z \widetilde{\psi}^*_{i} = (-1)^{k} q \widetilde{\psi}_i^*, \quad \widetilde{\psi}_{i - n} = z \widetilde{\psi}_i = (-1)^{k} q \widetilde{\psi}_i\;;
    \]
    compare with \eqref{quasiperiod}. To unburden formulae we shall keep the notation $\psi^*_i,\psi_i$ for the `quantum operators' but it is now always tacitly understood that the relations \eqref{quasiperiod} hold.     
\end{remark}
Recall from Lemma \ref{lem:BAE} that the matrices $X^q-y_j\cdot1$ are invertible if $q^{-1}$ exists, they are the images of the translations $t_j$ in $\End_{\ring} V[z^{\pm 1}]$; see Lemma \ref{lem:Waffaction}. Hence, they define automorphisms of the Clifford algebra over $\ring[q^{\pm1}]$ via 
    \[
    \psi_i^*\mapsto t_j.\psi^*_i = \psi^*_{i + 1} + (y_i - y_j)\psi^*_i=\psi^*_{t_j.v_i},
    \]
where $i,j=1,2,\ldots,n$ and $\psi^*_{t_j.v_i}=(t_j.v_i)\wedge$ is the wedge operator by the translated basis vector $t_j.v_i$. Using these automorphisms we now define the quantum product recursively via the push-pull maps \eqref{fermgeom}. 

For $\lambda,\mu$ partitions with $\mu\subset\lambda$ recall that one calls the resulting skew diagram $\lambda/\mu$ a {\em horizontal strip} if each column contains at most a single box; see e.g. \cite[Chapter I]{macdonald1998symmetric}. Suppose that $\lambda,\mu$ are such that their Young diagrams fit into the $k\times(n-k)$ rectangle, then to each horizontal strip $\lambda/\mu$ we consider the following composition of translations,
\begin{equation}\label{t2h.s.}
t^{\lambda/\mu} = \prod\limits_{\Box \in \lambda/\mu} t_{k + c(\Box)},
\end{equation}
where the product runs over all boxes in the strip $\lambda/\mu$ and $c(\Box)=\op{col}(\Box)-\op{row}(\Box)$ is its content, the column minus the row number of $\Box$ in the diagram of $\lambda$.
\begin{remark}\label{rmk:Richardson}
\rm
    There is an alternative definition of the above product \eqref{t2h.s.} of translations which is closer to a geometric interpretation using Grassmannian permutations instead. Namely, let $u,w\in S^{k,n}$ and $u\le w$ in the Bruhat order with reduced words $u=s_{i_1}\ldots s_{i_a}$ and $w=s_{j_1}\ldots s_{j_b}$ obtained by reading out the content of each column in the corresponding Young diagrams bottom to top from right to left. We note that such pairs $(u\le w)$ of Grassmannian permutation label Richardson varieties whose projection on to $\Gr(k;n)$ yield a stratification. Replacing in the word $wu^{-1}=s_{l_1}\ldots s_{l_d}$ the simple reflections $s_{l_i}$ with translations $t_{l_i}$ gives precisely the translation $t^{\lambda/\mu}$ from \eqref{t2h.s.} where $\mu$ is the partition corresponding to $u$ and $\lambda$ is the partition corresponding to $w$. 
\end{remark}

\begin{example}\rm
    Let $k=4$ and $n=9$. We fix $\lambda=(5,3,2,1)$ and $\mu=(3,3,1,0)$ with Young diagrams
    \[
    \begin{ytableau}[]
     4 & 5 & 6 & 7& 8  \\ 3& 4 & 5 \\ 2 & 3 \\ 1\\   
    \end{ytableau}  \qquad \qquad
    \begin{ytableau}
        4 & 5 & 6\\ 3 & 4 & 5 \\ 2\\
    \end{ytableau}\;,
    \]
    where the entries in each box give the content plus $k$. The resulting reduced expressions for the corresponding Grassmannian permutations are
    \[
    w=s_8\,s_7\,s_5s_6\,s_3s_4s_5\, s_1s_2s_3s_4\qquad\qquad
    u=s_5s_6\,s_4s_5\,s_2s_3s_4\;.
    \]
    Thus, we find that $wu^{-1}=s_8s_7s_3s_1$ which matches the product of translations $t_8t_7t_3t_1$ obtained from the skew diagram $\lambda/\mu$,
    \[
    \begin{ytableau}[]
      \times & \times & \times & 7& 8  \\ \times & \times & \times \\  \times & 3 \\ 1\\   
    \end{ytableau}\;.
    \]
\end{example}

Fix two Grassmannian permutations $u,w\in S^{k,n}$ and let $\lambda(u)$ be the partition corresponding to $u$ and $w=[i_1\ldots i_ki_{k+1}\ldots i_n]$. Then we define the following binary operation on $\bigwedge^kV[z^{\pm1}]/\langle z+(-1)^kq\rangle$: 
    \begin{equation}
    \label{fermionformula}
        \ket{u} \circledast \ket{w} := \sum\limits_{T(u)} (t^{\rho_1}. \psi^{*}_{i_1})\circ (t^{\rho_{2}}.\psi^*_{i_{2}})\circ \cdots\circ(t^{\rho_k}.\psi^*_{i_k}).1
    \end{equation}
    where the sum runs over all semi-standard tableaux $T(u)$ of shape $\lambda(u)$, i.e. sequences of partitions\footnote{N.B. we are labelling the horizontal strips in reverse order so that they match the labelling of the indices of the fermionic creation operators in \eqref{fermionformula} above.}
\[
\emptyset=\lambda^{(0)}\subset\lambda^{(1)}\subset\cdots\subset\lambda^{(k)}=\lambda
\]
such that $\rho_{k+1-i}=\lambda^{(i)}/\lambda^{(i-1)}$ is a horizontal strip  and $\lambda^{(i)}$ fits into the rectangle $i\times(n-i)$. Moreover, we have identified $\bigwedge^0V[q^{\pm 1}]\simeq\ring[q^{\pm1}]$ in \eqref{fermionformula}.
\begin{thm}\label{thm:fermion}
    The operation \eqref{fermionformula} defines an associative and commutative product on $\bigwedge^kV[z^{\pm1}]/\langle z+(-1)^kq\rangle$ which coincides with the quantum product $\star$. In particular, the map \eqref{Satake} becomes an isomorphism of $\ring[q^{\pm 1}]$-algebras $(\bigwedge^kV[z^{\pm1}]/\langle z+(-1)^kq\rangle,\circledast)\to qH^*_T(\Gr(k;n))$. 
\end{thm}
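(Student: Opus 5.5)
The plan is to prove directly that $\ket{u}\circledast\ket{w}=s_{\lambda(u)}(X^q_1,\ldots,X^q_k|y)\ket{w}=\ket{u}\star\ket{w}$. Commutativity, associativity and the identification with $qH^*_T(\Gr(k;n))$ then follow from the claims in the proof of Proposition~\ref{prop:quantumSatake}. The starting point is the tableau expansion of the factorial Schur polynomial from Appendix~\ref{app:A}. We write $s_\lambda(x|y)$ as a sum over semi-standard tableaux $T$ of shape $\lambda$ with entries in $\{1,\ldots,k\}$, each weighted by $\prod_{\square}(x_{T(\square)}-y_{T(\square)+c(\square)})$. We group the boxes by their entry: the boxes with entry $r$ form a horizontal strip, and we index it as $\rho_r$ to match the footnote's reversed labelling. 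The monomial then factorises as
\[
\prod_{r=1}^k\prod_{\square\in\rho_r}(X^q_r-y_{k+c(\square)}).
\]
I will have to check which normalisation of content (shifted by $k$ or by the row of the strip) makes the indices agree with \eqref{t2h.s.}, and reverse the tableau convention if necessary. The constraint that $\lambda^{(i)}$ fits into an $i\times(n-i)$ box should be harmless: either the corresponding terms vanish automatically, or they are exactly the terms the tableau formula excludes.

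Next, lift $\ket{w}=v_{i_1}\wedge\cdots\wedge v_{i_k}$ to $V^{\otimes k}[z^{\pm1}]$. The operator $s_\lambda(X^q|y)$ is symmetric in the $X^q_r$, so it commutes with antisymmetrisation, and we may compute $s_\lambda(X^q|y)\ket{w}$ as the projection of $s_\lambda(X^q|y)$ applied to the lift. The operator $\prod_{\square\in\rho_r}(X^q_r-y_{k+c(\square)})$ acts only in the $r$-th tensor factor. By Lemma~\ref{lem:Waffaction} it sends $v_{i_r}$ to $t^{\rho_r}.v_{i_r}$, since each factor $X^q-y_j$ is the image of the translation $t_j$ and these commute. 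Hence each monomial sends the lift to $\bigotimes_r t^{\rho_r}.v_{i_r}$, whose projection is
\[
(t^{\rho_1}.v_{i_1})\wedge\cdots\wedge(t^{\rho_k}.v_{i_k})=(t^{\rho_1}.\psi^*_{i_1})\cdots(t^{\rho_k}.\psi^*_{i_k}).1,
\]
using \eqref{commuterelat} and $t_j.\psi^*_i=\psi^*_{t_j.v_i}$. The quasi-periodicity \eqref{quasiperiod} takes care of the wrap-around terms, in which $v_n$ is sent to $z v_1$. Summing over tableaux gives $\ket{u}\circledast\ket{w}=s_{\lambda(u)}(X^q|y)\ket{w}$.

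The main obstacle is the first step: matching conventions precisely. Specifically, the factorial Schur tableau formula (which entry goes with which variable, row versus column strips, and whether the shift is $y_{T(\square)+c(\square)}$) has to agree with the labelling $\rho_{k+1-i}=\lambda^{(i)}/\lambda^{(i-1)}$ and with the content shift $k+c(\square)$ used in \eqref{t2h.s.}. I would first check the case $k=1$, where $s_{(m)}(X^q|y)v_i=(X^q-y_1)\cdots(X^q-y_m)v_i$ and Lemma~\ref{lem:BAE} applies, and the example $\lambda=\square$, which must reproduce the quantum Chevalley computation of Claim~\ref{claim:quantumChevalley}. Any mismatch would be resolved by applying the $S_k$-symmetry of $s_\lambda$ (relabelling $X^q_r\mapsto X^q_{k+1-r}$) before projecting. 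The sign conventions relating $z$ and $q$ are absorbed by the identification $q=(-1)^{k-1}z$ already fixed in the paper.
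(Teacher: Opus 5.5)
Your proposal is correct and is essentially the paper's argument unrolled. The paper inducts on $k$ using the commutation relation \eqref{fschurcom} with respect to the first variable, which is itself proved by exactly your tensor-lift argument; iterating that branching rule gives your full tableau expansion.

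The bookkeeping you flag resolves as you expect. A box with entry $r$ carries $y_{r+c(\square)}$, not $y_{k+c(\square)}$. After relabelling $X^q_r\mapsto X^q_{k+1-r}$, the strip paired with $\psi^*_{i_j}$ is the entry-$(k+1-j)$ strip with translations $t_{k+1-j+c(\square)}$ (compare \eqref{bijproof} and Example~\ref{exampleproduct}). The $i\times(n-i)$ condition is automatic, since column-strictness gives at most $i$ rows and $\lambda_1\le n-k\le n-i$.
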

As both products, $\star$ and $\circledast$, coincide we will henceforth drop the latter notation.
\begin{remark}\rm
    Setting $y_1=\ldots=y_n=0$ we recover the non-equivariant fermionic product formula from \cite[Eqn (1.5)]{Korff09}. Observe that in this case $t^{\rho_i}.\psi^*_j=\psi^*_{j+|\rho_i|}$ where $|\rho_i|=\#i$ is the number of boxes in the horizontal strip $\rho_i$, or equivalently, the number of entries $i$ in the semistandard tableau $T$. Thus, the product formula \eqref{fermionformula} simplifies to
    \begin{equation}
        \ket{u} \star \ket{w} := \sum\limits_{T(u)} \psi^{*}_{i_1+|\rho_1|}\circ\psi^*_{i_{2}+|\rho_{2}|}\circ \cdots\circ\psi^*_{i_k+|\rho_k|}.1\;.
    \end{equation}
    In terms of the affine Weyl group action this can be understood as follows: note that in the non-equivariant limit setting $y_1=\cdots=y_n=0$ the reflections $s_i$ act trivially. Since $t_i=s_{i-1}\cdots s_1\varrho s_{n-1}\cdots s_i$ the action of the translations simply reduces to acting with the cyclic element $\varrho$ and, hence, $t^{\rho_i}.\psi^*_j=\varrho^{|\rho_i|}.\psi^*_j=\psi^*_{j+|\rho_i|}$. N.B. that in \cite{Korff09} the fermionic recurrence formula was written in terms of the quantum deformation parameter $q=(-1)^kz$ instead, which leads to alternating signs as $k$ varies with each application of $\psi^*_i$. 
\end{remark}

\begin{example}\label{exampleproduct} \rm
    Let $k = 2$ and $n = 5$, consider the partitions $\lambda = (2 , 1)$ and $\mu = (3,1)$. The corresponding Grassmannian permutation $w(\mu)\in S^{2,5}$ for $\mu$ is $w(\mu)=[25134]$. For the Young diagram of $\lambda$ there are only two semi-standard Young tableaux,
    \[
    \begin{ytableau}[]
    1 & 1  \\ 2 \\   
    \end{ytableau}  \qquad \qquad
    \begin{ytableau}
        1 & 2\\
        2 \\
    \end{ytableau}\;,
    \]
    which result in the following product expansion
    \begin{align*}
        \ket{\lambda} \star \ket{\mu } =\;& (t_1. \psi^*_2) (t_1 t_2. \psi^*_5).1 + (t_1 t_3. \psi_2^*) (t_1. \psi^*_5).1 \\
        =\;& -z (y_2 - y_1)(y_5 - y_3) \psi^*_1 \psi^*_2.1 - z (y_5 - y_1)\psi^*_1 \psi^*_3.1 - z \psi^*_1 \psi^*_4 - z \psi^*_2 \psi^*_3.1 \\
        &+ (y_2 - y_1)(y_5 - y_1)(y_5 - y_3) \psi^*_2 \psi^*_5.1 
        + (y_5 - y_1)^2 \psi^*_3 \psi^*_5.1 + (y_5 - y_1) \psi^*_4 \psi^*_5.1\\
        =\;& q (y_2 - y_1)(y_5 - y_3) \ket{0,0} + q (y_5 - y_1)\ket{1,0} + q \ket{2,0} + q \ket{1,1} \\
        &+ (y_2 - y_1)(y_5 - y_1)(y_5 - y_3) \ket{3,1} 
        + (y_5 - y_1)^2 \ket{3,2} + (y_5 - y_1) \ket{3,3}.
    \end{align*}
    Here we have identified in the last step the quasi-periodicity parameter $z$ from \eqref{quasiperiod} with the quantum parameter $q$ according to $z=(-1)^{k-1}q=-q$.
    \end{example}
The proof of Theorem \ref{thm:fermion} requires the following Lemma.
\begin{lemma}
        (i) The following commutation relation holds true, when acting on $\bigwedge^{k-1}V[z^{\pm1}]$,
        \begin{equation}
        \label{fschurcom}
            s_{\lambda}(X^q_1,\ldots,X_k^q | y) \psi^*_i = \sum_{\mu}  (t^{\lambda/\mu}.\psi^*_{i}) s_{\mu}(X^q_1,\ldots, X^q_{k-1}|y),
        \end{equation}
        where the sum runs over all partitions $\mu=(\mu_1,\ldots,\mu_{k-1})$ such that $\lambda/\mu$ is a horizontal strip and $t^{\lambda/\mu}$ is the product of translations as defined in \eqref{t2h.s.}, 
        $
            t^{\lambda/\mu}=\prod\limits_{\square \in \lambda /\mu} t_{k + c(\square)}\;.
        $
 (ii) Similarly, for $\psi_i$ acting on $\bigwedge^{k}V[z^{\pm 1}]$ we have the following relation:
        \begin{equation} 
        \label{commpsi}
            s_{\lambda}(X_1^q, \ldots , X_{k-1}^q|y) \psi_i = \sum\limits_{\mu} (-1)^{|\lambda/\mu|} (t^{-\lambda/\mu}.\psi_i) s_{\mu}(X_1^q, \ldots, X_k^q|y),
    \end{equation}
    where the sum runs over all partitions $\mu$ such that $\lambda/\mu$ is a vertical strip (including the empty one for $\mu=\lambda$) and $t^{-\lambda / \mu} = \prod\limits_{\square \in \lambda / \mu}t^{-1}_{k + c(\square)}$ with the translations acting according to %
    $t^{-1}_j. \psi_i = \psi_{i - 1} + (y_i - y_j) \psi_i$; compare with Lemma \ref{lem:WaffClifford}.
    \end{lemma}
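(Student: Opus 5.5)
The plan is to work on the tensor lift $V^{\otimes k}[z^{\pm1}]$ and reduce to a one-variable branching rule for factorial Schur polynomials.

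Because $s_\lambda(X^q_1,\ldots,X^q_k|y)$ is symmetric in the quantised Chern roots, its action commutes with the projection onto $\bigwedge^kV[z^{\pm1}]$. For $\omega\in\bigwedge^{k-1}V[z^{\pm1}]$ we therefore compute $\psi^*_i\omega=v_i\wedge\omega$ as the antisymmetrisation of $v_i\otimes\tilde\omega$. We may then let $s_\lambda(X^q_1,\ldots,X^q_k|y)$ act on $v_i\otimes\tilde\omega$ before antisymmetrising.

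The key input is the branching rule for factorial Schur polynomials (Appendix \ref{app:A}), obtained by separating off the variable $x_1$ in the tableau formula. In the variables $x_1,\ldots,x_k$ it reads
\[
s_\lambda(x_1,\ldots,x_k|y)=\sum_{\mu}\prod_{\square\in\lambda/\mu}\bigl(x_1-y_{k+c(\square)}\bigr)\,s_\mu(x_2,\ldots,x_k|\tau y)\;,
\]
where the sum is over horizontal strips $\lambda/\mu$. If the outermost entries are chosen so that the shift of the $y$'s is absorbed, this becomes exactly the weight $\prod_{\square}(x-y_{k+c(\square)})$ with unshifted $s_\mu(\,\cdot\,|y)$ on the remaining $k-1$ variables. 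The first task is to fix the correct ordering convention, i.e. whether the strip is attached to the largest or the smallest tableau entry, so that no shift $\tau$ appears. I expect the right choice is to place the strip in the variable with the largest label and to use the content convention of \eqref{t2h.s.}; I would verify this on $\lambda=(1)$, where $e_1(x|y)=x_1+\cdots+x_k-y_1-\cdots-y_k$ splits as $(x_1-y_k)+e_1(x_2,\ldots,x_k|y)$.

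Next I substitute $x_1\mapsto X^q_1$ acting on the first tensor factor $v_i$. Here $\prod_{\square}(X^q-y_{k+c(\square)})v_i=t^{\lambda/\mu}.v_i$ by Lemma \ref{lem:Waffaction}, and the translations commute with each other, so the order of the product is irrelevant. The remaining factor $s_\mu(X^q_2,\ldots,X^q_k|y)$ acts on $\tilde\omega$. After antisymmetrising, and using again that $s_\mu$ is symmetric and commutes with the projection, this gives $(t^{\lambda/\mu}.v_i)\wedge s_\mu(X^q|y)\omega$, which is $(t^{\lambda/\mu}.\psi^*_i)\,s_\mu\,\omega$ by \eqref{commuterelat}.

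The subtle step is that antisymmetrising after a non-symmetric operation on $v_i\otimes\tilde\omega$ is legitimate. I would handle it by writing $v_i\wedge\omega=\mathrm{Alt}(v_i\otimes\tilde\omega)$ and applying the symmetric operator $s_\lambda(X^q|y)$, which commutes with $\mathrm{Alt}$, directly to $v_i\otimes\tilde\omega$. At that point we have a genuine identity of operators on the tensor product, so only the branching identity is used. A second point to check is the rectangle constraint on $\mu$: terms with $\mu$ not fitting the $(k-1)\times(n-k+1)$ box must vanish or be correctly accounted for. This holds because the polynomial identity is exact, and factorial Schur polynomials with too many rows vanish identically in $k-1$ variables.

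For part (ii), I dualise. Contraction $\psi_i$ is the transpose of $\psi^*_i$ under the pairing between $\bigwedge V^*$ and $\bigwedge V$. The plan is to use the dual Cauchy-type, or inverse, branching identity: expand $s_\lambda$ in $k-1$ variables in terms of $s_\mu$ in $k$ variables times elementary factors $\prod(-1)(x_1-y)^{-1}$-type terms, summed over vertical strips. Equivalently, I would invert (i) using the relation $\sum_{\lambda/\mu\ \mathrm{vert}}(-1)^{|\lambda/\mu|}e$-expansion, i.e. $h$ and $e$ being inverse generating functions. Concretely, I would contract the first tensor factor and use $\prod_{j=1}^k(1-y_j\ldots)$. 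This inversion, where $t^{-1}_j$ appears through $(X^q-y_j)^{-1}$ acting transposed on $v^i$, is what I expect to be the main obstacle. The quickest route is probably to derive (ii) formally from (i) and the CAR by pairing with $\bra{w}$ and inverting the transition matrix, using Lemma \ref{lem:BAE} for invertibility.
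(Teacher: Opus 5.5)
Your part (i) is the paper's argument. You lift $\psi^*_i\omega$ to a tensor, apply the branching rule in the variable sitting on the slot of $v_i$, recognise $\prod_{\square\in\lambda/\mu}(X^q-y_{k+c(\square)}\cdot 1)v_i=t^{\lambda/\mu}.v_i$, and project back. The paper uses $\omega\otimes v_i$ and $x_k$; your choice of $v_i\otimes\tilde\omega$ and $x_1$ is equivalent by symmetry. You should drop the displayed identity with $s_\mu(x_2,\ldots,x_k|\tau y)$, though: it is false (for $\lambda=(1)$ it is off by $y_1-y_k$). No shift ever arises. Deleting the entries equal to $k$ from a tableau gives \eqref{BranchingRule} with unshifted $y$, and symmetry of $s_\lambda$ in $x$ lets you single out $x_1$ instead, which is exactly what your own $\lambda=(1)$ check confirms.

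Part (ii), however, is not proved, and the mechanism you anticipate is wrong. No inverse of $X^q-y_j\cdot1$ enters, so Lemma \ref{lem:BAE} plays no role. Lift $\psi_i$ to contraction with $v^i$ in the \emph{last} tensor factor. Reading off row $i$ of $X^q-y_j\cdot 1$ then gives
\[
\psi_i\circ(X^q_k-y_j\cdot 1)=\psi_{i-1}+(y_i-y_j)\psi_i=t_j^{-1}.\psi_i,\qquad \psi_0=z\psi_n .
\]
So $t_j^{-1}$ acts on annihilators through the transpose $(X^q-y_j\cdot1)^T$ applied to $v^i$, not through an inverse. Likewise, the polynomial identity you need has no inverse factors. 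It is the reverse branching rule \eqref{ReverseBranching},
\[
s_\lambda(x_1,\ldots,x_{k-1}|y)=\sum_{\lambda/\mu\ \text{vertical}}(-1)^{|\lambda/\mu|}\prod_{\square\in\lambda/\mu}(x_k-y_{k+c(\square)})\,s_\mu(x_1,\ldots,x_k|y).
\]
The paper proves this in Appendix \ref{app:A} from the one-row case $h_i(x_1,\ldots,x_{k-1}|y)=h_i(x_1,\ldots,x_k|y)-(x_k-y_{k+i-1})h_{i-1}(x_1,\ldots,x_k|y)$ and multilinearity of the factorial Jacobi--Trudi determinant.

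With these two ingredients (ii) takes two lines:
\begin{enumerate}
\item substitute $X^q$ into the reverse branching rule;
\item compose with the last-factor contraction on the left;
\item turn each $\psi_i\prod_{\square\in\lambda/\mu}(X^q_k-y_{k+c(\square)}\cdot1)$ into $t^{-\lambda/\mu}.\psi_i$;
\item use that $s_\lambda(X^q_1,\ldots,X^q_{k-1}|y)$ does not touch the last factor, hence commutes with that contraction.
\end{enumerate}

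Your fallbacks do not close the gap. Transposing (i) and pairing with $\bra{w}$ produces $s_\lambda((X^q)^T|y)$ acting on $\bigwedge V^*$, which is a different operator. ``Inverting (i)'' only makes sense at the level of polynomial identities. There, showing that the signed vertical-strip transition matrix inverts the horizontal-strip one \emph{is} the reverse branching rule, which you would still have to prove.
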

    \begin{proof}
        We start by considering the branching rule \eqref{BranchingRule} for factorial Schur polynomials (with respect to the last variable) which is a direct consequence of their tableaux representation:
        \[
        s_\lambda(x|y) = \sum_{\mu} \prod\limits_{\square \in \lambda / \mu} (x_k - y_{k + c(\square)}) s_{\mu}(x_1, \ldots, x_{k - 1}|y),
        \]
        where the sum runs over all $\mu=(\mu_1,\ldots,\mu_{k-1})$ such that $\lambda/\mu$ is a horizontal strip. 

        Given any $v\in\bigwedge^{k-1}V[z^{\pm1}]$, consider (up to a sign) the preimage $v \otimes v_i \in V^{\otimes k}[z^{\pm1}]$ of the vector $\psi^*_i v = v_i \wedge v \in \bigwedge^{k}V[z^{\pm1}]$ under the projection $V[z^{\pm1}]^{\otimes k}\twoheadrightarrow\bigwedge^kV[z^{\pm1}]$. Then
        \begin{align*}
        s_{\lambda}(X^q|y) v \otimes v_i &= \sum_{\mu} \prod\limits_{\square \in \lambda / \mu} (X^q_k - y_{k + c(\square)}\cdot 1) s_{\mu}(X^q_1, \ldots, X^q_{k - 1}|y) v \otimes v_i \\
        &= \sum_\mu s_{\mu}(X^q_1, \ldots, X^q_{k - 1}|y)v \otimes \prod\limits_{\square \in \lambda / \mu} t_{k + c(\square)}.v_i,
        \end{align*}
        which gives us the desired formula \eqref{fschurcom} after projecting back to the wedge space.
        
        To prove the second formula, recall the reverse branching rule \eqref{ReverseBranching} for factorial Schur polynomials
        \[
        s_{\lambda}(x_1, \ldots, x_{k - 1}|y) = \sum\limits_{\mu}(-1)^{|\lambda/\mu|}\prod\limits_{\square \in \lambda / \mu}(x_k - y_{k + c(\square)}) s_{\mu}(x|y),
        \]
        where the sum runs over all Young diagrams $\mu$ such that $\lambda / \mu$ is a vertical strip. Lift the action of the operators $X_i$ and $\psi_j$ to the tensor product $V^{\otimes k}[z^{\pm 1}]$, where $\psi_j$ acts by contraction in the last tensor component. (This may differ by a sign from our previous definition of the lifted action, but does not affect the resulting formula.) Noting that $
        \psi_i (X_k - y_j\cdot 1) = t^{-1}_j.\psi_i, 
        $ 
        we obtain
        \begin{align*}
        \sum\limits_{\mu}(-1)^{|\lambda / \mu|} (t^{- \lambda / \mu}.\psi_i)s_{\mu}(X^q|y) \;&= \sum\limits_{\mu} (-1)^{|\lambda / \mu|}\psi_i \prod\limits_{\square \in \lambda / \mu}(X_k^q - y_{k + c(\square)})s_\mu(X^q | y) \\
        \;&= \psi_i s_{\lambda}(X_1^q, \ldots, X_{k - 1}^q|y) = s_{\lambda}(X_1^q, \ldots, X_{k - 1}^q|y) \psi_i,  
        \end{align*}
        where the last equality holds because $\psi_i$ acts by contraction only in the last tensor component. Projecting back to the wedge space we obtain statement of the proposition.
    \end{proof}
    Let us state two particular cases of the formulae \eqref{fschurcom} and \eqref{commpsi} in which they simplify significantly.
    \begin{cor}
        The following commutation relations hold, when acting in $\bigwedge^{k - 1}V[z^{\pm1}]$ and $\bigwedge^{k}V[z^{\pm1}]$ respectively,
        \begin{equation}
        \label{elemcom}
            e_j(X_1^q, \ldots, X_k^q|y) \psi^*_i = \psi^*_i e_j(X_1^q, \ldots, X_{k - 1}^q|y) + (t_{k - j + 1}.\psi_i^*)e_{j - 1}(X_1^q, \ldots, X_{k - 1}^q|y),
        \end{equation}
        \begin{equation}
        \label{compcom}
            h_j(X_1^q, \ldots, X_{k - 1}^q|y) \psi_i = \psi_i h_j(X_1^q, \ldots, X_k^q|y) - (t^{-1}_{k + j - 1}.\psi_i)h_{j - 1}(X_1^q, \ldots, X_k^q|y).
        \end{equation}
    \end{cor}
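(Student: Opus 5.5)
The plan is to specialise the previous Lemma to single-row and single-column partitions.

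For \eqref{elemcom}, apply \eqref{fschurcom} with $\lambda=(1^j)$, so that $s_{(1^j)}(X^q|y)=e_j(X^q_1,\ldots,X^q_k|y)$. A partition $\mu=(\mu_1,\ldots,\mu_{k-1})$ with $\lambda/\mu$ a horizontal strip must be $\mu=(1^j)$ or $\mu=(1^{j-1})$. Two things need checking here:
\begin{itemize}
\item the case $\mu=(1^j)$ is only available when $j\le k-1$;
\item for $j=k$ the term $e_k(X_1^q,\ldots,X_{k-1}^q|y)$ vanishes, so no case split is needed.
\end{itemize}
The empty strip contributes $\psi^*_i\,e_j(X^q_1,\ldots,X^q_{k-1}|y)$. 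The one-box strip consists of the box in row $j$, column $1$, whose content is $1-j$. By \eqref{t2h.s.} this gives $t^{\lambda/\mu}=t_{k+1-j}$, which is exactly the second term.

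For \eqref{compcom}, apply \eqref{commpsi} with $\lambda=(j)$, so that $s_{(j)}=h_j$. The partitions $\mu$ with $\lambda/\mu$ a vertical strip are $\mu=(j)$ and $\mu=(j-1)$. The empty strip gives $\psi_i h_j(X^q_1,\ldots,X^q_k|y)$. The removed box sits in row $1$, column $j$, so its content is $j-1$. This gives the sign $(-1)^1$ and the translation $t^{-1}_{k+j-1}$, matching the second term.

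The only delicate point is the bookkeeping of which variable set each factorial Schur function is evaluated in, since the sign and the content conventions are fixed by the Lemma. The content is measured in $\lambda$. Because the factorial parameters shift, we must confirm that $e_j$ and $h_j$ here mean $s_{(1^j)}$ and $s_{(j)}$ in the same factorial normalisation as Appendix A. Beyond this I expect no real obstacle; it is a direct specialisation.
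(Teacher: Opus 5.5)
Your proposal is correct and follows the paper's route: the paper presents this corollary as the specialisation of \eqref{fschurcom} to $\lambda=(1^j)$ and of \eqref{commpsi} to $\lambda=(j)$, without a separate proof. Your content computations ($1-j$ for the removed box in column one, $j-1$ for the removed box in row one) are right, and so is your observation that $e_k(X^q_1,\ldots,X^q_{k-1}|y)=0$ absorbs the $j=k$ edge case; the normalisation you flag agrees with \eqref{complelem}.
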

    Note that in formula \eqref{fermionformula} we assume that the fermionic operators are arranged in ascending order. However, we can (and sometimes will) reorder the fermionic operators in the product formula \eqref{fermionformula} using the canonical anticommutation relations \eqref{CAR}. Alternatively, we can apply the branching rule in the proof of \eqref{fschurcom} with respect to a different variable as the factorial Schur polynomial is symmetric in the $x$-variables. This reordering procedure is important when showing Graham positivity of the equivariant quantum Pieri rules since the latter depends on the definition of the Schubert basis which we have identified with an ascending wedge product via \eqref{Satake}.
\begin{proof}[Proof of Theorem \ref{thm:fermion}]
    Using the product formula \eqref{star_product} from Proposition \ref{prop:quantumSatake} and the last lemma (but with respect to the first variable) one shows by induction on $k$ that the products \eqref{star_product} and \eqref{fermionformula} coincide.
\end{proof}
\begin{remark}\rm
Note that the commutation relation \eqref{commpsi} allows for an alternative computation of the quantum product \eqref{fermionformula} in terms of the inverse translations and the annihilation operators $\psi_i$ instead: for any Grassmannian permutation $w=[i_1\ldots i_n]\in S^{k,n}$ we have
\[
\ket{w}=(-1)^{\ell(w^\vee)}\psi_{i_n}\ldots\psi_{i_{k+1}}v_1\wedge\ldots\wedge v_n\;,
\]
where $w^\vee=[i_{k+1}\ldots i_ni_1\ldots i_k]\in S^{n-k,n}$. Applying repeatedly \eqref{commpsi} one arrives by a completely analogous argument at a dual fermionic product formula,
\begin{multline}
    \label{fermionformula2}
        \ket{u} \star \ket{w} =\\
        (-1)^{k(n-k)+\ell(w)+|\lambda(u)|} \sum\limits_{T'(u)} (t^{-\nu_1}.\psi_{w_n}) \cdots (t^{-\nu_{n-k}}.\psi_{w_{n+1-k}})v_1\wedge\ldots\wedge v_n\,, 
    \end{multline}
where the sum runs over all partitions $T'(u)$ of $\lambda(u)$ into vertical strips $\nu_i$. Recall that a vertical strip is a skew diagram which can have multiple boxes in the same column but at most one box in each row; we leave the details to the reader.    
\end{remark}
As an immediate consequence of \eqref{fermionformula} and Theorem \ref{thm:fermion} we have the following new expression of equivariant Gromov-Witten invariants.
\begin{cor}
Let $u=[u_1\ldots u_n],v=[v_1\ldots v_n],w=[w_1\ldots w_n]\in S^{k,n}$ be Grassmannian permutations and set $\lambda_i=u_{k+1-i}+i-k-1$ for $i=1,\ldots,k$. Then the equivariant Gromov-Witten invariants \eqref{EGWdef} are given by
\begin{equation}\label{EGW=VEV}
C_{uv}^{w}(y,q)=\sum_{T(\lambda)}
\langle\psi_{w_k}\cdots\psi_{w_1}(t^{\rho_1}.\psi^{*}_{v_1}) \cdots(t^{\rho_k}.\psi^*_{v_k})\rangle
\end{equation}
where the sum runs over all semi-standard tableaux $T$ of shape $\lambda$.
\end{cor}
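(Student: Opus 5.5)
The plan is to read off the coefficient of $\ket{w}$ in the product $\ket{u}\star\ket{v}$ from Theorem \ref{thm:fermion} by pairing with the dual vector $\bra{w}$. By \eqref{EGWdef} and the isomorphism of Theorem \ref{thm:fermion}, $C^w_{uv}(y,q)$ is the coefficient of $\ket{w}$ in $\ket{u}\star\ket{v}$. Since Theorem \ref{thm:fermion} identifies $\star$ with $\circledast$, formula \eqref{fermionformula} with $\ket{v}=v_{v_1}\wedge\cdots\wedge v_{v_k}$ and $\lambda=\lambda(u)$ gives
\[
\ket{u}\star\ket{v}=\sum_{T(\lambda)}(t^{\rho_1}.\psi^*_{v_1})\cdots(t^{\rho_k}.\psi^*_{v_k})\ket{0}.
\]
We first check that $\lambda_i=u_{k+1-i}+i-k-1$ agrees with the bijection $i_j=\lambda_{k+1-j}+j$ used in \eqref{Satake}; this is immediate after substituting $j=k+1-i$.

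Next we extract the coefficient using the dual Fock space. By the Remark on $\mc{F}^*_n$, $\bra{0}\psi_{w_k}\cdots\psi_{w_1}$ is the dual basis vector $\bra{w}$. To confirm this, we check $\bra{0}\psi_{w_k}\cdots\psi_{w_1}\psi^*_{a_1}\cdots\psi^*_{a_k}\ket{0}$ for increasing $a_1<\cdots<a_k$. Wick's Theorem, or repeated use of the CAR \eqref{CAR}, evaluates this as $\det(\delta_{w_i a_j})$. This equals $1$ if $\{a_j\}=\{w_j\}$ and $0$ otherwise; the ordering $\psi_{w_k}\cdots\psi_{w_1}$ is chosen exactly so that the inner pairs contract first with sign $+1$.

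Finally, each $t^{\rho_i}.\psi^*_{v_i}$ is an $\ring[q^{\pm1}]$-linear combination of the $\psi^*_j$, by Lemma \ref{lem:WaffClifford} together with quasi-periodicity \eqref{quasiperiod}. Hence the product expands into ordered monomials $\psi^*_{a_1}\cdots\psi^*_{a_k}\ket{0}$, which can be reordered by the CAR into $\pm\ket{a}$ for increasing $a$. Linearity of the VEV then shows that pairing with $\bra{w}$ picks out exactly the coefficient of $\ket{w}$. Summing over tableaux yields \eqref{EGW=VEV}.

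The only delicate point is bookkeeping of signs and of $z$ versus $q$. The quasi-periodic relabelling $\psi^*_{i+n}=z\psi^*_i$ produces factors of $z$ whose conversion to $q=(-1)^{k-1}z$ depends on the degree at which the operator acts. I would handle this by working in the $z$-formalism throughout, as Theorem \ref{thm:fermion} does, and specialising $z=(-1)^{k-1}q$ only at the end, as in Example \ref{exampleproduct}.
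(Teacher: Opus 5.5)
Your proposal is correct and is essentially the paper's argument: the paper states this corollary as an immediate consequence of \eqref{fermionformula} and Theorem \ref{thm:fermion}, i.e.\ one pairs the fermionic product expansion of $\ket{u}\star\ket{v}$ with the dual Schubert vector $\bra{0}\psi_{w_k}\cdots\psi_{w_1}$ from the Remark on $\mc{F}^*_n$. Your check that this pairing extracts exactly the coefficient of $\ket{w}$ (via Wick's Theorem or the CAR), and your choice to stay in the $z$-formalism until specialising $z=(-1)^{k-1}q$ at the end, simply make explicit the details the paper leaves implicit.
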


\begin{remark}\rm
Note that the above formula simplifies in the non-equivariant setting when we specialise $y_1=\ldots=y_n=0$. As explained above the translations then simply act via $t_i.\psi^*_j=\psi^*_{j+1}$ for all $i,j=1,\ldots,n$ and, thus, we can rewrite \eqref{EGW=VEV} as
\[
C_{uv}^{w,d}(0)=\sum_{\alpha}
(-1)^{d(k-1)}K_{\lambda,\alpha}\langle\psi_{w_k}\cdots\psi_{w_1}\psi^{*}_{v_1+\alpha_1} \cdots\psi^*_{v_k+\alpha_k}\rangle
\]
where the sum now runs over all compositions $\alpha\in\Z^k_{\ge 0}$ of $\ell(u)=|\lambda(u)|$, $n\,d=\ell(u)+\ell(v)-\ell(w)$, and $K_{\lambda,\alpha}$ are the {\em Kostka numbers}, the number of semi-standard tableaux of weight $\alpha$; compare with \cite[Cor.1.4]{Korff09}\footnote{In loc. cit. the formula has been further simplified using that $K_{\lambda,\alpha}=K_{\lambda,\alpha'}$, where $\alpha'$ is any permutation of $\alpha$, and Wick's Theorem.}. In the case of setting also $d=0$ one obtains Littlewood-Richardson coefficients and computing the latter via this identity is known as {\em Racah-Speiser algorithm} or {\em Weyl's method of characters}; see e.g. \cite{knapp1996lie}.
\end{remark}

\subsection{Fermions and the shuffle product}\label{sec:shuffle} 
Let us mention yet another interpretation of the action of fermionic creation operators solely in terms of factorial Schur polynomials. To do that, we briefly recall the construction of cohomological Hall algebras due to Kontsevich and Soibelman \cite{kontsevich2011cohomological}, for details, we refer to the original paper. Their construction works for any finite quiver $Q$, but here we are only interested in the simplest case $A_1$, namely the quiver $Q=\bullet$ with one vertex and no arrows. To the quiver $Q$ one associates the space $\mathcal{H} = \bigoplus_{\underline{k}}\mathcal{H}_{\underline{k}}$, where $\underline{k}$ is a dimension vector of the quiver and $\mathcal{H}_{\underline{k}}$ is the $\GL_{\underline{k}}$ equivariant cohomology of the space of quiver representations of $Q$. In \cite{kontsevich2011cohomological} the authors define a graded multiplication $m_{\underline{k}_1, \underline{k}_2}: \mathcal{H}_{\underline{k}_1} \otimes \mathcal{H}_{\underline{k}_2} \to \mathcal{H}_{\underline{k}_1 + \underline{k}_2}$ on the space $\mathcal{H}$, which they then explicitly calculate in terms of the {\em shuffle product} from \cite{feigin1995vector}.  In our case the dimension vector is simply a non-negative integer $k \in\Z_{\ge 0}$ and $\mathcal{H}_{k} = H^*_T(\Gr(k;\infty)) \simeq \ring[x_1, \ldots, x_{k}]^{S_{k}}$ is the $T$-equivariant cohomology of the classifying space. The shuffle product is then given by
\begin{equation}\label{shuffle}
    (f_1 \ast f_2)(x_1, \ldots, x_{k_1 + k_2}) = \sum\limits_{\substack{|I|=k_1 \\ |J| = k_2 \\ I \cup J = [k_1 + k_2]}} \frac{f_1 (x_{i_1}, \ldots , x_{i_{k_1}}) f_2(x_{j_1}, \ldots, x_{j_{k_2}})}{\prod\limits_{l = 1}^{k_1}\prod\limits_{r = 1}^{k_2}(x_{j_r} - x_{i_l})},
\end{equation}
where the sum is taken over all disjoint ordered subsets $I=\{i_1<\ldots<i_{k_1}\}$ and $J=\{j_1<\ldots<j_{k_2}\}$ of $\{1,2,\ldots,k_1+k_2\}$. 

Recall that the action of the fermionic creation operator produces from the (equivariant) Schubert class $\sigma_w\in qH_T^*(\Gr(k;n))$ labelled by the Grassmannian permutation $w\in S^{k,n}$ another (equivariant) Schubert class $\sigma_{w''}=(-1)^{\#(w'',w)}\psi^*_i(\sigma_w)\in qH^*_T(\Gr(k+1;n))$, possibly with a minus sign; see Figure \ref{fig:CliffYoung} for an example. By abuse of notation let us denote the Grassmannian permutation $w''\in S^{k+1,n}$ by $\psi_i^*.w$. The following simple proposition explains the connection between the action of the fermionic creation operators and the shuffle product.
\begin{prop}
    Let $s_{\lambda(w)}(x_2, \ldots, x_{k + 1}|y)$ be the factorial Schur polynomial representing the corresponding equivariant Schubert class $\sigma_w$ in the presentation \eqref{QHdef}, then
    \begin{equation}\label{simpleshuffle}
        (x_1|y)^{l-1} \ast s_{\lambda(w)}(x_2, \ldots, x_{k + 1}|y) = 
        (-1)^{\# (\psi_l^*.w,w)}s_{\lambda(\psi^*_{l}.w)} (x_1, \ldots, x_{k + 1}|y)
    \end{equation}
    if $l \notin \{w_1, \ldots w_k \}$ and $0$ otherwise. In other words, computing the shuffle product of the factorial Schur polynomial with the factorial monomial $(x|y)^{l-1}$ is the same as acting on the corresponding Grassmannian permutation by the fermionic creation operator.
\end{prop}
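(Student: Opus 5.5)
The plan is to reduce both sides to alternants (bialternant formula) and observe that the shuffle product with a single-variable function is exactly antisymmetrisation of a product. Write $w=[i_1\ldots i_k\ldots]$ with $i_j=\lambda_{k+1-j}+j$. By the alternant formula for factorial Schur polynomials already used in the proof of \eqref{action},
\[
s_{\lambda(w)}(x_2,\ldots,x_{k+1}|y)=\frac{\det\big((x_{a+1}|y)^{i_b-1}\big)_{1\le a,b\le k}}{\prod_{2\le a<b\le k+1}(x_a-x_b)},
\]
since the exponents $\lambda_{k+1-b}+b-1$ are exactly $i_b-1$ (up to a reordering of columns, which only costs a fixed sign that I will track). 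Because $s_{\lambda(w)}$ is symmetric, the shuffle formula \eqref{shuffle} with $k_1=1$, $k_2=k$ reads
\[
(x|y)^{l-1}\ast f=\sum_{m=1}^{k+1}\frac{(x_m|y)^{l-1}\,f(x_1,\ldots,\widehat{x_m},\ldots,x_{k+1})}{\prod_{r\ne m}(x_r-x_m)} .
\]

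Substituting the alternant for $f$, the $m$-th summand has denominator $\prod_{r\neq m}(x_r-x_m)\prod_{a<b;\,a,b\ne m}(x_a-x_b)$, which equals $(-1)^{m-1}\prod_{a<b}(x_a-x_b)$, the full Vandermonde in $x_1,\ldots,x_{k+1}$. The sum over $m$ with signs $(-1)^{m-1}$ of $(x_m|y)^{l-1}$ times the $k\times k$ minor obtained by deleting row $m$ is precisely the Laplace expansion along the first column of the $(k+1)\times(k+1)$ determinant $\det\big((x_a|y)^{j_b-1}\big)$ with exponent list $(l,i_1,\ldots,i_k)$. Hence the shuffle product equals this alternant divided by the Vandermonde. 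If $l\in\{i_1,\ldots,i_k\}$ two columns coincide and the result vanishes, matching $\psi_l^*\ket{w}=v_l\wedge\ket{w}=0$. Otherwise, sorting the columns into increasing order $j_1<\cdots<j_{k+1}$ produces the sign of the permutation moving $l$ past the $i_b<l$, and the sorted alternant is, again by the bialternant formula, $s_{\lambda(\psi_l^*.w)}(x_1,\ldots,x_{k+1}|y)$ with $\psi_l^*.w$ the Grassmannian permutation of $\{l\}\cup\{i_b\}$.

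The only real obstacle is bookkeeping of signs: I must check that the sign $(-1)^{\#\{b:i_b<l\}}$ from sorting, combined with any convention sign in the alternant (the $(-1)^{k(k-1)/2}$ factors cancel between $k$ and $k+1$ only up to $(-1)^k$, which must be absorbed by the orientation of the Vandermonde $x_r-x_m$ versus $x_a-x_b$), agrees with $(-1)^{\#(\psi_l^*.w,w)}$, i.e. with the height of the added rim hook minus one. I would verify this by noting that $v_l\wedge v_{i_1}\wedge\cdots\wedge v_{i_k}=(-1)^{\#\{b:i_b<l\}}\ket{\psi_l^*.w}$ and that the combinatorial description in Figure \ref{fig:CliffYoung} (add ribbon of length $l$... remove a maximal column) has height equal to the number of $i_b$ exceeding $l$ plus one, so possibly a global $(-1)^k$ relates the two counts; I would fix conventions by checking the case $k=0$ and $k=1$ explicitly and then rely on the determinant computation for the general case.
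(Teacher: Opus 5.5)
Your route is the paper's: expand the shuffle product as a sum over which variable goes into the one-variable factor, bring all denominators to a single Vandermonde, and recognise the numerator as the Laplace expansion of the $(k+1)\times(k+1)$ alternant along the line carrying the exponent $l-1$. That alternant has two equal lines, and hence vanishes, when $l\in\{w_1,\ldots,w_k\}$. The paper only asserts the identity ``up to sign'', so pinning down the sign goes beyond it. However, two of your sign claims are wrong, and these are what make you suspect a stray $(-1)^k$.

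First, with the orientation you wrote,
\[
\prod_{r\neq m}(x_r-x_m)\prod_{a<b;\,a,b\neq m}(x_a-x_b)=(-1)^{k+1-m}\prod_{a<b}(x_a-x_b),
\]
not $(-1)^{m-1}\prod_{a<b}(x_a-x_b)$; already for $k=1$, $m=1$ the left side is $x_2-x_1$. The $(-1)^{k+1-m}$ is the sign in the paper's computation. Your $(-1)^{m-1}$ is correct if you use the ascending Vandermonde $\prod_{a<b}(x_b-x_a)=\det(x_a^{b-1})$ in both places. That is the natural convention here, since then $s_{\lambda(w)}=\det\big((x_a|y)^{i_b-1}\big)/\prod_{a<b}(x_b-x_a)$ holds with no reordering sign. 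With it every step is an equality, and you get exactly
\[
(x_1|y)^{l-1}\ast s_{\lambda(w)}(x_2,\ldots,x_{k+1}|y)=(-1)^{p}\,s_{\lambda(\psi_l^*.w)}(x_1,\ldots,x_{k+1}|y),\qquad p=\#\{b:\ i_b<l\}.
\]
This is precisely the sign produced by sorting $v_l\wedge v_{i_1}\wedge\cdots\wedge v_{i_k}$.

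Second, the added ribbon has height $p+1$, not (number of $i_b$ exceeding $l$)$+1$. Write $\mu=\lambda(\psi_l^*.w)$. Then $\mu_r=\lambda_r-1$ for $r\le k-p$, $\mu_{k+1-p}=l-p-1$, and $\mu_r=\lambda_{r-1}$ for $r\ge k+2-p$. So the ribbon $(\mu+(1^{k+1}))/\lambda$ occupies exactly rows $k+1-p,\ldots,k+1$, each of them nonempty. For example, with $k=1$, $i_1=1$, $l=3$ the ribbon is $(2,1)$, of height $2$, although no $i_b$ exceeds $l$. Hence $\#(\psi_l^*.w,w)=p$ and there is no global $(-1)^k$ to absorb: the statement holds with the sign exactly as written. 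Inserting a $(-1)^k$ to ``fix conventions'' would make it false for odd $k$.
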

\begin{proof}
    Let us compute the shuffle product 
    \begin{multline}\label{shufflexpand}
        (x_1|y)^{l-1} \ast s_{\lambda(w)}(x_2, \ldots, x_{k + 1}|y) =\\ \sum\limits_{i = 1}^{k + 1} \frac{(x_i | y)^{l - 1}}{\prod\limits_{j \neq i}(x_j - x_i)} s_{\lambda(w)} (x_1, \ldots, x_{i - 1}, x_{i + 1}, \ldots x_{k + 1}|y)=\\
         \sum\limits_{i = 1}^{k + 1}\frac{(-1)^{k + 1 - i}(x_i|y)^{l - 1}}{\Delta(x)} a_{\lambda(w)}(x_1, \ldots, x_{i - 1}, x_{i + 1}, \ldots, x_{k+1}|y),
    \end{multline}
    where $\Delta(x)= \prod\limits_{i<j}^{k + 1}(x_i - x_j)$ is the Vandermonde determinant and 
    \[
    a_{\lambda}(x_1, \dots, x_k|y) = \det((x_i|y)^{k + \lambda_j - j})
    \]
    is the alternant in the definition of the factorial Schur polynomial; see Appendix \ref{app:A}. On the other hand, the expression \eqref{shufflexpand} is up to sign exactly the expansion of the alternant in the definition of $s_{\lambda(\psi^*_l.w)}(x_1, \ldots, x_{k+1}|y)$ with respect to the row corresponding to the exponent $l-1$, which is zero if $l \in \{ w_1, \ldots w_k \}$ because of two coinciding rows corresponding to the same exponent.
\end{proof}
\noindent Notice that in the shuffle reformulation of the fermionic creation operators the quasi-periodicity condition \eqref{quasiperiod} follows immediately when we specialise the variable $x_i$ to satisfy the Bethe ansatz equations \eqref{GrassBAE} and assume $n$-periodicity of the equivariant parameters $y_{i + n} = y_i$.

To describe also the action of the annihilation operators $\psi_i$ we first recall Newton's interpolation formula for polynomials; see e.g. \cite{macdonald1991notes}.
\begin{lemma}
    Let $p(x)\in\C[x]$ be a polynomial of degree $\le n$. Then
    \[
    p(x)=\sum_{r=0}^n[p]_r\,(x|y)^r,\qquad [p]_{r}=\sum_{i=1}^{r+1}\frac{p(y_i)}{\prod_{1\le j\ne i\le r}(y_i-y_j)}\;.
    \]
    Equivalently, the `coefficient' $[p]_r$ can be expressed in terms of Demazure operators as
    \[
    [p]_r=\partial_r\ldots\partial_1p(y_1),\qquad \partial_i=\frac{1-s_i}{y_i-y_{i+1}}\;.
    \]
\end{lemma}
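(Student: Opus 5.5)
The plan is to work in the basis of falling factorial powers. Since $(x|y)^r=\prod_{i=1}^r(x-y_i)$ is monic of degree $r$, the polynomials $(x|y)^0,\ldots,(x|y)^n$ are related to $1,x,\ldots,x^n$ by a unitriangular change of basis. They therefore form a basis of the polynomials of degree $\le n$, over $\C[y]$ or over any field containing the specialised $y_i$. So $p(x)=\sum_{r=0}^n c_r\,(x|y)^r$ with unique coefficients $c_r$, and it only remains to identify $c_r$ with $[p]_r$.

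\textbf{Step 1: identify $c_r$ with an interpolation coefficient.} Fix $r$ and consider the truncation $p_{\le r}(x)=\sum_{s\le r}c_s(x|y)^s$. For $s>r$ we have $(y_i|y)^s=0$ whenever $i\le r+1$. Hence $p_{\le r}$ agrees with $p$ at the $r+1$ points $y_1,\ldots,y_{r+1}$ (generic, i.e. distinct). Because $\deg p_{\le r}\le r$, it is the unique interpolating polynomial of $p$ at these nodes, and its coefficient of $x^r$ is $c_r$.

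\textbf{Step 2: read off the leading coefficient via Lagrange.} The Lagrange form of this interpolating polynomial is
\[
p_{\le r}(x)=\sum_{i=1}^{r+1}p(y_i)\prod_{j\ne i}\frac{x-y_j}{y_i-y_j},
\]
with all indices in $\{1,\ldots,r+1\}$. Comparing coefficients of $x^r$ gives
\[
c_r=\sum_{i=1}^{r+1}\frac{p(y_i)}{\prod_{1\le j\ne i\le r+1}(y_i-y_j)}.
\]
This is the stated formula for $[p]_r$, with the product in the denominator understood to run up to $r+1$. As a by-product, this expression is a polynomial in $y$, being a coefficient in a $\C[y]$-basis expansion, and it is symmetric in $y_1,\ldots,y_{r+1}$.

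\textbf{Step 3: the Demazure description.} Write $f_r(y):=[p]_r$ for the divided difference on the nodes $y_1,\ldots,y_{r+1}$. I prove by induction on $r$ that $\partial_r f_{r-1}=f_r$, with base case $f_0=p(y_1)$. Since $f_{r-1}$ involves only $y_1,\ldots,y_r$, applying $s_r$ gives $s_rf_{r-1}$, the divided difference on the nodes $y_1,\ldots,y_{r-1},y_{r+1}$. The claim is then the classical recursion
\[
f_r=\frac{f_{r-1}-s_rf_{r-1}}{y_r-y_{r+1}}.
\]
I verify this directly from the Lagrange sums. Terms with $i\le r-1$ combine via the identity
\[
\frac{1}{y_i-y_r}-\frac{1}{y_i-y_{r+1}}=\frac{y_r-y_{r+1}}{(y_i-y_r)(y_i-y_{r+1})},
\]
and the terms $i=r$ and $i=r+1$ match directly. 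Iterating gives $[p]_r=\partial_r\cdots\partial_1p(y_1)$.

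The main (mild) obstacle is this last bookkeeping step, namely checking the recursion with the correct sign conventions for $\partial_i$. Everything else is triangularity of the basis and uniqueness of interpolation.
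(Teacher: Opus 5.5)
Your proof is correct. Note that the paper gives no argument for this lemma; it simply recalls Newton's interpolation formula and cites Macdonald's notes. So your self-contained route is a genuine addition rather than a variant of the paper's proof.

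Your argument has three steps:
\begin{itemize}
\item Unitriangularity of $(x|y)^0,\ldots,(x|y)^n$ against the monomials gives unique coefficients $c_r$ over $\C[y]$.
\item The vanishing $(y_i|y)^s=0$ for $i\le r+1\le s$ identifies $c_r$ as the leading coefficient of the interpolant of $p$ at $y_1,\ldots,y_{r+1}$, which Lagrange's formula then computes.
\item The Demazure form follows from the recursion $f_r=(f_{r-1}-s_rf_{r-1})/(y_r-y_{r+1})$. Your term-by-term check of the $i\le r-1$, $i=r$ and $i=r+1$ contributions is right, including signs.
\end{itemize}

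You were also right to read the denominator as $\prod_{1\le j\ne i\le r+1}(y_i-y_j)$. With the upper limit $r$ as printed, already $r=1$ gives $p(y_1)+p(y_2)/(y_2-y_1)$ instead of $(p(y_1)-p(y_2))/(y_1-y_2)=\partial_1p(y_1)$; for $p(x)=x$ the printed formula gives $y_1+y_2/(y_2-y_1)$ rather than $1$. The stated formula has a typo, and your proof establishes the corrected version.

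One point worth stating explicitly: for $r=n$ the formula uses a node $y_{n+1}$ beyond the $n$ equivariant parameters. Your argument works verbatim as long as $y_{n+1}$ is distinct from $y_1,\ldots,y_n$, e.g. an extra indeterminate. It then shows that $[p]_n$ is just the $x^n$-coefficient of $p$, independent of that choice. This case is not needed in the paper's application, where the degree in the interpolated variable is at most $n-1$.
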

We now apply this interpolation formula to the last variable in a factorial Schur polynomial $s_{\lambda(w)}(x_1,\ldots,x_k|y)$ representing an equivariant Schubert class $\sigma_w$ in $qH^*_T(\Gr(k;n))$.
\begin{prop}
    Let $w\in S^{k,n}$ and $\lambda(w)$ be the corresponding partition. Then
    \begin{align}
        \left[(x_1 - x_k) \cdots (x_{k-1} - x_k)s_{\lambda(w)}(x_1, \ldots, x_k | y)\right]_{i-1} &=\notag\\
        &(-1)^{\#(w,\psi_i.w)} s_{\lambda(\psi_i.w)}(x_1, \ldots, x_{k - 1}|y)\;,\label{annihshuffle}
    \end{align}
    or alternatively in terms of the Demazure operators $\partial_i=\frac{1-s_i}{y_i-y_{i+1}}$,
    \begin{align}
        \partial_{i-1}\ldots\partial_1\left[(x_1 - y_1) \cdots (x_{k-1} - y_1)s_{\lambda(w)}(x_1, \ldots, x_{k-1},y_1 | y)\right] &=\notag\\
        (-1)^{\#(w,\psi_i.w)}& s_{\lambda(\psi_i.w)}(x_1, \ldots, x_{k - 1}|y)\;.\label{annihshuffle2}
    \end{align}
    Here $w'=\psi_i.w\in S^{k-1,n}$ denotes the Grassmannian permutation obtained by the procedure as explained in the example of Figure \ref{fig:CliffYoung} and $\#(w,\psi_i.w)=\#(w,w')$ is the height of the ribbon removed minus one.
\end{prop}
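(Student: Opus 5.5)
The plan is to reduce both formulas to a determinantal identity, by writing the factorial Schur polynomial as a ratio of alternants, exactly as in the proof of \eqref{simpleshuffle}. Write $s_{\lambda(w)}(x_1,\ldots,x_k|y)=a_{\lambda(w)}(x|y)/\Delta(x)$, where $a_{\lambda(w)}=\det((x_i|y)^{w_j-1})$. Here I use that the exponents $k+\lambda_j-j$ are exactly $w_{k+1-j}-1$, so up to a fixed reordering sign the columns of the alternant are indexed by the set $\{w_1,\ldots,w_k\}$. Factor $\Delta(x_1,\ldots,x_k)=\Delta(x_1,\ldots,x_{k-1})\prod_{j<k}(x_j-x_k)$. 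Then the prefactor $(x_1-x_k)\cdots(x_{k-1}-x_k)$ cancels the $x_k$-dependent part of the Vandermonde, and we get
\[
(x_1-x_k)\cdots(x_{k-1}-x_k)\,s_{\lambda(w)}(x|y)=\frac{a_{\lambda(w)}(x_1,\ldots,x_k|y)}{\Delta(x_1,\ldots,x_{k-1})}.
\]
As a function of $x_k$ the right-hand side is a polynomial of degree $\le n-1$, since all exponents are at most $n-1$.

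Next, expand the alternant along the row of the variable $x_k$ (Laplace expansion):
\[
a_{\lambda(w)}(x|y)=\sum_{r=1}^k \pm\,(x_k|y)^{w_r-1}\,a^{(r)}(x_1,\ldots,x_{k-1}|y).
\]
Here $a^{(r)}$ is the $(k-1)\times(k-1)$ alternant with exponent set $\{w_1,\ldots,w_k\}\setminus\{w_r\}$. This expresses the polynomial in $x_k$ directly in the falling factorial basis $(x_k|y)^m$, with coefficients independent of $x_k$. Since the falling factorials $(x|y)^m$ with $0\le m\le n-1$ form a basis, the Newton coefficient $[\,\cdot\,]_{i-1}$ with respect to $x_k$ simply picks out the coefficient of $(x_k|y)^{i-1}$. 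There are two cases.
\begin{itemize}
\item If $i\notin\{w_1,\ldots,w_k\}$, this coefficient is $0$, consistent with $\psi_i.w=0$.
\item If $i=w_r$, the coefficient is $\pm a^{(r)}/\Delta(x_1,\ldots,x_{k-1})$. This equals $\pm s_{\lambda(w')}(x_1,\ldots,x_{k-1}|y)$, where $w'$ has window $\{w_1,\ldots,w_k\}\setminus\{i\}$, i.e.\ $w'=\psi_i.w$ by the definition of the contraction \eqref{contraction} and the Satake identification \eqref{Satake}.
\end{itemize}

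The second formula \eqref{annihshuffle2} then follows immediately from the Demazure form of Newton's interpolation formula in the previous lemma, applied to the variable $x_k$ and evaluated at $x_k=y_1$.

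The main obstacle is the sign. We must check that the Laplace sign $(-1)^{k+r}$, combined with the reordering signs relating $\det((x_i|y)^{k+\lambda_j-j})$ to the ascending column order for both $w$ and $w'$, equals $(-1)^{\#(w,\psi_i.w)}$, where this exponent is the height of the removed ribbon minus one. To do this, I would compare with the explicit contraction sign $(-1)^{r-1}$ in \eqref{contraction} for $\psi_i(v_{w_1}\wedge\cdots\wedge v_{w_k})$. The paper defines $(-1)^{\#}$ as exactly that sign, via the combinatorics of adding a column and removing a rim hook in Figure \ref{fig:CliffYoung}. I would then verify, by counting how many columns lie to one side of column $r$, that $(-1)^{k+r}$ matches it after the fixed sign conventions cancel between the numerator and the Vandermonde. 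The classical identity that the ribbon height equals the number of exponents jumped over then identifies the result with $(-1)^{\#(w,w')}$.
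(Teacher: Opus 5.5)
Your argument for \eqref{annihshuffle} is the paper's own proof. You write $s_{\lambda(w)}$ as an alternant, cancel the $x_k$-part of the Vandermonde against the prefactor, Laplace-expand in $x_k$, and recognise the cofactor of $(x_k|y)^{i-1}$ as $\pm s_{\lambda(\psi_i.w)}(x_1,\ldots,x_{k-1}|y)$. The sign check you defer does close. The two column-reversal signs combine to $(-1)^{(k-1)^2}=(-1)^{k-1}$, so the net sign is $(-1)^{k+r}(-1)^{k-1}=(-1)^{r-1}$, where $r$ is the position of $i$ among $w_1<\cdots<w_k$. This is the number of beads the ribbon jumps, i.e.\ $\#(w,\psi_i.w)$. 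If you keep the paper's decreasing-exponent order, no reordering sign appears: the Laplace sign $(-1)^{k-j}$ for the row with exponent $w_{k+1-j}-1$ is already $(-1)^{r-1}$.

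The step that fails is your claim that \eqref{annihshuffle2} follows immediately from the Demazure form of Newton's lemma. That form, $[p]_r=\partial_r\cdots\partial_1p(y_1)$, holds for $p\in\C[x]$, i.e.\ when the coefficients of $p$ do not involve the $y_j$. Here $p(x_k)=(x_1-x_k)\cdots(x_{k-1}-x_k)s_{\lambda(w)}(x|y)$ has $y$-dependent coefficients, and $\partial_j=\frac{1-s_j}{y_j-y_{j+1}}$ acts on them too.

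Read literally, the identity is false. Take $k=2$, $w$ with window $\{1,3\}$ (so $\lambda(w)=(1)$), and $i=3$. Then $(x_1-y_1)s_{(1)}(x_1,y_1|y)=(x_1-y_1)(x_1-y_2)$ is symmetric in $y_1,y_2$, so $\partial_2\partial_1$ kills it. The right-hand side, however, is $(-1)^1s_\emptyset=-1$; indeed $(x_1-x_2)s_{(1)}(x_1,x_2|y)=(x_1|y)^2-(x_2|y)^2$ has coefficient $-1$ at $(x_2|y)^2$.

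To repair the deduction, let the divided differences act only on the interpolation nodes. Evaluate $p$ at independent variables $t_1,\ldots,t_i$. Then $\partial^{(t)}_{i-1}\cdots\partial^{(t)}_1\,p(t_1)=p[t_1,\ldots,t_i]$, which is the coefficient of $(x_k|t)^{i-1}$; only afterwards set $t_j=y_j$. Uniqueness of the expansion in the basis $(x_k|y)^m$ over $\ring[x_1,\ldots,x_{k-1}]$ then identifies this with the left-hand side of \eqref{annihshuffle}. The paper's proof does not address \eqref{annihshuffle2} at all, so this reading has to be supplied in any case.
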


\begin{proof}
    Consider the alternant formula \eqref{factSchur} for the factorial Schur polynomial from Appendix \ref{app:A}. Expanding the determinant in the numerator with respect to the last column we find
    \begin{equation*}
        (x_1 - x_k) \cdots (x_{k-1} - x_k)s_{\lambda(w)}(x_1, \ldots, x_k | y)=\sum_{i=1}^k(-1)^{k-i}(x_k|y)^{\lambda_i+k-i}\frac{M_{i,k}}{\prod_{1\le a<b\le k-1}(x_a-x_b)},
    \end{equation*}
    where $M_{i,k}$ is the minor obtained by deleting the $i$th row and $k$th column. The latter is precisely the factorial Schur polynomial $s_{\lambda(\psi_i.w)}(x_1,\ldots,x_{k-1}|y)$.
\end{proof}

In Section \ref{sec:LRduality} we will show that the action \eqref{annihshuffle} of the annihilation operator can also be interpreted in terms of the shuffle product \eqref{shuffle}, albeit on the ring $qH^*_T(\Gr(n-k;n))$; see the commutative diagram \eqref{LRcd}.

\section{Applications}
In this section we state some combinatorial applications of our description of equivariant quantum cohomology using the Clifford algebra and also make contact with previous results obtained in the literature. Namely, we give new combinatorial proofs for
\begin{enumerate}
\item {\em Level-Rank duality}, a ring isomorphism $qH^*_T(\Gr(k;n))\simeq qH^*_T(\Gr(n-k;n))$ in \cite{gorbounovkorff2014,gorbounov2017quantum} and apply it to relate the fermion creation and annihilation operators;
\item {\em Graham positivity} of the equivariant quantum Pieri rules. The general result of Graham positivity for equivariant quantum Schubert calculus of Grassmannians is due to Mihalcea \cite{Mihalcea06} using geometric methods, but here we compare with the combinatorial formulae obtained by Bertiger et al. in \cite{BEMT22}.
\end{enumerate}
Motivated by the recent result \cite{gao2025graham} on generalised Graham positivity for (non-quantum) triple Schubert calculus  \cite{MS99,KnutTao2003}, we conclude by stating a conjecture for generalised Graham positivity for a (yet to be geometrically defined) quantum version of triple Schubert calculus and prove a manifestly positive quantum Pieri rule.

\subsection{Recurrence formulae for Gromov-Witten invariants}
We use the the commutation relations \eqref{fschurcom} and \eqref{commpsi} to state (one step) recurrence relations between Gromov-Witten invariants.
\begin{cor}
    We have the following recurrence relations between products in the rings $qH_T^*(\Gr(k;n))$ and $qH_T^*(\Gr(k\mp1;n))$,
    \begin{gather} \label{recurrence}
        \sigma_\lambda\star_k\sigma_\mu=\frac{1}{k}\sum_{j=1}^n\sum_{\nu}(t^{\lambda/\nu}.\psi^*_j)(\sigma_\nu\star_{k-1}\psi_{j}(\sigma_\mu))\\
        \sigma_\lambda\star_k\sigma_\mu=\frac{1}{n-k}\sum_{j=1}^n\sum_{\nu'}(-1)^{|\lambda/\nu'|}(t^{-\lambda/\nu'}.\psi_j)(\sigma_{\nu'}\star_{k+1}\psi^*_{j}(\sigma_\mu))\notag\;,
    \end{gather}
    where the sum in the first (second) identity runs over all partitions $\nu\subset (k-1)\times(n+1-k)$ ($\nu'\subset k\times (n-k)$) such that $\lambda/\nu$ is a horizontal (vertical) strip. We note that the quantum deformation parameters $q_k$ and $q_{k\pm1}$ in the respective rings are related by a minus sign, i.e. $q_k=-q_{k\pm1}$.
\end{cor}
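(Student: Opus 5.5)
The plan is to insert the identity $\frac{1}{k}\sum_{j=1}^n\psi_j^*\psi_j=1$ on $\bigwedge^kV[z^{\pm1}]$ (the number operator), respectively $\frac{1}{n-k}\sum_{j}\psi_j\psi^*_j=1$, and then commute the factorial Schur polynomial past the creation or annihilation operator using the commutation relations \eqref{fschurcom} and \eqref{commpsi}. By Proposition \ref{prop:quantumSatake}, $\sigma_\lambda\star_k\sigma_\mu$ corresponds to $s_\lambda(X^q_1,\ldots,X^q_k|y)\ket{\mu}$. The product is computed in $\bigwedge^kV[z^{\pm1}]$ before specialising $z$. It therefore suffices to prove the matrix identities
\[
s_\lambda(X^q_1,\ldots,X^q_k|y)=\frac1k\sum_{j=1}^n\sum_{\nu}(t^{\lambda/\nu}.\psi^*_j)\,s_\nu(X^q_1,\ldots,X^q_{k-1}|y)\,\psi_j
\]
on $\bigwedge^kV[z^{\pm1}]$, together with the analogous identity involving $\psi_j$ and $\psi^*_j$.

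For the first identity, the CAR \eqref{CAR} give $\sum_j\psi_j^*\psi_j=k$ on $\bigwedge^k V$. Indeed, on $v_{i_1}\wedge\cdots\wedge v_{i_k}$ each $\psi^*_{i_r}\psi_{i_r}$ returns the vector, and every other term vanishes. Hence
\[
s_\lambda(X^q|y)\ket{\mu}=\frac1k\sum_j s_\lambda(X^q_1,\ldots,X^q_k|y)\,\psi^*_j\,\psi_j\ket{\mu}.
\]
Since $\psi_j\ket{\mu}\in\bigwedge^{k-1}V$, I apply \eqref{fschurcom}, which is valid precisely on that space. This gives $\sum_\nu(t^{\lambda/\nu}.\psi_j^*)\,s_\nu(X^q_1,\ldots,X^q_{k-1}|y)\,\psi_j\ket{\mu}$. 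The factor $s_\nu(X^q_1,\ldots,X^q_{k-1}|y)\psi_j\ket\mu$ is, by Proposition \ref{prop:quantumSatake} applied in rank $k-1$, the image of $\sigma_\nu\star_{k-1}\psi_j(\sigma_\mu)$. This yields the first formula. I need to check the range of $\nu$ here. In the branching rule $\nu$ has $k-1$ parts, and the horizontal strip condition forces $\nu_1\le\lambda_1\le n-k$. So $\nu$ fits in the $(k-1)\times(n+1-k)$ rectangle and $s_\nu(X^q_1,\ldots,X^q_{k-1}|y)$ is a genuine Schubert operator there; this matches the stated index set.

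The second identity is proved the same way. On $\bigwedge^kV$ we have $\sum_j\psi_j\psi^*_j=n-k$ by the CAR, since $\psi_j\psi_j^*=1-\psi_j^*\psi_j$. So I write $s_\lambda(X^q_1,\ldots,X^q_k|y)\ket\mu=\frac{1}{n-k}\sum_j s_\lambda(X^q_1,\ldots,X^q_k|y)\,\psi_j\,\psi^*_j\ket\mu$. Applying \eqref{commpsi} with $k$ replaced by $k+1$, on $\psi^*_j\ket\mu\in\bigwedge^{k+1}V$, gives
\[
\sum_{\nu'}(-1)^{|\lambda/\nu'|}(t^{-\lambda/\nu'}.\psi_j)\,s_{\nu'}(X^q_1,\ldots,X^q_{k+1}|y)\,\psi^*_j\ket\mu ,
\]
where $\lambda/\nu'$ is a vertical strip. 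Then I identify the last factor with $\sigma_{\nu'}\star_{k+1}\psi^*_j(\sigma_\mu)$.

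The main obstacle is bookkeeping rather than substance. First, one has to confirm that the index shift in \eqref{commpsi}, the content labels $k+c(\square)$ versus $k+1+c(\square)$, is consistent with the stated translations. Second, the quantum parameters differ between ranks. The operators $X^q_i$ involve only $z$, while $q_k=(-1)^{k-1}z$, so passing between $\bigwedge^{k\mp1}$ and $\bigwedge^k$ at fixed $z$ forces $q_{k\pm1}=-q_k$. That is exactly the remark in the statement, and I would verify it explicitly via \eqref{quasiperiod}.
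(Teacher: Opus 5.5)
Your proposal is correct and follows the paper's proof exactly: insert $\sum_j\psi^*_j\psi_j=k$ (resp.\ $\sum_j\psi_j\psi^*_j=n-k$) on $\bigwedge^kV$ into $s_\lambda(X^q_1,\ldots,X^q_k|y)\ket{\mu}$, then commute the factorial Schur polynomial past $\psi^*_j$ (resp.\ $\psi_j$) using \eqref{fschurcom} (resp.\ \eqref{commpsi} with $k$ replaced by $k+1$). One point that the paper also glosses over: unlike in the first identity, in the second one $\nu'_1$ can equal $n-k$, so $\nu'$ need not fit in the $(k+1)\times(n-k-1)$ rectangle; hence $\sigma_{\nu'}\star_{k+1}$ must be read as the operator $s_{\nu'}(X^q_1,\ldots,X^q_{k+1}|y)$, i.e.\ quantum multiplication by the class of $s_{\nu'}$ in the presentation, rather than by a Schubert class, and your range check from the first case does not carry over verbatim.
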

\begin{proof}
    To obtain the first formula note that $\sum_{i=1}^n\psi^*_i\psi_i|_{\bigwedge^kV}=k\cdot 1$. Inserting the latter identity into the quantum product \eqref{star_product} we arrive at the claimed equation using \eqref{fschurcom}. Similarly, one obtaines the second one using that $\sum_{i=1}^n\psi_i\psi^*_i|_{\bigwedge^kV}=(n-k)\cdot 1$ and then applying \eqref{commpsi}.
\end{proof}
N.B. as the equivariant Gromov-Witten invariants $C_{\lambda\mu}^{\nu,d}(y)\in\Z_{\ge 0}[y_1,\ldots,y_n]$ the above identities \eqref{recurrence} imply that the sum of the structure constants for the products on the right hand side must be divisible respectively by $k$ and $n-k$. This is simply a reflection of the fact that there are respectively $k$ and $n-k$ values of $j$ in the sums for which we obtain non-trivial recurrence relations, as the following simple example shows.
    \begin{example}\rm
        Let us consider the case of $\Gr(2;4)$ and classes $\lambda=(2,1)$ and $\mu = (2,2)$, their quantum product can be computed from \eqref{fermionformula} to be
        \begin{multline}\label{ex24}
        \ket{(2,1)} \star_2 \ket{(2,2)} = q \ket{(2,1)} + q (y_3 - y_1) \ket{(1,1)} + q (y_4 - y_2) \ket{(2,0)} \\ + q(y_3 - y_1)(y_4 - y_2) \ket{(1,0)} + (y_4 - y_1)(y_3 - y_1) (y_4 - y_2) \ket{(2,2)}.
        \end{multline}
        We shall focus on the first identity in \eqref{recurrence} and  represent the class $\ket{\mu} = \ket{(2,2)}$ as $\psi^*_3 \psi^*_4.1$. Applying the commutation relations \eqref{fschurcom} gives
        \begin{multline*}
        \ket{(2,1)}\star_2\ket{(2,2)}=s_{(2,1)}(X^q | y) \psi^*_3 \psi^*_4.1 = \\
        (\psi^*_4 + (y_3 - y_1)\psi^*_3) s_{(2)}(X^q | y) \psi^*_4 .1 + (z \psi^*_1 +(y_4 - y_1) \psi^*_4)s_{(1)}(X^q | y) \psi^*_4 .1=\\
        (\psi^*_4 + (y_3 - y_1)\psi^*_3)\ket{(2)}\star_1\ket{(3)}
        +(-q \psi^*_1 +(y_4 - y_1) \psi^*_4)\ket{(1)}\star_1\ket{(3)}
        \end{multline*}
        Comparing coefficients on both sides of the equality we obtain the following recurrence relation for the Gromov-Witten invariant $C_{(2,1),(2,2)}^{(2,1)}(2, q) = q $ (the first term in \eqref{ex24})\footnote{In this example we write $C^{\nu}_{\lambda\mu}(k,q)$ to denote the Gromow--Witten invariants for $\Gr(k;n)$ omitting the dependence on the equivariant parameters to ease the notations.},
        \[
        C_{(2,1),(2,2)}^{(2,1)}(2,q) = -C_{2,3}^2(1, -q) + (y_4 - y_1) C_{1,3}^{2}(1, -q)\;.
        \]
        where on the right hand side we have Gromov-Witten invariants for $\mathbb{P}^3$. 
        
        \noindent Alternatively, we can use the commutation relation \eqref{fschurcom} for $\psi^*_4$ to obtain
        \begin{multline*}
            \ket{(2,1)} \star_2 \ket{(2,2)} = 
            -s_{(2,1)}(X^q|y) \psi^*_4 \psi^*_3.1= 
            (q \psi^*_1 - (y_4 - y_1)\psi^*_4) \ket{(2)} \star_{1} \ket{(2)}\\+ (q \psi^*_2 + q (y_4 - y_3) \psi^*_1 - (y_4 - y_3)(y_4 - y_1)\psi^*_4) \ket{(1)} \star_1 \ket{(2)}.
        \end{multline*}
        Again, we concentrate on on the first term in \eqref{ex24} to find the alternative relations
        \[
        C_{(2,1),(2,2)}^{(2,1)}(2, q) = (y_4 - y_1)C_{2,2}^1(1,-q) + q C_{1,2}^3(1,-q) + (y_4 - y_3)(y_4 - y_1)C_{1,2}^1(1,-q)\;.
        \]
        As both expressions for $C_{(2,1),(2,2)}^{(2,1)}(2, q)$ in terms of the invariants for $\mb{P}^3$ must be equally valid, we end up for $\mb{P}^3$ with the identity,
        \begin{multline*}
         -C_{2,3}^2(1, -q) + (y_4 - y_1) C_{1,3}^{2}(1, -q)=\\
            (y_4 - y_1)C_{2,2}^1(1,-q) + q C_{1,2}^3(1,-q) + (y_4 - y_3)(y_4 - y_1)C_{1,2}^1(1,-q)\;.
        \end{multline*}
        Thus, their sum in \eqref{recurrence} is obviously divisible by two. In other words, we obtain from \eqref{recurrence} in general $k$ different recurrence relations for a Gromov-Witten invariant in $qH^*_T(\Gr(k;n))$.
    \end{example}
    Let us now formulate the recurrence identities \eqref{recurrence} directly in terms of the equivariant Gromov-Witten invariants. To ease our notation somewhat we use the abbreviations $\psi^*_i . \lambda$ or $\psi_i . \lambda$ for the Young diagrams obtained by acting with the Clifford generators on the Schubert class $\sigma_\lambda$; see Figure \ref{fig:CliffYoung} for an example. We also introduce the following ``matrix elements'' in $\ring[q^{\pm1}]$ coming from the action of the translations on the fermionic operators
    \begin{equation}\label{matrixcoef}
        t^{\lambda/\rho}.\psi^*_i = \sum\limits_{r = 1}^n A_{ir}(\lambda/\rho) \psi^*_r, \qquad t^{-\lambda/\rho}.\psi_i = \sum\limits_{r = 1}^n B_{ir}(\lambda/\rho) \psi_r\;.
    \end{equation}
    \begin{cor}
        Denote by $C_{\lambda \mu}^{\nu}(k, q)$ the Gromov--Witten invariants of $\Gr(k;n)$, then the following recurrence relation is true
        \begin{equation}
            C_{\lambda \mu}^{\nu}(k, q) = \sum\limits_{r = 1}^n\sum\limits_{\rho} A_{i_j,r}(\lambda/\rho) C_{\rho, \psi_{i_j}.\mu}^{\psi_r. \nu }(k - 1, -q),
        \end{equation}
        where the sum runs over all $\rho=(\rho_1,\ldots,\rho_{k-1})$ such that $\lambda/\rho$ is a horizontal strip, $i_j=\mu_j+k+1-j$  and the $A_{i_j,r}(\lambda/\rho)$ are given by \eqref{matrixcoef}.
        Similarly, we have for all $i \neq \mu_j + k + 1 - j$ the recursion relation 
        \begin{equation} \label{secondrec}
            C_{\lambda \mu}^{\nu}(k,q) = \sum\limits_{\rho}(-1)^{|\lambda/\rho|} B_{ir}(\lambda/\rho) C_{\rho, \psi^*_i.\mu}^{\psi^*_r.\nu}(k+1,-q),
        \end{equation}
        where the sum now runs over all $\rho=(\rho_1, \ldots, \rho_k)$ such that $\lambda/\rho$ is a vertical strip and the $B_{ir}(\lambda/\rho)$ are given by \eqref{matrixcoef}.
    \end{cor}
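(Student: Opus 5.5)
The plan is to read off both recurrences as matrix coefficients of the operator identities \eqref{fschurcom} and \eqref{commpsi}, applied to a single creation/annihilation operator extracted from $\ket{\mu}$, rather than inserting the averaged resolutions of the identity from \eqref{recurrence}. This also explains why the formulas carry no factor $1/k$ or $1/(n-k)$: each choice of $j$ (resp. $i$) gives a valid relation on its own.

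\textbf{First relation.} Write $\ket{\mu}=v_{i_1}\wedge\cdots\wedge v_{i_k}$ and pick an index $i_j=\mu_j+k+1-j$ occurring in $\ket{\mu}$.
\begin{enumerate}
\item Since $\psi^*_{i_j}\psi_{i_j}$ acts as the identity on vectors containing $v_{i_j}$, we have $\ket{\mu}=\psi^*_{i_j}\psi_{i_j}\ket{\mu}$. Here $\psi_{i_j}\ket{\mu}=\pm\ket{\psi_{i_j}.\mu}\in\bigwedge^{k-1}V$.
\item Apply \eqref{fschurcom}:
\[
\ket{\lambda}\star_k\ket{\mu}=s_\lambda(X^q|y)\,\psi^*_{i_j}\psi_{i_j}\ket{\mu}=\sum_{\rho}(t^{\lambda/\rho}.\psi^*_{i_j})\,\bigl(\ket{\rho}\star_{k-1}\psi_{i_j}\ket{\mu}\bigr).
\]
The sum runs over $\rho$ with $\lambda/\rho$ a horizontal strip. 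The $k-1$ ring carries parameter $-q$, by the remark on $q_k=-q_{k-1}$.
\item Expand $\ket{\rho}\star_{k-1}\ket{\psi_{i_j}.\mu}=\sum_{\kappa}C^{\kappa}_{\rho,\psi_{i_j}.\mu}(k-1,-q)\ket{\kappa}$.
\item Expand $t^{\lambda/\rho}.\psi^*_{i_j}=\sum_r A_{i_j,r}(\lambda/\rho)\psi^*_r$ using \eqref{matrixcoef}.
\item Take the coefficient of $\ket{\nu}$, i.e.\ pair with $\bra{\nu}$. We have $\bra{\nu}\psi^*_r\ket{\kappa}\neq0$ exactly when $\kappa=\psi_r.\nu$, since $\psi^*_r$ is adjoint to $\psi_r$ under the pairing (the dual Fock module remark). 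This gives the stated formula.
\end{enumerate}

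\textbf{Second relation.} This is the mirror image. For $i\notin\{\mu_j+k+1-j\}$ we have $\psi_i\psi^*_i\ket{\mu}=\ket{\mu}$ by the CAR. Then:
\begin{enumerate}
\item Apply \eqref{commpsi} with $k$ replaced by $k+1$. This gives $s_\lambda(X^q_1,\dots,X^q_k|y)\psi_i=\sum_\rho(-1)^{|\lambda/\rho|}(t^{-\lambda/\rho}.\psi_i)\,s_\rho(X^q_1,\dots,X^q_{k+1}|y)$, acting on $\psi^*_i\ket{\mu}\in\bigwedge^{k+1}V$.
\item Expand $s_\rho\ket{\psi^*_i.\mu}$ via the invariants $C(k+1,-q)$, and expand $t^{-\lambda/\rho}.\psi_i=\sum_r B_{ir}\psi_r$.
\item Extract the coefficient of $\ket{\nu}$. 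Now $\bra{\nu}\psi_r=\pm\bra{\psi^*_r.\nu}$ selects $\kappa=\psi^*_r.\nu$, producing \eqref{secondrec}. The implicit sum over $r$ is understood there.
\end{enumerate}

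\textbf{Main obstacle.} The real difficulty is bookkeeping of signs and indices rather than any new idea. Three points need care:
\begin{enumerate}
\item \emph{Shift of $k$ in \eqref{commpsi}.} The index in \eqref{commpsi} must be shifted to $k+1$, so the translations are indexed by $t_{k+1+c(\square)}$. This has to be matched with the convention implicit in $B_{ir}$.
\item \emph{Signs $(-1)^{\#}$.} These arise from $\psi_{i_j}\ket{\mu}=(-1)^{j-1}\ket{\psi_{i_j}.\mu}$ and from $\bra{\nu}\psi^*_r$. They must either cancel or be absorbed into the abbreviations $\psi_i.\lambda$. I would check this through the convention of Figure \ref{fig:CliffYoung}, where the labels $\psi_i.\lambda$ already include the sign, and verify it against the $\Gr(2;4)$ example above.
\item \emph{Quasi-periodicity.} The parameter change $q\to-q$ together with \eqref{quasiperiod} must be applied consistently when $A_{ir}$ involves wrap-around factors of $z$.
\end{enumerate}
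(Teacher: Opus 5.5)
Your proposal is correct and matches the paper's proof. Like the paper, you insert $\psi^*_{i_j}\psi_{i_j}$ (resp.\ $\psi_i\psi^*_i$ for $i\neq\mu_j+k+1-j$), commute $s_\lambda(X^q|y)$ past the creation (resp.\ annihilation) operator using \eqref{fschurcom} (resp.\ \eqref{commpsi}), and compare coefficients using $\psi^*_r\ket{\alpha}=\pm\ket{\nu}\iff\ket{\alpha}=\pm\psi_r\ket{\nu}$. The paper also leaves the sign bookkeeping you flag at this $\pm$ level, and these signs do not cancel in general: the $-C^2_{2,3}(1,-q)$ term in the $\Gr(2;4)$ example comes from $\psi^*_4 v_2=-v_2\wedge v_4$ although $A_{3,4}(\lambda/(2))=1$. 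So the stated recurrence has to be read with those signs absorbed into the labels $\psi_{i_j}.\mu$ and $\psi_r.\nu$.
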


\begin{proof}
    We employ once more the formulae \eqref{star_product} to find 
    \[
    \ket{\lambda} \star \ket{\mu} = \sum\limits_{\nu} C_{\lambda \mu}^{\nu}(k, q) \ket{\nu}=s_{\lambda}(X^q|y)\ket{\mu}=s_{\lambda}(X^q|y)\psi^*_{i_j}\psi_{i_j}\ket{\mu}
    \]
    and then \eqref{fschurcom}. In the final step we compare coefficients on both sides of the equality using that if $\psi_i \ket{\nu} \neq 0$, then
        \[
        \psi^*_i \ket{\alpha} = \pm \ket{\nu} \iff \ket{\alpha} = \pm \psi_i \ket{\nu} \;.
        \]
        In case that $\psi_i \ket{\nu} = 0$, then there are no solutions to the equation $\psi_i^* \ket{\alpha} = \ket{\nu}$. The formula \eqref{secondrec} is proved in the same way by inserting the "identity" $\psi_i \psi^*_i$ for $i \neq \mu_j + k + 1 - j$ and applying the formula \eqref{commpsi}.
\end{proof}

\begin{figure}
\centering
\includegraphics[width=.4\textwidth]{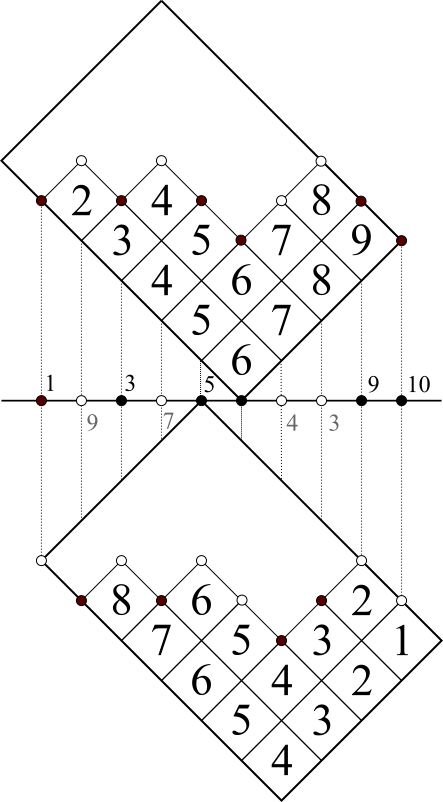} 
\caption{A graphical depiction of level-rank duality for an example with $k=6$ and $n=10$. On top is the Young diagram of $\lambda=(4,4,2,2,1,0)$ and on the bottom the diagram for the conjugate partition $\lambda'=(5,4,2,2)$ under level-rank duality. In the middle is the corresponding finite Maya diagram encoding the wedge product $\ket{\lambda}=v_1\wedge v_3\wedge v_5\wedge v_6\wedge v_9\wedge v_{10}$ where the indices correspond to the positions of the black go-stones in the Maya diagram. Under level-rank duality one swaps black and white go-stones and reverses their positions. 
}\label{fig:Young}
\end{figure}

\subsection{Equivariant Level-Rank Duality}\label{sec:LRduality}
It is well-known that Grassmannians of complementary dimensions are isomorphic $\Gr(k ; n) \simeq \Gr(n - k; n)$ as algebraic varieties, the isomorphism can be easily understood in the following way: consider the complex vector space $V = \mathbb{C}^n$ and a $k$-dimensional subspace $U \subset V$, consider the annihilator ${\rm Ann}(U)= \{ \alpha \in V^* | \alpha(U) = 0 \}$ of $U$. By basic linear algebra arguments it follows that ${\rm Ann}(U)$ is an $n-k$ dimensional subspace in $V^*$, thus we have $\Gr(k; V) \simeq \Gr(n - k, V^*)$ and this identification of algebraic varieties then implies a ring isomorphism $H^*_T(\Gr(k;n))\simeq H^*_T(\Gr(n-k;n))$. Here we shall recall its quantum analogue which was obtained in \cite[Cor. 6.8.]{gorbounovkorff2014}.

    Define an involution on $\ring[q^{\pm1}]$ via $f\mapsto f'$, where $f'(y_1,\ldots,y_n;q)=f(-y_n,\cdots,-y_1;q)$. We shall call an $\ring[q^{\pm 1}]$-module homomorphism $\varphi:M\to N$ {\em antilinear} if $\varphi(f\cdot m)=f'\cdot \varphi(n)$ for all $m\in M,n\in N$ and $f\in\ring[q^{\pm 1}]$.
\begin{thm}\label{thm:LRduality}
    The anti-linear map $\Theta_k:qH^*_T(\Gr(k;n))\to qH^*_T(\Gr(n-k;n))$ of $\ring[q^{\pm 1}]$-modules given by
    \begin{equation}\label{Theta}
    f\cdot\sigma_\lambda\mapsto f'\cdot\sigma_{\lambda'},\quad\forall f\in\ring[q^{\pm 1}], 
    \end{equation}
    where $\lambda'$ denotes the partition conjugate to $\lambda$, is a ring isomorphism. In particular, we have the following equality for the equivariant Gromov-Witten invariants \eqref{EGWdef},
    \begin{equation}\label{LRduality}
    C_{\lambda\mu}^{\nu,d}(y_1,\ldots,y_n)=C_{\lambda'\mu'}^{\nu',d}(-y_n,\ldots,-y_1)\;.
    \end{equation}
\end{thm}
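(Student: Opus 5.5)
The plan is to realise $\Theta_k$ on the exterior algebra side, using the quantum Satake isomorphism of Proposition \ref{prop:quantumSatake}, and then invoke Mihalcea's rigidity result \cite[Cor.7.1]{MihalceaAIM} as in Claim \ref{claim:quantumChevalley}. Identify $qH^*_T(\Gr(k;n))\cong\bigwedge^kV[q^{\pm1}]$ and $qH^*_T(\Gr(n-k;n))\cong\bigwedge^{n-k}V[q^{\pm1}]$. By the Maya diagram description of Figure \ref{fig:Young}, $\ket{\lambda}=v_{i_1}\wedge\cdots\wedge v_{i_k}$ is sent to $\ket{\lambda'}=v_{n+1-j_{n-k}}\wedge\cdots\wedge v_{n+1-j_1}$, where $\{j_1<\dots<j_{n-k}\}$ is the complement of $\{i_1,\dots,i_k\}$. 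So $\Theta_k$ is the composite of the Hodge-star type map $\bigwedge^kV\to\bigwedge^{n-k}V^*$ with the antilinear reversal $v^j\mapsto v_{n+1-j}$, $y_j\mapsto -y_{n+1-j}$. In particular $\Theta_k$ is an antilinear bijection preserving degree, since $|\lambda|=|\lambda'|$ and $\deg y_i=\deg(-y_{n+1-i})$. It also maps the unit $\ket{\emptyset}$ to $\ket{\emptyset}$.

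Since $\sigma_{(1)}$ generates the ring over $\ring[q^{\pm1}]$ after localisation, and the graded ring structure is determined by the quantum Chevalley rule, it suffices to check that
\[
\Theta_k\bigl(\sigma_{(1)}\star\sigma_\lambda\bigr)=\sigma_{(1)}\star\Theta_k(\sigma_\lambda).
\]
More precisely, the transported product $a\,\tilde\star\, b:=\Theta_k(\Theta_k^{-1}a\star\Theta_k^{-1}b)$ is again a graded commutative associative $\ring[q^{\pm1}]$-algebra structure on $qH^*_T(\Gr(n-k;n))$. It has Schubert basis $\sigma_{\lambda'}$ and satisfies the same Chevalley rule, so Mihalcea's corollary forces $\tilde\star=\star$. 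Antilinearity is compatible with this argument because the coefficients transform under the ring automorphism $f\mapsto f'$.

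The Chevalley check uses the explicit formula from the proof of Claim \ref{claim:quantumChevalley}:
\[
\sigma_{(1)}\star\sigma_\lambda=\sum_{\mu=\lambda+\square}\sigma_\mu+\Bigl(\sum_{r}y_{i_r}-\sum_{r\le k}y_r\Bigr)\sigma_\lambda+q\,\sigma_{\lambda^-}.
\]
Conjugation is a bijection between single-box additions to $\lambda$ and to $\lambda'$. Removing a maximal rim hook of length $n-1$ from $\lambda$ corresponds under conjugation to removing one from $\lambda'$; the conditions $i_k=n,\ i_1>1$ translate to $j_{n-k}<n$ and $j_1=1$, which under reversal become the corresponding conditions for $\lambda'$. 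The remaining verification concerns the equivariant coefficient. Since $\sum_{r}y_{i_r}+\sum_s y_{j_s}=\sum_{r=1}^n y_r$, applying $'$ to $\sum_r y_{i_r}-\sum_{r\le k}y_r$ gives
\[
-\sum_r y_{n+1-i_r}+\sum_{r\le k}y_{n+1-r}=\sum_{s}y_{n+1-j_s}-\sum_{r\le n-k}y_r,
\]
which is exactly the diagonal Chevalley coefficient for $\lambda'$ in $\Gr(n-k;n)$. Formula \eqref{LRduality} then follows by comparing coefficients, since $\Theta_k$ preserves $d$ by degree.

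The main obstacle is the sign of $q$. In the Satake model the parameters satisfy $z=(-1)^{k-1}q_k$ and $z=(-1)^{n-k-1}q_{n-k}$, so the quantum Chevalley terms must be compared with $q$ itself rather than with $z$. I would therefore work throughout with Mihalcea's normalisation, in which the Chevalley rule carries $+q$ for every $k$, as established in Claim \ref{claim:quantumChevalley}, and never with $z$. With that normalisation the argument is sign-free, with the trivial involution on $q$.
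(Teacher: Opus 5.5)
Your proof is correct, but it takes a different route from the paper's main proof.

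\textbf{The paper's route.} The paper argues bijectively inside the Clifford algebra. It expands $\ket{u}\star\ket{w}$ by the fermionic formula \eqref{fermionformula}, using horizontal strips and creation operators. It expands $\ket{u'}\star\ket{w'}$ by the dual formula \eqref{fermionformula2}, using vertical strips and annihilation operators. It then matches the two tableau sums by transposing each strip. Finally it uses the intertwining relations \eqref{LRpsi}, in the form $\Theta\circ(t^{\rho_i}.\psi^*_{w_i})=\pm(t^{-\rho_i'}.\psi_{n+1-w_i})\circ\Theta$, to identify the two expansions term by term.

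\textbf{Your route.} You transport $\star$ along $\Theta_k$ and let Mihalcea's characterisation \cite[Cor.7.1]{MihalceaAIM} do the work, so only the Chevalley rule needs checking. Your verification of that rule is right: the box-adding terms, the rim-hook term with its conditions $i_k=n$, $i_1>1$ becoming the corresponding conditions for $\lambda'$, and the diagonal coefficient via $\sum_r y_{i_r}+\sum_s y_{j_s}=\sum_{m=1}^n y_m$. This is the strategy of the alternative proof in the paper's last appendix. There, however, it is carried out only for $k=1$, and general $k$ is merely sketched via the transposed matrix $(X^q)^T$. Your direct check handles all $k$ at once and is complete.

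\textbf{Comparison.} Your route is brief and avoids the sign bookkeeping, as well as \eqref{fermionformula2}, whose derivation the paper leaves to the reader. The cost is that it rests on Mihalcea's rigidity theorem as a black box. Two small points are worth stating explicitly:
\begin{itemize}
\item Bilinearity of the transported product uses $f''=f$.
\item Mihalcea's corollary is stated over $\ring[q]$. This is harmless, since $\Theta_k$ fixes $q$ and maps the $\ring[q]$-span of Schubert classes to itself.
\end{itemize}
Your aside that $\sigma_{(1)}$ ``generates after localisation'' is not what the argument actually uses and can be dropped. The paper's argument stays within the Clifford formalism. It also produces the intertwiners \eqref{LRpsi}, and hence the commutative diagram \eqref{LRcd} relating creation and annihilation operators, as a by-product; your argument does not yield these.
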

    The proof of this result in \cite{gorbounovkorff2014} used the Bertram-Vafa-Intrilligator formula for the Gromov-Witten invariants and the Bethe ansatz. Here we wish to give an independent proof using the Clifford algebra and keeping this article self-contained.
    \begin{proof}[A bijective proof of level-rank duality]
        Let $u, w \in S^{k , n}$ and define $u', w' \in S^{n - k , n}$ to be the Grassmannian permutations corresponding to the conjugate partitions, i.e. they satisfy $\lambda'(u) = \lambda(u')$ and $\lambda'(w) = \lambda(w')$. In terms of the Clifford algebra we have 
    \begin{multline}
    \label{corresp}
       \Theta_k \ket{w} =\Theta_k( v_{w_1}\wedge\ldots\wedge v_{w_k}) =\\
       \Theta_k(\psi^*_{w_1} \cdots \psi^*_{w_k}.1)=
        \ket{w'} = v_{n+1-w_{n}}\wedge\ldots\wedge v_{n+1-w_{k+1}} = \\(-1)^{k (n - k) +\ell(w)}\psi_{n + 1 - w_1} \cdots \psi_{n + 1 - w_k} v_1\wedge\ldots\wedge v_n,
    \end{multline}
    where we recall that $\ell(w)=|\lambda(w)|$. From \eqref{corresp} and the identities
    \begin{gather}
        |\psi_i^*.\lambda|=|\lambda|+ i - k - 1 ,\qquad
        |\psi_i.\lambda|=|\lambda|+ k - i,
    \end{gather}
    where by abuse of notation $\psi_i^*.\lambda$ denotes the partition labelling the Schubert class $\psi^*_{i}(\sigma_\lambda)$ and, likewise, $\psi_i.\lambda$ the one labelling $\psi_i(\sigma_\lambda)$, we arrive at the commutation relations
    \begin{gather}\label{LRpsi}
        \Theta_{k+1} \circ\psi^*_i=(-1)^{n-k-i}\psi_{n+1-i}\circ\Theta_k,\qquad
        \Theta_{k-1}\circ\psi_i=(-1)^{n-k+1-i}\psi^*_{n+1-i}\circ\Theta_k\;.
    \end{gather}
    Compare the signs in formula \eqref{LRpsi} with the ones obtained from the graphical action of the Clifford algebra on Young diagrams depicted in Figures \ref{fig:CliffYoung} and \ref{fig:signcompute}. 
    \begin{figure}[h!]
        \centering
        \includegraphics[width=0.7\linewidth]{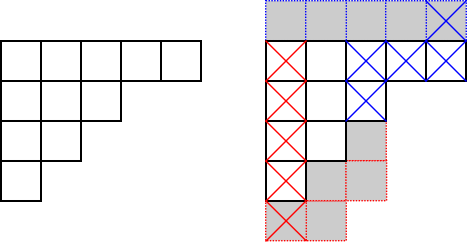}
        \caption{The figure depicts the action of fermionic creation operator $\psi^*_5$ on the Schubert class $\ket{w} = \psi^*_2 \psi^*_4 \psi^*_6 \psi^*_9.1 \in qH^*_T(\Gr(4;9))$ and the action of the fermionic annihilation operator $\psi_{5}$ on a level-rank dual Schubert class $\ket{w'} = \psi^*_2 \psi^*_3\psi^*_5\psi^*_7\psi^*_9.1 \in qH^*_T(\Gr(5;9))$. The action of the creation operator $\psi^*_5$ is obtained by adding a ribbon of length $5$, red dashed boxes in the figure, and removing the leftmost column of maximal height, the boxes  with red crosses. The action of the annihilation operator $\psi_5$ on the dual class consists of adding a row of maximal width on top (blue dashed boxes) and removal of a ribbon of length $5$ from the top (boxes with blue crosses). Together the added (red) and removed (blue) ribbons always meet at the boundary. Together both actions produce a factor $(-1)^{\rm ht^+-wd^-}$ with $\rm ht^+$ the height of the added ribbon and $\rm wd^-$ the width of the removed ribbon. This factor is exactly the one appearing in formula \eqref{LRpsi}.}
        \label{fig:signcompute}
    \end{figure}
    Using the product formulae \eqref{fermionformula} and \eqref{fermionformula2} we have
    \begin{gather}
    \label{prod}
        \ket{u} \star \ket{w} = \sum\limits_{T(u)} (t^{\rho_1}. \psi^{*}_{w_1}) (t^{\rho_{2}}.\psi^*_{w_{2}}) \cdots (t^{\rho_k}.\psi^*_{w_k}).1
    \end{gather}
    and
    \begin{multline}
    \label{proddual}
        \ket{u'} \star \ket{w'} =\\
        (-1)^{k(n-k)+\ell(w)+|\lambda(u)|} \sum\limits_{T'(u')} (t^{-\nu_1}.\psi_{n + 1 - w_1}) \cdots (t^{-\nu_k}.\psi_{n + 1 - w_k})v_1\wedge\ldots\wedge v_n 
    \end{multline}
    where the first sum runs over all possible partitions $T(u)$ of $\lambda(u)$ into horizontal strips $\rho_i$ and the second over all partitions $T'(u')$ of $\lambda(u')$ into vertical strips. We note that each partitioning $T'$ of $\lambda'(u)=\lambda(u')$ into vertical strips is in one-to-one correspondence with a partitioning $T$ of $\lambda(u)$ into horizontal strips: one simply needs to transpose each individual strip $\rho_i$ in $T$ together with the Young diagram $\lambda(u)$.
    Thus, we can simply identify $\nu_i = \rho'_i$ and match each term in \eqref{proddual} against one in \eqref{prod}. Moreover, for each single horizontal strip $\rho_i$ we can also directly match the resulting Clifford operators,
    \begin{equation}
    \label{bijproof}
        t^{\rho_i}.\psi^*_{w_i} = \prod\limits_{\square \in \rho_i} t_{k + 1 - i+c(\square)}.\psi^*_{w_i} \quad {\rm and} \quad t^{-\rho'_i}.\psi_{n+1-w_i} = \prod\limits_{\square' \in \rho'_i} t^{-1}_{n - k + i + c(\square')}.\psi_{n + 1 - w_i},
    \end{equation}
    where we have used that for each $\square'\in\rho'_i$ we have that $c(\square') = - c(\square)$ with $\square$ being the corresponding box in $\rho_i$. Namely, due to the anti-linearity of the map \eqref{Theta} and \eqref{corresp}, we arrive at
    \begin{multline*}
        \Theta_{k+1-i}\circ t_{k + 1 - i + c(\square)}.\psi^*_{w_i}  = \Theta_{k+1-i}\circ(\psi^*_{w_i + 1} + (y_{w_i} - y_{k + 1 - i + c(\square)})\psi^*_{w_i})=\\ 
         (-1)^{n-k+i-w_i+1} (\psi_{n - w_i} + (y_{n + 1 - w_i} - y_{n - k + i - c(\square)})\psi_{n+ 1 - w_i})\circ\Theta_{k-i}=\\
         (-1)^{n-k+i-w_i+1}t^{-1}_{n - k + i - c(\square)}.\psi_{n + 1 - w_i}\circ\Theta_{k-i} \;
    \end{multline*}
    which implies that
    \begin{equation*}
        \Theta_{k + 1 - i} \circ t^{\rho_i}.\psi^*_{w_i} = (-1)^{n - k + i - w_i + |\rho_i|} t^{-\rho'_i}.\psi_{n + 1 - w_i} \circ \Theta_{k - i}\;.
    \end{equation*}
    Applying the latter identity one now easily deduces from \eqref{prod} and \eqref{proddual} that $\Theta_k$ is an isomorphism of rings and, thus, \eqref{LRduality} follows.
    \end{proof}

Note that the relations \eqref{LRpsi} from the last proof give rise to the following:
\begin{cor}
     The diagram \eqref{LRcd} is commutative for any $0<k<n$.
\end{cor}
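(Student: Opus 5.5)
The plan is to read off commutativity of \eqref{LRcd} directly from the intertwining relations \eqref{LRpsi}, established inside the bijective proof of Theorem \ref{thm:LRduality}. The diagram consists of two squares. The right square asserts $\Theta_{k+1}\circ(-1)^{n-k-i}\psi^*_i=\psi_{n+1-i}\circ\Theta_k$. The left square asserts $\Theta_{k-1}\circ(-1)^{n-k+1-i}\psi_i=\psi^*_{n+1-i}\circ\Theta_k$. Since $(-1)^{2(n-k-i)}=1$, each is equivalent to the corresponding identity in \eqref{LRpsi} after multiplying by the sign. So the task reduces to giving a careful verification of \eqref{LRpsi}, which the proof only sketched.

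First I will check that both sides are antilinear maps of $\ring[q^{\pm1}]$-modules. Indeed, $\Theta$ is antilinear, and the Clifford generators are $\ring[q^{\pm 1}]$-linear; the quasi-periodicity factors play no role here because $1\le i\le n$. It therefore suffices to compare both sides on the Schubert basis $\ket{w}=\psi^*_{w_1}\cdots\psi^*_{w_k}.1$.

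For the right square, I compute $\psi^*_i\ket{w}$. It vanishes if $i\in\{w_1,\dots,w_k\}$. Otherwise it equals $(-1)^{r}\ket{w''}$, where $r=\#\{j\le k: w_j<i\}$ is the number of transpositions needed to sort $v_i$ into place. By \eqref{corresp}, $\Theta_k\ket{w}=\ket{w'}$ with $\ket{w'}=\bigwedge_{j>k}v_{n+1-w_j}$ in increasing order. Then $\psi_{n+1-i}\ket{w'}$ vanishes exactly when $n+1-i\notin\{n+1-w_j\}_{j>k}$, that is, when $i\in\{w_1,\dots,w_k\}$; so the supports agree. When it is nonzero, the contraction produces the sign $(-1)^{s}$, with $s=\#\{j>k: w_j>i\}$ the number of vectors preceding $v_{n+1-i}$. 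Removing $n+1-i$ from the complement of the reflected set gives precisely $(w'')'$, so both sides hit the same basis vector.

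It remains to show $(-1)^r=(-1)^{n-k-i}(-1)^s$. The elements smaller than $i$ number $i-1$, so those among the $w_j$, $j>k$, number $i-1-r$. Hence $s=(n-k)-(i-1-r)$, which gives $s\equiv n-k-i+1+r \pmod 2$. This is off by one from the claimed sign. Therefore I will have to track the convention in \eqref{corresp} more carefully, specifically the sign $(-1)^{k(n-k)+\ell(w)}$ relating $\ket{w'}$ to the $\psi$-description, together with the $k$-dependence in the $\Theta$ normalisation. This sign bookkeeping, matching Figure \ref{fig:signcompute}'s $(-1)^{\mathrm{ht}^+-\mathrm{wd}^-}$, is the main obstacle. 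I would resolve it by recomputing with the degree identities $|\psi_i^*.\lambda|=|\lambda|+i-k-1$ and the length parity $\ell(w)$ that enters \eqref{corresp}.

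The left square is handled identically with $\psi_i$ and $\psi^*_{n+1-i}$ swapped. Alternatively, it follows from the right square by applying $\Theta^{-1}$ and using $\Theta_{n-k}\circ\Theta_k=\mathrm{id}$ (conjugating twice and applying the involution $f\mapsto f'$ twice is the identity).
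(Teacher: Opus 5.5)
Your reduction is the paper's own route: the corollary is read off from \eqref{LRpsi}, and each square of \eqref{LRcd} is equivalent to one of those identities. The gap is in your verification of the right-hand identity. As written, it ends in a claimed sign mismatch that you leave unresolved, so the proof is not complete. The remedy you suggest (revisiting the sign $(-1)^{k(n-k)+\ell(w)}$ in \eqref{corresp}, the degree identities, or some $k$-dependent normalisation of $\Theta$) is a red herring. Your computation only uses $\Theta_k\ket{w}=\ket{w'}$, which holds with coefficient $+1$ directly from \eqref{Theta}, and that is correct.

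The mismatch is a miscount. When $\psi^*_i\ket{w}\neq 0$ we have $i\notin\{w_1,\dots,w_k\}$, so $i$ is itself one of the white positions $w_j$ with $j>k$. The $n-k$ white positions therefore split into $i-1-r$ below $i$, the position $i$ itself, and
\[
s=\#\{j>k:\ w_j>i\}=(n-k-1)-(i-1-r)=n-k-i+r
\]
above $i$; you dropped the $-1$. Hence $(-1)^s=(-1)^{n-k-i}(-1)^r$, which is exactly the sign required in \eqref{LRpsi}. As a check, take the paper's example $n=9$, $k=4$, $\{w_1,\dots,w_4\}=\{2,4,6,9\}$, $i=5$. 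Then $r=s=2$, whereas your formula gives $s=3$.

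Your count $(n-k)-(i-1-r')$ is the correct one for the left square, where $i=w_m$ is black, and there it gives $(-1)^{n-k+1-i}$ as needed. Your alternative of deducing the left square from the right one at $(n-k,\,n+1-i)$ via $\Theta_{n-k}\circ\Theta_k=\mathrm{id}$ also works, since $f''=f$.

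With this one-line correction your argument is a complete direct sign count for \eqref{LRpsi}. The paper instead obtains \eqref{LRpsi} by comparing the $\psi$-expansion \eqref{corresp} of $\ket{w'}$ before and after inserting a generator, using the degree identities.
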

This, in particular, relates the action \eqref{annihshuffle} of the annihilation operator $\psi_i$ acting on $qH^*_T(\Gr(k;n))$ to the action of the shuffle product \eqref{shuffle} on $qH^*_T(\Gr(n-k;n))$; compare with \eqref{simpleshuffle}. 

\begin{example}\rm
        Let us compare the action of the fermionic creation operator $\psi^*_5$ on the Schubert class $\ket{w} = \psi^*_2\psi^*_4\psi^*_6\psi^*_9.1 \in qH^*_T(\Gr(4;9))$ against the action of the annihilation operator $\psi_5 = \psi_{9+1 - 5}$ on the Schubert class $\ket{w'} = \psi^*_2\psi^*_3\psi^*_5\psi^*_7\psi^*_9.1 \in qH^*_T(\Gr(5;9))$ under level-rank duality; see the depiction in terms of Young diagrams in Figure \ref{fig:signcompute}. We find
        \begin{gather*}
            \psi^*_5\ket{w} = \psi^*_2\psi^*_4 \psi^*_5\psi^*_6\psi^*_9.1, \quad \Theta_5 \circ \psi^*_5\ket{w} = \psi^*_2\psi^*_3\psi^*_7\psi^*_9.1, \\
            \psi_5 \circ \Theta_4(\ket{w}) = \psi_5 \ket{w'} = (-1)^{9-5-4} \psi^*_2 \psi^*_3 \psi^*_7 \psi^*_9.1 = \psi^*_2 \psi^*_3 \psi^*_7 \psi^*_9.1
        \end{gather*}
        in accordance with the commutative diagram \eqref{LRcd}.
    \end{example}
    
\subsection{Graham positivity and quantum Pieri rules}\label{sec:Pieri}
In this section we use the Clifford algebra to prove that the structure constants $C^{\nu,d}_{\lambda\mu}(y)$ of the equivariant quantum cohomology ring $qH^*_T(\Gr(k;n))$ satisfy {\em Graham positivity}, i.e. the $C^{\nu,d}_{\lambda\mu}(y)\in\Z_{\ge 0}[y_2-y_1,\ldots,y_n-y_{n-1}]$  for $\lambda$ being either a single column or a single row. These special cases are known as equivariant quantum Pieri rules. We note that the choice of torus weights $y_{i+1}-y_i$ and, thus, the formulation of Graham positivity, depends on the conventions when fixing the Borel subgroup in the representation of $\Gr(k;n)=\SL_n/P$ as homogeneous variety, where $P$ is a maximal parabolic subgroup. Here we shall follow the conventions from \cite{BEMT22} as we wish to compare directly with their combinatorial description of the quantum Pieri rules \cite[Thm 1.1]{BEMT22} and fix $B\subset\SL_n$ to be the subgroup of {\em lower triangular} matrices, a choice which is already visible in the definition of the matrices \eqref{X0} and \eqref{X}.

We shall proceed as follows: we first prove Graham-positivity for products $\sigma_{(1)^p}\star\sigma_\mu$, where $(1)^p$ is a single column of height $1\le p\le k$ and $\mu$ is arbitrary, by employing solely the fermionic product formula \eqref{fermionformula}. We then explain the connection with the formula on cylindric shapes from \cite[Thm 1.1]{BEMT22}. To prove the other quantum Pieri formula for products of the form $\sigma_p\star\sigma_\mu$ with $1\le p\le n-k$, we then employ the equivariant level-rank duality from \cite{gorbounovkorff2014}.

\begin{cor}\label{PieriPos}
    (i) Let $\lambda=(1)^p$ be single column of height $1\le p\le k$ and $w=[w_1\ldots w_n]$ a Grassmannian permutation with associated partition $\mu$. Then \eqref{fermionformula} simplifies to
    \[
    \ket{(1)^p} \star \ket{\mu} = \sum_{\{i_1<\ldots<i_p\}\subset[k]}
    \psi^*_{w_1}\circ\cdots\circ(t_{i_1}.\psi^*_{w_{i_1}})\circ\cdots\circ(t_{i_p-p+1}.\psi^*_{i_p})\circ\cdots\circ\psi^*_{w_k}.1\,,
    \]
    where the sum runs over all ordered $p$-subsets of $[k]=\{1,\ldots,k\}$. Since $i_r-r+1\le w_{i_r}$ and
    \[
    t_{i_r-r+1}.\psi^*_{w_{i_r}}=\psi^*_{w_{i_r}+1}+(y_{w_{i_r}}-y_{i_r-r+1})\psi^*_{w_{i_r}},\qquad r=1,\ldots,p,
    \]
    this formula is manifestly Graham positive.\medskip

    \noindent (ii) Setting $\lambda=(p)$, a single row of length $1\le p\le n-k$, we apply the anti-linear ring isomorphism \eqref{Theta} to find that the expansion of $\ket{(p)}\star\ket{\mu}=\Theta_{n-k}(\ket{(1^p)}\star\ket{\mu'})$ is also Graham positive.
\end{cor}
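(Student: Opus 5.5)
The plan is to specialise the fermionic product formula \eqref{fermionformula} of Theorem \ref{thm:fermion} to $\lambda=(1)^p$, expand each translated creation operator using Lemma \ref{lem:WaffClifford}, and read off the coefficient of each ordered wedge basis vector $\ket{\nu}$.

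\textbf{Step 1: the column case reduces to choosing $p$ creation operators.} A semi-standard tableau of the column shape $(1)^p$ with entries in $[k]$ is a strictly increasing sequence $i_1<\dots<i_p$. The corresponding chain $\lambda^{(0)}\subset\dots\subset\lambda^{(k)}$ therefore has each horizontal strip $\rho_j$ either empty or a single box. The box in row $r$ of the column has content $1-r$, so its translation is $t_{k+c(\square)}=t_{k+1-r}$. Keeping track of the reversed labelling $\rho_{k+1-i}=\lambda^{(i)}/\lambda^{(i-1)}$, together with the constraint that $\lambda^{(i)}$ fits into the $i\times(n-i)$ rectangle, I expect this to produce exactly the displayed formula. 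There, the $r$-th chosen factor $\psi^*_{w_{i_r}}$ is replaced by $t_{i_r-r+1}.\psi^*_{w_{i_r}}$ and all other factors are left unchanged. I would check the index bookkeeping on the example $\Gr(2;5)$ before writing the general case.

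\textbf{Step 2: expand the chosen factors and check each term is positive.} By \eqref{commuterelat}, each chosen factor splits as
\[
t_{i_r-r+1}.\psi^*_{w_{i_r}}=\psi^*_{w_{i_r}+1}+(y_{w_{i_r}}-y_{i_r-r+1})\psi^*_{w_{i_r}}.
\]
Multiplying out gives a sum of wedge monomials $\psi^*_{a_1}\cdots\psi^*_{a_k}.1$ in which each $a_j\in\{w_j,w_j+1\}$. The argument then has three parts.
\begin{itemize}
\item \emph{Order is preserved.} Since $w_1<\dots<w_k$, each $a_j$ is either $w_j$ or $w_j+1$, so the sequence $(a_j)$ stays weakly increasing. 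If two entries coincide, the monomial vanishes by the CAR \eqref{CAR}. Otherwise the monomial is already the ascending wedge $\ket{\nu}$ with sign $+1$, so no reordering signs appear in the classical part.
\item \emph{Equivariant coefficients are Graham positive.} Because $w$ is Grassmannian we have $w_j\ge j$, hence $w_{i_r}\ge i_r\ge i_r-r+1$. Each coefficient $y_{w_{i_r}}-y_{i_r-r+1}$ is therefore a telescoping sum $\sum_m (y_{m+1}-y_m)$ of torus weights.
\item \emph{Quantum term.} This only arises when $w_k=n$ is shifted to $n+1$. By quasi-periodicity \eqref{quasiperiod}, $\psi^*_{n+1}=z\psi^*_1$. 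Moving $\psi^*_1$ to the front past the other $k-1$ factors costs $(-1)^{k-1}$, and combined with $z=(-1)^{k-1}q$ on $\bigwedge^kV$ this gives exactly $+q$. If $w_1=1$ is unshifted, the term vanishes.
\end{itemize}
Together these show $C^{\nu,d}_{(1)^p\mu}(y)\in\Z_{\ge0}[y_2-y_1,\ldots,y_n-y_{n-1}]$. I expect the main obstacle to be getting the index conventions and the quantum sign exactly right, since the strip labelling is reversed and the deformation parameter $q_k$ changes sign with $k$. I would first verify these carefully against Example \ref{exampleproduct}.

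\textbf{Step 3: the row case via level-rank duality.} For (ii), I apply Theorem \ref{thm:LRduality}. The map $\Theta_{n-k}$ is an anti-linear ring isomorphism sending $\sigma_{(1)^p}\mapsto\sigma_{(p)}$ and $\sigma_{\mu'}\mapsto\sigma_\mu$. By part (i) applied in $\Gr(n-k;n)$, the coefficients of $\sigma_{(1)^p}\star\sigma_{\mu'}$ are polynomials in the weights $y_{m+1}-y_m$ with nonnegative integer coefficients, and the power of $q$ is untouched by $\Theta$. The involution $f\mapsto f'$ sends
\[
y_{m+1}-y_m\;\mapsto\; -y_{n-m}+y_{n+1-m}=y_{(n-m)+1}-y_{n-m},
\]
which is again a positive torus weight. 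Hence the coefficients of $\ket{(p)}\star\ket{\mu}=\Theta_{n-k}(\ket{(1^p)}\star\ket{\mu'})$ are also Graham positive.
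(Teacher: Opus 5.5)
Your overall route is the paper's: specialise \eqref{fermionformula} to a column, expand each $t_{i_r-r+1}.\psi^*_{w_{i_r}}$, use $w_{i_r}\ge i_r\ge i_r-r+1$, absorb the reordering sign of the $q$-term into $z=(-1)^{k-1}q$, and transport the result to rows with the anti-linear map $\Theta_{n-k}$. Steps 2 and 3 are correct. Your observation $a_j\le w_j+1\le w_{j+1}\le a_{j+1}$ also makes explicit why the classical terms need no reordering, which the paper leaves implicit.

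Step 1, however, does not produce the displayed formula as written, and the discrepancy is exactly what positivity depends on. There are two problems.

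First, the box in row $r$ does not carry $t_{k+c(\square)}=t_{k+1-r}$. The index $k$ in \eqref{t2h.s.} is the number of variables at the branching step where the box is removed, i.e.\ the box's tableau entry. So the box with entry $i_r$ carries $t_{i_r+1-r}$. With your $t_{k+1-r}$, already in $\Gr(2;4)$ the product $\ket{1}\star\ket{\varnothing}$ would come out as $\ket{1}+(y_1-y_2)\ket{\varnothing}$ instead of $\ket{1}$.

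Second, the reversed labelling $\rho_{k+1-m}=\lambda^{(m)}/\lambda^{(m-1)}$ in \eqref{fermionformula} attaches entry $m$ to $\psi^*_{w_{k+1-m}}$, not to $\psi^*_{w_m}$. For instance, in Example~\ref{exampleproduct} the $1$'s act on $\psi^*_{w_2}=\psi^*_5$. Both pairings give the same total, but only the pairing entry $i_r\leftrightarrow\psi^*_{w_{i_r}}$ is termwise positive. With the reversed pairing you only know $w_{k+1-i_r}\ge k+1-i_r$. For example, take $k=3$, $p=1$, $\mu=\varnothing$: the entry-$3$ term equals $(y_1-y_3)\ket{\varnothing}$, and it is cancelled only by part of the entry-$1$ term. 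The rectangle constraint you invoke plays no role in fixing either problem.

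The fix is to bypass the reversed labelling altogether. Since $s_{(1)^p}(X^q_1,\ldots,X^q_k|y)$ is symmetric in the $X^q_i$, it commutes with antisymmetrisation. You can therefore apply its tableau expansion \eqref{tablformula} directly to the lift $v_{w_1}\otimes\cdots\otimes v_{w_k}$ of $\ket{\mu}$. This is equivalent to branching in a different variable order, as the paper notes in the remark following \eqref{compcom}. The tableau $i_1<\cdots<i_p$ then contributes $\prod_r(X^q_{i_r}-y_{i_r+1-r})$, i.e.\ $t_{i_r+1-r}$ acting on the $i_r$-th factor $v_{w_{i_r}}$, which is exactly the displayed formula.
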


\begin{proof}
    (i) We start by noting that since $\lambda=(1)^p$ is a single column, the sum over the Young tableaux of shape $\lambda$ in \eqref{fermionformula} becomes a sum over ordered subsets $\{i_1<\ldots<i_p\}\subset[k]$, where the $i_r$ are the entries in the respective tableau. In particular, each horizontal strip $\rho_j=\lambda^{(j)}/\lambda^{(j-1)}$ in \eqref{fermionformula} contains at most a single box and there are in total $p$ of them.

    To show Graham positivity recall that $1\le w_1<w_2<\ldots<w_k\le n$, since $w\in S_n$ is Grassmannian. Hence, we must have that $i_r-r+1\le w_{i_r}$ as stated and, thus, the expansion of each term,
    \[ \psi^*_{w_1}\circ\cdots\circ(t_{i_1}.\psi^*_{w_{i_1}})\circ\cdots\circ(t_{i_p-p+1}.\psi^*_{i_p})\circ\cdots\circ\psi^*_{w_k}.1
    \]
    gives a $\Z_{\ge 0}$-polynomial in the weights $(y_{w_{i_r}}-y_{i_r-r+1})$. Furthermore, due to the quasi-periodic boundary conditions we have that if $i_p=k$ and $w_k=n$, then $t_{k-p+1}.\psi^*_{n}=(-1)^{k-1}q\psi^*_1+(y_n-y_{k-p+1})\psi^*_n$ and, although a reordering of terms is required, the additional factor $(-1)^{k-1}$ ensures positivity.

    To show (ii) it suffices to note that if the expansion coefficients of the product $\ket{(1^p)}\star\ket{\mu'}$ satisfy $f(y_1,\ldots,y_n)\in\Z_{\ge 0}[y_2-y_1,\ldots,y_n-y_{n-1}]$ then  their images $f'(y_1,\ldots,y_n)=f(-y_n,\ldots,-y_1)$ must also be Graham positive by the anti-linearity property of the map \eqref{Theta}. 
\end{proof}

To demonstrate the simplicity of our formula let us consider an easy example.
\begin{figure}\label{fig:LRduality}
\centering
\includegraphics[width=.8\textwidth]{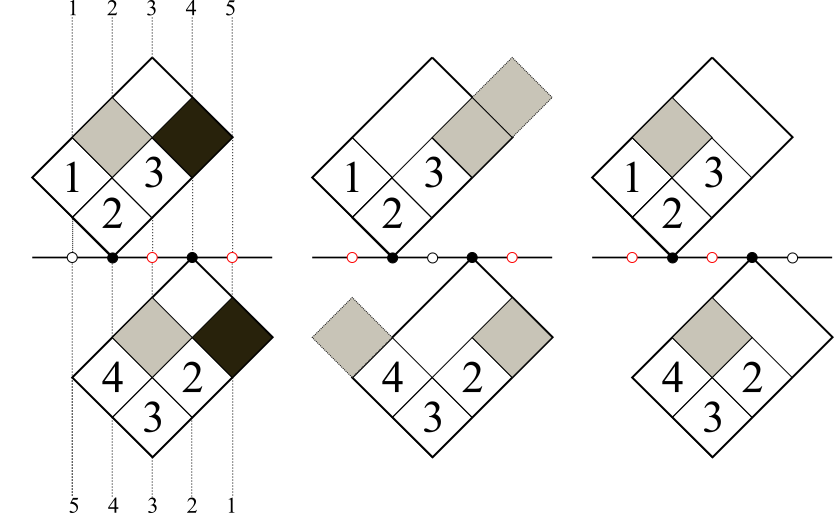} 
\caption{A graphical depiction of the quantum Pieri rule for horizontal strips using level-rank duality. Shown is the example of the product expansion $h_2\star s_{(2,1)}$ in $qH^*(\Gr(2;5))$ (top) and the corresponding expansion of $e_2\star s_{(2,1)}$ in $qH^*(\Gr(3;5))$ (bottom). The gray boxes are addable and the black box in the first diagram must be added; see Example \ref{ex:LRduality} for further explanations.
}
\end{figure}

\begin{example}\label{ex:LRduality}
\rm 
    Set $k=3$ and $n=5$. We wish to consider the product $\sigma_{(1)^2}\star\sigma_{(2,1)}$ in $qH^*_T(\Gr(3;5))$. The Grassmannian permutation $w=[13524]$ corresponds to our chosen partition $\mu$. There are three possible tableaux for $\lambda=(1,1)$,
    \[
    \begin{ytableau}[]
    1 \\ 2 \\   
    \end{ytableau}  \qquad
    \begin{ytableau}[]
    1 \\ 3 \\   
    \end{ytableau}\qquad
    \begin{ytableau}
        2\\
        3 \\
    \end{ytableau}\;,
    \]
    corresponding to the three ordered $2$-subsets $\{1,2\},\{1,3\},\{2,3\}$ of $[3]=\{1,2,3\}$. Each of these subsets gives rise to one of the following terms,
    \begin{gather*}
        (t_1.\psi^*_1)(t_1.\psi^*_3)\psi^*_5.1=
        \psi^*_2(\psi^*_4+(y_3-y_1)\psi^*_3)\psi^*_5.1\\
        (t_1.\psi^*_1)\psi^*_3(t_2.\psi^*_5).1=
        \psi^*_2\psi^*_3(q\psi^*_1+(y_5-y_2)\psi^*_5).1\\
        \psi^*_1(t_2.\psi^*_3)(t_2.\psi^*_5).1=
        \psi^*_1(\psi^*_4+(y_3-y_2)\psi^*_3)(q\psi^*_1+(y_5-y_2)\psi^*_5).1\;.
    \end{gather*}
    Note that each individual coefficient resulting from acting with the translations $t_i$ is Graham positive. 
    Expanding the latter and summing up terms we arrive at
    \begin{multline*}
         \ket{(1)^2}\star\ket{2,1}=(v_1\wedge v_3\wedge v_4)\star(v_1\wedge v_3\wedge v_5)=\\
         (y_5-y_2)(y_3-y_2)v_1\wedge v_3\wedge v_5+(y_5-y_2)v_1\wedge v_4\wedge v_5+\\
         (y_5+y_3-y_2-y_1)v_2\wedge v_3\wedge v_5+
         v_2\wedge v_4\wedge v_5+q\; v_1\wedge v_2\wedge v_3=\\
         (y_5-y_2)(y_3-y_2)\ket{2,1}+(y_5-y_2)\ket{2,2}+(y_5+y_3-y_2-y_1)\ket{2,1,1}+\ket{2,2,1}+q\ket{\emptyset}\;.
    \end{multline*}
    Let us now demonstrate the application of level-rank duality to obtain the other Pieri rule for horizontal strips; the reader might wish to consult Figure \ref{fig:LRduality} for a graphical depiction. The map \eqref{Theta} sends $qH^*_T(\Gr(3;5))$ to $qH^*_T(\Gr(2;5))$ and we now consider instead the quantum product of 
        \[
        \ket{(2)} \star \ket{2,1} =  h_2(X | y) v_2 \wedge v_4=\Theta_3(\ket{(1)^2}\star\ket{2,1})
        \]
    in $qH^*_T(\Gr(2;5))$. We simply need to apply the map $\Theta_3$ term by term to the result above to find
    \begin{multline*}
            h_2(X|y) v_2 \wedge v_4 =(y_4 - y_3)(y_4 - y_1) v_2 \wedge v_4 + (y_4 - y_1) v_3 \wedge v_4\\
            + (y_5 + y_4 - y_3 - y_1) v_2 \wedge v_5 + v_3 \wedge v_5+q v_1 \wedge v_2=\\
            (y_4 - y_3)(y_4 - y_1)\ket{2,1}+(y_4 - y_1)\ket{2,2}+
            (y_5 + y_4 - y_3 - y_1)\ket{3,1}+\ket{3,2}+q\ket{\emptyset}\;.
        \end{multline*}
        Looking at Figure \ref{fig:LRduality} one sees that the individual terms correspond to adding horizontal strips of length $0\le r\le p=2$ and, if the result is outside the bounding box, removing a maximal rim hook afterwards.
\end{example}

\subsection{The equivariant quantum Pieri rule in terms of cylindric shapes}\label{sec:cylloops}
\begin{figure}\label{fig:cylindricloop}
\centering
\includegraphics[width=.8\textwidth]{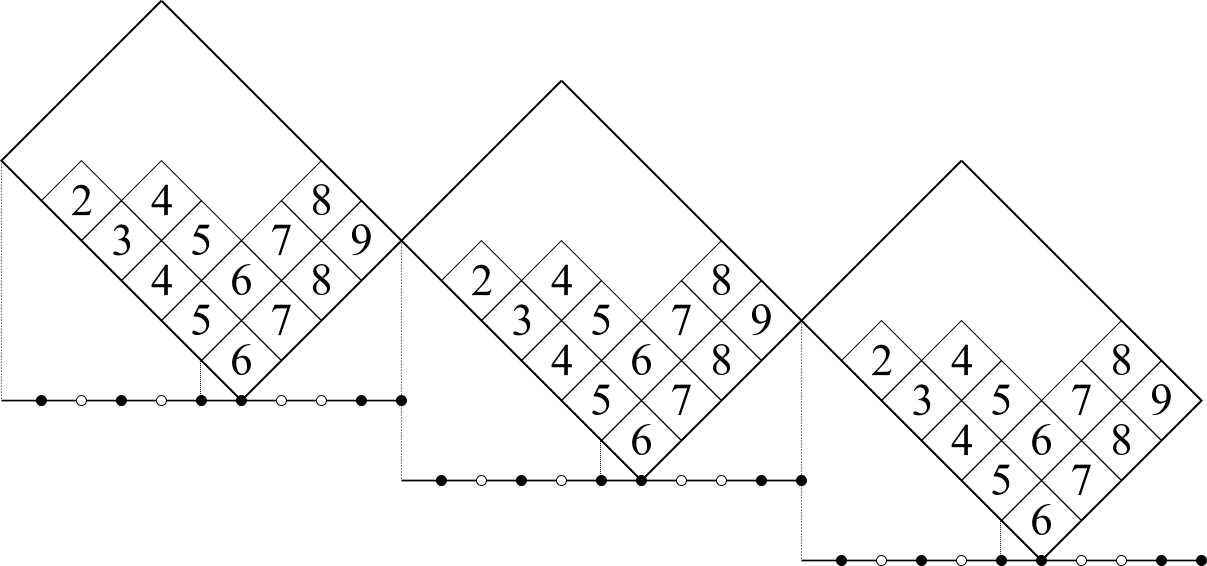} 
\caption{A graphical depiction of the connection between our fermionic picture and cylindric loops. Each partition can be encoded into a finite Maya diagram (a binary string depicted as a string of black and white go-stones) which is periodically repeated. Each Maya diagram defines the outline of a Young diagram by going downward by 45 deg for a black go-stone and going upward by 45 deg for a white one. Shown is the Young diagram for the partition $\lambda=(4,4,2,2,1,0)$ and the numbers in the boxes are the content of that box plus $k=6$. They correspond to the positions of the black go-stones in the Maya diagram.
}
\end{figure}

We recall the quantum equivariant Pieri rule for vertical strips from \cite{BEMT22}. The latter uses so-called {\em cylindric shapes} as introduced in \cite{Post05} when discussing non-equivariant quantum cohomology. 

In our setting the cylindric shapes are recovered as follows: each wedge product $\ket{\lambda}$ can be encoded in a binary string or a finite {\em Maya diagram}, a string of black and white go-stones of length $n$, where the positions $w_1<\ldots<w_k$ of the black go-stones are fixed by the Grassmannian permutation $w=[w_1\ldots w_n]\in S^{k,n}$ corresponding to $\lambda$. If we periodically repeat this Maya diagram as shown in Figure \ref{fig:cylindricloop} then the outline of the cylindric loop associated with $\lambda$ can be recovered by going 45 deg upwards for a white go-stone and 45 deg downwards for a black go-stone. The resulting lattice path in $\Z^2$ is called the cylindric loop $\lambda[0]$ in \cite{Post05}. Such a loop can be shifted upward or downward by either adding or removing $d$ maximal rim hooks in Figure \ref{fig:cylindricloop}. The resulting loop is denoted by $\lambda[d]$. A {\em cylindric shape} $\lambda/d/\mu$ is the number of boxes between the two cylindric loops $\lambda[d]$ and $\mu[0]$. 

The equivariant quantum Pieri rule can be written in terms cylindric shapes as follows: select $p$ rows of the Young diagram and add a single box in them denoting the resulting diagram by $\mu\cup\nu_p$ and calling $\nu_p$ a (generalised) vertical $p$-strip. Note that $\mu\cup\nu_p$ has {\em not} to be a Young diagram and in general it will not be one. The rule then reads \cite[Theorem 1]{BEMT22}:
\begin{thm}[Bertiger et al] One has the following product expansion in $qH^*_T(\Gr(k;n))$:
        \begin{equation}
        \label{QEPieri}
            \ket{(1)^p} \star \ket{\mu} = \sum\limits_{r = 0}^{p} \sum\limits_{\substack{\mu \subset \lambda \subset\, \mu \cup \nu_p \\\lambda /d/\mu = \nu_r}} q^d \prod\limits_{\alpha \in \nu_p  \setminus \nu_r} \rm{wt}(\alpha) \ket{\mu},
        \end{equation}
        where the sum runs over all possible additions of (generalised) vertical $p$-strips $\nu_p$ and partitions $\lambda$ such that its Young diagram is enclosed by the cylindric loops of $\mu$ and $\mu \cup \nu_p$ and $\lambda/d/\mu=\nu_r$ is a vertical $r$-strip whose boxes belong to $\nu_p$. The weight of each box $\Box \in \nu_p \setminus \nu_r$ is defined as
        \[
        {\rm{wt}}(\Box) = y_{\mu_{r(\Box)} + k + 1 - r(\Box)} - y_{k + 1 - r(\Box) - u({\Box})},
        \]
        where $r(\Box)$ is the row number of the box $\Box$ and $u(\Box)$ is the number of the boxes in $\nu_p$ below $\Box$. 
\end{thm}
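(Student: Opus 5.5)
The plan is to derive \eqref{QEPieri} directly from the fermionic Pieri formula of Corollary \ref{PieriPos}(i), by expanding every translated creation operator and matching the resulting terms one by one with the data $(\nu_p,\nu_r,\lambda,d)$ on the right-hand side. Write $w=[w_1\ldots w_n]$ for the Grassmannian permutation of $\mu$, so that $w_i=\mu_{k+1-i}+i$. The factor $\psi^*_{w_i}$ then corresponds to row $k+1-i$ of $\mu$, and replacing $w_i$ by $w_i+1$ adds one box to that row.

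\emph{Step 1: strips.} Identify each ordered subset $\{i_1<\ldots<i_p\}\subset[k]$ in Corollary \ref{PieriPos}(i) with the generalised vertical $p$-strip $\nu_p$ that puts one box in each of the rows $k+1-i_1,\ldots,k+1-i_p$. This is a bijection between the index sets of the two outer sums.

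\emph{Step 2: expand the factors.} Expand each factor as
\[
t_{i_r-r+1}.\psi^*_{w_{i_r}}=\psi^*_{w_{i_r}+1}+(y_{w_{i_r}}-y_{i_r-r+1})\psi^*_{w_{i_r}} .
\]
Choosing the first summand means the box of $\nu_p$ in row $k+1-i_r$ is added. Choosing the second means it is not added and contributes a weight. The added boxes form $\nu_r\subset\nu_p$. The weight factor should then equal ${\rm wt}(\Box)$:
\begin{itemize}
\item For the box $\Box$ in row $r(\Box)=k+1-i_r$, we have $\mu_{r(\Box)}+k+1-r(\Box)=w_{i_r}$, which gives the first index of ${\rm wt}(\Box)$.
\item The boxes of $\nu_p$ lying below $\Box$ are exactly those with indices $i_1,\ldots,i_{r-1}$, so $u(\Box)=r-1$. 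Hence $k+1-r(\Box)-u(\Box)=i_r-r+1$, which gives the second index.
\end{itemize}
The resulting monomial is therefore $\prod_{\alpha\in\nu_p\setminus\nu_r}{\rm wt}(\alpha)$ times the wedge product $\pm\, v_{w'_1}\wedge\cdots\wedge v_{w'_k}$, where $w'_i=w_i+1$ for the rows in $\nu_r$ and $w'_i=w_i$ otherwise.

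\emph{Step 3: vanishing and the classical terms.} If two of the indices $w'_i$ coincide, the wedge product vanishes. Since $w'_i\le w'_{i+1}$ always holds, this happens precisely when $\mu\cup\nu_r$ fails to be a partition. When no such collision occurs and no index exceeds $n$, the indices are already strictly increasing. Then the term is $+\ket{\lambda}$ with $\lambda=\mu\cup\nu_r$ and $d=0$, and the requirement $\mu\subset\lambda\subset\mu\cup\nu_p$ holds automatically.

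\emph{Step 4: the quantum terms (main obstacle).} The remaining case is $i_p=k$, $w_k=n$, with the box in the top row added. This term needs the most care. By the quasi-periodicity \eqref{quasiperiod}, $\psi^*_{n+1}=z\psi^*_1$ with $z=(-1)^{k-1}q$ on $\bigwedge^kV$. Moving $\psi^*_1$ past the other $k-1$ creation operators costs a sign $(-1)^{k-1}$, which cancels the sign in $z$ and leaves $+q$. I will then show the following:
\begin{itemize}
\item the resulting wedge product is nonzero exactly when $1\notin\{w'_1,\ldots,w'_{k-1}\}$;
\item in that case it is $\ket{\lambda}$, where $\lambda$ is obtained from $\mu\cup\nu_r$ by removing a maximal rim hook;
\item on cylindric loops this says precisely that $\lambda[1]$ and $\mu[0]$ enclose the strip $\nu_r$, i.e. $\lambda/1/\mu=\nu_r$, with $\lambda$ enclosed by the loops of $\mu$ and $\mu\cup\nu_p$.
\end{itemize}
Since $\nu_p$ contains only one box per row, at most one index can wrap around, so $d\in\{0,1\}$. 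This checks that no higher powers of $q$ arise, consistent with the rule. To verify the cylindric dictionary I would use the periodic Maya-diagram description of Figure \ref{fig:cylindricloop}: adding a box to the last black stone at position $n$ moves it to position $n+1\equiv 1$, which is exactly the shift of the loop by one rim hook. Summing over all choices of $\nu_p$, $\nu_r$ and $d$ then reproduces \eqref{QEPieri} term by term.
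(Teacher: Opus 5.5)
Your proposal is correct and follows essentially the paper's own route: the paper quotes the rule from \cite{BEMT22} and only remarks that choosing $\nu_p$ amounts to choosing which go-stones the translations act on in Corollary~\ref{PieriPos}(i), with non-partition terms killed by $(\psi^*_i)^2=0$. You supply the details the paper leaves implicit: the weight match via $u(\Box)=r-1$, and the single quantum term (sign cancellation $(-1)^{k-1}\cdot(-1)^{k-1}=1$ followed by removal of an $n$-rim hook), where the collision condition from your Step 3 must still be imposed on $w'_1,\ldots,w'_{k-1}$.
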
        
        Note that as pointed out in \cite{BEMT22} the quantum equivariant Pieri rule \eqref{QEPieri} is manifestly Graham positive, because
        \[
        \mu_{r(\Box)} + k + 1 - r(\Box) \geq k + 1 - r(\Box) - u(\Box).
        \]
In our setting the selection of the generalised vertical $p$-strip $\nu_p$ corresponds to selecting which black go-stones in the finite Maya diagram we are acting on with the translations $t_{i_r}$. The adding of boxes which do not lead to a well defined Young diagram are eliminated from our formula as they correspond to terms involving $(\psi^*_i)^2=0$.

\subsection{Triple quantum Schubert calculus and positivity}\label{sec:tripleSchubert} 

In this subsection, we speculate on the possible combinatorial definition of a {\em triple quantum Schubert calculus} for Grassmannians. 
Motivated by \eqref{MolevSagan} we make the following change of base ring, $$\ring=\Z[y_1,\ldots,y_n]\to\mb{A}=\Z[y_1,\ldots,y_n,z_1,\ldots,z_n]\;,$$ and consider now the free $\mb{A}$-module $\mb{V}=\mb{A}^n$ instead. If we let $S_n$ act on $\mb{A}$ diagonally, i.e. by simultaneously permuting the parameters $y_i$ and $z_i$, while acting via \eqref{g_i} on the standard basis in $\mb{V}$ as before, then $\mb{V}$ has the structure of an $\mb{A}\#S_n$-module. 
\begin{lemma}
    The matrices $\bt_i=X^q-z_i\cdot1$ for $i=1,\ldots,n$, with $X^q$ as in \eqref{Xmatrix}, together with the matrices \eqref{g_i} define another left action of the affine Weyl group $W=\Z^n\rtimes S_n$ on $\mb{V}^{\rm loc}$, where $\mb{V}^{\rm loc}$ is the corresponding module after the appropriate localisation $\mb{A}\to\mb{A}^{\rm loc}$ such that the inverses of\footnote{Here we have used the quantum parameter $q$ in \eqref{tripleq} instead of the quasi-periodicity parameter $z$ from \eqref{Xmatrix} in order to avoid confusion with the new set of equivariant parameters $z_i$.}
    \begin{equation}\label{tripleq}
    \det \bt_i=q+\prod_{j=1}^n(z_i-y_j)
    \end{equation}
    exist for all $i=1,\ldots,n$.
\end{lemma}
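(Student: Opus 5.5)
We have to check two things. First, the matrices $\bt_i$ and $g_i$ satisfy the defining relations of $W=\Z^n\rtimes S_n$ as a skew group ring acting on $\mb{V}^{\rm loc}$. Second, each $\bt_i$ is invertible once \eqref{tripleq} has been inverted. We follow the proof pattern of Lemma \ref{lem:Waffaction}, replacing $y_i$ by $z_i$ only in the translation part.

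\emph{Invertibility.} We first compute $\det\bt_i=\det(X^q-z_i\cdot 1)$ by expanding along the first row of \eqref{Xmatrix} with $X^q$ replaced by $X^q-z_i$. The diagonal term is $\prod_j(y_j-z_i)$. The only other contribution is the corner entry $q$: its minor is the lower-triangular matrix with ones on the diagonal, and the sign of the cyclic permutation is $(-1)^{n-1}$. This gives $\det\bt_i=(-1)^n\big(\prod_j(z_i-y_j)\big)+(-1)^{n+1}q$, up to the sign convention chosen for $q$. With the quasi-periodicity convention for $q$ of the paper, this matches \eqref{tripleq} up to a global sign. Hence $\bt_i\in\GL_n(\mb{A}^{\rm loc}[q^{\pm1}])$ after localisation. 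The inverse is given by the adjugate formula.

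\emph{Relations.} The translations commute, $\bt_i\bt_j=\bt_j\bt_i$, since all $\bt_i$ are polynomials in the single matrix $X^q$. The $g_i$ already satisfy the Coxeter relations of $S_n$ by Lemma \ref{lem:Waction}; this is unaffected by the base change, because the $g_i$ involve only the $y$'s. It remains to check the semidirect product relations $s_i t_j s_i^{-1}=t_{s_i(j)}$, understood in the skew group ring. Since $S_n$ now acts diagonally on $\mb{A}$, $s_i$ conjugates a scalar $z_j$ into $z_{s_i(j)}$ and $y$'s into permuted $y$'s. By Lemma \ref{lem:Winvariance} and its quantum counterpart, $g_i^{-1}X^qg_i=(X^q)^{s_i}$, where $(X^q)^{s_i}$ means $y_i,y_{i+1}$ swapped. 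Applying the skew action, $s_i\circ\bt_j\circ s_i^{-1}$ is represented by $g_i\,{}^{s_i}(X^q-z_j)\,g_i^{-1}=g_i\big((X^q)^{s_i}-z_{s_i(j)}\big)g_i^{-1}=X^q-z_{s_i(j)}=\bt_{s_i(j)}$, which is exactly the required relation. The scalar $z$ commutes with $g_i$, which is why the diagonal action on both alphabets is needed.

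Finally, the localisation must be $S_n$-stable for the skew group ring to make sense. The multiplicative set generated by the elements \eqref{tripleq} is permuted by the diagonal $S_n$-action, since $s$ sends $\det\bt_i$ to $\det\bt_{s(i)}$, so this holds. The main obstacle is bookkeeping rather than substance. One has to verify carefully the quantum analogue of Lemma \ref{lem:Winvariance}, i.e. that $g_i$ for $1\le i\le n-1$ conjugates $X^q$ including the corner entry $qE_{1n}$ correctly. This is a direct matrix check, since $E_{i,i+1}$ commutes with $E_{1,n}$ for such $i$.
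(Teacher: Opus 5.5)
Your proposal is correct and follows essentially the same route as the paper. The paper's proof consists of exactly your key step: $X^q$ is invariant under the twisted $S_n$-action given by the $g_i$, while the diagonal action on $\mb{A}$ permutes the $z_j$, which yields $s_i\bt_j s_i^{-1}=\bt_{s_i(j)}$. The other checks you supply are details the paper leaves implicit: commuting translations, the Coxeter relations, the corner-entry verification, the determinant $(-1)^n\bigl(\prod_j(z_i-y_j)-q\bigr)$, and the $S_n$-stability of the localisation. Your determinant is computed correctly; its mismatch with \eqref{tripleq} is only the sign convention for $q$, which is harmless because one localises at the actual determinant.
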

\begin{proof}
    As discussed previously the matrices $X^q$ commute with the $S_n$-action and as they are independent of the additional parameters $z_i$ this remains true. Since we defined the action of $S_n$ to be diagonal on $\mb{A}$ it then is immediate that $s_i\bt_i=\bt_{i+1}s_i$ and $s_i\bt_j=\bt_js_i$ for $|i-j|>1$. 
\end{proof}
Clearly, $\mb{V}^{\rm loc}$ plays the role analogous to that of the quantum cohomology ring of projective space in our previous construction and, motivated by \eqref{MolevSagan} let us define, analogously to \eqref{star_product}, the following bilinear operation $\bigwedge^k\mb{V}^{\rm loc}\otimes\bigwedge^k\mb{V}^{\rm loc}\to\bigwedge^k\mb{V}^{\rm loc}$ 
\begin{equation}\label{triple}
    \ket{\lambda}\star_z\ket{\mu}=s_{\lambda}(X^q|z)s_{\mu}(X^q|y)\ket{e} = s_{\lambda}(X^q|z) \ket{\mu} = \sum\limits_{\nu,d} q^d C^{\nu,d}_{\lambda\mu}(y,z) \ket{\nu}.
\end{equation}
\begin{lemma}
    The pair $(\bigwedge^k\mb{V}^{\rm loc},\star_z)$ is a well-defined associative unital ring.
\end{lemma}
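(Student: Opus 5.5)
The plan is to copy the proof of the second Claim in the proof of Proposition \ref{prop:quantumSatake}, now over the base ring $\mb{A}^{\rm loc}$. The first step is to establish the analogue of \eqref{action}: for each $\lambda$ we want
\[
s_\lambda(X^q_1,\ldots,X^q_k|y)\ket{e}=\ket{\lambda}.
\]
This needs no change, because the alternant argument only uses $X^q$ and the $y$-parameters, and is compatible with extending scalars from $\ring[q^{\pm1}]$ to $\mb{A}^{\rm loc}$. It follows that $\{\ket{\lambda}\}$ is an $\mb{A}^{\rm loc}$-basis of $\bigwedge^k\mb{V}^{\rm loc}$ obtained from the cyclic vector $\ket{e}$. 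It also justifies the middle equality in \eqref{triple}, so the product is well defined as a bilinear map: we define it on basis vectors and extend $\mb{A}^{\rm loc}$-bilinearly.

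Next, every operator $s_\lambda(X^q_1,\ldots,X^q_k|z)$ and $s_\mu(X^q_1,\ldots,X^q_k|y)$ is a polynomial in the mutually commuting matrices $X^q_i$ with coefficients in $\mb{A}$. These operators therefore all commute with each other on $\mb{V}^{\otimes k}$. Being symmetric in the $X^q_i$, they also preserve the antisymmetric part and so descend to $\bigwedge^k\mb{V}^{\rm loc}$. Writing $S^z_\lambda$ for the restriction of $s_\lambda(X^q|z)$, we then have $\ket{\lambda}\star_z v=S^z_\lambda v$ for all $v$. Unitality is immediate: $\ket{\emptyset}=\ket{e}$ gives $S^z_\emptyset=1$, so $\ket{\emptyset}\star_z v=v$. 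For the right unit, observe that $\ket{\lambda}\star_z\ket{e}=S^z_\lambda\ket{e}$, and since $S^z_\lambda$ is generally different from $S^y_\lambda$ this is not obviously $\ket{\lambda}$.

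The key step is associativity. We must show that $\ket{\lambda}\star_z(\ket{\mu}\star_z\ket{\nu})=S^z_\lambda S^z_\mu\ket{\nu}$ agrees with $(\ket{\lambda}\star_z\ket{\mu})\star_z\ket{\nu}$. The plan is to express each $S^z_\lambda$ in terms of the $S^y$'s. Factorial Schur polynomials in $x$ with respect to $z$ expand over $\mb{A}$ in factorial Schur polynomials with respect to $y$, by \eqref{MolevSagan} with $\mu=\emptyset$. This gives
\[
S^z_\lambda=\sum_\kappa a_{\lambda\kappa}(y,z)\,S^y_\kappa,
\]
and hence $S^z_\lambda\ket{e}=\sum_\kappa a_{\lambda\kappa}\ket{\kappa}$. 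Both sides of the associativity identity are then polynomials in commuting operators applied to $\ket{\nu}$, so the comparison reduces to an identity inside the commutative algebra generated by the $S^y_\kappa$.

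I expect this identification to be the main obstacle. The product as literally written, $s_\lambda(X^q|z)\ket{\mu}$, is associative exactly when $S^z_\lambda S^z_\mu=S^z_{\lambda\star_z\mu}$. One way to guarantee this is to read the product as transported along the linear isomorphism $\ket{\mu}\mapsto S^y_\mu\ket{e}$ from the commutative operator algebra, in which case associativity is inherited from associativity of matrix multiplication. If that reading does not hold, I would fall back on checking that the operator $T_\lambda:=S^z_\lambda$ composed with the $S^y$-change of basis is consistent, again using \eqref{MolevSagan} together with the branching rule in the proof of \eqref{fschurcom}. Either way, the localisation making $\det\bt_i$ invertible plays no role in the algebraic identities; it is only needed to define the $W$-action.
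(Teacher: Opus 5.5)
Your proposal does not prove the lemma: the associativity step is left open, and it cannot be closed, because the lemma is false as stated. Your reduction is correct: associativity of $\star_z$ is equivalent to $S^z_\lambda S^z_\mu=S^z_{\ket{\lambda}\star_z\ket{\mu}}$ with $S^z$ extended linearly. Your suspicion about the right unit is also correct. But for the product defined in \eqref{triple} this identity fails whenever $0<k<n$. We have $s_{(1)}(x|z)=\sum_{i=1}^k(x_i-z_i)$ and $(X^q_1+\cdots+X^q_k)\ket{\varnothing}=\ket{(1)}+(y_1+\cdots+y_k)\ket{\varnothing}$. Hence $\ket{(1)}\star_z\ket{\varnothing}=\ket{(1)}+\delta\,\ket{\varnothing}$ with $\delta=\sum_{i=1}^k(y_i-z_i)$; this is also the triple Chevalley formula at $\mu=\varnothing$. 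So $\ket{\varnothing}$ is only a left unit, and
\[
(\ket{(1)}\star_z\ket{\varnothing})\star_z\ket{\varnothing}-\ket{(1)}\star_z(\ket{\varnothing}\star_z\ket{\varnothing})=(\ket{(1)}+2\delta\ket{\varnothing})-(\ket{(1)}+\delta\ket{\varnothing})=\delta\,\ket{\varnothing}\neq 0
\]
in $\bigwedge^k\mb{V}^{\rm loc}$. Already for $k=1$, $n=2$ this difference is $(y_1-z_1)v_1$. Consequently neither of your rescue routes works. Transporting the product along $\ket{\mu}\mapsto S^y_\mu$ gives $S^y_\lambda S^y_\mu\ket{\varnothing}=\ket{\lambda}\star\ket{\mu}$, which is the $y$-equivariant product of Proposition \ref{prop:quantumSatake}, not $\star_z$. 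Your fallback would have to establish a false identity.

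The paper's proof is exactly the argument you distrusted. It declares associativity clear ``as we have defined the product via matrix multiplication'' and takes $\ket{\varnothing}$ as unit because $s_\varnothing(X^q|z)=1$. This only shows that $\ket{\lambda}\star_z(\ket{\mu}\star_z v)=S^z_\lambda S^z_\mu v$ and that $\ket{\varnothing}$ is a left identity. What is actually true is that $\bigwedge^k\mb{V}^{\rm loc}$ is a module over the commutative algebra generated by the operators $s_\lambda(X^q|z)$, and \eqref{triple} records the action of $s_\lambda(X^q|z)$ in the basis $\ket{\nu}$. In the same way, the Molev--Sagan coefficients in \eqref{MolevSagan} are not the structure constants of a single associative product. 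A genuine ring structure is recovered at $z=y$, where $\star_z=\star$.
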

\begin{proof}
    Associativity is clear as we have defined the product via matrix multiplication. The multiplicative unit remains the vector labelled by the empty partition $\lambda=\varnothing$, since $s_\varnothing(X^q|z)=s_\varnothing(X^q|y)$ are both the identity matrix.
\end{proof}
Notice that if both partitions $\lambda$ and $\mu$ are ``small'' enough, i.e. $\lambda_1+|\mu|< n-k$, then no $q$-terms occur in the product and the expansion in \eqref{triple} reduces to the Molev--Sagan formula \eqref{MolevSagan}. In general, however, the product \eqref{triple} will contain non-trivial $q$-terms and specialising $z_i = y_i$ we recover the product for equivariant quantum Schubert calculus from Prop \ref{prop:quantumSatake}. 

\begin{prop}
    Let $w_i=\mu_{k+1-i}+i$ with $w=[w_1,\ldots w_n]\in S^{k,n}$ Grassmannian. Then we have the following triple quantum Chevalley formula,
    \[
    \ket{1}\star_z\ket{\mu}=\sum_{\lambda-\mu=(1)}\ket{\lambda}+(y_{w_1}+\cdots+y_{w_k}-z_1-\cdots-z_k)\ket{\mu}+q\ket{\mu^-},
    \]
    where $\mu^-$ is the partition obtained by removing a maximal rim hook of length $n-1$. If this is impossible, then the last term is omitted. 
\end{prop}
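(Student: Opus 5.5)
We follow the proof of Claim \ref{claim:quantumChevalley}, replacing the $y$-parameters of the factorial Schur polynomial by the $z$-parameters. By the definition \eqref{triple}, $\ket{1}\star_z\ket{\mu}=s_{(1)}(X^q|z)\ket{\mu}$. The tableau formula for factorial Schur polynomials, specialised to a single box, gives
\[
s_{(1)}(x|z)=\sum_{r=1}^k(x_r-z_r)=e_1(x)-(z_1+\cdots+z_k).
\]
Hence
\[
s_{(1)}(X^q|z)=X^q_1+\cdots+X^q_k-(z_1+\cdots+z_k)\cdot 1
\]
as an operator on $\bigwedge^k\mb{V}^{\rm loc}$. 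The scalar part contributes $-(z_1+\cdots+z_k)\ket{\mu}$. It therefore remains to compute $\Delta^{k-1}(X^q)$ on $\ket{\mu}=v_{w_1}\wedge\cdots\wedge v_{w_k}$. This computation involves only the $y$-parameters, since $X^q$ does not depend on the $z_i$.

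Next we lift to $\mb{V}^{\otimes k}$ and use the explicit form \eqref{Xmatrix} of $X^q$. For each $r$ we have $X^q v_{w_r}=v_{w_r+1}+y_{w_r}v_{w_r}$ when $w_r<n$, and $X^q v_n=y_nv_n+q\,v_1$. Here the loop parameter is identified with the quantum parameter as in \eqref{tripleq}, so the sign convention needs care; see the last paragraph. Summing over $r$ and projecting back to the exterior power gives three kinds of terms.
\begin{itemize}
\item The diagonal terms $(y_{w_1}+\cdots+y_{w_k})\ket{\mu}$.
\item The shifts $v_{w_r}\mapsto v_{w_r+1}$. Such a term survives exactly when $w_r+1\notin\{w_1,\ldots,w_k\}$, and it then corresponds to adding a single box of content $w_r-k$ to $\mu$. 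This produces $\sum_{\lambda-\mu=(1)}\ket{\lambda}$ with coefficient $+1$, because the increasing order of the wedge factors is preserved.
\item The wrap-around term. It is present only if $w_k=n$, and it is nonzero only if $w_1>1$. It sends $\ket{\mu}$ to a multiple of $v_1\wedge v_{w_1}\wedge\cdots\wedge v_{w_{k-1}}$, obtained after moving $v_1$ to the front.
\end{itemize}

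Finally we identify the wrap-around term with $q\ket{\mu^-}$, where $\mu^-$ is obtained by removing a maximal rim hook. This identification of the shape is the same combinatorics as in Claim \ref{claim:quantumChevalley} and the description of $\psi_n$ in Proposition \ref{fermionform}.

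The main obstacle is the sign. Moving $v_1$ past the $k-1$ remaining wedge factors gives $(-1)^{k-1}$. This must be absorbed by the convention relating the loop parameter to $q$, namely $q=(-1)^{k-1}z$ as fixed after Lemma \ref{lem:Waffaction}. The step to check carefully is that the notation $q$ in \eqref{Xmatrix}/\eqref{tripleq} is used with this normalisation on $\bigwedge^k$. If so, the coefficient is exactly $+q$, and the three contributions sum to the stated formula. As a consistency check, setting $z_i=y_i$ recovers Claim \ref{claim:quantumChevalley}.
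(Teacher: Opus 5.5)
Your proposal is correct and follows the same route as the paper, which proves this by referring back to Claim~\ref{claim:quantumChevalley}. Expanding $s_{(1)}(X^q|z)=\sum_{r=1}^k X^q_r-(z_1+\cdots+z_k)$ and letting $\Delta^{k-1}(X^q)$ act as a derivation on the wedge gives three kinds of terms: the diagonal terms, the single-box additions, and the wrap-around term.

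Your sign concern is settled by the paper's convention in this section, $\psi^*_{n+i}=(-1)^{k-1}q\,\psi^*_i$. That is, the corner entry of $X^q$ on $\bigwedge^k$ is $(-1)^{k-1}q$, so your intermediate formula $X^q v_n=y_nv_n+q\,v_1$ should carry that factor. With it, the wrap-around coefficient is exactly $+q$.
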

\begin{proof}
    The proof of the quantum Chevalley formula is completely analogous to the case of equivariant quantum Schubert calculus; see Claim \ref{claim:quantumChevalley}.

    To prove the recurrence relation among the structure constants $C_{\lambda\mu}^{\nu,d}(y,z)$ one uses associativity of the product \eqref{triple} similar to the case of equivariant quantum Schubert calculus; compare with \cite{MihalceaAIM}.
\end{proof}

 One of the possible tests to determine whether the formula \eqref{triple} is a meaningful quantum version of \eqref{MolevSagan} and triple Schubert calculus is the {\em refined Graham positivity} property.
\begin{conjecture}
    The coefficients in \eqref{triple} are positive, i.e. $C^{\nu,d}_{\lambda \mu}(y,z) \in \Z_{\geq 0}[y_i - z_j]$.
\end{conjecture}
\noindent Let us illustrate the conjecture with a simple example.
\begin{example}
    Set $k = 2$ and $n = 5$. We compute the product \eqref{triple} for $\lambda = (2,1)$ and $\mu = (3,1)$. 
    \begin{align*}
        s_{(2,1)}(X^q|z) \ket{3,1} =\;& (X_1^q - z_1)(X_2^q - z_2)(X_1^q + X_2^q - z_2 - z_3) v_2 \wedge v_5\\
        =\;& q(y_2 - z_1)(y_1 + y_2 + y_5 - z_1 - z_2 - z_3) \ket{0}\\
        +\;& q(y_1 + y_2 + y_3 + y_5 - 2z_1 - z_2 - z_3)\ket{1,0} + q \ket{1,4} + q \ket{2,3} \\
        +\;& (y_2 - z_1)(y_5 - z_1)(y_2 + y_5 - z_2 - z_3)\ket{2,5} \\
        +\;& (y_5 - z_1)(y_2 + y_3 + y_5 - z_1 - z_2 - z_3)\ket{3,5} + (y_5 - z_1) \ket{4,5}.
    \end{align*}
\end{example}
\noindent Having identified $\mb{V}$ with the triple Schubert calculus analogue of projective space we can now apply the same construction as previously. The proofs generalise in a straightforward manner and we therefore omit details.
\begin{prop}
    The product \eqref{triple} can be computed in terms of fermions as
    \begin{equation}\label{triplefermi}
        \ket{\lambda}\star_z\ket{\mu}=s_{\lambda}(X^q|z)\ket{\mu} = \sum\limits_{T(\lambda)} (\bt^{\rho_1}. \psi^{*}_{i_1})\circ (\bt^{\rho_{2}}.\psi^*_{i_{2}})\circ \cdots\circ(\bt^{\rho_k}.\psi^*_{i_k}).1,
    \end{equation}
    where $\ket{\mu} = \psi^*_{i_1} \psi^*_{i_2} \cdots \psi^*_{i_k}.1$ and
    \[
    \bt^{\lambda/\mu}.\psi^*_i = \prod\limits_{\square \in \lambda/ \mu} \bt_{k + c(\square)}\psi^*_i, \quad \bt_j \psi^*_i = \psi^*_{i + 1} + (y_i - z_j) \psi^*_i
    \]
    with the same quasi-periodicity relation, $\psi^*_{n+i}=(-1)^{k-1}q\psi^*_i$, as before.
\end{prop}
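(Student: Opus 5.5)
The plan is to copy the proof of Theorem \ref{thm:fermion}, replacing the translations $t_j=X^q-y_j\cdot 1$ by $\bt_j=X^q-z_j\cdot 1$ and the factorial Schur polynomials $s_\lambda(x|y)$ by $s_\lambda(x|z)$ in the acting variables. The vector $\ket{\mu}=\psi^*_{i_1}\cdots\psi^*_{i_k}.1$ keeps its $y$-labelling throughout. The key point is that $X^q$ itself involves only the $y$-parameters. Hence the identity $X^q v_i=v_{i+1}+y_iv_i$ (with $v_{n+1}=z v_1$) still holds, and it gives
\[
\bt_j.v_i=v_{i+1}+(y_i-z_j)v_i,
\]
which is exactly the stated formula $\bt_j\psi^*_i=\psi^*_{i+1}+(y_i-z_j)\psi^*_i$. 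Specialising the loop parameter as before turns $\psi^*_{n+1}$ into $(-1)^{k-1}q\psi^*_1$, so the quasi-periodicity relation survives unchanged.

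The main step is the triple analogue of the commutation relation \eqref{fschurcom}:
\[
s_\lambda(X^q_1,\ldots,X^q_k|z)\,\psi^*_i=\sum_{\mu}(\bt^{\lambda/\mu}.\psi^*_i)\,s_\mu(X^q_1,\ldots,X^q_{k-1}|z),
\]
acting on $\bigwedge^{k-1}\mb{V}^{\rm loc}$. To prove it I will use the branching rule \eqref{BranchingRule} for $s_\lambda(x|z)$ with respect to the last variable. This rule is a purely formal tableau identity valid for any parameter sequence, so it reads
\[
s_\lambda(x|z)=\sum_\mu\prod_{\square\in\lambda/\mu}(x_k-z_{k+c(\square)})\,s_\mu(x_1,\ldots,x_{k-1}|z).
\]
I then substitute the commuting matrices $X^q_a$ and lift $v_i\wedge v$ to $v\otimes v_i$ in $\mb{V}^{\otimes k}$. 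The factor in the last tensor slot becomes $\prod_{\square\in\lambda/\mu}\bt_{k+c(\square)}$ applied to $v_i$, and projecting back to the wedge gives the identity. Symmetry of $s_\lambda$ in the $x$-variables allows branching on the first variable instead; this matches the ordering of the creation operators in \eqref{triplefermi}, as in the proof of Theorem \ref{thm:fermion}.

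With the relation in hand, I argue by induction on $k$. The case $k=0$ is trivial since $s_\emptyset=1$. For the inductive step, commute $s_\lambda(X^q|z)$ past $\psi^*_{i_1}$. This produces a sum over horizontal strips $\rho_1=\lambda/\lambda^{(k-1)}$ with $\lambda^{(k-1)}$ fitting into $(k-1)$ rows, and then applies $s_{\lambda^{(k-1)}}(X^q|z)$ to $\psi^*_{i_2}\cdots\psi^*_{i_k}.1$. Iterating this peels off the horizontal strips $\rho_1,\dots,\rho_k$ of a semistandard tableau $T(\lambda)$, each strip attached to the corresponding creation operator, and yields \eqref{triplefermi}. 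The constraint that $\lambda^{(i)}$ fit into an $i\times(n-i)$ box is automatic here, because a factorial Schur polynomial in $i$ variables vanishes once its partition has more than $i$ parts.

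The only place I expect any care to be needed is checking that the localisation does not interfere. Everything above uses only polynomial expressions in the $\bt_j$, so inverses never appear and the computation already holds over $\mb{A}[q^{\pm1}]$. I also need to confirm that the quasi-periodic sign is the same as in the $y$-case. This holds because the $z_j$ enter only on the diagonal, while the corner entry of $X^q$ is unchanged.
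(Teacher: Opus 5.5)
Your proposal is correct and follows the paper's route. The paper's proof also derives the triple analogue of \eqref{fschurcom}, namely $s_\lambda(X^q|z)\,\psi^*_i=\sum_\mu(\bt^{\lambda/\mu}.\psi^*_i)\,s_\mu(X^q_1,\ldots,X^q_{k-1}|z)$, from the branching rule for $s_\lambda(x|z)$, and then applies it repeatedly, which is exactly your induction on $k$. Your extra checks only make explicit what the paper leaves implicit: $\bt_j.v_i=v_{i+1}+(y_i-z_j)v_i$ since $X^q$ involves only the $y$'s, quasi-periodicity comes from the unchanged corner entry, and no localisation is needed.
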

\begin{proof}
    In complete analogy with \eqref{fschurcom} one derives the commutation relation
    \begin{equation}\label{triplecom}
        s_{\lambda}(X_1^q, \ldots , X_k^q|z)\psi^*_i = \sum\limits_{\nu} (\bt^{\lambda/\mu}.\psi^*_i)s_{\nu}(X_1^q, \ldots , X_{k - 1}^q|z)
    \end{equation}
    via the branching rule for factorial Schur functions. Applying this relation repeatedly we arrive at the claim.
\end{proof}
With the help of the fermionic formula \eqref{triplefermi}, we can now easily prove positivity for a {\em triple quantum Pieri rule}.
\begin{cor}
    The expansion coefficients $C^{\nu,d}_{(1^r)\mu}(y,z) \in \Z_{\geq 0}[y_i - z_j]$ are positive.
\end{cor}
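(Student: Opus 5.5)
We mimic the proof of Corollary \ref{PieriPos}(i), now using the triple fermionic formula \eqref{triplefermi} in place of \eqref{fermionformula}. Write $\ket{\mu}=\psi^*_{w_1}\cdots\psi^*_{w_k}.1$ with $w_1<\cdots<w_k$ the positions of the black go-stones. Since $\lambda=(1^r)$ is a single column, the semistandard tableaux of shape $\lambda$ with entries in $[k]$ are in bijection with subsets $\{i_1<\cdots<i_r\}\subset[k]$. Each horizontal strip $\rho_j$ is then either empty or a single box. Reading off the contents with the strip labelling of \eqref{fermionformula}, the box in row $s$ carries content $1-s$. Hence \eqref{triplefermi} becomes
\[
\ket{(1^r)}\star_z\ket{\mu}=\sum_{\{i_1<\ldots<i_r\}\subset[k]}\psi^*_{w_1}\circ\cdots\circ(\bt_{i_1}.\psi^*_{w_{i_1}})\circ\cdots\circ(\bt_{i_r-r+1}.\psi^*_{w_{i_r}})\circ\cdots\circ\psi^*_{w_k}.1 .
\]
Here
\[
\bt_j.\psi^*_i=\psi^*_{i+1}+(y_i-z_j)\psi^*_i ,
\]
and for $i=n$ the first term is replaced by $(-1)^{k-1}q\,\psi^*_1$. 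I would double-check the index bookkeeping against Example \ref{ex:LRduality}, which is the case $z=y$.

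Expanding the product, every term becomes a product of factors $(y_{w_{i_s}}-z_{i_s-s+1})$ times a wedge monomial $\psi^*_{a_1}\cdots\psi^*_{a_k}.1$, possibly multiplied by $(-1)^{k-1}q$. These linear factors lie in $\Z_{\ge0}[y_i-z_j]$ trivially: unlike ordinary Graham positivity, no inequality $i_s-s+1\le w_{i_s}$ is needed, since every difference $y_i-z_j$ is an allowed generator. So the only remaining issue is signs. Terms without a quantum correction keep the indices $a_1\le\cdots\le a_k$ weakly increasing, because shifting $w_{i}\mapsto w_i+1$ preserves order or produces a repeated index. Repeated indices vanish by $(\psi^*_a)^2=0$, so surviving terms are already ascending wedges $\ket{\nu}$ with coefficient $+1$ times the positive product.

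The main obstacle is the quantum term. It arises only when $i_r=k$ and $w_k=n$, from $\bt_{k-r+1}.\psi^*_n\ni(-1)^{k-1}q\,\psi^*_1$. Moving $\psi^*_1$ from the last slot to the front costs $(-1)^{k-1}$ by the CAR \eqref{CAR}, which cancels the prefactor and leaves $+q$ times a positive coefficient, exactly as in the proof of Corollary \ref{PieriPos}. If the front already contains $\psi^*_1$, the term vanishes. Collecting coefficients of each $q^d\ket{\nu}$ therefore gives a sum of products of differences $y_i-z_j$ with nonnegative integer multiplicities, so $C^{\nu,d}_{(1^r)\mu}(y,z)\in\Z_{\ge0}[y_i-z_j]$.
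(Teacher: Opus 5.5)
Your proposal is correct and follows the same route as the paper, whose proof of this corollary simply specialises \eqref{triplefermi} to $\lambda=(1^r)$ and repeats the argument of Corollary \ref{PieriPos}. Your two observations are exactly what make that one-line proof go through. First, the inequality $i_s-s+1\le w_{i_s}$ is no longer needed, because each factor $y_{w_{i_s}}-z_{i_s-s+1}$ is already a generator of $\Z_{\ge0}[y_i-z_j]$. Second, the prefactor $(-1)^{k-1}$ of the wrapped term $(-1)^{k-1}q\,\psi^*_1$ cancels against the $k-1$ transpositions needed to move $\psi^*_1$ to the front, so the quantum term also comes with a $+$ sign.
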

\begin{proof}
      Specialising $\lambda = (1^r)$ we repeat the same argument as in Corollary \ref{PieriPos}.
\end{proof}
\appendix

\section{Factorial Schur Polynomials}\label{app:A}
Factorial Schur polynomials are a special case of double Schubert polynomials, which were originally introduced by Lascoux and Sch\"utzenberger. They are obtained by specialising the definition of double Schubert polynomials to Grassmannian permutations similar as Schur polynomials are a special case of single Schubert polynomials under the same specialisation. In the appendix we collect some basic known facts about factorial Schur polynomials and refer the reader to \cite{Mac92} for proofs and further details. 

Let $\{y_i\}_{i=1}^{\infty}$ be an infinite sequence of pairwise commuting variables (we assume $y_j = 0$ for $j > n$ in the main body of the paper), then the factorial Schur polynomial is defined via the alternant formula similar to the usual Schur polynomials
\begin{equation}
    \label{factSchur}
    s_{\lambda}(x|y) = \frac{\underset{n \times n}{\det}((x_j|y)^{\lambda_i + n - i})}{\Delta(x)}, 
\end{equation}
where $\Delta(x) = \prod\limits_{i < j}^n (x_i - x_j)$ is the Vandermonde determinant and $(x|y)^{i} = \prod\limits_{j = 1}^i (x - y_j)$ is the factorial power. Alternatively, one can define factorial Schur polynomials combinatorially using semi-standard Young tableaux, 
\begin{equation}
    \label{tablformula}
    s_{\lambda}(x|y) = \sum\limits_{T(\lambda)}   \prod\limits_{\square \in \lambda} (x_{T(\square)} - y_{T(\square) + c(\square)}),
\end{equation}
where $c({\square}) = j - i$ is the content of the box $\square = (i,j)$. Setting all the parameters $y_i = 0$ recovers the usual Schur polynomial. The complete and elementary factorial symmetric polynomials are defined in the same manner as in the non-factorial case,
\begin{equation}
    \label{complelem}
    h_k(x|y) = s_{(k)}(x|y), \quad e_{k}(x|y) = s_{(1^k)}(x|y).
\end{equation}
The factorial versions of the Jacobi-Trudi and N\"{a}gelsbach-Kostka formulae are
\begin{equation}
    s_{\lambda}(x|y) = \underset{n \times n}{\det}(h_{\lambda_i-i+j}(x | \tau^{1-j}y)), \qquad s_{\lambda}(x|y) = \underset{n \times n}{\det}(e_{\lambda'_i - i +j }(x | \tau^{j - 1}y)),
\end{equation}
where $\tau(y_i) = y_{i + 1}$ is a shift operator and $\lambda'$ is the conjugate partition.

\noindent Branching and reverse branching rules make it possible to express factorial Schur polynomials as a sum of factorial Schur polynomials dependent on fewer or more variables, respectively. First, we recall the well-known branching rule
\begin{equation}\label{BranchingRule}
    s_{\lambda}(x_1, \ldots, x_{k-1},x_k|y) = \sum\limits_{\mu} \prod\limits_{\square \in \lambda / \mu}(x_k - y_{k + c(\square)}) s_{\mu}(x_1, \ldots, x_{k-1}|y),
\end{equation}
where the sum runs over all partitions $\mu \subseteq \lambda$ such that $\lambda / \mu$ is a horizontal strip. We also present the reverse branching rule\footnote{We were not able to find an exact reference for the reverse branching rule, thus we provide a proof, which uses only the above mentioned properties of the factorial Schur polynomials.}
\begin{prop}
The factorial Schur polynomials satisfy the reverse branching rule
\begin{equation}\label{ReverseBranching}
    s_{\lambda}(x_1, \ldots, x_{k - 1}|y) = \sum\limits_{\mu}(-1)^{|\lambda/\mu|}\prod\limits_{\square \in \lambda / \mu}(x_k - y_{k + c(\square)}) s_{\mu}(x_1, \ldots, x_{k - 1}, x_k|y),
\end{equation}
where the sum runs over all partitions $\mu \subseteq \lambda$ such that $\lambda / \mu$ is a vertical strip. Note that since factorial Schur polynomials are symmetric with respect to $x$ variables, the branching rule \eqref{BranchingRule} with respect to another $x$ variable is the same.
\end{prop}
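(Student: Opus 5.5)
The plan is to deduce the reverse branching rule \eqref{ReverseBranching} from the ordinary branching rule \eqref{BranchingRule} by inverting it. The tool is the generating-function identity for factorial elementary and complete polynomials.

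Write $B$ for the matrix indexed by partitions, with $B_{\lambda\mu}=\prod_{\square\in\lambda/\mu}(x_k-y_{k+c(\square)})$ when $\lambda/\mu$ is a horizontal strip and $0$ otherwise. Write $R$ for the proposed matrix, with $R_{\mu\nu}=(-1)^{|\mu/\nu|}\prod_{\square\in\mu/\nu}(x_k-y_{k+c(\square)})$ when $\mu/\nu$ is a vertical strip. The branching rule states $s_\lambda(x_1,\dots,x_k|y)=\sum_\mu B_{\lambda\mu}\,s_\mu(x_1,\dots,x_{k-1}|y)$. The reverse rule therefore follows once we show that $R$ and $B$ are mutually inverse (both are unitriangular with respect to inclusion), i.e. that
\[\sum_{\nu\subseteq\mu\subseteq\lambda}(-1)^{|\mu/\nu|}\prod_{\square\in\lambda/\mu}(x_k-y_{k+c(\square)})\prod_{\square\in\mu/\nu}(x_k-y_{k+c(\square)})=\delta_{\lambda\nu},\]
where $\mu/\nu$ ranges over vertical strips and $\lambda/\mu$ over horizontal strips. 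Note that the weight of a box depends only on its content, and the same parameter $k$ appears in both rules.

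To prove this identity I would use a sign-reversing involution on pairs of strips. Given $\nu\subsetneq\lambda$, decompose $\lambda/\nu$ into a vertical strip $\mu/\nu$ followed by a horizontal strip $\lambda/\mu$. Consider the box of $\lambda/\nu$ with largest content, or the lowest such box if there are ties. Toggling whether this box lies in the vertical or in the horizontal part changes $|\mu/\nu|$ by one and leaves the total weight unchanged, since the weight is the same product over all boxes of $\lambda/\nu$. The terms therefore cancel pairwise. The delicate point is checking that the toggled configuration is still a valid (vertical, horizontal) decomposition with $\mu$ a partition. The natural choice is the box at the end of the rim order of the ribbon-like set $\lambda/\nu$; this choice is exactly the standard proof of $\sum_r(-1)^r e_r h_{m-r}=0$. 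I expect this verification to be the main obstacle, and I would model it on the classical proof of $\omega$-duality of Pieri rules, where the factorial weights are harmless because they depend only on content.

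As an alternative that avoids the combinatorics, I would instead compare alternants. Multiply the claimed identity by $\Delta(x_1,\dots,x_k)=\Delta(x_1,\dots,x_{k-1})\prod_{i<k}(x_i-x_k)$ and expand $a_\mu(x_1,\dots,x_k|y)$ along the column of $x_k$, as in the proof of \eqref{annihshuffle}. Then use $\prod_{i<k}(x_i-x_k)=\sum_r(-1)^r e_r(x_1,\dots,x_{k-1})x_k^{k-1-r}$, rewritten in factorial powers. This route reduces the claim to the factorial Pieri rule for $e_r$ acting on $s_\lambda(x_1,\dots,x_{k-1}|y)$. I would keep it as a backup if the involution's well-definedness becomes messy.
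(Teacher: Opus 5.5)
Your route is correct in outline but genuinely different from the paper's, and the involution as you state it fails. It needs a one-word fix.

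\textbf{The two routes.} The paper first proves the case $\lambda=(i)$ directly from tableaux, $h_i(x_1,\ldots,x_{k-1}|y)=h_i(x_1,\ldots,x_k|y)-(x_k-y_{k+i-1})h_{i-1}(x_1,\ldots,x_k|y)$. It then substitutes this into every entry of the factorial Jacobi--Trudi determinant $\det(h_{\lambda_i-i+j}(x|\tau^{1-j}y))$. The correction factor $x_k-y_{k+\lambda_i-i}$ does not depend on $j$, so multilinearity in rows produces a sum over subsets of rows, i.e.\ vertical strips.

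You use only the forward branching rule and show that the two transition matrices are inverse. Your key observation is that a box's weight depends only on its content, so $\prod_{\lambda/\mu}\prod_{\mu/\nu}=\prod_{\lambda/\nu}$ regardless of $\mu$. This reduces everything to the weight-free statement $\sum_\mu(-1)^{|\mu/\nu|}=0$ for $\nu\subsetneq\lambda$. Your approach avoids Jacobi--Trudi with shifted parameters and explains why the factorial deformation cannot affect the inversion. The paper's route is shorter once Jacobi--Trudi is available.

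One bookkeeping point: substituting the branching rule into the right side of \eqref{ReverseBranching} gives $\sum_\mu R_{\lambda\mu}B_{\mu\nu}$, with the vertical strip $\lambda/\mu$ outside and the horizontal strip $\mu/\nu$ inside. That is the product $RB$, not the $BR$ you wrote. This is harmless, because unitriangular elements of the incidence algebra of a locally finite poset have two-sided inverses, but you should say so.

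\textbf{The failing step.} For the order you wrote (vertical strip $\mu/\nu$ inside, horizontal strip $\lambda/\mu$ outside), toggling the largest-content box is not a valid involution. Take $\nu=\emptyset$ and $\lambda=(2)$. The admissible $\mu$ are $\emptyset$ and $(1)$, with signs $+1$ and $-1$. The largest-content box $(1,2)$ lies in the horizontal part in both. Moving it into $\mu$ gives either the non-partition $\{(1,2)\}$, or $\mu=(2)$ with $\mu/\nu$ not a vertical strip. The two terms are actually paired by the smallest-content box $(1,1)$.

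\textbf{The fix.} Whenever an admissible $\mu$ exists, $\lambda/\nu$ contains no $2\times 2$ square, so its contents are distinct and your tie-breaking rule is never used.
\begin{itemize}
\item For your order $BR$, toggle the smallest-content box $b=(i,j)$. It satisfies $(i+1,j)\notin\lambda$, and $(i,j-1)\in\nu$ when $j>1$. These are exactly the facts needed to move $b$ in either direction while keeping $\mu$ a partition and both strip conditions intact.
\item For the order $RB$ that the deduction actually uses, the largest-content box works. There $(i,j+1)\notin\lambda$, and $(i-1,j)\in\nu$ when $i>1$.
\end{itemize}

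Alternatively, skip the involution altogether. The weight-free signed count is, by the classical Pieri rules, the coefficient of $s_\lambda$ in $\sum_r(-1)^r h_{m-r}e_r\,s_\nu$ for ordinary Schur functions. This vanishes for $m=|\lambda/\nu|\ge 1$.
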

\begin{proof}
    We first prove the formula \eqref{ReverseBranching} for complete factorial symmetric polynomials. Indeed, to compute $h_i(x_1, \ldots, x_{k - 1}|y)$ we sum over the semistandard Young tableaux (SSYT) of shape $\lambda = (i)$ filled in with numbers at most $k-1$ via formula \eqref{tablformula}. On the other hand, the expression
    \[
    h_i(x_1, \ldots, x_k|y) - (x_k - y_{k + i - 1})h_{i - 1}(x_1, \ldots, x_k|y)
    \]
    counts all SSYT of the same shape $\lambda = (i)$ filled in with numbers at most $k$ minus all SSYT with at least one $k$ appearing, which is the same as counting all SSYT with at most $k-1$, thus we obtain
    \begin{equation}
    \label{reversecomplete}
    h_i(x_1, \ldots, x_{k - 1}|y) = h_i(x_1, \ldots, x_k|y) - (x_k - y_{k + i - 1})h_{i - 1}(x_1, \ldots, x_k|y).
    \end{equation}
    Now we use the Jacobi-Trudi formula to express the factorial Schur polynomials in terms of factorial complete symmetric polynomials
    \[
    s_{\lambda}(x_1, \ldots, x_{k - 1}|y) = \det(h_{\lambda_{i} - i + j}(x_1, \ldots, x_{k - 1}|\tau^{1 - j}y)).
    \]
    Apply the reverse branching rule \eqref{reversecomplete} for each of the complete factorial symmetric polynomials 
    \begin{multline*}
    h_{\lambda_i - i + j}(x_1, \ldots, x_{k - 1}|\tau^{1-j}y) = h_{\lambda_i - i + j}(x_1, \ldots, x_{k}|\tau^{1-j}y)\\
    - (x_k - y_{k +\lambda_i - i})h_{\lambda_i - i + j - 1}(x_1, \ldots, x_{k}|\tau^{1-j}y).
    \end{multline*}
    Notice that the factor 
    \[
    x_k - y_{k + \lambda_i - i} = x_k - y_{k + c(\square)}
    \]
    corresponds exactly to the deletion of the box on the border of $\lambda$ in the $i$-th row in the formula \eqref{ReverseBranching}. Thus, we can write, using the linearity of the determinant, the initial Jacobi-Trudi determinant as the sum of the determinants with boxes deleted from the border of $\lambda$ and only one box deleted in the same row
    \[
    \det(h_{\lambda_{i} - i + j}(x_1, \ldots, x_{k - 1}|\tau^{1 - j}y)) = \sum\limits_{\lambda/\mu=(1^r)}(-1)^r \prod\limits_{\square \in \lambda / \mu}(x_k - y_{k + c(\square)}) s_{\mu}(x_1, \ldots, x_k|y),
    \]
    which finishes the proof of the proposition.
\end{proof}
\noindent The factorial Schur polynomials also satisfy a dual version of the Cauchy identity \cite{Mac92}; see also \cite{bump2011factorial} for the proof using integrable lattice models. Let $x=(x_1, \ldots, x_k)$ and $z = (z_1, \ldots, z_{n - k})$, then the dual Cauchy identity holds
\begin{equation}\label{Cauchydual}
    \sum\limits_{\lambda \subset \mathcal{P}_{k,n}} s_{\lambda}(x|y) s_{\lambda^{*}}(z|-y) = \prod\limits_{i = 1}^k \prod\limits_{j = 1}^{n - k}(x_i + z_j) = E(z_1) \cdots E(z_{n - k}),
\end{equation}
where 
\[
E(z) = \prod\limits_{i = 1}^k(z + x_i) = \sum\limits_{r = 0}^k (z|-y)^r e_{k - r}(x|y)
\]
is a generating function of factorial elementary symmetric polynomials \eqref{complelem} and the sum runs over all partitions $\lambda$ fitting into the $k\times (n - k)$ rectangle with $\lambda^{*} = (\lambda^{\vee})^{'}$ is a partition obtained from $\lambda$ taking the complement in the $k \times (n-k)$ rectangle and transposing. 
\section{The local affine plactic algebra}\label{app:B}
We give an alternative combinatorial description of the Chern classes $c_r(\cS_k)$ under the quantum Satake correspondence using the Clifford algebra $\mc{C}_n(\ring)$. This description generalises the one of non-equivariant quantum cohomology from \cite{Post05} in terms of the (affine) nil Temperley-Lieb algebra $\mathrm{nilTL}_n$. Namely, in the equivariant setting one obtains instead a (non-faithful) representation of the local affine plactic algebra considered in \cite{korffstroppel2010} (albeit over the ring $\ring$ rather than the field $\C$).

 \begin{defn} \label{defplactic}
        The local affine plactic algebra $\mathcal{A}_n$ is a free unital associative algebra generated by the set $\{u_1, \ldots, u_{n} \}$ modulo the relations
        \begin{equation}
        \label{plactic_commute}
        u_i u_j = u_j u_i, \quad |i - j| \neq 1 \;\; mod \;\; n, 
        \end{equation}
        \begin{equation}
        \label{plactic}
        u_i^2 u_{i + 1} = u_i u_{i + 1} u_i, \quad u_i u_{i + 1}^2 = u_{i + 1} u_i u_{i + 1},
        \end{equation}
        where all the indices in the relation \eqref{plactic} are understood modulo $n$. 
    \end{defn}
    \subsection{Plactic polynomials}
    We recall the definition of a commutative subalgebra $\mc{B}_n\subset\mc{A}_n$ in terms of plactic polynomials from \cite{korffstroppel2010}: for any monomial in the generators $u_i$ we recall the notion of {\em clockwise} cyclic orientation (respectively {\em anticlockwise} cyclic orientation). Let $I$ be a multiset, i.e. a set with possible repetitions, with elements $1\le i\le n$ such that there exists at least one $1\le j\le n$ with $j\not\in I$. Then we define a {\em clockwise ordered monomial} $\prod\limits_{i \in I}^{\circlearrowright} u_i$ to be a product of the $u_i$ over the multiset $I$, such that $u_{i + 1}$ appears before $u_i$, where indices are understood modulo $n$. 
    
    Similarly, define the {\em anticlockwise ordered monomial} $\prod\limits_{i \in I}^{\circlearrowleft} u_i$, such that $u_i$ appears before $u_{i + 1}$ modulo $n$.
    
    Next we define the plactic elementary and complete symmetric polynomials; compare with \cite{korffstroppel2010}.
    \begin{defn}
        The affine plactic elementary and complete symmetric polynomials are defined by the cyclically ordered products
        \begin{equation}\label{noncomelem}
            \bs{e}_r = \sum\limits_{|I| = r}\prod\limits_{i \in I}^{\circlearrowleft} u_i \qquad\text{and}\qquad 
            \bs{h}_r = \sum\limits_{|I| = r} \prod\limits_{i \in I}^{\circlearrowright} u_i, \quad 0<r< n,
        \end{equation}
        where the first sum runs over all sets (without repetitions) of size $0< r<n$ and the second one runs over all multisets of size $0<r<n$. For $r=0$ we set $\bs{e}_0=\bs{h}_0=1$.
    \end{defn}
    N.B. the definition \eqref{noncomelem} mimics the definition of the elementary and complete symmetric polynomials (modulo the plactic relations) in the case of commuting variables; see \cite{macdonald1998symmetric}.  
    \begin{example}\rm
        Consider the case $n = 3$, then we have
        \[
        \bs{e}_1 = \bs{h}_1 = u_1 + u_2 + u_3, \quad \bs{e}_2 = u_1 u_2 + u_2 u_3 + u_3 u_1, \quad \bs{h}_2 = u_1^2 + u_2^2 + u_3^2 + u_2 u_1 + u_3 u_2 + u_1 u_3\;.
        \]
        We note that the quadratic terms $u_i^2$ do not vanish in contrast to the polynomials defined in terms of the affine nil-Temperley-Lieb algebra for non-equivariant quantum cohomology; see \cite{Post05}.
    \end{example}
    It was shown in \cite{korffstroppel2010} that the affine plactic elementary and complete symmetric polynomials commute.
    \begin{lemma}
    The elements \eqref{noncomelem} generate an abelian subalgebra $\mc{B}_n\subset\mc{A}_n$, i.e.
    \[
    \bs{e}_i\bs{e}_j =\bs{e}_j\bs{e}_i,\quad\bs{h}_i\bs{h}_j=\bs{h}_j\bs{h}_i,\quad
    \bs{e}_i\bs{h}_j=\bs{h}_j\bs{e}_i
    \]
        for all $0<i,j<n$.
    \end{lemma}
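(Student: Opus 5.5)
The plan is to reduce commutativity to a statement about pairs of generators and then verify the plactic relations are enough. First I would show that $\bs{e}_i\bs{e}_j=\bs{e}_j\bs{e}_i$ via generating functions. Set
\[
E(x)=\sum_{r=0}^{n-1}x^r\bs{e}_r,\qquad H(x)=\sum_{r=0}^{n-1}x^r\bs{h}_r .
\]
The claim is then $E(x)E(x')=E(x')E(x)$, and similarly for $H$ and for the mixed pair. The natural tool is a factorisation of $E(x)$ as a cyclically ordered product $\prod^{\circlearrowleft}(1+xu_i)$. Because of the cyclic wrap-around, this product must be taken with the convention that the first factor missing from the subset determines where the cyclic order is cut. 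This is exactly why the definition requires some $j\notin I$.

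To make the cut precise I would work locally. Fix the index $j$ that is absent. In the subalgebra generated by the $u_i$ with $i\neq j$, the cyclic order becomes a linear order $j+1,j+2,\ldots,j-1$. There the plactic relations \eqref{plactic} together with \eqref{plactic_commute} are precisely the (finite, type $A$) plactic relations in the sense of Fomin--Greene. In that setting, the elementary and complete noncommutative symmetric functions commute provided the generators satisfy
\[
u_iu_k=u_ku_i \ (|i-k|>1),\qquad u_{i}u_{i+1}u_i \text{-type relations} .
\]
Here the required relations are $u_i^2u_{i+1}=u_iu_{i+1}u_i$ and $u_iu_{i+1}^2=u_{i+1}u_iu_{i+1}$, which follow from Knuth/plactic relations. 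So the first step is to invoke the Fomin--Greene theorem, or reprove it for the local case, to get commutativity of the linear truncations $E^{(j)}(x),H^{(j)}(x)$.

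The second step compares the products degree by degree. I would expand $\bs{e}_a\bs{e}_b$ as a sum over pairs of subsets $(I,J)$ and group the terms by the multiset $I\uplus J$. Since $a,b<n$ but $a+b$ may reach $2n-2$, a monomial can involve every index, so no single $j$ is missing globally. I would handle this by a sign-free bijective argument in the style of Lascoux--Schützenberger/Bender--Knuth, applied on each multiset class. The involution swaps the roles of the two factors using only local moves $u_iu_{i+1}u_i\leftrightarrow u_i^2u_{i+1}$ and commutations of distant generators. Because these relations are local in adjacent pairs $(i,i+1)$ modulo $n$, the involution can be defined block by block on maximal cyclic runs of consecutive indices. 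Whenever the union of supports is a full cycle, I would cut at a position where the combined multiplicity pattern forces a canonical break: each of $I$ and $J$ omits some index, so one can cut at an index omitted by $I$ and treat the two words linearly.

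The main obstacle is exactly this full-support case. There the words $\prod^{\circlearrowleft}_I u_i\prod^{\circlearrowleft}_J u_i$ have different cut points for $I$ and $J$, and one must check that the bijection commutes with moving the cut. I would first try to prove a cyclic-shift lemma: rotating the cut by one step changes both sides by the same sequence of plactic moves. This would follow from the relation $u_{i+1}u_i\,(\cdots)$ reordering only near $i,i+1$. As a fallback I would cite \cite{korffstroppel2010}, where the result is stated. The mixed relation $\bs{e}_i\bs{h}_j=\bs{h}_j\bs{e}_i$ would then follow from the same argument, or from the identity $\sum_r(-1)^r\bs{e}_r\bs{h}_{m-r}=0$ in the truncated range, which expresses the $\bs{h}$'s polynomially in the $\bs{e}$'s.
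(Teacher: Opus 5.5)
The paper gives no argument for this lemma; it simply attributes it to \cite{korffstroppel2010}.

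\textbf{What your plan gets right.} The parts you actually carry out are sound. Both sides of each defining relation of $\mc{A}_n$ use the same multiset of letters, so it suffices to compare content components of $\bs{e}_a\bs{e}_b$ and $\bs{e}_b\bs{e}_a$. For a content missing some index $j$, these components agree with those of the linear truncations in $u_{j+1},\dots,u_{j-1}$, where Fomin--Greene applies. What you need there is that the defining relations give their condition for increasingly ordered products, $u_i^2u_{i+1}+u_{i+1}u_iu_{i+1}=u_iu_{i+1}u_i+u_iu_{i+1}^2$; this holds term by term, and has nothing to do with Knuth relations. Your identity $\sum_r(-1)^r\bs{e}_r\bs{h}_{m-r}=0$ is also valid for $0<m<n$, because every content of degree $m<n$ misses an index. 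Hence the $\bs{h}$'s are polynomials in the $\bs{e}$'s, and the other two relations follow from the first.

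\textbf{The gap.} The missing piece is the full-support case. It occurs exactly when $a+b\ge n$, and it is the genuinely affine part of the lemma; there you offer only a plan.
\begin{itemize}
\item Splitting into maximal cyclic runs is vacuous, since the support is a single run, the whole cycle.
\item Cutting at an index missed by $I$ does not linearise the word for $J$.
\item The ``cyclic-shift lemma'' is never formulated.
\end{itemize}
The required identifications are not termwise, and they take place among words containing every letter, which lie in no linear truncation. For example, take $n=4$, $a=2$, $b=3$ and content $\{1,1,2,3,4\}$. The components of $\bs{e}_2\bs{e}_3$ and $\bs{e}_3\bs{e}_2$ are respectively $u_1u_2u_3u_4u_1+u_4u_1u_1u_2u_3+u_1u_3u_4u_1u_2$ and $u_1u_2u_3u_4u_1+u_4u_1u_2u_1u_3+u_3u_4u_1u_1u_2$. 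Matching the last terms requires $u_1u_4u_1=u_4u_1^2$, the relation for $i=n$. Without it, $u_1u_3u_4u_1u_2$ is equivalent only to $u_3u_1u_4u_1u_2$.

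\textbf{What would close it.} Write $u_I$ for the anticlockwise monomial of a set $I$, and $D$, $S$ for the doubled and single indices of the content. You need an explicit bijection $A\mapsto A'$ from subsets of $S$ of size $n-b$ to subsets of size $n-a$ such that $u_{D\cup A}\,u_{D\cup(S\setminus A)}=u_{D\cup A'}\,u_{D\cup(S\setminus A')}$ in $\mc{A}_n$. It must be proved from the cyclic relations. In the example above, $A'$ is $A$ together with its cyclic successor in $S$. Until such a bijection is supplied, your argument, like the paper's, rests entirely on the citation.
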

    \subsection{The Jordan-Schwinger map for fermions}
     Recall the Jordan-Schwinger map
    \begin{equation}\label{jsmap}
\jmath:U(\gl_n(\ring))\to\End_{\ring}\bigwedge\nolimits^kV,\qquad x\mapsto \sum_{i,j=1}^n\psi^*_ix_{ij}\psi_j,\quad\forall x\in\gl_n(\ring)\;.
    \end{equation}
    This map is a Lie algebra morphism where the Lie bracket is the commutator. Moreover, if we consider the canonical action of the universal enveloping algebra $U(\gl_n(\ring))$ on $V^{\otimes k}$ and its projection onto $\bigwedge^kV$ then we arrive at the commuting diagram
    \[
  \begin{tikzcd}
    U(\gl_n(\ring)) \arrow{r}{\Delta^{k-1}} \arrow[swap]{dr}{\jmath} & \End_{\ring} V^{\otimes k} \arrow{d}{\op{res}} \\
     & \End_{\ring}\bigwedge\nolimits^kV
  \end{tikzcd}
\]
  where $\Delta:U\to U\otimes_\ring U$ is the familiar coproduct of the Hopf algebra $U=U(\gl_n(\ring))$ and $\op{res}$ is the restriction to $\bigwedge^kV\subset V^{\otimes k}$.
    \begin{prop} \label{propfermplac}
        The map $\phi: \mathcal{A}_n \to \mathcal{C}_n \otimes_\ring R[z]$ defined by
        \begin{gather}
        \phi(u_i) = \psi^*_{i + 1}\psi_i + y_{i} \psi^*_{i}\psi_{i}, \quad 0<i <n, \qquad
        \phi(u_{n}) = z \psi^*_{1}\psi_n + y_n \psi^*_n\psi_n\;,
        \end{gather}
        is an algebra morphism. The representation of $\mc{A}_n$ is not faithful, but factors through the relations
        \[
        u_i^2=y_iu_i\qquad\text{and}\qquad y_{i+1}\,u_iu_{i+1}u_i=y_i\,u_{i+1}u_iu_{i+1}\,,
        \]
        where $i=1,\ldots,n$ and indices are understood modulo $n$.
    \end{prop}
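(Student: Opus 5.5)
The plan is to reduce everything to explicit bilinear identities in the Clifford algebra. Set $a_i=\psi^*_{i+1}\psi_i$ for $0<i<n$ and $a_n=z\psi^*_1\psi_n$, so that with the quasi-periodic convention $\psi^*_{n+1}=z\psi^*_1$ we can write uniformly $\phi(u_i)=a_i+y_i\,\nu_i$, where $\nu_i=\psi^*_i\psi_i$ is the number operator. (Equivalently $\phi(u_i)=\jmath(E_{i+1,i}+y_iE_{ii})$, and $\sum_i\phi(u_i)=\jmath(X^q)$ is the first Chern class operator, which is a useful consistency check.) Since $\mc{A}_n$ is free modulo \eqref{plactic_commute} and \eqref{plactic}, it suffices to verify that the elements $\phi(u_i)$ satisfy these relations. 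It then remains to show that the stronger relations $u_i^2=y_iu_i$ and $y_{i+1}u_iu_{i+1}u_i=y_iu_{i+1}u_iu_{i+1}$ hold in the image.

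\emph{Step 1: commutation relations.} If $|i-j|\neq1 \bmod n$ and $i\neq j$, the index sets $\{i,i+1\}$ and $\{j,j+1\}$ are disjoint. Each $\phi(u_i)$ is even, built from generators carrying only the indices $i$ and $i+1$, and by the CAR \eqref{CAR} even elements in disjoint sets of modes commute. This gives \eqref{plactic_commute}.

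\emph{Step 2: the relation $u_i^2=y_iu_i$.} From the CAR we get
\[
a_i^2=0,\qquad \nu_i^2=\nu_i,\qquad a_i\nu_i=a_i,\qquad \nu_ia_i=0 .
\]
The last two hold because $\psi_i\psi^*_i\psi_i=\psi_i$ and because $\psi_i\psi^*_{i+1}=-\psi^*_{i+1}\psi_i$ combined with $\psi_i^2=0$. The factor $z$ in $a_n$ is a scalar and plays no role. Expanding the square gives
\[
\phi(u_i)^2=y_ia_i+y_i^2\nu_i=y_i\phi(u_i).
\]

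\emph{Step 3: the cubic relations.} This is the main computational step. I would show the two identities
\[
\phi(u_iu_{i+1}u_i)=y_i\,\phi(u_iu_{i+1}),\qquad \phi(u_{i+1}u_iu_{i+1})=y_{i+1}\,\phi(u_iu_{i+1}).
\]
Combined with Step 2, the first yields $u_i^2u_{i+1}=y_iu_iu_{i+1}=u_iu_{i+1}u_i$, and the second yields $u_iu_{i+1}^2=y_{i+1}u_iu_{i+1}=u_{i+1}u_iu_{i+1}$. Together they give both relations \eqref{plactic}, and multiplying the first identity by $y_{i+1}$ and the second by $y_i$ gives the factorisation $y_{i+1}u_iu_{i+1}u_i=y_iu_{i+1}u_iu_{i+1}$.

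To prove these identities, note that all operators involved live in the Clifford algebra of the three modes $i,i+1,i+2$. The Fock representation of this subalgebra is the 8-dimensional space of occupation patterns, and it is faithful on the finite Clifford algebra. Indeed $\mc{C}_n\cong\End\bigwedge V$, and $\mc{C}_n$ factors as a graded tensor product of the subalgebra on these three modes and the subalgebra on the remaining modes. On basis states the operator acts simply: $\phi(u_i)$ moves a particle from $i$ to $i+1$ if $i+1$ is empty, multiplies by $y_i$ if both $i$ and $i+1$ are occupied, and kills the state if $i$ is empty. The appropriate sign comes from reordering, and the factor $z$ appears when $i=n$.

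Checking the two identities on the eight occupation patterns is then a short case analysis. The only delicate points are the fermionic signs, which cancel because each operator moves a particle between adjacent modes, and the wrap-around cases $i=n-1,n$, where $z$ appears symmetrically on both sides. I expect this bookkeeping, together with small $n$ (for instance $n=2$ or $n=3$, where modes $i$ and $i+2$ can coincide modulo $n$), to be the main obstacle. For those small cases I would redo the check directly with the CAR rather than in the three-mode model.

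Finally, non-faithfulness follows from Step 2: $u_i^2-y_iu_i$ is a nonzero element of $\mc{A}_n$, since the defining relations are homogeneous in the $u_j$ and leave $u_i^2$ alone in degree two.
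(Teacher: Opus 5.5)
Your argument is correct and is essentially the paper's proof: both verify the defining relations directly from the CAR. Your packaging as $\phi(u_i)^2=y_i\phi(u_i)$ plus $\phi(u_iu_{i+1}u_i)=y_i\phi(u_iu_{i+1})$ and $\phi(u_{i+1}u_iu_{i+1})=y_{i+1}\phi(u_iu_{i+1})$, checked on occupation patterns in the faithful Fock representation, yields the plactic relations and the two stated quotient relations at once, whereas the paper only normal-orders both sides of one plactic relation for non-wrapping indices.

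The one point to fix is your small-$n$ remark. For $n=3$ no extra work is needed, since $i,i+1,i+2$ are distinct modulo $3$ and your eight-pattern check does not depend on the wrap-around signs or factors of $z$. For $n=2$, however, the deferred CAR check cannot succeed because the relation is false there. On $\bigwedge^1V$ one has $\phi(u_1)^2\phi(u_2)v_1=0$ but $\phi(u_1)\phi(u_2)\phi(u_1)v_1=z(v_2+y_1v_1)$. So the proposition tacitly requires $n\ge 3$, as does the paper's proof, which treats only $0\le i\le n-3$.
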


    \begin{remark}\rm
        The above representation of the affine plactic algebra was discussed in \cite[Corollary 3.25 and Remark 3.26]{gorbounovkorff2014} albeit in the context of the affine nil Coxeter and the affine nil-Hecke algebra. Indeed, setting $y_1=\cdots=y_n=0$ the above representation is a representation of the nil-Coxeter algebra which factors through the projection onto the nil Temperley-Lieb algebra. Setting $y_1=\cdots=y_n=1$ instead, one obtains a representation of the affine nil-Hecke algebra.
    \end{remark}
    \begin{proof}
        The proof consists of checking the relations of the plactic algebra using relations in the Clifford algebra. The relation \eqref{plactic_commute} is obvious, because even number of fermions with distinct indices commute. Let us check one of the relations \eqref{plactic}
        \begin{gather*}
        u_i^2 u_{i + 1} = (\psi^*_{i + 2} + y_{i + 1} \psi^*_{i + 1}) \psi_{i + 1} (\psi^*_{i + 2} + y_{i + 1} \psi^*_{i + 1}) \psi_{i + 1} (\psi^*_{i + 3} + y_{i + 2} \psi^*_{i + 2}) \psi_{i + 2} \\
        = - y_{i + 1} (\psi^*_{i + 2} + y_{i + 1} \psi^*_{i + 1})(\psi^*_{i + 3} + y_{i + 2} \psi^*_{i + 2})\psi_{i + 1} \psi_{i + 2}, \quad 0 \leq i \leq n - 3 
        \end{gather*}
        and
        \begin{gather*}
            u_i u_{i + 1} u_i = (\psi^*_{i + 2} + y_{i + 1}\psi^*_{i + 1}) \psi_{i + 1}(\psi^*_{i + 3} + y_{i + 2}\psi^*_{i + 2}) \psi_{i + 2}(\psi^*_{i + 2} + y_{i + 1}\psi^*_{i + 1}) \psi_{i + 1} \\
            = y_{i + 1}  (\psi^*_{i + 2} + y_{i + 1} \psi^*_{i + 1})(\psi^*_{i + 3} + y_{i + 2} \psi^*_{i + 2})\psi_{i + 2} \psi_{i + 1}, \quad 0 \leq i \leq n - 3, 
        \end{gather*}
        which coincide due to the anticommutation relations. The relations including the parameter $q$ are checked in the same way.
    \end{proof}
    \subsection{Chern classes in terms of the plactic algebra}
    We now identify the plactic elementary and complete symmetric polynomials \eqref{noncomelem} with multiplication operators by the Chern classes of the tautological and quotient bundles over Grassmannians.
    \begin{prop}\label{placticandChern}
        Under the isomorphism from Proposition \ref{prop:quantumSatake} the images $\phi(\bs{e}_i),\phi(\bs{h}_j)\in\End_{\ring}\bigwedge^kV[z^{\pm 1}]/\langle z + (-1)^kq\rangle$ of the plactic elementary and complete symmetric polynomials act by quantum multiplication with the Chern classes of the tautological and quotient bundle respectively,
        \begin{equation}\label{plactic2Chern}
            \Sigma^q_k \circ \phi(\bs{e}_i) = c_i(\cS_k) \circ \Sigma^q_k
            \qquad\text{and}\qquad
            \Sigma^q_k \circ \phi(\bs{h}_j) = c_j(\cQ_k) \circ \Sigma^q_k\;,
        \end{equation}
        where $1\le i\le k$, $1\le j\le n-k$ and $0<k<n$. 
    \end{prop}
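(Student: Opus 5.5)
The idea is to compare $\phi(\bs{e}_i)$ directly with the operator $e_i(X^q_1,\ldots,X^q_k)$, the \emph{non-factorial} elementary symmetric polynomial in the quantised Chern roots. By the quantum Satake map, this operator represents quantum multiplication by $c_i(\cS_k)$: the Chern polynomial of $\cS_k$ is $\prod_i(z+x_i)$, and the Corollary after the Satake theorem, extended to $X^q$ by Proposition \ref{prop:quantumSatake}, identifies the two. Everything therefore reduces to an identity of operators on $\bigwedge^kV[z^{\pm1}]$.

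First I would record that, under the Jordan--Schwinger map \eqref{jsmap}, each $\phi(u_i)$ is the image of the elementary matrix piece of $X^q$. Explicitly,
\[
\jmath(E_{i+1,i}+y_iE_{ii})=\psi^*_{i+1}\psi_i+y_i\psi^*_i\psi_i=\phi(u_i),
\]
and similarly $\phi(u_n)=\jmath(zE_{1n}+y_nE_{nn})$. Hence $\sum_i\phi(u_i)=\jmath(X^q)=\Delta^{k-1}(X^q)$ restricted to $\bigwedge^kV$, which settles $i=1$.

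For general $i$, I would compute the action of the anticlockwise ordered monomial $\prod^{\circlearrowleft}_{j\in I}\phi(u_j)$, with $I$ a set, on a basis vector $v_{w_1}\wedge\cdots\wedge v_{w_k}$. Each factor either moves a single occupied position $j\mapsto j+1$, with $n\mapsto 1$ picking up $z$, or multiplies by $y_j$ if $j$ is occupied. The anticlockwise order, in which $u_j$ is applied after $u_{j+1}$, is exactly what guarantees that each particle moves at most one step: position $j+1$ is processed before the particle arriving from $j$ lands there. Expanding $e_i(X^q_1,\ldots,X^q_k)=\sum_{|S|=i}\prod_{a\in S}X^q_a$ on a lift to $V^{\otimes k}$, each particle likewise moves at most one step or contributes its weight $y$. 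This gives a weight-preserving bijection between terms: the choice of particles $S$ corresponds to the choice of the set $I$ of their current positions. Terms where two particles collide vanish on both sides, by antisymmetry on one side and $(\psi^*)^2=0$ on the other. Signs match because moving $v_n$ to $v_1$ in the wedge produces the same reordering sign on both sides. This yields $\phi(\bs{e}_i)=e_i(X^q)$ on $\bigwedge^kV$.

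For $\bs{h}_j$ I would use the identity $\sum_r(-1)^r\bs{e}_r\bs{h}_{m-r}=0$ for $0<m<n$, proved in \cite{korffstroppel2010} in $\mc{A}_n$. Together with the commutativity of $\mc{B}_n$, this recursively determines $\phi(\bs{h}_j)$ from the $\phi(\bs{e}_r)$. On the geometric side, the Whitney relation $c(\cS_k)c(\cQ)=1$ holds in $qH^*_T$ in degrees below $n$; this is the content of Mihalcea's presentation \eqref{QHdef}, where the $h_r$, $r<n$, are the quotient Chern classes. Since both families satisfy the same triangular recursion, $\phi(\bs{h}_j)$ acts as $c_j(\cQ)$ for $j\le n-k$.

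The main obstacle I expect is the sign and quasi-periodicity bookkeeping in the bijection when the wrap $u_n$ occurs. One must check that the $z$-term produced by $\phi(u_n)$, combined with reordering $v_1$ to the front of the wedge, reproduces exactly $q=(-1)^{k-1}z$ with the correct sign. A second point needing care is confirming that the plactic identity linking $\bs{e}$ and $\bs{h}$ really holds in $\mc{A}_n$, and not only in the quotient. If that identity is unavailable there, I would instead prove it directly in the image, using the factorisation relations $u_i^2=y_iu_i$ of Proposition \ref{propfermplac}.
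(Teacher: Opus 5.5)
Your handling of $\phi(\bs{e}_i)$ is the paper's argument, made more explicit: you add the Jordan--Schwinger check for $i=1$ and the observation that the anticlockwise order processes position $j+1$ before $j$, so no particle moves twice. Reducing the $\bs{h}$-statement to the $\bs{e}$--$\bs{h}$ relation of \cite{korffstroppel2010} is also what the paper does.

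The problem is your final step. What the recursion actually gives you is
\[
\phi(\bs{h}_j)=h_j(X^q_1,\ldots,X^q_k),
\]
the ordinary (non-factorial) complete symmetric polynomial in the quantised Chern roots. To identify this with $c_j(\cQ_k)$ you invoke $c(\cS_k)c(\cQ)=1$ in $qH^*_T$. That is false equivariantly: $\C^n$ carries the standard torus action, so $c^T(\cS_k)\,c^T(\cQ)=c^T(\C^n)$, whose components are, up to sign, the nonzero $e_a(y_1,\ldots,y_n)$.

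Nor does \eqref{QHdef} supply the recursion you need. Its $h_r=\det(\tau^{j-1}e_{1-i+j})$ are built from the factorial $e_r$ with shifted parameters. They are not tied to the non-factorial $e_r(X^q)$ by $\sum_r(-1)^re_rh_{m-r}=0$.

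A quick check shows the failure. Since $\bs{h}_1=\bs{e}_1$, the statement would force $c_1^T(\cQ)=\pm c_1^T(\cS_k)$ for some fixed sign convention. The $+$ sign fails already non-equivariantly. The $-$ sign fails because $c_1^T(\cS_k)+c_1^T(\cQ)=c_1^T(\C^n)=\pm(y_1+\cdots+y_n)\neq 0$. So ``both families satisfy the same triangular recursion'' is false, and $h_j(X^q)$ is not the genuine equivariant class $c^T_j(\cQ)$.

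The paper's proof stops exactly at ``it suffices to show \ldots\ $h_r(X^q)=\phi(\bs{h}_r)$''. That is, it tacitly reads $c_j(\cQ_k)$ in the statement as $h_j$ of the Chern roots. This agrees with the quotient Chern class only non-equivariantly, and even then only with the convention $\cS\leftrightarrow\cS^*$ implicit in the paper's positive normalisation.

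You should adopt that reading explicitly and drop the Whitney argument. With that change your proof is complete and coincides with the paper's.
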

    If $k=0$ or $k=n$ then we define $\phi(\bs{e}_i),\phi(\bs{h}_j)$ through the identity \eqref{plactic2Chern}. 
    \begin{proof}
        It suffices to show that
        \[
        e_r(X_1^q,\ldots,X^q_k)=\phi(\bs{e}_r)\qquad\text{and}\qquad h_r(X^q_1,\ldots,X^q_k)=\phi(\bs{h}_r)\;.
        \]
        Let us reinterpret the action of the elementary symmetric polynomials $e_r(X^q)$ in combinatorial terms. To do that, we first lift the action to $V^{\otimes k}[z^{\pm 1}]/\langle z + (-1)^kq\rangle$ and write the elementary symmetric polynomials explicitly
        \[
        e_r(X^q) = \sum\limits_{1\le i_1 < \ldots<i_r\le k} X^q_{i_1} \cdots X^q_{i_r}.
        \]
        The action of each term on a basis vector $v_{i_1}\otimes\cdots\otimes v_{i_k}$ is described as follows: we choose an $r$-subset of $\{i_1,\ldots,i_k\}$ and for each element in that set we simply multiply with $y_{i_j}$ or swap $v_{i_j}$ with $v_{i_j+1}$ (unless it is already contained in the set $\{i_1,\ldots,i_k\}$ because then its projection onto $\bigwedge^kV[z^{\pm 1}]/\langle z + (-1)^kq\rangle$ is zero). Summing up all the resulting vectors and projecting down to $\bigwedge^k V[z^{\pm 1}]/\langle z + (-1)^kq\rangle$, we obtain the result of the action of the elementary symmetric polynomial $e_r(X^q)$. 

        Let us turn to the action of the plactic polynomial $\phi(\bs{e}_r)$. Observe that due to the cyclic order in the definition  \eqref{noncomelem} the combinatorial action of $\phi(\bs{e}_r)$ is exactly the same as $e_r(X^q)$. Indeed, since the plactic elementary polynomials are ordered anti-clockwise and square-free, each basis vector $v_{i_j}$ in $v_{i_1}\wedge\ldots\wedge v_{i_k}$ is acted on by the same rule as described above, which proves the statement for the elementary symmetric polynomials. The proof for complete symmetric polynomials follows immediately from \cite[Def. 5.16, Cor. 6.9]{korffstroppel2010}, where it was proved that plactic complete polynomials can be expressed in terms of plactic elementary polynomials by the Jacobi--Trudi formula, which finishes the proof. 
    \end{proof}
    
The following relates a general factorial Schur polynomial in the quantised Chern roots to the plactic polynomials:
\begin{cor}
    Let $E(z)=\sum_{r=0}^kz^r\phi(\bs{e}_r)$ for $0<k<n$. Then we have the identity
    \begin{equation}
        E(z_1)\cdots E(z_{n-k})=\sum_{\lambda}s_{\lambda}(X_1^q,\ldots,X^q_k|y)s_{\lambda^*}(z_1,\ldots,z_{n-k}|-y)
    \end{equation}
    where the sum runs over all partitions $\lambda$ with Young diagram inside the $k\times (n-k)$ rectangle and $\lambda^*=(\lambda^\vee)'$ is obtained from $\lambda$ by first taking the complement in the rectangle and then its transpose.
\end{cor}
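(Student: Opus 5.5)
The plan is to combine Proposition \ref{placticandChern} with the dual Cauchy identity \eqref{Cauchydual}. By that proposition, $\phi(\bs{e}_r)$ acts on $\bigwedge^kV[z^{\pm1}]/\langle z+(-1)^kq\rangle$ as $e_r(X^q_1,\ldots,X^q_k)$, that is, as quantum multiplication by $c_r(\cS_k)$. Hence $E(z)$ becomes a polynomial in commuting matrices,
\[
E(z)=\sum_{r=0}^k z^r e_r(X^q_1,\ldots,X^q_k)\Big|_{\bigwedge^kV}.
\]
To match \eqref{Cauchydual} we need $E(z)$ written as a generating function in falling factorial powers, so the first step is to fix the normalisation. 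The product $\prod_{i=1}^k(z+X^q_i)$ equals both $\sum_r z^{k-r}e_r(X^q)$ and $\sum_r (z|-y)^r e_{k-r}(X^q|y)$. I will therefore read $E(z)$ as $\prod_{i=1}^k(z+X_i^q)$, which is the quantum Chern polynomial $t(z)$ of Lemma \ref{lem:transfermatrix}. Reconciling the indexing $z^r$ versus $z^{k-r}$ in the stated definition of $E(z)$ is a bookkeeping check. I expect it to amount to using the generating function convention of the appendix, $E(z)=\prod_i(z+x_i)$, rather than a genuine difference.

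The second step is the key observation: \eqref{Cauchydual} is a polynomial identity in $\Z[x_1,\ldots,x_k,z_1,\ldots,z_{n-k},y]$. We may substitute for the $x_i$ any mutually commuting elements of an $\ring[z_1,\ldots]$-algebra. The quantised Chern roots $X_i^q\in\End V^{\otimes k}[z^{\pm1}]$ commute pairwise, since they act in different tensor factors. Substituting $x_i\mapsto X_i^q$ therefore gives an operator identity on $V^{\otimes k}[z^{\pm1}]\otimes\ring[z_1,\ldots,z_{n-k}]$:
\[
\prod_{j=1}^{n-k}\prod_{i=1}^k(z_j+X_i^q)=\sum_{\lambda\subset k\times(n-k)} s_\lambda(X^q|y)\,s_{\lambda^*}(z|-y).
\]
Both sides are symmetric in the $X_i^q$, so they preserve $\bigwedge^kV[z^{\pm1}]$ by the argument already used in the proof of Proposition \ref{prop:quantumSatake}. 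Restricting the identity to $\bigwedge^kV[z^{\pm1}]$ and passing to the quotient by $z+(-1)^kq$ then yields the claim.

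The main point needing care is the first step. We must make sure that the image of $\phi(\bs{e}_r)$ really agrees with $e_r(X^q)$ restricted to the wedge space for all $0\le r\le k$, including $r=k$, where the boundary generator $u_n$ with its factor $z$ enters. We must also confirm that no relation of the quantum ring intervenes. Such a relation would only matter if we tried to reduce the sum further; none is needed, because the right-hand side is indexed exactly by $\lambda$ in the rectangle, as in \eqref{Cauchydual}. One also notes that the $z_j$ here are formal variables unrelated to the loop parameter $z$, so the substitution is legitimate.
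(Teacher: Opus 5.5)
Your argument is the paper's own: Proposition~\ref{placticandChern} identifies $\phi(\bs{e}_r)$ with $e_r(X_1^q,\ldots,X_k^q)$ on $\bigwedge^kV$. One then substitutes the mutually commuting $X_i^q$ into the dual Cauchy identity \eqref{Cauchydual} and restricts to the wedge space. The one point to correct is your remark that the exponent mismatch is mere bookkeeping: taken literally, $\sum_{r}z^r\phi(\bs{e}_r)=\prod_i(1+zX_i^q)$, and already for $k=1$, $n=2$ the left-hand side would be $1+z_1X^q$, whereas the right-hand side is $(z_1+y_1)+(X^q-y_1)=z_1+X^q$. So the statement is only true with $E(z)=\sum_{r=0}^k z^{k-r}\phi(\bs{e}_r)=\prod_i(z+X_i^q)$, which is precisely the version you (and the paper, via the convention $E(z)=\prod_i(z+x_i)$ in \eqref{Cauchydual}) actually prove.
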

\begin{proof}
    This is a direct consequence of the known Cauchy type identity for factorial Schur polynomials, see e.g. \cite[Eqn 6.17]{Mac92}.
\end{proof}
\section{Exactly solvable lattice models}\label{app:C}
In this section, we briefly summarise the results of \cite{GKS20} concerning the integrable $5$-vertex lattice models. We go through the definitions of the integrable lattice models and the respective Yang--Baxter algebras, for details, we refer to \cite[Sections 1 and 2]{GKS20}. Next, we explain their connection to the quantum equivariant Schubert calculus, which is explained in \cite[Sections 3\hyphen5]{GKS20}. In the end of this section we connect to the definition of the local affine plactic algebra and deduce that the images of the Yang--Baxter algebras in the corresponding module are generated by the fermionic operators \eqref{recferm}.
\subsection{Lattice models}
\noindent Firstly, we recall the definition of the quantum $R$-matrix
\begin{equation}
    \label{R-matrix}
    R_{12}(x) = \begin{pmatrix}
        1 & 0 & 0 & 0 \\
        0 & x & 1 & 0 \\
        0 & 1 & 0 & 0 \\
        0 & 0 & 0 & 1
    \end{pmatrix} \in \End(\C^2 \otimes \C^2)[x],
\end{equation}
which is a solution to the quantum Yang--Baxter equation
\begin{equation}
\label{Yang--Baxter}
R_{12}(x_1 - x_2) R_{13}(x_1 - x_3) R_{23}(x_2 - x_3) = R_{23}(x_2 - x_3) R_{13}(x_1 - x_3) R_{12}(x_1 - x_2).
\end{equation}
Using this solution of the quantum Yang--Baxter equation, we define the bialgebra in the following way.
\begin{defn}\label{YBalgebra}
    The Yang-Baxter algebra $\YB$ associated with the quantum $R$-matrix \eqref{R-matrix} is the associative unital $\C$-algebra generated by the expansion coefficients $T^{(r)}_{ij}$ of the matrix elements of the monodromy matrix,
    \[
    T(x) = ( T_{ij}(x))_{1\le i, j \le 2} = \begin{pmatrix}
        T_{11}(x) & T_{12}(x) \\
        T_{21}(x) & T_{22}(x) 
    \end{pmatrix}, \quad T_{ij}(x) = \sum\limits_{r = 0}^\infty T^{(r)}_{ij} x^r
    \]
    subject to the $RTT$-relation
    \[
    R_{12}(x_1 - x_2) T_1(x_1) T_{2}(x_2) = T_2(x_2) T_1(x_1) R_{12}(x_1 - x_2)
    \]
    together with the coproduct given by 
    \begin{equation}
    \label{coproductYB}
    \Delta^{\YB}(T_{ij}(x)) = \sum\limits_{k = 1}^2 T_{kj}(x) \otimes T_{ik}(x).
    \end{equation}
\end{defn}
Define {\em evaluation modules} $\C^2_{y}$ for the Yang--Baxter algebra $\YB$ by mapping $T(x)\mapsto L(x,y)$ with 
\begin{gather}
\label{evaluate1}
    L(x,y) = \begin{pmatrix}
        E_{11} + (x + y) E_{22} & E_{21} \\
        E_{12} & E_{22}
    \end{pmatrix} = \begin{pmatrix}
        1 & 0 & 0 & 0 \\
        0& x+y & 1 & 0 \\
        0 & 1 & 0 & 0 \\
        0 & 0 & 0 & 1
    \end{pmatrix} \in \End(\C^2[y])[x].
\end{gather}
Employing the coproduct \eqref{coproductYB} the representation of $\YB$ given by a tensor product $U := \C^2_{y_1} \otimes \ldots \otimes \C^2_{y_n}$ of such evaluation modules is then simply
\[
T(x) \mapsto L_{0n}(x , y_n) \ldots L_{01}(x , y_1).
\]
Recall the decomposition of the module $U$ into weight subspaces,
\[
U := \C^2_{y_1} \otimes \ldots \otimes \C^2_{y_n} = \bigoplus\limits_{k = 0}^n U_{k , n}, \quad U_{k,n} = \{v \in U | \Delta^{n - 1}(E_{22}) v = k v\}.
\]
Notice that similarly to \eqref{Satake} the standard basis in $U$ can also be labelled by Grassmannian permutations: namely, if $\{\veps_0,\veps_1\}$ denotes the standard basis in $\C^2$ we label the basis vector $\veps_{\alpha_1}\otimes\cdots\otimes\veps_{\alpha_n}\in U_{k,n}$ with $w=[i_1\ldots i_n]\in S^{k,n}$ if $i_1,\ldots,i_k$ give the positions of the basis vectors $\veps_0$ and $i_{k+1},\ldots,i_n$ the positions of the basis vectors $\veps_1$. Let $\Xi_k : U_{k,n} \to H^*_{T}(\Gr(k;n))$ be the linear isomorphism which sends the standard basis of $U_{k,n}$ to the equivariant Schubert classes and define $\Xi = \bigoplus\limits_{k = 0}^n \Xi_k$.

We summarise the main result of \cite{GKS20} in the following theorem.
\begin{thm}\label{geom-lattice}
    Under the isomorphisms $\Xi_k$ the geometric action of the Yang--Baxter algebra $\YB$ is given by the following convolution operators in terms of Chern classes and push-pull operators:
    \begin{gather}
    \Xi_k \circ T_{11}(x)|_{U_{k,n}} = x^k c_{x^{-1}}(\cS_k) \circ \Xi_k, \\
    \Xi_{k + 1} \circ T_{12}(x)|_{U_{k,n}} = x^k b_k \circ c_{x^{-1}}(\cS_k) \circ \Xi_k, \\
    \Xi_{k - 1} \circ T_{21}(x)|_{U_{k,n}} = x^{k - 1}c_{x^{-1}}(\cS_{k - 1}) \circ c_{k-1} \circ \Xi_k, \\
    \Xi_{k} \circ T_{22}(x)|_{U_{k,n}} = x^{k - 1} b_{k - 1} \circ c_{x^{-1}}(\cS_{k - 1}) \circ c_{k-1} \circ \Xi_k,
    \end{gather}
    where $c_x(\cS_k)$ is the Chern polynomial of the tautological bundle and $c_k=(p_1)_* p_2^*$ and $b_k=(p_2)_* p_1^*$ with the maps $p_1,p_2$ from \eqref{pushpull}. This result extends to equivariant quantum cohomology when replacing the operator $T_{11}(x)$ by its quantum deformation $T_{11}(x) + q T_{22}(x)$.
\end{thm}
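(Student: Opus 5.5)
The plan is to deduce the statement from the Clifford algebra description established above, rather than repeating the equivariant localisation computation of \cite{GKS20}. The first step is to fix a Jordan--Wigner type identification $U_{k,n}\cong\bigwedge^kV$. A basis vector of $U_{k,n}$ labelled by $w=[i_1\ldots i_n]\in S^{k,n}$ is sent to $\ket{w}=v_{i_1}\wedge\ldots\wedge v_{i_k}$, so that $\Xi_k=\Sigma_k\circ(\text{this identification})$. Under this map the local operators $E_{12},E_{21}$ in the $i$th factor of $U$ become $\psi_i,\psi^*_i$, dressed by the sign string $\prod_{j<i}(-1)^{\psi^*_j\psi_j}$, and $E_{22}$ becomes a number operator. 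Since the monodromy matrix $L_{0n}(x,y_n)\cdots L_{01}(x,y_1)$ is an ordered product, I will write each entry $T_{ab}(x)$ as a sum over ``lattice paths'' of the auxiliary line. Each path is a word in the local operators, and I will rewrite it as a normally ordered fermionic expression; the sign strings telescope along a path.

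Next I treat $T_{11}(x)$. The auxiliary line enters and leaves in state $\veps_0$. Expanding in $x$, each term should be an anticlockwise ordered (but non-cyclic) product of the hopping terms $\psi^*_{i+1}\psi_i$ and diagonal terms $y_i\psi^*_i\psi_i$. This is exactly $\phi$ applied to the plactic elementary polynomials of Appendix \ref{app:B}, without the $u_n$ term. I then plan to invoke Proposition \ref{placticandChern}, which identifies $\phi(\bs{e}_r)$ with $c_r(\cS_k)$, to obtain $T_{11}(x)=\sum_r x^{k-r}c_r(\cS_k)=x^kc_{x^{-1}}(\cS_k)$ in the classical case.

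For the off-diagonal entries I will isolate the first and last factors in the product. A path contributing to $T_{12}(x)$ must create a particle, and after commuting it to the front the creation happens at site $1$. This gives $T_{12}(x)=\psi^*_1\,T_{11}(x)$ on $U_{k,n}$. Prop.~\ref{fermionform} with $l=0$ gives $\psi^*_1=(p_2)_*p_1^*=b_k$, which yields the second identity. Symmetrically, the annihilation happens last, at site $n$, so $T_{21}(x)=T_{11}(x)|_{k-1}\circ\psi_n$ up to the sign $(-1)^{k-1}$. That sign is absorbed by the definition $\psi_n=(-1)^{k-1}(p_1)_*p_2^*$, giving $c_{k-1}$. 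The entry $T_{22}$ then combines both pieces, $b_{k-1}\,c_{x^{-1}}(\cS_{k-1})\,c_{k-1}$.

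For the quantum statement I observe that $T_{11}+qT_{22}$ adds exactly the paths that wrap around, namely $\psi^*_1\cdots\psi_n$ weighted by $q$. This is the $\phi(u_n)=z\psi^*_1\psi_n$ term, so the full plactic $\phi(\bs{e}_r)$ appears, and Proposition \ref{placticandChern} in its quantum form, via the quantised Chern root $X^q$ of Prop.~\ref{prop:quantumSatake}, finishes the argument. I expect the main obstacle to be bookkeeping rather than ideas. The hard points are tracking the Jordan--Wigner signs and the convention $z=(-1)^{k-1}q$, so that the sign in $\psi_n$ and the sign relating $q_k$ to $q_{k-1}$ match precisely. A further point to check is that the ordering of $L$-factors really produces anticlockwise rather than clockwise monomials, since the latter would give $c(\cQ)$ instead of $c(\cS)$. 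I would check these on $\Gr(1;2)$ and $\Gr(1;3)$ before writing out the general argument.
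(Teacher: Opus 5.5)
Your argument is correct, but it runs in the opposite direction to the paper. The paper does not prove this theorem. It quotes it from \cite{GKS20}, where the lattice operators were matched with convolution operators via equivariant localisation. It then \emph{uses} the theorem, in the last proposition of Appendix~\ref{app:C}, to derive fermionic formulae for the $T_{ij}(x)$. You derive those fermionic formulae directly from the $L$-matrices, and then read off the geometry from Proposition~\ref{fermionform} at $l=0$ and from the Satake maps.

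Your ``commute the creation to the front'' step can be made exact. Split off the first (resp.\ last) $L$-factor and use $E_{21}E_{11}=E_{21}$, $E_{21}E_{22}=0$, $E_{21}E_{12}=E_{22}$ (resp.\ $E_{11}E_{12}=E_{12}$, $E_{22}E_{12}=0$). This gives
\[
T_{12}(x)=E^{(1)}_{21}T_{11}(x),\qquad T_{21}(x)=T_{11}(x)E^{(n)}_{12},\qquad T_{22}(x)=E^{(1)}_{21}T_{11}(x)E^{(n)}_{12},
\]
and under your Jordan--Wigner map $E^{(1)}_{21}=\psi^*_1$ and $E^{(n)}_{12}|_{U_{k,n}}=(-1)^{k-1}\psi_n$. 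So everything reduces to $T_{11}(x)$.

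One correction: in the classical case you must not drop $u_n$. An occupied site $n$ crossed in auxiliary state $1$ carries weight $x+y_n$; for $k=1$, for instance, $T_{11}(x)v_n=(x+y_n)v_n$. Hence $T_{11}(x)=\sum_r x^{k-r}\phi(\bs{e}_r)|_{z=0}$ with $\phi(u_n)|_{z=0}=y_n\psi^*_n\psi_n$ retained. Only the hop $z\psi^*_1\psi_n$ is absent, and
\[
qT_{22}(x)=(-1)^{k-1}q\,\psi^*_1T_{11}(x)\psi_n=z\,\psi^*_1T_{11}(x)\psi_n
\]
supplies exactly that hop when $z=(-1)^{k-1}q$.

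You can also bypass the plactic algebra. Only the tensor factor carrying $v_n$ can use $zE_{1n}$, so on $\bigwedge^kV$
\[
\prod_{i=1}^k(x+X^q_i)=\prod_{i=1}^k(x+X_i)+z\,\psi^*_1\prod_{i=1}^{k-1}(x+X_i)\,\psi_n .
\]
Together with Proposition~\ref{prop:quantumSatake}, this gives the quantum statement directly.

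As to what each route buys: yours is purely algebraic and explains structurally why $T_{12},T_{21},T_{22}$ factor through $T_{11}$. It is not independent of geometry, however. It inherits the localisation computation of $b_k$ and $c_k$ through the base case of Proposition~\ref{fermionform}, which is \cite[Prop.4.6]{GKS20}. For the quantum part it also relies on Mihalcea's uniqueness result via the quantum Chevalley rule. The argument of \cite{GKS20} identifies the Yang--Baxter generators as convolution operators intrinsically, without passing through the Satake identification or the Chevalley-rule argument.
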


\begin{remark}\rm
    We recall from \cite{gorbounovkorff2014,GKS20} that there is a second Yang-Baxter algebra $\YB'$ and a corresponding lattice model, called the {\em vicious walker model} in \cite{korff2014quantum}, which describes the multiplication by Chern classes of the quotient bundle $\cQ_{n-k}$ instead. Denote the corresponding monodromy matrix by $T'(x)$. Then the action of both algebras $\YB$ and $\YB'$ are related via level-rank duality \eqref{Theta}, $\Theta\circ T_{ij}(x)=T'_{ji}(x)\circ\Theta$. Therefore, we shall omit the details for this `dual model'. 
\end{remark}

\subsection{Plactic algebra and fermionic operators}
In this subsection, we comment on the relation between the action of the  Yang--Baxter algebras $\YB$ and $\YB'$ from \cite{GKS20} and the fermionic operators \eqref{CAR} in this article. 

The geometric nature of both actions has already been explained, the one for $\YB,\YB'$ in \cite[Theorem 7.3]{GKS20} and the one for the Clifford algebra in \eqref{fermgeom}, \eqref{recferm} of this article. In order to directly relate both actions let us denote by $\mathcal{Y}\subset \End_{\ring}(U)$ and $\mathcal{Y}' \subset \End_{\ring}(U)$ the subalgebras generated by the Yang--Baxter algebras $\YB$ and $\YB'$ localised at the images of the operators $T_{11}(x)$ and $T_{11}^{'}(x)$, respectively. 
\begin{prop}\label{fermtheor1}
    The operators $\gamma^*_j = \Xi^{-1} \circ \Sigma \circ \psi^*_j$ and $\gamma_j = \Xi^{-1} \circ \Sigma \circ \psi_j$ for $j=1,\ldots,n$ belong to $\mathcal{Y}\cap\mathcal{Y}'$.
\end{prop}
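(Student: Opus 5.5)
The plan is to express the fermionic operators directly through generators of $\YB$ and $\YB'$ using the geometric descriptions already available. First I treat $\gamma_1^*$ and $\gamma_n$. By \eqref{initial}, $\psi_1^*=(p_2)_*p_1^*=b_k$ and $\psi_n=(-1)^k(p_1)_*p_2^*=(-1)^k c_{k}$ (with the appropriate index shift). Theorem \ref{geom-lattice} gives $\Xi_{k+1}\circ T_{12}(x)|_{U_{k,n}}=x^k b_k\circ c_{x^{-1}}(\cS_k)\circ\Xi_k$. Since $\mc{Y}$ is localised at the image of $T_{11}(x)$, which acts by $x^kc_{x^{-1}}(\cS_k)$ on $U_{k,n}$, we get
\[
\Xi^{-1}\Sigma\, b_k\,\Sigma^{-1}\Xi=T_{12}(x)\,T_{11}(x)^{-1}\big|_{U_{k,n}} .
\]
The left-hand side is independent of $x$, so the coefficient extraction is harmless. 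In the same way, $T_{21}(x)$ is $c_{k-1}$ composed on the left with $x^{k-1}c_{x^{-1}}(\cS_{k-1})=T_{11}(x)|_{U_{k-1,n}}$, so $T_{11}(x)^{-1}T_{21}(x)$ realises $c_{k-1}$. The sign $(-1)^k$ and the weight-space projections come from $\Delta^{n-1}(E_{22})$. This projection is a polynomial in $T_{22}(x)$ and $T_{11}(x)$; if needed I would take the coefficient of the leading power of $x$, which measures $k$. With it, these signs and projections lie in $\mc{Y}$. Thus $\gamma_1^*,\gamma_n\in\mc{Y}$.

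Second, I use the recursion \eqref{recferm}. It gives $\psi^*_{i+1}=c_1(\cS_{k+1})\psi_i^*-\psi_i^*c_1(\cS_k)-y_i\psi_i^*$, and analogously for $\psi_{i-1}$. The multiplication by $c_1(\cS_k)$ on each summand is a coefficient of $T_{11}(x)$, namely the coefficient of $x^{k-1}$ on $U_{k,n}$, again combined with the weight projections. Hence all $\gamma_j^*,\gamma_j$ are generated by elements of $\mc{Y}$ over $\ring$. In the quantum case I would replace $T_{11}$ by $T_{11}+qT_{22}$. The quasi-periodic identification only rescales by $z$, which is a scalar.

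Third, for membership in $\mc{Y}'$, I would invoke level-rank duality. The Remark states $\Theta\circ T_{ij}(x)=T'_{ji}(x)\circ\Theta$, and \eqref{LRpsi} gives $\Theta\psi_i^*=\pm\psi_{n+1-i}\Theta$ and $\Theta\psi_i=\pm\psi^*_{n+1-i}\Theta$. Conjugating the statement $\gamma_{n+1-i},\gamma^*_{n+1-i}\in\mc{Y}$ by $\Theta$ therefore yields $\gamma_i^*,\gamma_i\in\mc{Y}'$, provided conjugation by the antilinear $\Theta$ maps $\mc{Y}$ onto $\mc{Y}'$. It does, since it sends generators to generators, acts on scalars by the involution $f\mapsto f'$, and carries the localisation at $T_{11}$ to the localisation at $T'_{11}$. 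The signs, such as $(-1)^{n-k-i}$, depend only on $k$ and are handled by the weight projections as before.

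The main obstacle I expect is the bookkeeping in the first step. One must confirm that $T_{11}(x)|_{U_{k,n}}$ is invertible after localisation, which should follow from its constant term being the identity once $x$ is inverted, or from working over the fraction field in $x$. One must also confirm that the weight-space projections and the $k$-dependent signs genuinely lie in the localised algebra. If the leading-coefficient argument fails, I would instead use that the quantum trace $T_{11}+qT_{22}$ generates a maximal commutative subalgebra containing the Chern classes on each summand, and obtain the projections by Lagrange interpolation in the eigenvalue of $\Delta^{n-1}(E_{22})$.
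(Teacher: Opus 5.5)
Your proposal is correct and follows the route the paper intends. The paper gives no separate proof; it only points to the comparison of Theorem \ref{geom-lattice} with \eqref{initial} and \eqref{recferm}, and to level-rank duality for $\mathcal{Y}'$. Concretely, that is $\gamma_1^*=T_{12}(x)T_{11}(x)^{-1}$ and $\gamma_n=(-1)^{k-1}T_{11}(x)^{-1}T_{21}(x)$ on $U_{k,n}$, the remaining operators via the recursion with $c_1(\cS_k)$ a coefficient of $T_{11}(x)$, and conjugation by $\Theta$.

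The weight projections need no fallback. The coefficient $T^{(m)}_{11}$ of $x^m$ vanishes on $U_{k,n}$ for $k<m$ and is the identity on $U_{m,n}$. Hence the projection $P_m$ onto $U_{m,n}$ is $P_m=T^{(m)}_{11}\bigl(1-\sum_{k>m}P_k\bigr)$, by downward induction from $P_n=T^{(n)}_{11}$. Also, your $\Xi^{-1}\Sigma\, b_k\,\Sigma^{-1}\Xi$ should read $\Xi^{-1}b_k\Xi$.
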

We can refine the last statement as follows: because the images $\gamma_j,\gamma_j^*$ of the fermionic operators lie in $\mathcal{Y}\cap\mathcal{Y}'$, the algebra $\mc{A}\subset\End_\ring(U)$ generated by them over the base ring $\ring= H^*_T(pt)[q^{\pm 1}]$ also lies in the intersection $\mathcal{Y}\cap\mathcal{Y}'$. Recall the definition \eqref{defplactic} of the plactic algebra and the plactic elementary and complete symmetric polynomials \eqref{noncomelem}. Due to Proposition \ref{propfermplac}, the image of the plactic algebra and, therefore, the images of the plactic polynomials in $\End_\ring(\bigwedge V)[z^{\pm 1}]/\langle z+ (-1)^k q \rangle$ also belong to the subalgebra $\mc{A}$. 

In terms of the Yang-Baxter algebras $\YB,\YB'$, the plactic elementary and complete symmetric polynomials correspond to expansion coefficients of the operators $T_{11}(x)+q T_{22}(x)$ and $T_{11}^{'}(x) + q T_{22}^{'}(x)$ respectively, which geometrically correspond to quantum multiplication by Chern classes of the tautological and quotient bundles. 

Finally, we observe that in the fermionic construction of the plactic algebra from Proposition \ref{propfermplac} the variable $z$ should be specified to $(-1)^{k - 1} q$, where the sign factor $(-1)^{k - 1}$ should be interpreted as the operator\footnote{N.B. the exponent of the particle number operators can, in principle, be written as a polynomial in the particle number operator} 
\[
(-1)^k = \exp\left(\sqrt{-1}\,\pi\sum\limits_{i = 1}^n \gamma^*_i \gamma_i\right)\;.
\]
Moreover, the projector onto any fixed weight subspace $U_{k,n}$ can also be written in terms this fermionic particle number operator $\sum_{i=1}^n\gamma_i^*\gamma_i$. 
Thus, we obtain the following refinement of Proposition \ref{fermtheor1}:
\begin{thm} \label{fermtheor2}
    The images $\mathcal{Y}$ and $\mathcal{Y}'$ of the Yang--Baxter algebras are both equal to the subalgebra $\mc{A}\subset\End_\ring(U)$ generated by the fermionic operators $\gamma_j$ and $\gamma^*_j$. Hence, they must be equal, $\mathcal{Y} = \mathcal{Y}'$.
\end{thm}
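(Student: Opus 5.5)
The plan is to realise each Yang--Baxter generator as an explicit element of the algebra $\mc{A}$ generated by the $\gamma_j,\gamma_j^*$, and conversely to realise each fermion inside $\mathcal{Y}$ and inside $\mathcal{Y}'$. Proposition \ref{fermtheor1} already gives the second half, namely $\gamma_j,\gamma_j^*\in\mathcal{Y}\cap\mathcal{Y}'$. Hence $\mc{A}\subseteq\mathcal{Y}\cap\mathcal{Y}'$, and it remains to prove the reverse inclusions $\mathcal{Y}\subseteq\mc{A}$ and $\mathcal{Y}'\subseteq\mc{A}$. Once these are in hand, $\mathcal{Y}=\mc{A}=\mathcal{Y}'$ follows at once.

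For $\mathcal{Y}\subseteq\mc{A}$ I will use Theorem \ref{geom-lattice}, checking each of the four generators and the inverse needed for the localisation.
\begin{enumerate}
\item \emph{The quantum trace.} $T_{11}(x)+qT_{22}(x)$ acts on the weight space $U_{k,n}$ by quantum multiplication with $x^kc_{x^{-1}}(\cS_k)$, whose coefficients are the $c_r(\cS_k)$. By Proposition \ref{placticandChern} these equal $\phi(\bs{e}_r)$. By Proposition \ref{propfermplac} each $\phi(u_i)$ is a fermion bilinear, with $z$ replaced by the operator $(-1)^{N-1}q$, where $N=\sum_i\gamma_i^*\gamma_i$.
\item \emph{The parity and the projectors.} $(-1)^N$ and the projectors $\pi_k$ onto $U_{k,n}$ are polynomials in $N$. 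The spectrum of $N$ is $\{0,\dots,n\}$, so Lagrange interpolation gives them with rational coefficients; I expect that suitable integral combinations, or passing to $\ring\otimes\mathbb{Q}$, settle integrality. Consequently the trace, decomposed over weight spaces as $\sum_k\pi_k(\cdot)\pi_k$, lies in $\mc{A}$.
\item \emph{The off-diagonal generators.} $T_{12}$ and $T_{21}$ are built from $b_k=(p_2)_*p_1^*$ and $c_k=(p_1)_*p_2^*$ composed with Chern polynomials. By \eqref{initial} these push-pull maps are $\psi_1^*$ and $(-1)^k\psi_n$, and the sign $(-1)^k$ is again a function of $N$.
\item \emph{The diagonal generators.} $T_{22}$ is a product of the pieces already treated. $T_{11}$ is then the trace minus $qT_{22}$, so it lies in $\mc{A}$ as well.
\item \emph{The localisation.} $\mathcal{Y}$ is localised at $T_{11}(x)$, so I must show its inverse is in $\mc{A}$. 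On each $U_{k,n}$ the operator $T_{11}(x)$ is a Chern polynomial, that is, $1$ plus a nilpotent part after grading. Its inverse is therefore a finite polynomial in it, taken weight space by weight space, and hence lies in $\mc{A}$.
\end{enumerate}

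For $\mathcal{Y}'\subseteq\mc{A}$ I will transport the previous step along level-rank duality. The remark after Theorem \ref{geom-lattice} gives $\Theta\circ T_{ij}(x)=T'_{ji}(x)\circ\Theta$. By \eqref{LRpsi}, conjugation by $\Theta$ maps $\mc{A}$ to itself: it sends $\psi_i^*$ to $\pm\psi_{n+1-i}$, with signs that are functions of $N$, and acts antilinearly on scalars through an automorphism of $\ring[q^{\pm1}]$. Hence $\mathcal{Y}'=\Theta\mathcal{Y}\Theta^{-1}\subseteq\Theta\mc{A}\Theta^{-1}=\mc{A}$. Alternatively one can argue directly, using that the quotient Chern classes equal $\phi(\bs{h}_j)$ by Proposition \ref{placticandChern}.

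The main obstacle is bookkeeping. I must make sure that the $k$-dependent signs and the $q$-deformation are exactly matched by functions of $N$, that the projectors and the inverses of the localised generators really lie in the $\ring[q^{\pm1}]$-span of fermion monomials (the integrality issue in step 2), and that the identification $z=(-1)^{k-1}q$ is consistent across weight spaces. I would check this first on $U_{k,n}$ for small $n$ before writing the general argument.
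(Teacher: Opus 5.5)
Your plan follows the paper's sketch. Proposition~\ref{fermtheor1} gives $\mc{A}\subseteq\mathcal{Y}\cap\mathcal{Y}'$. The Yang--Baxter generators are rebuilt from the plactic Chern classes, $\psi_1^*$, $\psi_n$ and functions of the number operator, and $\mathcal{Y}'$ is handled by level-rank duality.

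\textbf{Step (5) fails as written.} Equivariant Chern class operators are not nilpotent over $\ring$. Raising degree forces nothing here, because $\ring$ has elements of every positive degree. Already for $n=2$, $k=1$, multiplication by $c_1(\cS_1)$ is $X=E_{21}+y_1E_{11}+y_2E_{22}$ under the Satake map. So $T_{11}(x)|_{U_{1,2}}$ corresponds to $x\cdot 1+X$, whose determinant is $(x+y_1)(x+y_2)$.

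Hence $T_{11}(x)$ is not $1$ plus a nilpotent, and its inverse is not a finite polynomial in it. The inverse does not even exist in $\End_\ring(U)[x^{\pm1}]$. It exists only as a series in $x^{-1}$, or after inverting such determinants in the base ring.
\begin{itemize}
\item Under the second reading, nothing can put $T_{11}(x)^{-1}$ into $\mc{A}\subseteq\End_\ring(U)$.
\item Under the first reading you must argue coefficientwise. On $U_{k,n}$ one has $x^{-k}T_{11}(x)=1+x^{-1}c_1(\cS_k)+\dots+x^{-k}c_k(\cS_k)$, and each coefficient of its inverse in $\End(U_{k,n})[[x^{-1}]]$ is a polynomial in the $c_r(\cS_k)$.
\end{itemize}

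Two smaller repairs are also needed.
\begin{itemize}
\item Step 2 needs no passage to $\mathbb{Q}$. The $n_i=\gamma_i^*\gamma_i$ are commuting idempotents, so $(-1)^N=\prod_i(1-2n_i)$ and $\pi_k=\sum_{|I|=k}\prod_{i\in I}n_i\prod_{i\notin I}(1-n_i)$ are integral.
\item The Chern polynomials inside $T_{12},T_{21},T_{22}$ and $T_{11}$ are classical, but step 1 only produced the quantum ones. You also need the classical ones in $\mc{A}$, e.g.\ by specialising $z=0$ in Propositions~\ref{propfermplac} and~\ref{placticandChern}.
\end{itemize}

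\textbf{The inclusion $\mathcal{Y}\subseteq\mc{A}$ needs none of steps 1--5.} The algebra $\mc{A}$ is all of $\End_\ring(U)$. Indeed, $P_0=\prod_{i=1}^n\gamma_i\gamma_i^*$ is the projector onto the vacuum. For $i_1<\dots<i_a$ and $j_1<\dots<j_b$, the monomial $\gamma^*_{i_1}\cdots\gamma^*_{i_a}P_0\,\gamma_{j_b}\cdots\gamma_{j_1}$ sends the basis vector labelled $J=\{j_1,\dots,j_b\}$ to the one labelled $I=\{i_1,\dots,i_a\}$ and kills every other basis vector. In other words, the finite Clifford algebra acts on its Fock space as the full matrix algebra.

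Consequently, any localisation of the Yang--Baxter images that stays inside $\End_\ring(U)$ lies in $\mc{A}$ automatically. Staying inside $\End_\ring(U)$ is exactly what the notation $\mathcal{Y}\subset\End_\ring(U)$ presupposes. The theorem then reduces to Proposition~\ref{fermtheor1}, and its content is $\mathcal{Y}=\mathcal{Y}'=\End_\ring(U)$. Your generator-by-generator bookkeeping is only needed if you want explicit fermionic formulas for the $T_{ij}(x)$, as in the last proposition of the appendix.
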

\noindent Geometrically, this statement can be reformulated as follows: quantum multiplication by any Schubert class, and thus any element of the equivariant quantum cohomology ring $qH^*_T(\Gr(k;n))$, can be performed solely by making use of the fermionic operators \eqref{recferm}. 
\begin{remark}\rm
    As explained in \cite[Proposition 7.2]{GKS20}, the operators $\gamma_j$ and $\gamma^*_j$ give two representations of the cohomological Hall algebra(COHA) of the quiver $A_1$. Theorem \ref{fermtheor2} states that the images of these two COHA representations generate the (localised) images $\mathcal{Y}$ and $\mathcal{Y}'$ of the corresponding quantum groups $\YB$ and $\YB'$.
\end{remark}
We conclude by expressing the generators of the Yang--Baxter algebra $\YB$ explicitly in terms of creation and annihilation operators using the elementary plactic polynomials. The formulae for the dual Yang--Baxter algebra $\YB'$ generators are obtained in a similar same way using level-rank duality.
\begin{prop}
    The generators of the Yang--Baxter algebra $\YB$ can be written in terms of fermionic operators as
    \begin{equation*}
    \begin{array}{l}
    T_{11}(x) \circ \Xi_k^{-1} \circ \Sigma_k^q = \Xi_k^{-1} \circ \Sigma^q_k \circ \sum\limits_{i = 0}^k x^{k - i} \phi(\bs{e}_i), \\
    T_{21}(x) \circ \Xi_k^{-1} \circ \Sigma_k^q =(-1)^{k - 1}  \Xi_{k-1}^{-1} \circ \Sigma^q_{k-1} \circ \sum\limits_{i = 0}^k x^{k - 1 - i} \phi(\bs{e}_i) \circ \psi_n, \\
    T_{12}(x) \circ \Xi_k^{-1} \circ \Sigma_k^q  = \Xi_{k+1}^{-1} \circ \Sigma^q_{k+1} \circ \psi^*_1\circ \sum\limits_{i = 0}^k x^{k - i} \phi(\bs{e}_i) , \\
    T_{22}(x) \circ \Xi_k^{-1} \circ \Sigma_k^q = (-1)^{k - 1} \circ \Xi_k^{-1} \circ \Sigma_k^q \circ \sum\limits_{i = 0}^{k-1} x^{k - 1 - i} \psi^*_1 \circ \phi(\bs{e}_i) \circ \psi_n . 
    \end{array}
    \end{equation*}
\end{prop}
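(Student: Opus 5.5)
The plan is to translate Theorem \ref{geom-lattice} term by term into fermionic language. That theorem expresses each $T_{ij}(x)$ through three kinds of geometric operators: Chern polynomials of tautological bundles, and the push–pull operators $b_k=(p_2)_*p_1^*$ and $c_k=(p_1)_*p_2^*$. Each of these already has a fermionic description earlier in the paper, so the proof reduces to three dictionary entries plus bookkeeping of degrees and signs.

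\textbf{Step 1 (Chern polynomial).} Expand the Chern polynomial as
\[
x^kc_{x^{-1}}(\cS_k)=\sum_{i=0}^k x^{k-i}c_i(\cS_k).
\]
By Proposition \ref{placticandChern} we have $\Sigma^q_k\circ\phi(\bs{e}_i)=c_i(\cS_k)\circ\Sigma^q_k$, where $c_i(\cS_k)$ acts by quantum multiplication. Hence $x^kc_{x^{-1}}(\cS_k)\circ\Sigma^q_k=\Sigma^q_k\circ\sum_i x^{k-i}\phi(\bs{e}_i)$. Composing with $\Xi_k^{-1}$ gives the first formula, with $T_{11}(x)$ read in the quantum setting as the quantum trace $T_{11}(x)+qT_{22}(x)$, as stipulated at the end of Theorem \ref{geom-lattice}.

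\textbf{Step 2 (push–pull operators).} The initial conditions \eqref{initial} give $\psi^*_1=b_k$ on $qH^*_T(\Gr(k;n))$. Shifting $k$ by one in the definition of $\psi_n$ gives $\psi_n=(-1)^{k-1}c_{k-1}$ as a map $qH^*_T(\Gr(k;n))\to qH^*_T(\Gr(k-1;n))$. Thus $c_{k-1}=(-1)^{k-1}\psi_n$ and $b_{k-1}=\psi^*_1$. By Proposition \ref{fermionform}, and the Remark following \eqref{quasiperiod} (the operators $\tilde\psi^*_1=\psi^*_1$ and $\tilde\psi_n=\psi_n$ are unchanged in the quantum setting), these identifications hold under $\Sigma^q$ without any quantum correction.

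\textbf{Step 3 (substitution).} Substituting into Theorem \ref{geom-lattice}:
\begin{itemize}
\item $T_{12}(x)$ becomes $\psi^*_1\circ\sum_i x^{k-i}\phi(\bs{e}_i)$.
\item $T_{21}(x)$ becomes $(-1)^{k-1}\sum_i x^{k-1-i}\phi(\bs{e}_i)\circ\psi_n$. Here $\phi(\bs{e}_i)$ acts on $\bigwedge^{k-1}V$, so the term $i=k$ vanishes and the sum may be written up to $k$.
\item $T_{22}(x)$ becomes $(-1)^{k-1}\sum_{i=0}^{k-1}x^{k-1-i}\psi^*_1\circ\phi(\bs{e}_i)\circ\psi_n$.
\end{itemize}
In each case we then compose with the appropriate $\Xi^{-1}_{k\pm1}\circ\Sigma^q_{k\pm1}$.

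\textbf{Main obstacle.} I expect the difficulty to lie in the quantum bookkeeping rather than the algebra. Three points need care:
\begin{itemize}
\item Theorem \ref{geom-lattice} is stated classically, with only $T_{11}$ deformed. I must check that in the quantum setting $T_{12}$, $T_{21}$ and $T_{22}$ keep the same geometric form, with the Chern classes now acting by quantum multiplication, so that Proposition \ref{placticandChern} applies in the ring of the correct rank.
\item The deformation parameters of adjacent rings differ by a sign, $q_k=-q_{k\pm1}$, and $z=(-1)^{k-1}q$. This must be tracked so that $\phi(\bs{e}_i)$ is evaluated with $z$ specialised to the correct sign on $\bigwedge^{k-1}V$ versus $\bigwedge^kV$.
\item The sign factor $(-1)^{k-1}$ in front of $\psi_n$ must be confirmed to come out as stated in the $T_{21}$ and $T_{22}$ formulae.
\end{itemize}
I would check these points first on $k=1$ and $n=2$ by comparing matrix elements directly with \cite[Prop.~4.6]{GKS20}.
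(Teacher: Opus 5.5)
Your proposal is correct and is essentially the paper's own argument: the paper also reads off the $T_{11}$ formula from Proposition~\ref{placticandChern} (with $T_{11}$ understood as the quantum trace $T_{11}+qT_{22}$, as you do), the $T_{12}$ and $T_{21}$ formulas from the $l=0$ case of Proposition~\ref{fermionform}, and the $T_{22}$ formula by direct substitution into Theorem~\ref{geom-lattice}. Your first two flagged points are settled by an observation the paper leaves implicit (Theorem~\ref{geom-lattice} does not deform $T_{12},T_{21},T_{22}$, so their Chern classes stay classical): every $z$-dependent monomial of $\phi(\bs{e}_i)$ contains the factor $z\psi^*_1\psi_n$ with no fermion of index $1$ to its left and none of index $n$ to its right, so it is annihilated by $\psi^*_1\circ$ and by $\circ\,\psi_n$; hence classical and quantum $\phi(\bs{e}_i)$ agree inside those three formulas, and the specialisation of $z$ there is irrelevant.
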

\begin{proof}
    The formula for $T_{11}(x)$ follows from Proposition \ref{placticandChern}. The formulae for $T_{12}(x)$ and $T_{21}(x)$ generators are special cases of Proposition \ref{fermionform}. Finally, the formula for $T_{22}(x)$ follows from Theorem \ref{geom-lattice}.
\end{proof}
\section{Another proof of Level-Rank Duality}

We sketch an alternative proof of Theorem \ref{thm:LRduality} using the quantum Chevalley formula. 

    Since our construction rests on the Satake correspondence let us first establish the duality for the case $k=1$, i.e. $qH^*_T(\Gr(1;n))\simeq qH^*_T(\Gr(n-1;n))$ or $(V,\star)\simeq(\bigwedge^{n-1}V,\star)$. Under the Satake map \eqref{Satake} the Schubert basis in $qH^*_T(\Gr(1;n))$ corresponds to the standard basis in $V[q^{\pm1}]=\ring^n[q^{\pm 1}]$, $v_k=\psi^*_k(1)\mapsto\sigma_{(k-1)}$, and the Schubert basis in $qH^*_T(\Gr(n-1;n))$ to $v^k=(-1)^{k-1}\psi_k(v_1\wedge\ldots\wedge v_n)\mapsto \sigma_{(1)^{n-k}}$ with $k=1,\ldots,n$. We need to show that the map $\Theta_1:v_k\mapsto v^{n+1-k}$ gives a ring isomorphism up to the stated transformation of the coefficients in $\ring$. Using the same argument from \cite[Cor. 7.1]{MihalceaAIM} as previously, it suffices to show that the quantum Chevalley rule holds in both rings, because under the stated isomorphism $\Theta_1:\sigma_{(1)}\mapsto\sigma_{(1)}$. We have already done this for $(V,\star)$, so it suffices to compute
    \begin{multline*}
        \Theta(v_2)\star\Theta(v_{n+1-k})=v^{n-1}\star v^{k}=
    (X^q_1+\cdots+X^q_{n-1}-y_1\cdot1-\cdots-y_{n-1}\cdot1)v^{k}\\
    =(X^q_1-y_1\cdot 1+\cdots+X^q_{n-1}-y_{n-1}\cdot1)v_1\wedge\ldots\wedge v_{k-1}\wedge v_{k+1}\wedge\ldots\wedge v_n
    \end{multline*}
    for all $k=1,\ldots,n$. One easily verifies that
    \[
    (X^q_i-y_i\cdot1)v^k=\left\{
    \begin{array}{ll}
         0,&i<k-1  \\
         v^{k-1},& i=k-1\\
         (y_{i+1}-y_i)v^k,&i>k-1
    \end{array}
    \right.\;,\qquad k=2,\ldots,n
    \]
    and
    \[
    (X^q_{i}-y_{i}\cdot1)v^1=\left\{
    \begin{array}{ll}
         (y_{i+1}-y_i)v^1,&i<n-1  \\
         qv^{n}+(y_n-y_{n-1})v^1,& i=n-1
    \end{array}
    \right.\;.
    \]
    Hence, we arrive at
    \begin{eqnarray}\label{LR1}
    \Theta(v_2)\star\Theta(v_{n+1-k})&=&v^{n-1}\star v^{k}=\left\{
    \begin{array}{ll}
         v^{k-1}+(y_{n}-y_k)v^k,&k>1  \\
         qv^{n}+(y_n-y_{1})v^1,& k=1
    \end{array}
    \right.\\
    &=&\Theta(v_2\star v_{n+1-k})\notag
    \end{eqnarray}
    provided we impose the stated `anti-linearity' property of the map which sends $f=(y_n-y_k)\in\ring$ to $f'=(y_{n+1-k}-y_1)\in\ring$. As the map $\Theta_1$ is obviously degree preserving this establishes the `anti-linear' ring isomorphism $qH^*_T(\Gr(1;n))\simeq qH^*_T(\Gr(n-1;n))$ or $(V,\star)\simeq(\bigwedge^{n-1}V,\star)$.

In light of the `dual' quantum Chevalley rule \eqref{LR1} we can now repeat our previous construction step-by-step but starting from $qH^*_T(\Gr(n-1;n))\simeq \bigwedge^{n-1}V\simeq V^*$ instead and use 
Poincar\'e duality to describe the ring structure on $(\bigwedge^{n-k}V,\star)$. Namely, we have
\[
\ket{\lambda}\star\ket{w}=\Theta (s_{\lambda'}(\tilde X^q_1,\ldots\tilde X^q_{k};y)v^{n+1-w_n}\wedge\ldots\wedge v^{n+1-w_{n-k+1}}),\qquad \tilde X^q=(X^q)^T\;.
\]
The last formula shows that under level-rank duality we obtain an equivalent description in terms of the transpose of the matrix \eqref{Xmatrix}, which corresponds to swapping the Borel subalgebra in our conventions when defining the quantum cohomology of Grassmannians.    

\printbibliography
\end{document}